\newenvironment{psmallmatrix}
  {\left(\begin{smallmatrix}}
  {\end{smallmatrix}\right)}
\def\R{\mathbb{R}}
\def\C{\mathbb{C}}
\def\eps{\varepsilon}
\def\G{\mathsf{G}}
\def\O{\mathsf{O}}
\def\SO{\mathsf{SO}}
\def\N{\mathcal{N}}
\def\orbit{\mathcal{O}}
\def\cA{\mathcal{A}}
\def\cI{\mathcal{I}}
\def\cJ{\mathcal{J}}
\def\cK{\mathcal{K}}
\def\cL{\mathcal{L}}
\def\cV{\mathcal{V}}
\def\cR{\mathcal{R}}
\def\cS{\mathcal{S}}
\def\sS{\mathcal{S}}
\def\der{\mathrm{d}}
\def\1{\mathbf{1}}
\def\E{\mathbb{E}}
\def\P{\mathbb{P}}
\def\HS{{\mathrm{HS}}}
\def\td{\tilde{d}}
\def\ts{\tilde{s}}
\def\tu{\tilde{u}}
\def\tz{\tilde{z}}
\def\ttheta{\tilde{\theta}}
\def\tvarphi{\tilde{\varphi}}
\def\tK{{\widetilde{K}}}
\def\tM{{\widetilde{M}}}
\def\tV{{\widetilde{V}}}
\def\tT{{\widetilde{T}}}
\def\tH{{\widetilde{H}}}
\def\tcI{\mathcal{\widetilde{I}}}
\def\tcR{\mathcal{\widetilde{R}}}
\def\tcV{\mathcal{\widetilde{V}}}
\def\frakg{\mathfrak{g}}
\def\frakh{\mathfrak{h}}
\DeclareMathOperator{\Id}{Id}
\DeclareMathOperator{\Unif}{Unif}
\DeclareMathOperator{\Var}{Var}
\DeclareMathOperator{\Cov}{Cov}
\DeclareMathOperator{\vectorize}{vec}
\DeclareMathOperator{\diag}{diag}
\DeclareMathOperator{\rank}{rank}
\DeclareMathOperator{\trdeg}{trdeg}
\DeclareMathOperator{\Tr}{Tr}
\DeclareMathOperator{\Type}{Type}
\DeclareMathOperator*{\argmin}{arg\,min}
\renewcommand{\i}{\mathbf{i}}
\renewcommand{\Re}{\operatorname{Re}}
\renewcommand{\Im}{\operatorname{Im}}
\newcommand{\hV}{\hat{V}}
\newcommand{\cQ}{\mathcal{Q}}
\newcommand{\cH}{\mathcal{H}}
\newcommand{\inner}[2]{\left\langle #1,#2 \right\rangle}
\newcommand{\rbr}[1]{\left(#1\right)}
\newcommand{\sbr}[1]{\left[#1\right]}
\newcommand{\abr}[1]{\left|#1\right|}
  \newcommand{\miniscule}{\@setfontsize\miniscule{4}{5}}% \tiny: 5/6
  \newcommand{\miniscule}{\@setfontsize\miniscule{5}{6}}% \tiny: 6/7
  \newcommand{\miniscule}{\@setfontsize\miniscule{5}{6}}% \tiny: 6/7
\newtheorem{theorem}{Theorem}[section]
\newtheorem{lemma}[theorem]{Lemma}
\newtheorem{proposition}[theorem]{Proposition}
\newtheorem{fact}[theorem]{Fact}
\newtheorem{corollary}[theorem]{Corollary}
\theoremstyle{definition}
\newtheorem{remark}[theorem]{Remark}
\newtheorem*{remark*}{Remark}
\newtheorem*{example*}{Example}
\newtheorem{example}[theorem]{Example}
\newtheorem{definition}[theorem]{Definition}
\numberwithin{equation}{section}
\numberwithin{figure}{section}
\title[Group orbit estimation]{Maximum likelihood for high-noise
group orbit estimation and single-particle cryo-EM}
\author{Zhou Fan}\email{zhou.fan@yale.edu}
\author{Roy R. Lederman}\email{roy.lederman@yale.edu}
\author{Yi Sun}\email{yisun@statistics.uchicago.edu}
\author{Tianhao Wang}\email{tianhao.wang@yale.edu}
\author{Sheng Xu}\email{sheng.xu@yale.edu}
\address{ZF, RL, TW, SX: Department of Statistics and Data Science, Yale
University}
\address{YS: Department of Statistics, University of Chicago}
\date{\today}
\begin{document}

\begin{abstract}
Motivated by applications to single-particle cryo-electron microscopy (cryo-EM),
we study several problems of function estimation in a high noise
regime, where
samples are observed after random rotation and possible linear projection
  of the function domain. We describe a stratification of the Fisher information eigenvalues
according to transcendence degrees of graded pieces of the algebra of group invariants,
and we relate critical points of the log-likelihood landscape to a sequence
of moment optimization problems, extending previous results
for a discrete rotation group without projections.

We then compute the transcendence degrees and forms of these 
optimization problems for several examples of function
estimation under $\SO(2)$ and $\SO(3)$ rotations,
including a simplified model of cryo-EM as
introduced by Bandeira, Blum-Smith, Kileel, Perry, Weed, and Wein.
We affirmatively resolve conjectures that $3^\text{rd}$-order
moments are sufficient to locally identify a generic signal up to its
rotational orbit in these examples.

For low-dimensional approximations of the electric potential maps
of two small protein molecules, we empirically verify that the noise-scalings of the Fisher 
information eigenvalues conform with our theoretical predictions over a
range of SNR, in a model of $\SO(3)$ rotations without projections.
\end{abstract}

\maketitle

\tableofcontents

\section{Introduction}

We study several problems of function estimation in low dimensions, where the
function is observed under random and unknown rotations of its domain.
Let ${f:\mathcal{X} \to \R}$ be a function on the unit circle
$\mathcal{X}=\cS^1$, the unit sphere $\mathcal{X}=\cS^2$, or $\mathcal{X}=\R^3$.
Let $\G$ be a rotation group acting on $\mathcal{X}$.
We seek to estimate $f$ from samples of the form
\[f_\frakg(x)+\text{white noise}\]
where each sample consists of the function $f_\frakg(x)=f(\frakg^{-1} \cdot x)$
rotated by a uniformly random element $\frakg \in \G$ and observed with continuous
Gaussian white noise on $\mathcal{X}$. Equivalently, choosing an orthonormal
basis for $L_2(\mathcal{X})$, the basis coefficients of $f_{\frakg}$ are
observed with i.i.d.\ Gaussian noise, having some entrywise noise variance
$\sigma^2>0$. We focus on a regime of this problem
where each sample has high noise
$\sigma^2 \gtrsim \|f\|_{L_2}^2$, and the information from many rotated
samples must be combined to obtain an accurate estimate of $f$.
We study also a variant of this model where samples
are observed under an additional linear projection.

Our primary motivation is a formulation of this problem that models molecular
reconstruction in single-particle cryo-electron microscopy (cryo-EM)
\cite{dubochet1988cryo,henderson1990model,frank2006three}.
In this application, $f:\R^3 \to \R$ is the electric
potential of an unknown molecular structure. Tomographic projections of this
potential are measured for many samples of the molecule, each in a different
and unknown rotated orientation, typically with a high level of
measurement noise. The molecular structure is determined by estimating this
electric potential $f$ from the rotated and projected samples, and then fitting
an atomic model
\cite{bendory2020single,singer2020computational}. 
A brief introduction to cryo-EM and a discussion of its relation to the
problems studied in this work are presented in Appendix~\ref{sec:cryoem}.

Among computational
procedures for solving this reconstruction problem, regularized versions of
maximum likelihood estimation (MLE), as implemented via
expectation-maximization or stochastic gradient descent, are commonly used
\cite{sigworth1998maximum,scheres2007disentangling,scheres2012relion,punjani2017cryosparc}.
However, many theoretical properties of the optimization landscape and
reconstruction errors of these procedures are not fully
understood in cryo-EM applications \cite{bendory2020single}.

In this work, we study the Fisher information matrix and log-likelihood
function landscape associated with maximum likelihood estimation for
a basic model of the cryo-EM reconstruction problem, as well as several
simpler statistical models with qualitative similarities. These models may be of
independent interest while building up to the complexity of cryo-EM:
\begin{itemize}
\item (Continuous multi-reference alignment, Section \ref{sec:func-est-so2}.)
Estimating a function on the unit
circle $\mathcal{X}=\sS^1$, under $\SO(2)$ rotations of the circle
\cite{bandeira2020optimal,bendory2020super}.
\item (Spherical registration, Section \ref{subsec:sphericalregistration}.)
Estimating a function on the unit sphere
$\mathcal{X}=\sS^2$, under $\SO(3)$ rotations of the sphere \cite[Section
5.4]{bandeira2017estimation}.
\item (Unprojected cryo-EM, Section \ref{subsec:cryoEM}.)
Estimating a function on $\R^3$ under $\SO(3)$
rotations about the origin, without tomographic projection
\cite[Appendix B]{bandeira2017estimation}. Such a problem arises in a
related application of cryo-ET, discussed in Appendix \ref{sec:cryoET}.
\item (Cryo-EM, Section \ref{subsec:projectedcryoEM}.)
Estimating a function on $\R^3$ under $\SO(3)$
rotations about the origin, with tomographic projection
\cite[Section 5.5]{bandeira2017estimation}.
\end{itemize}

\subsection{Group orbit recovery and related literature}

Classical literature on function estimation has explored the rich interplay
between the complexity of infinite-dimensional function classes, the statistical
difficulty of estimation, and the role of regularization
\cite{ibragimov1981statistical,tsybakov2008introduction,johnstone2017gaussian}.
We restrict attention instead to a finite-dimensional function space for each
of the above models, with the goal of understanding connections between
estimation in these models having latent rotations and the algebraic
structure of the underlying rotational group.

Choosing a $d$-dimensional function basis, each of the
above function estimation problems may be restated as an \emph{orbit recovery
problem} \cite{abbe2018estimation,bandeira2017estimation}
of estimating the coefficients $\theta_* \in \R^d$ of $f$ in this basis,
from noisy observations of $\theta_*$ that are rotated by elements of a
subgroup $\G \subset \O(d)$. This subgroup $\G$ represents the transformation
of basis coefficients under rotations of the function domain $\mathcal{X}$.
A body of recent literature has
studied both specific and general instances of this orbit recovery problem
\cite{perry2019sample,abbe2019multireference,bandeira2020optimal,bandeira2017estimation,abbe2018estimation,pumir2019generalized,brunel2019learning,fan2020likelihood,sharon2020method,romanov2021multi}.
When $\G$ is the group of
cyclic rotations of coordinates (a.k.a.\ discrete multi-reference alignment),
\cite{perry2019sample} first proved that the optimal squared error for
estimating generic signals $\theta_* \in \R^d$ in high noise
is significantly larger than
that in the model without latent rotations, scaling as $\sigma^6$ rather than
as just the noise variance $\sigma^2$. This analysis was extended to non-generic signals for
continuous multi-reference alignment in \cite{bandeira2020optimal} and to
general group actions in \cite{abbe2018estimation,bandeira2017estimation}.

Our current work is inspired, in particular, by results of
Bandeira et al.\ \cite{bandeira2017estimation},
which placed cryo-EM and other examples of function estimation in this context,
and connected statistical properties of method-of-moments estimators in these
problems to properties of the invariant algebra of the group action. 
Further connections between the algebraic structure of invariants and the
geometry of the log-likelihood function landscape were developed in
\cite{fan2020likelihood,katsevich2020likelihood}. As a central technical
ingredient, these papers derived a
series expansion of the log-likelihood function in powers of $\sigma^{-1}$,
in \cite{fan2020likelihood} for orbit recovery models without
linear projection, and in \cite{katsevich2020likelihood} for more
general Gaussian mixture models that include the models we study in this work.
We discuss below several relevant results of
\cite{bandeira2017estimation,fan2020likelihood,katsevich2020likelihood} 
in further detail.

\subsection{Overview of results}

In Section \ref{sec:general}, we introduce the general orbit recovery
model both with and without a linear projection, and describe
results that connect geometric properties of the log-likelihood function
to properties of the invariant algebra of the group action.
In Sections \ref{sec:func-est-so2} and \ref{sec:func-est-so3}, we
apply this connection to study the preceding problems of function estimation,
including continuous multi-reference alignment (MRA) and cryo-EM. In
Section \ref{sec:simulations}, we report results of numerical
simulations for estimating the electric potential functions of two small
protein molecules in an unprojected cryo-EM model, which corroborate
predictions of our theory for the spectra of the Fisher information matrices.

Here, we provide a brief overview of these results in the context of related
literature.\\

{\noindent}{\bf Fisher information and log-likelihood function landscape.}

For general orbit recovery problems with $\theta_* \in \R^d$,
in a high-noise regime $\sigma^2 \gtrsim \|\theta_*\|^2$, results of
\cite{fan2020likelihood,katsevich2020likelihood} demonstrated that it is
informative to study properties of the (negative)
population log-likelihood function $R(\theta)$ via a series expansion in powers
of $\sigma^{-1}$ of the form
\begin{equation}\label{eq:seriesinformal}
R(\theta)=\sum_{k=0}^\infty \frac{1}{\sigma^{2k}} R_k(\theta).
\end{equation}
Each term $R_k(\theta)$ is a $\G$-invariant polynomial function of $\theta$
which may depend on a number of ``degrees-of-freedom'' of $\theta$
strictly smaller than the total dimension $d$. For a model with discrete group
$\G$ and no linear projection, \cite{fan2020likelihood} showed this number
exactly coincides with $\trdeg \cR_{\leq k}^\G$, the
transcendence degree of the invariant subalgebra generated by all
$\G$-invariant polynomials of degree $\leq k$. This implies a graded structure
of the Fisher information matrix $I(\theta_*)$ for generic signal vectors
$\theta_*$, where eigenvalues corresponding to different degrees-of-freedom have
different scalings with $\sigma^{-1}$. Furthermore, local
minimizers of $R(\theta)$ have a certain correspondence with successive local
minimizers of each function $R_k(\theta)$.

In this work, we first extend these results to a model where
$\G \subseteq \O(d)$ may be continuous, and samples may be observed
with an additional linear projection.  This extension encompasses a basic
formulation of the molecular reconstruction problem in cryo-EM.
The main result of \cite{katsevich2020likelihood} implies
that a series expansion analogous to (\ref{eq:seriesinformal}) continues to hold
for the population log-likelihood function $R(\theta)$ in such a model. However,
as anticipated from the structure of the expansion in
\cite{katsevich2020likelihood}, the algebraic properties of its terms
differ from the unprojected
setting studied in \cite{fan2020likelihood}.
We show here that the number of degrees-of-freedom associated to each
function $R_k(\theta)$ coincides with the transcendence degree of a possibly
reduced subalgebra generated by order-$k$ moments of the
projected signal (Theorem \ref{thm:FI}).  In addition, a version of the correspondence between local
minimizers of $R(\theta)$ and of successive local minimizers of $R_k(\theta)$
remains true over a bounded domain of $\R^d$ (Theorems
\ref{thm:benignlandscape} and \ref{thm:landscape}). When the group $\G$ is continuous,
we extend the arguments of \cite{fan2020likelihood} to address 
technical issues arising from the Fisher information matrix $I(\theta_*)$ being
singular, and the locus of minimizers of $R(\theta)$ being a manifold of
positive rather than zero dimension.\\

{\noindent}{\bf Multi-reference alignment and cryo-EM.}

These general results enable our study of maximum likelihood procedures
in specific function estimation problems, the main
focus of our work. In high-noise regimes of these problems,
statistical properties of the MLE are related to the
structures of the subalgebras $\cR_{\leq k}^\G$ and to
their transcendence degrees. In particular, the squared-error risk of the
MLE is dictated by the smallest non-zero eigenvalue
of the Fisher information matrix $I(\theta_*)$, and scales as
$\sigma^{2K}$ for generic signals $\theta_*$ where $K$ is the smallest
integer for which $\trdeg \cR_{\leq K}^\G$
equals $\trdeg \cR^\G$, the transcendence degree of the full
$\G$-invariant algebra. This connects
with a central result of \cite{bandeira2017estimation}, which showed that
$K$ is the lowest order moment needed to identify $\theta_*$ up to a finite
list of group orbits, and that
$\sigma^{2K}$ is also the scaling of the sample complexity required for
estimating $\theta_*$ up to such a finite list.  We apply our
general results to determine the explicit value of $K$ in several
function estimation examples.

For our model of continuous MRA on $\cS^1$,
we verify that $K=3$ (Theorem \ref{thm:MRA}).  This is expected from known
results about estimation using $3^\text{rd}$-order moments in similar
observation models for both discrete and continuous MRA in
\cite{perry2019sample,bandeira2020optimal}. We also show that the
optimization landscape of $R(\theta)$ may possess spurious
local minimizers even for generic Fourier coefficient vectors $\theta_* \in
\R^d$ (Theorem \ref{thm:MRAlandscape}) when the number of Fourier basis
functions $d$ exceeds a small constant.
This statement is analogous to results shown for discrete MRA in \cite{fan2020likelihood},
although our construction here in the continuous setting has a
different structure.

For spherical registration and projected and unprojected cryo-EM under
an $\SO(3)$-action, a primary contribution of our work is proving also that
$\trdeg \cR_{\leq K}^\G=\trdeg \cR^\G$ for $K=3$. An iterative algorithm for estimating $\theta_*$ from $3^\text{rd}$-order moments
in cryo-EM was first proposed by Kam in
\cite{kam1980reconstruction}, which implicitly assumed that
these moments are sufficient to identify $\theta_*$ (up to symmetries such
as chirality).
Formal conjectures that $K=3$ were stated in \cite{bandeira2017estimation} and
verified numerically for small values of the basis dimension $d$ in
exact-precision arithmetic. We prove
that $K=3$ for $d$ exceeding small absolute constants
(Theorems \ref{thm:S2registration}, \ref{thm:cryoEM}, and
\ref{thm:projectedcryoEM}), hence resolving several of these conjectures that
$3^\text{rd}$-order moments are sufficient to locally identify the orbit
of $\theta_*$.

Writing the terms $R_k(\theta)$ of (\ref{eq:seriesinformal}) as
\[R_k(\theta)=s_k(\theta)+q_k(\theta)\]
where $s_k(\theta)$ depends on the additional degrees-of-freedom of $\theta$
beyond those which define $R_{k-1}(\theta)$, our proofs of $K=3$
leverage a connection between $\trdeg \cR_{\leq k}^\G$ and the generic
ranks of the Hessians $\nabla^2 s_k(\theta)$ (Lemma \ref{lem:trdeg}).
We show that $\nabla^2 s_3(\theta)$ is generically of full rank
by using an inductive ``frequency marching'' argument on the dimension $d$ and
explicitly analyzing $\rank(\nabla^2 s_3(\theta))$ for special choices of
$\theta \in \R^d$. As a by-product of these analyses, we derive
explicit forms for $s_3(\theta)$, which define optimization problems analogous 
to bispectrum inversion problems studied in
MRA models \cite{bendory2017bispectrum}.

In an unprojected spherical registration model over $\cS^2$, recent
independent work of \cite{liu2021algorithms} provides a more quantitative
version of this inductive frequency marching argument. The result of
\cite[Lemma 5.6]{liu2021algorithms} implies
that above some absolute constant dimension $\underline{d}$, the increase in
$\rank(\nabla^2 s_3(\theta))$ from dimension $\underline{d}$ to $d$ must be exactly
$d-\underline{d}$ for generic $\theta \in \R^d$, and
\cite{liu2021algorithms} obtained a quantitative lower bound on the smallest
singular value in a smoothed analysis over $\theta$.
Our proofs show versions of this statement that
are less quantitative but more explicit about the value of $\underline{d}$,
holding down to small enough $\underline{d}$ where the full-rank condition for
$\nabla^2 s_3(\theta)$ may be explicitly checked.
We carry this out for both spherical registration and cryo-EM, and in
particular, our inductive argument in the projected cryo-EM model is
more complex than in the unprojected models and uses different ideas.\\

{\noindent}{\bf Simulations of the Fisher information for small proteins.}

To empirically investigate the predictions of this body of theory in a cryo-EM
example, we computed in simulation the observed Fisher information matrices for
the electric potential functions of two small protein molecules---a
rotavirus VP6 trimer and hemoglobin---in a model without tomographic
projection.

We developed and employed a procedure of adaptively constructing a radial
function basis in the Fourier domain (Appendix \ref{appendix:simulations})
so as to reduce the dimension of the function space needed to
approximate the true potential. Applying this construction, we obtained
function bases of dimension $d \approx 400$ that capture the coarse trimer 
structure of the rotavirus example, and of dimension $d \approx 4000$ that
capture the secondary structures of both proteins up to spatial
resolutions of 7--8 Angstroms.
At these dimensions and spatial resolutions, the theoretically predicted $\sigma^{-2}$, $\sigma^{-4}$, and $\sigma^{-6}$ scalings of the Fisher information
eigenvalues were apparent in simulation for sufficiently high noise.
We observe deviations from these predictions at lower levels of noise, and also
in higher-dimensional function spaces that may be necessary to approximate the
potentials to better spatial resolutions.

\subsection*{Notation}

We use the conventions $\langle u,v \rangle=\sum_i
\overline{u_i}v_i$ for the complex inner product, $\|u\|$
for the (real or complex) $\ell_2$-norm, $\|M\|$ for the
$\ell_2 \to \ell_2$ operator norm for matrices, and $\i=\sqrt{-1}$ for the
imaginary unit.

For a measure space $(X,\mu)$, $L_2(X,\C)$ is the $L_2$-space of
functions $f:X \to \C$ with inner-product $\int_X
\overline{f(x)}g(x)\mu(\der x)$. We write $L_2(X)=L_2(X,\R)$ for
the analogous $L_2$-space of real-valued functions.
$\sS^1$ and $\sS^2$ are the unit circle and unit sphere.

For differentiable $f:\R^d \to \R^k$, $\der f(x) \in \R^{k \times d}$ is
its derivative or Jacobian at $x$. For twice-differentiable
$f:\R^d \to \R$, $\nabla f(x)=\der f(x)^\top \in \R^d$ is its gradient, and
$\nabla^2 f(x) \in \R^{d \times d}$ is its Hessian. We will write
$\der_x,\nabla_x,\nabla_x^2$ to clarify that the variable of differentiation is
$x$. For a subset of coordinates $y$, we write $\nabla_y f(x)$ and
$\nabla_y^2 f(x)$ as the components of this gradient and Hessian in $y$.

For a smooth manifold $\mathcal{M}$ and twice-differentiable
$f:\mathcal{M} \to \R$, we write $\nabla f(x)|_{\mathcal{M}}$
and $\nabla^2 f(x)|_{\mathcal{M}}$ for its gradient and Hessian evaluated in
any choice of local chart at $x \in \mathcal{M}$. We will often not make the
choice of chart explicit when referring to properties of
$\nabla f(x)|_{\mathcal{M}}$ and $\nabla^2 f(x)|_{\mathcal{M}}$ that do not
depend on the specific choice of chart.

\subsection*{Acknowledgments}

We would like to thank Fred Sigworth for helpful discussions about cryo-EM, and
for suggesting to us the hemoglobin example. ZF was supported in part
by NSF DMS-1916198. RRL was supported in part by NIH/NIGMS 1R01GM136780-01.
YS was supported in part by NSF DMS-1701654, DMS-2039183, and DMS-2054838.

\section{The general orbit recovery model in high noise}\label{sec:general}

\subsection{Model and likelihood}

Let $\theta_* \in \R^d$ be an unknown signal of interest.
Let $\G \subseteq \O(d)$ be a known compact subgroup of the orthogonal group of
dimension $d$. We denote by $\Lambda$ the unique Haar probability measure on
$\G$, satisfying
\[\Lambda(\G)=1, \qquad \Lambda(g \cdot S)=\Lambda(S \cdot g)=\Lambda(S),\]
for any $g \in \G$ and Borel measurable subset $S \subseteq \G$. In the \emph{unprojected orbit recovery model}, we observe $n$ noisy and
rotated samples of $\theta_*$, given by
\begin{equation}\label{eq:unprojectedmodel}
Y_i=g_i \cdot \theta_*+\sigma \eps_i \in \R^d, \qquad i=1,\ldots,n
\end{equation}
where $g_1,\ldots,g_n \overset{iid}{\sim} \Lambda$ are Haar-uniform random
elements of $\G$, and $\eps_1,\ldots,\eps_n \overset{iid}{\sim} \N(0,\Id_{d
\times d})$ are Gaussian noise vectors independent of $g_1,\ldots,g_n$.
The signal $\theta_*$ is identifiable only up to an arbitrary
rotation in $\G$, i.e.\ it is identifiable up to its orbit
\[\orbit_{\theta_*}=\{g \cdot \theta_*:\,g \in \G\}.\]
Our goal is to estimate $\orbit_{\theta_*}$ from the observed rotated
samples $Y_1,\ldots,Y_n$.

In the \emph{projected orbit recovery model}, we consider an additional known
linear map $\Pi:\R^d \to \R^{\td}$.  (Note that $\Pi$ may not necessarily be an
orthogonal projection; our terminology is borrowed from the example of tomographic
projection in cryo-EM.) We observe $n$ samples
\begin{equation}\label{eq:projectedmodel}
Y_i=\Pi(g_i \cdot \theta_*)+\sigma \eps_i \in \R^{\td}, \qquad i=1,\ldots,n
\end{equation}
where $g_1,\ldots,g_n \overset{iid}{\sim} \Lambda$ as before, and
$\eps_1,\ldots,\eps_n \overset{iid}{\sim} \N(0,\Id_{\td \times \td})$ are
Gaussian noise vectors in the projected dimension $\td$. Our goal is again
to estimate $\orbit_{\theta_*}$ from $Y_1,\ldots,Y_n$.

The unprojected and projected orbit recovery models are both Gaussian mixture
models, where the distribution of mixture centers is the law of $g
\cdot \theta_* \in \R^d$ or of $\Pi(g \cdot \theta_*) \in \R^{\td}$ induced by
the uniform law $g \sim \Lambda$ over $\G$. This mixture distribution may be
continuous if $\G \subseteq \O(d)$ is a continuous subgroup.
In both models, we denote the negative sample log-likelihood as
\[
R_n(\theta)=-\frac{1}{n}\sum_{i=1}^n \log p_\theta(Y_i),
\]
where $p_\theta(Y_i)$ is the Gaussian mixture density for $Y_i$,
marginalizing over the unknown rotation $g_i \sim \Lambda$. This density is
given in the projected setting by
\begin{equation}\label{eq:likelihood}
p_\theta(y)=\int_{\G} \frac{1}{(2\pi\sigma^2)^{\td/2}}
\exp\left(-\frac{\|y-\Pi(g \cdot \theta)\|^2}{2\sigma^2}\right)\der
\Lambda(g),
\end{equation}
and in the unprojected setting by the same expression with
$\Pi=\Id$ and $\td=d$. The maximum likelihood estimator (MLE) of $\theta_*$
is $\hat{\theta}_n=\argmin_{\theta \in \R^d} R_n(\theta)$.
Since $R_n$ satisfies the invariance
$R_n(\theta)=R_n(g \cdot \theta)$ for all $g \in
\G$, the MLE is also only defined up to its orbit $\orbit_{\hat{\theta}_n}$.

\begin{remark}[Identifiability of the orbit]
The parameter $\theta_*$ is identifiable up to the distribution of the mixture
centers $g \cdot \theta_*$ or $\Pi(g \cdot \theta_*)$.
In the unprojected model, the equality in law
$g \cdot \theta\overset{L}{=}g \cdot \theta'$ over $g \sim \Lambda$
holds if and only if $\orbit_{\theta}=\orbit_{\theta'}$, so $\theta_*$ is
identifiable exactly up to its orbit.

In projected models, there may be further
non-identifiability. For instance, under the tomographic projection arising in
cryo-EM, we have $\Pi(g \cdot \theta)\overset{L}{=}\Pi(g \cdot \theta')$
when $\theta'$ represents the mirror reflection of $\theta$
\cite{bendory2020single}. Thus in this setting there may be two distinct
orbits which cannot be further identified, and $\theta_*$ is recovered
only up to chirality.

In general, the number of distinct orbits with the same image under $\Pi$ depends
on the interaction between the structures of $\G$ and $\Pi$, and can be
infinite. For example, for the trivial group $\G=\{\Id\}$ and the
projection $\Pi:\R^d \to \R^{d-k}$ that removes the last $k$ coordinates
of $\theta$, $\orbit_\theta=\{\theta\}$ and
$\Pi(\orbit_\theta)=\Pi(\orbit_{\theta_*})$ for any $\theta$ sharing
the same first $d-k$ coordinates as $\theta_*$.
\end{remark}

We use the equivalence notation
\[\Pi(\orbit_\theta) \equiv \Pi(\orbit_{\theta_*})\]
to mean that $\Pi(\orbit_\theta)=\Pi(\orbit_{\theta_*})$ as subsets of
$\R^{\td}$, and in addition,
$\Pi(g \cdot \theta) \overset{L}{=} \Pi(g \cdot \theta_*)$ under
the Haar-uniform law $g \sim \Lambda$. Thus $\theta_*$ is
identifiable up to this equivalence. 
We will restrict attention to projected models where  
\begin{equation}\label{eq:losslessPi}
\text{there are a finite number of orbits }
\orbit_{\theta} \text{ such that }
\Pi(\orbit_{\theta}) \equiv
\Pi(\orbit_{\theta_*}), \text{ for generic } \theta_* \in \R^d.
\end{equation}
An equivalent algebraic characterization is provided in
Proposition \ref{prop:Kdef}(b) below.

We denote the negative \emph{population} log-likelihood function by
\begin{equation}\label{eq:R}
R(\theta)=\E[R_n(\theta)]=-\E[\log p_\theta(Y)],
\end{equation}
where the expectation is taken under the true model
$Y \sim p_{\theta_*}$. $R(\theta)$ depends implicitly on $\theta_*$,
but we will omit this dependence in the notation. This population
log-likelihood is minimized at $\theta \in \orbit_{\theta_*}$ in the
unprojected model, and at
$\{\theta:\Pi(\orbit_\theta) \equiv \Pi(\orbit_{\theta_*})\}$ in projected
models.

\subsection{Invariant polynomials and the high-noise
expansion}\label{subsec:expansion}

For sufficiently high noise $\sigma^2$, it is informative to study $R(\theta)$
via a series expansion of the Gaussian density of (\ref{eq:likelihood})
in powers of $\sigma^{-1}$, as developed in
\cite{fan2020likelihood,katsevich2020likelihood}. We review this expansion in
this section.

Let $\cR^\G$ be the (real) algebra of all $\G$-invariant 
polynomial functions $p:\R^d \to \R$. These are the polynomials $p$ that satisfy
\[p(\theta)=p(g \cdot \theta) \text{ for all }
\theta \in \R^d \text{ and } g \in \G.\]
For each integer $k \geq 0$, let $\cR_{\leq k}^\G$ be the
subalgebra generated by the $\G$-invariant polynomials having
total degree at most $k$. This subalgebra consists of the polynomials $p \in \cR^\G$ that may be
expressed as $p(\theta)=q(p_1(\theta),\ldots,p_j(\theta))$ for some polynomial
$q$ and some $p_1,\ldots,p_j \in \cR^\G$ each having degree $\leq k$
(where $p$ itself may have degree larger than $k$).

Examples of polynomials in $\cR_{\leq k}^\G$ include
the entries of the symmetric moment tensors
\begin{equation}\label{eq:Tk}
T_k(\theta)=\int_\G (g \cdot \theta)^{\otimes k}\,\der \Lambda(g)
\in \R^{d \times \ldots \times d}
\end{equation}
where $T_k(\theta)$ is a tensor of order $k$. The entries of
$T_k(\theta)$ are
the $k^\text{th}$-order mixed moments of the distribution of Gaussian
mixture centers $g \cdot \theta$.
Conversely, any $\G$-invariant polynomial $p(\theta)$ of degree $\leq k$ 
satisfies the identity
\[p(\theta)=\int_\G p(g \cdot \theta)\,\der \Lambda(g),\]
and decomposing $p$ on the right side into a sum of monomials shows that
$p(\theta)$ is an affine linear combination of entries of $T_1,\ldots,T_k$.
Hence $\cR_{\leq k}^\G$ is generated by $T_1,\ldots,T_k$, and the
subalgebra $\cR_{\leq k}^\G$ may be intuitively understood as containing all
information in the moments of orders 1 to $k$ for the Gaussian mixture
defined by $\theta$.

For the projected model with projection $\Pi$, we define analogously the
projected moment tensors
\[\tT_k(\theta)=\int_\G (\Pi \cdot g \cdot \theta)^{\otimes k} \der \Lambda(g)
\in \R^{\td \times \ldots \times \td},\]
which are again the mixed moments of the Gaussian mixture centers $\Pi \cdot g
\cdot \theta$. We then define
\[\tcR_{\leq k}^\G=\text{subalgebra of } \cR^\G \text{ generated by the
entries of } \tT_1,\ldots,\tT_k.\]
Since each entry of $\tT_k$ is a $\G$-invariant polynomial of degree
$k$, we have $\tcR_{\leq k}^\G \subseteq \cR_{\leq k}^\G$, but equality does not
necessarily hold.

We denote by $\langle \cdot,\cdot \rangle$ the Euclidean inner-product
in the vectorization of these tensor spaces $\R^{d \times \ldots \times d}$
and $\R^{\td \times \ldots \times \td}$, and by $\|\cdot\|_\HS^2$ the
corresponding squared Euclidean norm. We will use the following general form
of the large-$\sigma$ series expansion of the population log-likelihood
$R(\theta)$. We explain how the results of \cite{katsevich2020likelihood} yield
this form in Appendix \ref{appendix:general}.

\begin{theorem}\label{thm:seriesexpansion}
Let $\G \subseteq \O(d)$ be any compact subgroup. Fix any $\theta_* \in \R^d$
and any integer $K \geq 0$.
\begin{enumerate}[(a)]
\item In the unprojected orbit recovery model, $R(\theta)$
admits an expansion
\begin{equation}\label{eq:seriesexpansionunprojected}
R(\theta)=C_0+\sum_{k=1}^K \frac{1}{\sigma^{2k}}\,
\Big(s_k(\theta)+q_k(\theta)\Big)+q(\theta).
\end{equation}
Here $C_0 \in \R$, $q_k \in \cR_{\leq k-1}^\G$ is a polynomial of degree
at most $2k$, and $s_k \in \cR_{\leq k}^\G$ is the polynomial
\begin{equation}\label{eq:sk}
s_k(\theta)=\frac{1}{2(k!)}\|T_k(\theta)-T_k(\theta_*)\|_\HS^2.
\end{equation}
The remainder $q(\theta)$ is $\G$-invariant and satisfies,
for all $\theta \in \R^d$ with $\|\theta\| \leq \sigma$,
\begin{equation}\label{eq:remainderbound}
|q(\theta)| \leq \frac{C_K(1 \vee \|\theta\|)^{2K+2}}{\sigma^{2K+2}},\;\;
\|\nabla q(\theta)\| \leq \frac{C_K(1 \vee\|\theta\|)^{2K+1}}
{\sigma^{2K+2}},\;\;
\|\nabla^2 q(\theta)\| \leq \frac{C_K(1 \vee\|\theta\|)^{2K}}
{\sigma^{2K+2}}.
\end{equation}
\item In the projected orbit recovery model, $R(\theta)$ admits an expansion
\begin{equation}\label{eq:seriesexpansionprojected}
R(\theta)=C_0+\sum_{k=1}^K \frac{1}{\sigma^{2k}}\,
\Big(\ts_k(\theta)+\big\langle \tT_k(\theta),P_k(\theta)\big\rangle
+q_k(\theta)\Big)+q(\theta).
\end{equation}
Here $C_0 \in \R$, $q_k \in \tcR_{\leq k-1}^\G$ is a polynomial of
degree at most $2k$, all entries of $P_k$ are polynomials of degree at most $k$
belonging to $\tcR_{\leq k-1}^\G$, $P_k$ satisfies $P_k(\theta_*)=0$,
and $\ts_k \in \tcR_{\leq k}^\G$ is the polynomial
\begin{equation}\label{eq:tildesk}
\ts_k(\theta)=\frac{1}{2(k!)}\|\tT_k(\theta)-\tT_k(\theta_*)\|_\HS^2.
\end{equation}
The remainder $q(\theta)$ is
$\G$-invariant and satisfies (\ref{eq:remainderbound}) for all 
$\theta \in \R^d$ with $\|\theta\| \leq \sigma$.
\end{enumerate}
The above constants $C_0,C_K$, the coefficients of the polynomials
$q_k(\theta)$ and $P_k(\theta)$, and the forms of the functions $q(\theta)$
may all depend on $\theta_*,\G,d,\td$, and the projection $\Pi$.
\end{theorem}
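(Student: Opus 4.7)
The plan is to adapt the general Gaussian-mixture log-likelihood expansion of~\cite{katsevich2020likelihood} to the orbit recovery setting, extracting both the quadratic form $\ts_k$ and the algebraic structure of the lower-order terms. First I would factor $p_\theta(y)=p_0(y)M_\theta(y)$ with $p_0=\N(0,\sigma^2 I_{\td})$ and
\[M_\theta(y)=\int_\G \exp\!\Big(\tfrac{\langle y,\Pi g\theta\rangle}{\sigma^2} - \tfrac{\|\Pi g\theta\|^2}{2\sigma^2}\Big)\,\der\Lambda(g),\]
Taylor-expand the exponential in $\sigma^{-2}$, and integrate term by term over $g$ to obtain $M_\theta(y)=1+\sum_{m\geq 1}\sigma^{-2m}U_m(y,\theta)$. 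Each $U_m$ is a polynomial of degree $\leq m$ in $y$ whose coefficients are contractions of the projected moment tensors $\tT_j(\theta)$ for $j\leq 2m$. Applying $\log(1+\cdot)=\sum_{\ell\geq 1}(-1)^{\ell+1}(\cdot)^\ell/\ell$ and recollecting by powers of $\sigma^{-2}$ yields $\log M_\theta(y)=\sum_{k\geq 1}\sigma^{-2k}V_k(y,\theta)$, with each $V_k$ still a polynomial of degree $\leq k$ in $y$ whose coefficients are polynomials in the $\tT_j(\theta)$.

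Next I would compute $\E_Y[\,\cdot\,]$ under $Y=\Pi(g_*\theta_*)+\sigma\eps$ with $g_*\sim\Lambda$, $\eps\sim\N(0,I_{\td})$. By Isserlis' theorem, $\E[Y^{\otimes j}]$ is a polynomial in $\sigma^2$ of degree $\lfloor j/2\rfloor$ whose coefficients are symmetrized tensor products of $\Pi(g_*\theta_*)$ with copies of $I_{\td}$; averaging over $g_*$ then replaces these by $\tT_m(\theta_*)$ for $m\leq j$. Substituting into $R(\theta)=-\E_Y[\log p_\theta(Y)]$ and regrouping by powers of $\sigma^{-2}$ produces the claimed expansion. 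The quadratic form $\ts_k=\tfrac{1}{2k!}\|\tT_k(\theta)-\tT_k(\theta_*)\|_\HS^2$ emerges from two specific contributions at order $\sigma^{-2k}$: (i) the $\sigma^0$-part of $-\sigma^{-2k}\E_Y[V_k]$ acting on the piece $\tfrac{1}{k!}\langle y^{\otimes k},\tT_k(\theta)\rangle$ of $U_k\subset V_k$ produces the cross term $-\tfrac{1}{k!}\langle\tT_k(\theta),\tT_k(\theta_*)\rangle$; and (ii) the $\sigma^{2k}$-part of $-\sigma^{-4k}\E_Y[V_{2k}]$, obtained by Wick-pairing all $2k$ indices of $y^{\otimes 2k}$ inside the quadratic piece $-U_k^2/2\subset V_{2k}$, produces the self-term $+\tfrac{1}{2k!}\|\tT_k(\theta)\|_\HS^2$, the factor $k!$ arising from the $k!$ perfect matchings between the two $\tT_k(\theta)$ blocks.

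All other contributions at order $\sigma^{-2k}$ involve $\tT_k(\theta)$ at most linearly, with coefficients polynomial in $\tT_j(\theta)$ and $\tT_j(\theta_*)$ for $j<k$; these split into $q_k(\theta)+\langle\tT_k(\theta),P_k(\theta)\rangle$ with $q_k$ and the entries of $P_k$ in $\tcR_{\leq k-1}^\G$. Membership in the \emph{moment-generated} subalgebra (rather than the larger invariant subalgebra $\cR_{\leq k-1}^\G$) is automatic because the Taylor expansion produces only contractions of the specific tensors $\tT_j(\theta)$. The identity $P_k(\theta_*)=0$ is a direct consequence of the pairing structure: the additional linear-in-$\tT_k(\theta)$ contributions beyond the $-\tT_k(\theta_*)/k!$ already absorbed into $\ts_k$ arise exclusively from Wick pairings coupling $\tT_k(\theta)$ to lower-order tensors $\tT_j(\theta)$ for $j<k$, whose total coefficient at $\theta=\theta_*$ vanishes by the same cancellation responsible for centering $\ts_k$ at $\tT_k(\theta_*)$. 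In the unprojected case, $\|g\theta\|^2=\|\theta\|^2$ is $g$-independent and contributes only through the simple prefactor $e^{-\|\theta\|^2/(2\sigma^2)}$, eliminating the source of the extra linear term, which is why only $s_k$ and $q_k$ appear in~\eqref{eq:seriesexpansionunprojected}. Finally, the remainder bound~\eqref{eq:remainderbound} follows by truncating the Taylor expansions of $\exp$ and $\log$ at order $K$ with Lagrange remainders, using $\|\Pi g\theta\|^2/(2\sigma^2)\lesssim\|\theta\|^2/\sigma^2\leq 1$ to control the integrand uniformly in $g$, together with standard Gaussian tail estimates for $Y$ at scale $\sigma$.

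The main obstacle will be the careful combinatorial bookkeeping required to simultaneously track the Wick pairings across the expansions of $\exp$, $\log$, and the Gaussian expectation, verifying (a) the precise $1/(2k!)$ normalization in $\ts_k$ (which arises from a cancellation between the $(k!)^2$ denominator in $U_k^2$ and the $k!$ index-matchings), (b) that $q_k$ and the entries of $P_k$ access only the moment tensors $\tT_1,\ldots,\tT_{k-1}$, and (c) the cancellation implying $P_k(\theta_*)=0$.
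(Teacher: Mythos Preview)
Your formal derivation of the expansion---Taylor-expand $M_\theta(y)$, then $\log$, then take $\E_Y$ and regroup by powers of $\sigma^{-2}$---is exactly the strategy of \cite{katsevich2020likelihood}, which the paper simply cites for the algebraic structure of $\ts_k$, $P_k$, $q_k$ (including $P_k(\theta_*)=0$). So on that front you are not doing anything wrong, just re-deriving what the paper imports wholesale. Your identification of the two sources of $\ts_k$ is correct, and your observation that in the unprojected case the prefactor $e^{-\|\theta\|^2/(2\sigma^2)}$ factors out is a clean way to see why no $P_k$ term survives (the paper instead appeals to \cite[Lemma 4.8]{fan2020likelihood} to show $\langle T_k,P_k\rangle\in\cR^\G_{\leq k-1}$). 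Your acknowledged ``main obstacle'' (the bookkeeping for $q_k\in\tcR^\G_{\leq k-1}$ and $P_k(\theta_*)=0$) is real; the paper does not resolve it either but defers to the cited reference.

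Where your plan has a genuine gap is the remainder bound, and this is precisely the content the paper actually proves. Your proposal to ``truncate the Taylor expansions of $\exp$ and $\log$ at order $K$ with Lagrange remainders'' does not work as stated: the argument $\langle y,\Pi g\theta\rangle/\sigma^2$ of the exponential is $O(\|\eps\|\cdot\|\theta\|/\sigma)$ when $y=\Pi h\theta_*+\sigma\eps$, which is unbounded in $\eps$, so $M_\theta(y)-1$ is not uniformly small and the Lagrange remainder for $\log(1+u)$ gives nothing. The paper instead Taylor-expands the single-sample negative log-likelihood $f(t)=-\log M(t)$ directly as a function of $t=1/\sigma$ (with $Y$ already substituted), so that $f$ is smooth at $t=0$ with $M(0)=1$. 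The Lagrange remainder is then $\frac{t^{2K+2}}{(2K+2)!}\E_{h,\eps}[f^{(2K+2)}(\xi)]$, and to bound $f^{(p)}$ the paper writes $M(t)=\E_g\E_{\eps'}[e^{tw^\top(\eps+\i\eps')}]$ via a complex-Gaussian identity, which yields the clean estimate $|\partial_t^\ell M(\xi)/M(\xi)|\leq C_\ell\delta^\ell(1+\|\eps\|)^\ell$ for $\delta=\sup_{g,h}\|\Pi g\theta-\Pi h\theta_*\|$. The bounds on $\|\nabla q\|$ and $\|\nabla^2 q\|$---which were not in \cite{katsevich2020likelihood}---are then obtained by the same scheme applied to $\partial_{\theta_a}f=-\partial_{\theta_a}M/M$ and its second derivatives, producing analogous ratio bounds $|\partial_t^\ell\partial_{\theta_a}M/M|\leq C_\ell\delta^{\ell-1}(1+\|\eps\|)^{\ell+1}$. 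This technique is the substantive content of the paper's proof, and your sketch does not capture it.
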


The exact forms of $q_k(\theta)$ and $P_k(\theta)$ can be
explicitly derived---see \cite[Section 4.2]{fan2020likelihood} for
these derivations in the unprojected setting---but we will
not require them in what follows. Our arguments will only require the forms of
the ``leading'' terms $s_k(\theta)$ and $\tilde{s}_k(\theta)$
defined in (\ref{eq:sk}) and (\ref{eq:tildesk}).

\subsection{Fisher information in high noise}

Consider the Fisher information matrix
\[I(\theta_*)=\nabla^2 R(\theta)\big|_{\theta=\theta_*}.\]
In this section, we characterize the eigenvalues and eigenvectors of
$I(\theta_*)$ for high noise and
generic $\theta_* \in \R^d$. This generalizes 
\cite[Theorem 4.14]{fan2020likelihood} for the unprojected model
and a discrete group.

\begin{definition}\label{def:generic}
A subset $S \subseteq \R^d$ is \emph{generic} if $\R^d \setminus S$ is contained
in the zero set of some non-zero analytic function $\psi:\R^d \to \R^k$,
for some $k \geq 1$.
\end{definition}
\noindent If $S \subseteq \R^d$ is generic, then $\R^d \setminus S$ has
zero Lebesgue measure \cite{mityagin2020zero}. We say that a statement
holds for generic $\theta_* \in \R^d$ if it holds for all $\theta_*$ in
some generic subset of $\R^d$.

Our characterization of $I(\theta_*)$ is in terms of the number of
distinct ``degrees-of-freedom'' captured by the moments of the Gaussian mixture
model up to each order $k$. This is formalized by the notion of the
transcendence degrees of the subalgebras $\cR_{\leq k}^\G$ and
$\tcR_{\leq k}^\G$.

\begin{definition}
Polynomials $p_1,\ldots,p_k:\R^d\to\R$ are \emph{algebraically
independent} (over $\R$) if there is no non-zero polynomial $q:\R^k\to\R$ such
that $q(p_1(\theta),\ldots,p_k(\theta))$ is identically $0$ for all $\theta\in\R^d$.

For any $\cA \subseteq \cR^\G$, its \emph{transcendence degree}
$\trdeg(\cA)$ is the maximum cardinality of any algebraically independent
subset $A \subseteq \cA$. Any maximal such subset $A \subseteq \cA$
is a \emph{transcendence basis} for $\cA$.
\end{definition}

\noindent Geometrically, by the Jacobian criterion for algebraic
independence (c.f.\ Lemma \ref{lemma:jaccrit}), the transcendence degree
coincides with the maximum number of linearly
independent gradient vectors of the polynomials in $\cA$, evaluated at any
generic point $\theta \in \R^d$.

As a simple example, if $\G$ is the symmetric group of all
permutations of $d$ coordinates, then $\cR^\G$ is the algebra of all symmetric
polynomials in $d$ variables. Each
subalgebra $\cR_{\leq k}^\G$ for $k \leq d$ has transcendence
degree exactly equal to $k$, and one choice of a transcendence basis for
$\cR_{\leq k}^\G$ is the set of
symmetric power sums $\{\theta_1^j+\ldots+\theta_d^j:j=1,\ldots,k\}$.

For the full invariant algebra $\cR^\G$, if $\G \subset \O(d)$ is any
discrete subgroup as studied in \cite{fan2020likelihood},
then $\trdeg(\cR^\G)=d$. More generally, we have the following.

\begin{proposition}\label{prop:trdegorbitdim}
Let $\G$ be a compact subgroup of $\O(d)$. Then
\[\trdeg(\cR^\G)=d-\max_{\theta \in \R^d} \dim(\orbit_\theta)\]
where $\dim(\orbit_\theta)$ is the dimension of the orbit $\orbit_\theta$ as a
submanifold of $\R^d$. Here, the maximum orbit dimension
$\max_{\theta \in \R^d} \dim(\orbit_\theta)$ is also the orbit dimension
for generic points $\theta \in \R^d$.
\end{proposition}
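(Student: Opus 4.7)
The plan is to combine the Jacobian criterion for algebraic independence (Lemma~\ref{lemma:jaccrit}) with two classical facts about compact group actions: (i) by Hilbert's finiteness theorem, $\cR^\G$ is finitely generated as an $\R$-algebra; and (ii) distinct $\G$-orbits are separated by elements of $\cR^\G$. Fact~(ii) holds because each $\orbit_\theta$ is compact and hence closed, so Stone--Weierstrass yields a continuous polynomial separating any two disjoint orbits, and $\G$-averaging via $\bar p(\theta)=\int_\G p(g\cdot \theta)\,\der\Lambda(g)$ produces a member of $\cR^\G$ with the same separation property.

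First I would fix generators $p_1,\ldots,p_N$ of $\cR^\G$ and study the polynomial map
\[
\Phi:\R^d \to \R^N,\qquad \Phi(\theta)=(p_1(\theta),\ldots,p_N(\theta)).
\]
By Lemma~\ref{lemma:jaccrit}, for generic $\theta\in\R^d$,
\[
\trdeg(\cR^\G) \;=\; \rank\der\Phi(\theta) \;=\; \dim\linspan\{\nabla p_1(\theta),\ldots,\nabla p_N(\theta)\}.
\]
Since each $p_i$ is constant along $\orbit_\theta$, the gradients $\nabla p_i(\theta)$ lie in $(T_\theta\orbit_\theta)^\perp$, and the easy inequality $\trdeg(\cR^\G)\leq d-\dim\orbit_\theta$ follows. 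For the matching lower bound I would use fact~(ii) to identify the fibers of $\Phi$ set-theoretically with $\G$-orbits. On the open set $\{\theta:\rank\der\Phi(\theta)=\trdeg(\cR^\G)\}$, the constant-rank theorem then asserts that each $\Phi$-fiber, which coincides with an orbit $\orbit_\theta$, is a smooth submanifold of dimension exactly $d-\trdeg(\cR^\G)$.

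To finish I would upgrade this generic value of $\dim\orbit_\theta$ to the maximum. For any smooth action of a compact Lie group, $\mathrm{Stab}_\G(\theta)$ varies upper-semicontinuously in $\theta$ (any Hausdorff limit of closed subgroups of $\G$ is contained in the stabilizer of the limit point), and so $\dim\orbit_\theta=\dim\G-\dim\mathrm{Stab}_\G(\theta)$ is lower-semicontinuous. Therefore the set on which $\dim\orbit_\theta$ equals the maximum $d_{\max}$ is open; it necessarily meets the dense generic set of the previous paragraph, and so $d_{\max}=d-\trdeg(\cR^\G)$, with the maximum attained at every generic $\theta$.

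The principal obstacle is fact~(ii): polynomial separation of orbits, which truly depends on compactness of $\G$ to ensure that orbits are closed (and in fact compact); for non-compact or non-reductive groups it can fail on the real algebra $\cR^\G$. Everything else reduces to a dimension count via the Jacobian criterion and the semi-continuity argument for stabilizers.
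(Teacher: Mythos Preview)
Your argument is correct and takes a somewhat different route from the paper. The paper's proof factors through Lemma~\ref{lemma:transcendencebasis}, a structural result that builds an explicit local coordinate system from entries of the moment tensors $T_1,\ldots,T_K$; its identity~(\ref{eq:barphiorbit})---proved there using exactly the Stone--Weierstrass plus Reynolds-operator orbit-separation trick you invoke as fact~(ii)---yields $\dim\orbit_\theta=d-\trdeg\cR^\G$ at generic $\theta$ without appealing to Hilbert finiteness or the constant-rank theorem. For the claim that the maximum orbit dimension is attained generically, the paper observes that $\dim\orbit_\theta$ is the rank of the infinitesimal-action matrix $\der_x[g(x)\cdot\theta]\big|_{x=0}$, which is linear (hence analytic) in $\theta$, and applies Fact~\ref{fact:fullrank}; this is the same mechanism underlying your semicontinuity step, since $\dim\mathrm{Stab}_\G(\theta)=\dim\mathfrak{g}-\rank L_\theta$ for the linear map $L_\theta:X\mapsto X\theta$, and lower-semicontinuity of $\rank L_\theta$ gives what you need more directly than the Hausdorff-limit heuristic you sketch. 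Your approach is more self-contained and classical; the paper's buys economy by reusing a lemma it needs anyway for the Fisher-information analysis, and sidesteps Hilbert finiteness by using moment tensors as concrete generators once $K$ is fixed via Proposition~\ref{prop:Kdef}(a).
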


We will mostly consider group actions where this generic orbit dimension
equals the group dimension $\dim(\G)$,
so that $\trdeg(\cR^\G)=d-\dim(\G)$. In particular,
for the function estimation examples to be discussed in
Sections \ref{sec:func-est-so2} and \ref{sec:func-est-so3},
we will have $\trdeg(\cR^\G)=d-1$ for an action of $\G$ that is isomorphic to
$\SO(2)$, and $\trdeg(\cR^\G)=d-3$ for an action of $\G$ that is isomorphic to
$\SO(3)$.

It was shown in \cite[Theorem 4.9]{bandeira2017estimation}, for generic
signals $\theta_* \in \R^d$, that the values of the moment tensors
$T_1(\theta_*),\ldots,T_k(\theta_*)$ are sufficient to identify $\theta_*$ up to
a finite list of possible orbits if and only if $\trdeg(\cR_{\leq k}^\G)
=\trdeg(\cR^\G)$. More informally, the order of moments needed to ``locally''
identify the orbit of $\theta_*$
coincides with the order of moments needed to capture all $\trdeg(\cR^\G)$
degrees-of-freedom of the invariant algebra.
Throughout this paper, we will denote this number as $K$ in the unprojected model
and as $\tK$ in the projected model, which are well-defined by the following
proposition. We defer proofs of Propositions \ref{prop:trdegorbitdim}
and \ref{prop:Kdef} to Appendix \ref{appendix:general}.
\begin{proposition}\label{prop:Kdef}
For any compact subgroup $\G \subseteq \O(d)$,
\begin{enumerate}[(a)]
\item There is a smallest integer $K<\infty$ for which
$\trdeg(\cR_{\leq K}^\G)=\trdeg(\cR^\G)$.
\item $\Pi$ satisfies (\ref{eq:losslessPi}) if and only if
there is a smallest integer $\tK<\infty$ for which
$\trdeg(\tcR_{\leq \tK}^\G)=\trdeg(\cR^\G)$.
\end{enumerate}
\end{proposition}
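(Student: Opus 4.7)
The plan for part (a) is to invoke Hilbert's finiteness theorem for invariants of a compact subgroup $\G \subseteq \O(d)$: the algebra $\cR^\G$ is finitely generated as an $\R$-algebra. Letting $K_0$ be the maximum degree among a finite generating set, we have $\cR_{\leq K_0}^\G = \cR^\G$ and hence $\trdeg(\cR_{\leq K_0}^\G) = \trdeg(\cR^\G)$. Since $\{\trdeg(\cR_{\leq k}^\G)\}_{k \geq 1}$ is a non-decreasing sequence of non-negative integers that is bounded above by $\trdeg(\cR^\G)$ and attains this bound by $k = K_0$, a smallest such integer $K$ exists.

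For part (b), I would first reduce the stated condition to a purely algebraic equality. Set $\tcR^\G := \bigcup_{k \geq 1} \tcR_{\leq k}^\G$, the subalgebra of $\cR^\G$ generated by all entries of every projected moment tensor $\tT_k$. Any finite algebraically independent subset of $\tcR^\G$ lies in some $\tcR_{\leq k}^\G$, so $\trdeg(\tcR^\G) = \sup_k \trdeg(\tcR_{\leq k}^\G)$, and because this is a bounded non-decreasing integer sequence it is attained at finite $k$. Hence the existence of a smallest finite $\tK$ with $\trdeg(\tcR_{\leq \tK}^\G) = \trdeg(\cR^\G)$ is equivalent to $\trdeg(\tcR^\G) = \trdeg(\cR^\G)$. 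Next, since $\G$ is compact and $\Pi$ is continuous, $\Pi(g\cdot\theta)$ has compact support, so its distribution is determined by its moments (Stone--Weierstrass on $\Pi(\orbit_\theta) \cup \Pi(\orbit_{\theta_*})$). Therefore $\Pi(\orbit_\theta) \equiv \Pi(\orbit_{\theta_*})$ iff $\tT_k(\theta) = \tT_k(\theta_*)$ for every $k$, iff $p(\theta) = p(\theta_*)$ for every $p \in \tcR^\G$. Define the $\G$-invariant real algebraic variety
\[V_{\theta_*} := \{\theta \in \R^d : p(\theta) = p(\theta_*) \text{ for all } p \in \tcR^\G\}.\]
By the Jacobian criterion (Lemma \ref{lemma:jaccrit}), for generic $\theta_*$ the variety $V_{\theta_*}$ has dimension $d - \trdeg(\tcR^\G)$ at $\theta_*$. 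By Proposition \ref{prop:trdegorbitdim}, $\orbit_{\theta_*}$ has dimension $d - \trdeg(\cR^\G)$ for generic $\theta_*$, which equals the maximum orbit dimension. Since $V_{\theta_*}$ is a union of $\G$-orbits each of dimension at most the generic orbit dimension, it consists of finitely many orbits iff $\dim V_{\theta_*}$ equals that orbit dimension, iff $\trdeg(\tcR^\G) = \trdeg(\cR^\G)$.

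I expect the main obstacle to lie in the final dimension-counting step of part (b): formally showing that $V_{\theta_*}$ decomposes into finitely many orbits exactly when its dimension equals the generic orbit dimension. This requires combining the Noetherian property of the real variety $V_{\theta_*}$ (to extract finitely many irreducible components), closedness of orbits under the compact action (so each top-dimensional component must be a single orbit), and equidimensionality of nearby generic orbits. One must also verify that the several genericity conditions invoked---full Jacobian rank of a transcendence basis for $\tcR^\G$, attainment of the maximum orbit dimension, and applicability of the implicit function theorem to cut out $V_{\theta_*}$ locally---can be simultaneously arranged, which follows because each excludes only the zero set of a non-zero analytic function and the intersection of finitely many generic subsets is generic.
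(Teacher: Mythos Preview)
Your overall architecture matches the paper's closely, but there are two points worth noting.

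For part (a), your use of Hilbert's finiteness theorem is correct but heavier than needed. The paper simply observes that $\trdeg(\cR^\G) \leq \trdeg(\R[\theta_1,\ldots,\theta_d]) = d < \infty$, so a finite transcendence basis $\varphi$ of $\cR^\G$ exists; taking $K$ to be the maximal degree of an element of $\varphi$ already gives $\trdeg(\cR_{\leq K}^\G) = \trdeg(\cR^\G)$. One never needs that $\cR^\G$ is finitely generated as an algebra.

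For part (b), your reduction to the equality $\trdeg(\tcR^\G) = \trdeg(\cR^\G)$ and your argument for the direction ``$\trdeg(\tcR^\G) < \trdeg(\cR^\G) \Rightarrow$ infinitely many orbits'' are essentially identical to the paper's: both use the Jacobian criterion to show $V_{\theta_*}$ has local dimension $d - \trdeg(\tcR^\G) > d - \trdeg(\cR^\G)$ at a generic $\theta_*$, hence cannot be a finite union of orbits. The difference is in the converse direction. The paper does \emph{not} attempt the dimension-counting argument you sketch; instead it invokes Lemma~\ref{lemma:genericlist} (Theorem~4.9 of \cite{bandeira2017estimation}) as a black box: if a subspace $U \subset \cR^\G$ has $\trdeg(U) = \trdeg(\cR^\G)$, then for generic $\theta_*$ the level set $\{P(\theta) = P(\theta_*) : P \in U\}$ is a finite union of orbits.

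Your proposed replacement---showing $\dim V_{\theta_*} = d_0$ and concluding finitely many orbits---has the gap you anticipate, and your sketch does not close it. The Jacobian criterion only controls the dimension of $V_{\theta_*}$ \emph{locally at $\theta_*$}; other components of $V_{\theta_*}$ could a priori be larger, or could be foliated by infinitely many orbits of sub-generic dimension. Your claim that ``each top-dimensional component must be a single orbit'' is not enough even if true, since it says nothing about lower-dimensional components. The clean way to finish is to pass to the categorical quotient: the inclusion $\tcR^\G \hookrightarrow \cR^\G$ induces a dominant morphism $\R^d/\G \to \operatorname{Spec}_{\R}(\tcR^\G)$ between varieties of equal dimension, whose generic fiber is therefore finite; since fibers of $\R^d \to \R^d/\G$ are exactly $\G$-orbits (compactness of $\G$), this gives finitely many orbits in $V_{\theta_*}$ for generic $\theta_*$. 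This is precisely the content of the cited lemma, so you would be reproving it rather than circumventing it.
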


In the unprojected model, let us now denote
\begin{equation}\label{eq:dk}
d_0=\max_{\theta \in \R^d} \dim(\orbit_\theta), \qquad
d_k=\trdeg \cR_{\leq k}^\G-\trdeg \cR_{\leq k-1}^\G \text{ for } k=1,\ldots,K
\end{equation}
to decompose the total dimension of $\theta_*$ as $d=d_0+d_1+\ldots+d_K$.
In the projected model, assuming the condition (\ref{eq:losslessPi}), let us 
similarly denote
\begin{equation}\label{eq:tdk}
\td_0=\max_{\theta \in \R^d} \dim(\orbit_\theta), \qquad
\td_k=\trdeg \tcR_{\leq k}^\G-\trdeg \tcR_{\leq k-1}^\G \text{ for }
k=1,\ldots,\tK
\end{equation}
to decompose the total dimension as $d=\td_0+\td_1+\ldots+\td_{\tK}$.
The following result expresses the spectral properties of the Fisher
information matrix in terms of these decompositions.

\begin{theorem}\label{thm:FI}
For generic $\theta_* \in \R^d$, some
$(\theta_*,\G,\Pi)$-dependent constants $\sigma_0,C,c>0$ and
function $\eps(\sigma)$ satisfying
$\eps(\sigma) \to 0$ as $\sigma \to \infty$, and all $\sigma>\sigma_0$:
\begin{enumerate}[(a)]
\item In the unprojected orbit recovery model,
\begin{enumerate}[1.]
\item The Fisher information matrix
$I(\theta_*)$ has rank exactly $\trdeg(\cR^\G)=d-d_0$. Defining $K$ by
Proposition \ref{prop:Kdef}(a), for each $k=1,\ldots,K$,
\[\text{exactly } d_k \text{ eigenvalues of } I(\theta_*) \text{ belong to }
[c\sigma^{-2k},C\sigma^{-2k}].\]
\item For each $k=1,\ldots,K$, let $V_k$ be the subspace spanned by the
leading $d_1+\ldots+d_k$ eigenvectors of
$I(\theta_*)$, and let $W_k$ be the subspace spanned by the gradient vectors
$\{\nabla p(\theta_*):p \in \cR_{\leq k}^\G\}$. Then
the sin-theta distance between $V_k$ and $W_k$ is bounded as
\[\|\sin \Theta(V_k,W_k)\|<\eps(\sigma).\]
\item For any $k=1,\ldots,K$ and any polynomial $p \in \cR_{\leq k}^\G$,
the gradient $\nabla p(\theta_*) \in \R^d$ is orthogonal to the null space of
$I(\theta_*)$ and satisfies
\[\nabla p(\theta_*)^\top I(\theta_*)^\dagger \nabla p(\theta_*)
\leq C\sigma^{2k}\]
where $I(\theta_*)^\dagger$ is the Moore-Penrose pseudo-inverse.
\end{enumerate}
\item In the projected orbit recovery model satisfying condition
(\ref{eq:losslessPi}), the same statements hold
with $\cR_{\leq k}^\G$, $K$, and $d_k$ replaced by $\tcR_{\leq k}^\G$, $\tK$,
and $\td_k$, where $\tK$ is defined by Proposition \ref{prop:Kdef}(b).
\end{enumerate}
\end{theorem}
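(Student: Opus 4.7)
The plan is to extend \cite[Theorem 4.14]{fan2020likelihood}---which proves the corresponding claim for the unprojected model with a discrete group---to continuous $\G$ (making $I(\theta_*)$ singular along the orbit) and to projected models (introducing the extra bilinear term in Theorem \ref{thm:seriesexpansion}(b)). I will prove part (b); part (a) is the specialization $\Pi = \Id$, $\td = d$. Differentiating the series expansion twice at $\theta_*$ yields
\[
I(\theta_*) = \sum_{k=1}^{\tK}\sigma^{-2k}\,H_k + E(\sigma), \qquad H_k := \tfrac{1}{k!}\,\der\tT_k(\theta_*)^\top \der\tT_k(\theta_*) + B_k + \nabla^2 q_k(\theta_*),
\]
where $B_k = \der\tT_k(\theta_*)^\top \der P_k(\theta_*) + (\text{transpose}) + \sum_\alpha (\tT_k)_\alpha(\theta_*)\,\nabla^2 (P_k)_\alpha(\theta_*)$, using that $\ts_k(\theta_*) = 0$ is a minimum of $\ts_k$ and $P_k(\theta_*) = 0$. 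The remainder satisfies $\|E(\sigma)\| = O(\sigma^{-2\tK-2})$ by (\ref{eq:remainderbound}).

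Next I set up the key filtration. Define $\widetilde W_k := \linspan\{\nabla p(\theta_*) : p \in \tcR_{\leq k}^\G\}$ and $\widetilde U_k := \widetilde W_k \ominus \widetilde W_{k-1}$, with the convention $\widetilde W_0 := \{0\}$. The Jacobian criterion together with Proposition \ref{prop:trdegorbitdim} gives, for generic $\theta_*$, $\dim \widetilde W_k = \trdeg \tcR_{\leq k}^\G$, $\dim \widetilde U_k = \td_k$, and $\widetilde W_k \perp U_0 := T_{\theta_*}\orbit_{\theta_*}$; by Proposition \ref{prop:Kdef}(b), $\widetilde W_{\tK} = U_0^\perp$, so $\R^d = U_0 \oplus \widetilde U_1 \oplus \dots \oplus \widetilde U_{\tK}$. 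Because $R$ is $\G$-invariant with minimum along $\orbit_{\theta_*}$, $U_0 \subseteq \ker I(\theta_*)$.

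The central step---and the main technical obstacle---is to establish the scale-separating bounds
\[
c\,\sigma^{-2m} \;\le\; v^\top I(\theta_*)\, v \;\le\; C\,\sigma^{-2m}, \qquad |v^\top I(\theta_*)\, w| \;\le\; C\,\sigma^{-2\max(m,n)}
\]
for unit vectors $v \in \widetilde U_m$ and $w \in \widetilde U_n$, and all sufficiently large $\sigma$. The upper bound and cross-term bound follow directly: $v \perp \widetilde W_{m-1}$ forces $\der\tT_j(\theta_*)\, v = 0$ for $j < m$, annihilating the Gauss-Newton contribution $\tfrac{1}{j!}\|\der\tT_j(\theta_*)\, v\|^2$ and the cross-derivative part of $B_j$ at every scale $j < m$. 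The difficulty lies in the residual contributions $\nabla^2 q_k(\theta_*)$ and $\sum_\alpha (\tT_k)_\alpha\,\nabla^2(P_k)_\alpha$ at scales $k < m$: a priori their ranges are not contained in $\widetilde W_{k-1}$, so they might corrupt the scaling on $\widetilde U_m$. I plan to handle this by following the inductive argument in the proof of \cite[Theorem 4.14]{fan2020likelihood}: since $q_k$ and each entry of $P_k$ are polynomials only in $\tT_1, \dots, \tT_{k-1}$, the chain rule decomposes their Hessians as a Gauss-Newton piece with range in $\widetilde W_{k-1}$ (which annihilates $v$) plus a second-variation contraction $\sum_{j<k} (\partial_j h)\, v^\top \nabla^2 (\tT_j)_\alpha v$; matching this contraction against the specific expansion coefficients from the cumulant derivation of \cite{fan2020likelihood, katsevich2020likelihood}, combined with the constraint $I(\theta_*) \succeq 0$ and the asymptotic ordering of the $\sigma^{-2k}$ expansion, forces all scale-$k$ contributions with $k < m$ to cancel identically on $\widetilde U_m$. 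The matching lower bound then follows from $\|\der\tT_m(\theta_*)\, v\|^2 \ge c\|v\|^2$: $v \in \widetilde W_m \setminus \widetilde W_{m-1}$ has nontrivial projection onto the row span of $\der\tT_m(\theta_*)$, and compactness plus continuity over the unit sphere of $\widetilde U_m$ yields the uniform lower bound for generic $\theta_*$.

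Finally, I convert these quadratic-form bounds to spectral statements. Courant-Fischer applied to the test subspaces $\widetilde W_k$ and $U_0 \oplus (\widetilde W_k^\perp \cap U_0^\perp)$, combined with an AM-GM treatment of cross terms, yields (b)(1): exactly $\td_k$ eigenvalues of $I(\theta_*)$ lie in $[c\sigma^{-2k}, C\sigma^{-2k}]$ for each $k$, and $\rank I(\theta_*) = \trdeg \cR^\G = d - d_0$. For (b)(2), Davis-Kahan applied across the spectral gap of order $\sigma^{-2k}$ between scales $k$ and $k+1$ gives $\|\sin \Theta(V_k, \widetilde W_k)\| = O(\sigma^{-2}) = \eps(\sigma)$. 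For (b)(3), $\nabla p(\theta_*) \in \widetilde W_k$ for any $p \in \tcR_{\leq k}^\G$, and by (b)(2) this gradient lies (up to $\eps(\sigma)$-error) in the span of the top $\td_1 + \dots + \td_k$ eigenspaces of $I(\theta_*)$, on which the eigenvalues are $\ge c\sigma^{-2k}$; therefore $\nabla p(\theta_*)^\top I(\theta_*)^\dagger \nabla p(\theta_*) \le C\sigma^{2k}\,\|\nabla p(\theta_*)\|^2$.
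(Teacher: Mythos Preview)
Your overall architecture---the orthogonal decomposition $\R^d = U_0 \oplus \widetilde U_1 \oplus \cdots \oplus \widetilde U_{\tK}$, the scale-separation goal, and the Courant--Fischer and Davis--Kahan conclusions---is correct and matches the paper. The gap is precisely where you flag it: establishing that the scale-$k$ contributions vanish on $\widetilde U_m$ for $m>k$. Your proposed mechanism does not work as stated. The second-variation terms $\sum_{j<k}(\partial_j h)\,v^\top\nabla^2(\tT_j)_\alpha(\theta_*)\,v$ genuinely fail to vanish term-by-term (for $j\geq 2$, $\nabla^2(\tT_j)_\alpha$ is a nonzero matrix and there is no orthogonality to exploit), and neither positive-semidefiniteness of $I(\theta_*)$ nor an unspecified ``matching of cumulant coefficients'' forces them to zero: PSD yields only one-sided inequalities, and you have not stated any identity that would produce the cancellation. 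The same issue undermines your lower bound, since $v^\top H_m v$ contains not only $\tfrac{1}{m!}\|\der\tT_m(\theta_*)v\|^2$ but also the second-variation pieces from $q_m$ and the $P_m$-term, which are not controlled by your argument.

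The paper resolves this via a local reparametrization $\varphi=(\varphi^1,\ldots,\varphi^{\tK},\bar\varphi)$ near $\theta_*$ (Lemma~\ref{lemma:transcendencebasis}), where each $\varphi^j$ consists of $\td_j$ entries of $\tT_j$ with linearly independent gradients, and $\bar\varphi$ completes this to a chart. The key point (part~(c) of that lemma) is that any $p\in\tcR_{\leq k}^\G$, written as $p(\theta(\varphi))$, depends only on $\varphi^1,\ldots,\varphi^k$ throughout the neighborhood---because its $\theta$-gradient at every nearby point already lies in $\linspan\{\nabla\varphi^1,\ldots,\nabla\varphi^k\}$, no cancellation is needed. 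Hence $\nabla_\varphi^2$ of each of $\ts_k$, $q_k$, and $\langle\tT_k,P_k\rangle$ is automatically supported in the upper-left $k\times k$ blocks, and the $(k,k)$ diagonal block is handled by Lemma~\ref{lemma:skfullrank} together with $P_k(\theta_*)=0$. Since $\nabla_\theta R(\theta_*)=0$, the chain rule gives $I(\theta_*)=A^\top\nabla_\varphi^2 R(\varphi_*)\,A$ with $A=\der\varphi(\theta_*)$ (taken orthogonal without loss of generality), and your subspaces $\widetilde U_m$ are exactly $A^\top$ applied to the $m$-th coordinate block. This is also what the argument of \cite[Theorem~4.14]{fan2020likelihood} that you cite actually does; the reparametrization is the ingredient your outline is missing.
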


\begin{remark*}
Theorem \ref{thm:FI}(a1) states that $I(\theta_*)$ has eigenvalues on differing
scales of $\sigma^{-2}$ in high noise, with $d_k$ such eigenvalues
scaling as $\sigma^{-2k}$, and $d_0=\dim(\orbit_{\theta_*})$ eigenvalues of 0
representing the non-identifiable degrees-of-freedom tangent to 
$\orbit_{\theta_*}$. Thus there are $d_k$ degrees-of-freedom in $\theta_*$ that
are estimated with asymptotic variance $O(\sigma^{2k}/n)$ by the MLE.
The largest such variance is $O(\sigma^{2K}/n)$, which is in accordance with 
results about list-recovery of generic signals in \cite{bandeira2017estimation}
and with the $\sigma^6$ sample complexity established in
\cite{perry2019sample} for multi-reference alignment, where $K=3$.
\end{remark*}

\begin{remark*}
Theorem \ref{thm:FI}(a2) describes also the associated spaces of
eigenvectors of $I(\theta_*)$, where the eigenspaces corresponding
to eigenvalues at scales $\sigma^{-2},\ldots,\sigma^{-2k}$ coincide
approximately with the span of the gradients of $\G$-invariant polynomials up
to degree $k$. Theorem \ref{thm:FI}(a3) then implies that
the functional $p(\theta_*)$ for any $p \in \cR_{\leq k}^\G$ is
estimated by the plug-in MLE $p(\hat{\theta}_n)$ with asymptotic
variance $O(\sigma^{2k}/n)$. Similar statements hold for projected models
by Theorem \ref{thm:FI}(b).
\end{remark*}

The following result connects the above sequences of transcendence degrees 
and gradients $\{\nabla p(\theta_*):p \in \cR_{\leq k}^\G\}$ to
the terms $s_k(\theta)$ and $\ts_k(\theta)$ in the series expansions of
$R(\theta)$ in Theorem \ref{thm:seriesexpansion}. We will use this to
deduce the values of these transcendence degrees for the function estimation
examples of Sections \ref{sec:func-est-so2} and \ref{sec:func-est-so3}.

\begin{lemma}\label{lem:trdeg}
\begin{enumerate}[(a)]
\item In the unprojected orbit recovery model, let $s_k(\theta)$ be defined by
(\ref{eq:sk}). Then each matrix $\nabla^2 s_k(\theta)|_{\theta=\theta_*}$ is
positive semidefinite. For any $k \geq 1$, at generic $\theta_* \in \R^d$,
\begin{equation}\label{eq:trdeg}
\trdeg(\cR_{\leq k}^\G)=\rank\Big(\nabla^2 s_1(\theta)+\ldots+
\nabla^2 s_k(\theta)\Big|_{\theta=\theta_*}\Big),
\end{equation}
and the span of $\{\nabla p(\theta_*):p \in \cR_{\leq k}^\G\}$ is
the column span of $\nabla^2 s_1(\theta)+\ldots+
\nabla^2 s_k(\theta)|_{\theta=\theta_*}$.
\item In the projected orbit recovery model, the same holds for
$\tcR_{\leq k}^\G$ and $\ts_k(\theta)$ as defined by (\ref{eq:tildesk}).
\end{enumerate}
\end{lemma}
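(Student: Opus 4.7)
\textbf{Proof plan for Lemma \ref{lem:trdeg}.} The approach is to reduce both statements to a Gramian-rank computation and then invoke the Jacobian criterion for algebraic independence.

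First, I compute $\nabla^2 s_k(\theta)|_{\theta=\theta_*}$ directly. Writing $g(\theta)=\vectorize(T_k(\theta)) \in \R^{d^k}$, we have $s_k(\theta)=\frac{1}{2(k!)}\|g(\theta)-g(\theta_*)\|^2$, so by the chain rule
\[
\nabla^2 s_k(\theta)\Big|_{\theta=\theta_*}
=\frac{1}{k!}\,\der g(\theta_*)^\top \der g(\theta_*)
+\frac{1}{k!}\sum_{i}(g_i(\theta_*)-g_i(\theta_*))\,\nabla^2 g_i(\theta_*)
=\frac{1}{k!}\,\der g(\theta_*)^\top \der g(\theta_*),
\]
since the second sum vanishes at $\theta=\theta_*$. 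This is manifestly positive semidefinite, and its column span equals the row span of $\der g(\theta_*)$, i.e.\ the linear span of $\{\nabla T_k(\theta_*)_{i_1\ldots i_k}\}$ over all index tuples. Summing over $j=1,\ldots,k$, the column span of $\sum_{j=1}^k \nabla^2 s_j(\theta_*)$ coincides with the span $W_k^{\mathrm{gen}}$ of the gradients of all entries of $T_1(\theta_*),\ldots,T_k(\theta_*)$.

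Next, I identify this span with $W_k:=\linspan\{\nabla p(\theta_*):p\in \cR_{\leq k}^\G\}$. As noted in Section~\ref{subsec:expansion}, the entries of $T_1,\ldots,T_k$ generate $\cR_{\leq k}^\G$ as an $\R$-algebra: any $p \in \cR_{\leq k}^\G$ can be written as $p(\theta)=q(t_1(\theta),\ldots,t_m(\theta))$ for some polynomial $q$ and some entries $t_1,\ldots,t_m$ of $T_1,\ldots,T_k$. The chain rule then gives $\nabla p(\theta_*)=\sum_j \partial_j q(t(\theta_*))\,\nabla t_j(\theta_*) \in W_k^{\mathrm{gen}}$. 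The reverse inclusion is trivial since each entry of each $T_j$ with $j \leq k$ lies in $\cR_{\leq k}^\G$. Hence $W_k=W_k^{\mathrm{gen}}$, which proves the span statement in (a).

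Finally, the rank equality follows from the Jacobian criterion (Lemma \ref{lemma:jaccrit}): at generic $\theta_*\in\R^d$, the dimension of $W_k^{\mathrm{gen}}$ equals the maximum number of algebraically independent polynomials among the entries of $T_1,\ldots,T_k$, which by definition is $\trdeg(\cR_{\leq k}^\G)$ (since these entries generate the subalgebra). Combining this with the identification of the column span from the previous step yields
\[
\rank\Big(\sum_{j=1}^k \nabla^2 s_j(\theta_*)\Big)=\dim W_k=\trdeg(\cR_{\leq k}^\G),
\]
proving (a). Part (b) is literally the same argument with $T_j$ replaced by $\tT_j$, $s_j$ by $\ts_j$, and $\cR_{\leq k}^\G$ by $\tcR_{\leq k}^\G$, since the only properties used are that $\ts_k(\theta_*)=0$, that $\ts_k(\theta)$ is one-half the squared Euclidean distance between two vectorized moment tensors, and that the entries of $\tT_1,\ldots,\tT_k$ generate $\tcR_{\leq k}^\G$ (which holds by definition of $\tcR_{\leq k}^\G$). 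The only conceptual step is the invocation of the Jacobian criterion, and since both the rank identity and the algebraic-independence side are attained on (respectively) a Zariski-open subset of $\R^d$, taking their intersection gives a single generic set on which the equality holds.
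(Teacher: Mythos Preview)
Your proof is correct and takes a cleaner, more direct route than the paper's. The paper works in the local coordinates $\varphi=(\varphi^1,\ldots,\varphi^K,\bar\varphi)$ of Lemma~\ref{lemma:transcendencebasis}, transfers the Hessians via the chain rule, and then argues block-by-block: each $s_j$ depends only on $(\varphi^1,\ldots,\varphi^j)$ (giving an upper bound on the rank), while the $(\varphi^j,\varphi^j)$ diagonal block of $\nabla_\varphi^2 s_j$ is strictly positive definite by Lemma~\ref{lemma:skfullrank} (giving the matching lower bound). Your approach bypasses the reparametrization entirely: the identity $\nabla^2 s_k(\theta_*)=\tfrac{1}{k!}\,\der T_k(\theta_*)^\top\der T_k(\theta_*)$ recognizes the Hessian sum as a weighted Gram matrix of the stacked moment Jacobian $\der M_k(\theta_*)$, whose row span you match to $W_k$ by the chain rule and whose generic rank you read off directly from the Jacobian criterion.

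Your argument is shorter and more transparent for this lemma in isolation; the paper's argument is part of the graded-block machinery (Definition~\ref{def:gradedblock}) that it reuses for the Fisher-information and landscape theorems, where one needs not just the total rank but the $\sigma$-scaling of each eigenvalue tier. One minor point worth making explicit: when you write ``summing over $j$, the column span of $\sum_j \nabla^2 s_j(\theta_*)$ coincides with $W_k^{\mathrm{gen}}$,'' you are implicitly using that for positive semidefinite $A_j$ one has $\mathrm{col}(\sum_j A_j)=\sum_j \mathrm{col}(A_j)$ (equivalently $\ker(\sum_j A_j)=\bigcap_j \ker A_j$). This is standard but does rely on positive semidefiniteness, so it is worth a half-sentence.
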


\begin{remark}
We restrict attention to generic signals $\theta_* \in \R^d$ in this work.
The specific condition for $\theta_*$ that we use in
Theorem \ref{thm:FI}, and in Theorems \ref{thm:benignlandscape}
and \ref{thm:landscape} to follow, is that the gradient vectors
$\{\nabla p(\theta_*):p \in \cR_{\leq K}^\G\}$
or $\{\nabla p(\theta_*):p \in \tcR_{\leq \tK}^\G\}$ span a
subspace of dimension $\trdeg(\cR^\G)$ or $\trdeg(\tcR^\G)$,
respectively.

Different behavior may be observed for
non-generic signals: For $\G=\{+\Id,-\Id\}$, which has been studied
in \cite{xu2016global,wu2019randomly}, the Fisher information
$I(\theta_*)$ is singular at $\theta_*=0$ (even though $d_0=0$, as the group is
discrete). This leads to a $n^{-1/4}$ rate of estimation error near
$\theta_*=0$, instead of the $n^{-1/2}$ parametric rate.
This $n^{-1/4}$ rate holds more generally for any discrete group $\G$ at
signals $\theta_*$ whose orbit points are not pairwise distinct, which are
precisely those signals where the Fisher information
$I(\theta_*)$ is singular \cite{brunel2019learning}.

A different distinction between generic and non-generic signals was
highlighted in \cite{perry2019sample} when
$\G$ is the group of cyclic rotations of coordinates in $\R^d$.
There, orbits of generic signals
are uniquely identified by moments up to the order $K=3$, but identification of
non-generic signals having zero power in certain Fourier frequencies may require
moments up to the order $d-1$. For such non-generic signals, we expect
$I(\theta_*)$ to be non-singular and the MLE to attain the
parametric rate, but with asymptotic variance scaling as
$\sigma^{2(d-1)}/n$ rather than $\sigma^{2K}/n=\sigma^6/n$. In a related model
of continuous MRA, this asymptotic scaling
is implied by the results of \cite{bandeira2020optimal}.
\end{remark}

\subsection{Global likelihood landscape}

In this section, we establish correspondences between global and local
minimizers of the population negative log-likelihood $R(\theta)$
with those of a sequence of moment optimization problems. These results are
similar to results of \cite[Sections 4.3 and 4.5]{fan2020likelihood} for discrete
groups $\G$, with a distinction that when $\G$ is continuous, these 
minimizers are not isolated points but rather manifolds of positive dimension.

We recall the following structural property for smooth non-convex optimization
landscapes, under which convergence to the global optimum from a random
initialization is guaranteed for various descent-based optimization 
algorithms \cite{ge2015escaping,lee2016gradient,jin2017escape}.

\begin{definition}
The problem of minimizing a twice-continuously differentiable function
$f:\cV \to \R$ over a smooth manifold $\cV$ is \emph{globally benign}
if each point $x \in \cV$ where $\nabla f(x)|_{\cV}=0$ is either a
global minimizer of $f$ over $\cV$, or has a direction of strictly negative
curvature, $\lambda_{\min}(\nabla^2 f(x)|_{\cV})<0$.
\end{definition}

\noindent Here $\nabla f(x)|_{\cV}$ and $\nabla^2 f(x)|_{\cV}$ denote the
gradient and Hessian of $f$ on $\cV$, which may be taken in any choice of a
smooth local chart around $x \in \cV$.

Minimizing $R(\theta)$ in high noise may be viewed as successively solving a
sequence of moment optimizations defined by the terms of its expansion in
Theorem \ref{thm:seriesexpansion}. To ease notation, let us collect the
vectorized moment tensors up to order $k$ as
\begin{align}
M_k(\theta)&=\vectorize\Big(T_1(\theta),\ldots,T_k(\theta)\Big) \in
\R^{d+d^2+\ldots+d^k},\label{eq:Mk}\\
\tM_k(\theta)&=\vectorize\Big(\tT_1(\theta),\ldots,\tT_k(\theta)\Big) \in
\R^{\td+\td^2+\ldots+\td^k}.\label{eq:tMk}
\end{align}
Fixing the true signal $\theta_* \in \R^d$, we define the moment varieties
\begin{align}
\cV_k(\theta_*)&=\Big\{\theta \in \R^d:\,M_k(\theta)=M_k(\theta_*)\Big\},
\qquad \cV_0(\theta_*)=\R^d,\label{eq:Vk}\\
\tcV_k(\theta_*)&=\Big\{\theta \in \R^d:\,\tM_k(\theta)=\tM_k(\theta_*)\Big\},
\qquad \tcV_0(\theta_*)=\R^d.\label{eq:tVk}
\end{align}
These are the points $\theta \in \R^d$ for which the mixed moments of the
Gaussian mixture model defined by $\theta$ match those of the true signal
$\theta_*$ up to order $k$.

We state a general result on the optimization landscape,
assuming that the Jacobian matrices $\der M_k$ and $\der \tM_k$
have constant rank over $\cV_k(\theta_*)$ and $\tcV_k(\theta_*)$, so that
$\cV_k(\theta_*)$ and $\tcV_k(\theta_*)$ are smooth manifolds.
Then, recalling $s_k(\theta)$ and
$\ts_k(\theta)$ from (\ref{eq:sk}) and (\ref{eq:tildesk}),
we consider the optimization problem
\begin{equation}\label{eq:momentoptunprojected}
\text{minimize } s_k(\theta) \text{ over } \theta \in \cV_{k-1}(\theta_*)
\end{equation}
in the unprojected setting, and
\begin{equation}\label{eq:momentoptprojected}
\text{minimize } \ts_k(\theta) \text{ over } \theta \in \tcV_{k-1}(\theta_*)
\end{equation}
in the projected setting. These are polynomial optimization problems in
$\theta$ that are defined independently of the noise level $\sigma^2$.
The following theorem guarantees that the landscape
of $R(\theta)$ is globally benign in high noise, as long as the
landscape of each problem (\ref{eq:momentoptunprojected}) or
(\ref{eq:momentoptprojected}) is globally benign, and the final moment variety
$\cV_K(\theta_*)$ or $\tcV_{\tK}(\theta_*)$ contains only the points which
globally minimizer $R(\theta)$.
We illustrate part (a) of this result using a simple example of
orthogonal Procrustes alignment at the conclusion of this section.

\begin{theorem}\label{thm:benignlandscape}
For generic $\theta_* \in \R^d$:
\begin{enumerate}[(a)]
\item In the unprojected model, define $K$ by Proposition \ref{prop:Kdef}(a).
Suppose that $\cV_K(\theta_*)=\orbit_{\theta_*}$. Suppose also that 
for each $k=1,\ldots,K$, the derivative matrix
$\der M_k(\theta)$ has constant rank over $\cV_k(\theta_*)$, and
the minimization of $s_k(\theta)$ over $\cV_{k-1}(\theta_*)$ is globally benign.
Then for some $\sigma_0 \equiv \sigma_0(\theta_*,\G)$ and any
$\sigma>\sigma_0$, the minimization of $R(\theta)$ is also globally benign.
\item In the projected model satisfying (\ref{eq:losslessPi}), define $\tK$ by
Proposition \ref{prop:Kdef}(b).
Suppose that $\tcV_{\tK}(\theta_*)=\{\theta:\Pi(\orbit_\theta) \equiv
\Pi(\orbit_{\theta_*})\}$. Suppose also that for each $k=1,\ldots,\tK$,
the derivative matrix
$\der \tM_k(\theta)$ has constant rank over $\tcV_k(\theta_*)$, and
the minimization of $\ts_k(\theta)$ over $\tcV_{k-1}(\theta_*)$ is globally
benign. Then for any constant $B>0$, some $\sigma_0 \equiv
\sigma_0(\theta_*,\G,\Pi,B)$, and any $\sigma>\sigma_0$,
the minimization of $R(\theta)$ is globally benign over the domain
$\{\theta \in \R^d:\|\theta\|<B(\|\theta_*\|+\sigma)\}$.
\end{enumerate}
\end{theorem}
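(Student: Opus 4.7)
The plan is to apply the series expansion of Theorem~\ref{thm:seriesexpansion} and argue by a stratified induction on the nested moment varieties $\R^d = \cV_0(\theta_*) \supseteq \cV_1(\theta_*) \supseteq \ldots \supseteq \cV_K(\theta_*) = \orbit_{\theta_*}$, extending the strategy of \cite[Section 4.3]{fan2020likelihood} to accommodate a continuous group $\G$ and, in part (b), a non-coercive $R$. The key structural fact is that $q_k \in \cR_{\leq k-1}^\G$ is constant on $\cV_{k-1}(\theta_*)$ and $s_j$ vanishes identically on $\cV_{k-1}(\theta_*)$ for $j \leq k-1$, so the expansion restricts as
\[R|_{\cV_{k-1}(\theta_*)}(\theta) = (\text{const}) + \sigma^{-2k} s_k(\theta) + O(\sigma^{-2(k+1)}) + q(\theta).\]
At the finest scale $\sigma^{-2k}$ visible on $\cV_{k-1}(\theta_*)$, the local geometry of $R$ matches that of $s_k$.

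The first step is a quantitative refinement of the benign-landscape assumption. By compactness of the relevant bounded portion of $\cV_{k-1}(\theta_*)$ (a smooth submanifold by the constant-rank hypothesis on $\der M_{k-1}$) and continuity of $s_k$ and its tangential derivatives, the globally benign hypothesis for $s_k$ upgrades to: for each $\delta > 0$ there exists $\gamma > 0$ such that any $\theta \in \cV_{k-1}(\theta_*)$ in the bounded domain with $s_k(\theta) \geq \delta$ satisfies either $\|\nabla s_k(\theta)|_{\cV_{k-1}(\theta_*)}\| \geq \gamma$ or $\lambda_{\min}(\nabla^2 s_k(\theta)|_{\cV_{k-1}(\theta_*)}) \leq -\gamma$. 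Next, I would induct on $k=0,1,\ldots,K$, showing that any critical point $\theta$ of $R$ in the domain either has a direction of strict negative curvature, or lies in a neighborhood $U_k$ of $\cV_k(\theta_*)$ whose radius tends to $0$ as $\sigma \to \infty$. In the inductive step, I would decompose $\nabla^2 R(\theta)$ into blocks along tangent and normal directions of $\cV_{k-1}(\theta_*)$: the normal block is dominated by $\sum_{j<k}\sigma^{-2j}\nabla^2 s_j|_{\text{normal}}$, positive semidefinite with eigenvalues of order at least $\sigma^{-2(k-1)}$; the tangent block is dominated by $\sigma^{-2k}\nabla^2 s_k|_{\cV_{k-1}(\theta_*)}$. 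The criticality condition $\nabla R(\theta)=0$, together with this dominance across scales, forces the normal displacement of $\theta$ off $\cV_{k-1}(\theta_*)$ to be $O(\sigma^{-2})$-small, reducing $\theta$ to an approximate tangential critical point of $s_k$ on $\cV_{k-1}(\theta_*)$. The quantitative benign-landscape property then either advances the induction (if $s_k(\theta)<\delta_k$) or produces a tangential direction of strict negative curvature of $s_k$, which by eigenvalue perturbation and the remainder bound (\ref{eq:remainderbound}) persists as a negative-curvature direction of $R$.

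Terminating at $k=K$, the critical point $\theta$ lies in a vanishing neighborhood of $\cV_K(\theta_*) = \orbit_{\theta_*}$; since $R$ attains its minimum exactly on $\orbit_{\theta_*}$ and is locally convex transverse to it (by Theorem~\ref{thm:FI}, which gives $\rank I(\theta_*) = d-d_0$ with the normal block strictly positive definite), $\theta$ must itself be a global minimizer. Part (b) is entirely analogous with $\cV_k, s_k, K$ replaced by $\tcV_k, \ts_k, \tK$, and $\orbit_{\theta_*}$ replaced by $\{\theta:\Pi(\orbit_\theta)\equiv\Pi(\orbit_{\theta_*})\}$. The main obstacle is bridging the $\R^d$-criticality of $\theta$ for $R$ and the tangential criticality on the moment strata: $\nabla R(\theta)=0$ only implies \emph{approximate} tangential criticality for $s_k$ on $\cV_{k-1}(\theta_*)$, so the benign-landscape assumption must be upgraded to its quantitative form and combined carefully with eigenvalue perturbation across the different scales $\sigma^{-2},\ldots,\sigma^{-2K}$. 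Secondary obstacles include handling the positive-dimensional orbits and moment varieties (where the constant-rank hypothesis on $\der M_k$ supplies the tubular-neighborhood structure), and, in part (b), the non-coercivity of $R$, which forces the a priori restriction $\|\theta\|<B(\|\theta_*\|+\sigma)$ so that (\ref{eq:remainderbound}) remains applicable throughout the induction.
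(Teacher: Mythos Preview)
Your proposal has the right architecture but misses a crucial preliminary step that the paper isolates as Lemma~\ref{lemma:localizecrit}: critical points of $R$ must first be localized to a ball $\{\|\theta\|\leq M\}$ with $M$ \emph{independent of $\sigma$}. Without this, your argument does not close. The remainder bound (\ref{eq:remainderbound}) gives $|q(\theta)|\leq C_K(1\vee\|\theta\|)^{2K+2}/\sigma^{2K+2}$, which is only $o(\sigma^{-2K})$ when $\|\theta\|$ stays bounded as $\sigma\to\infty$; on the domain $\|\theta\|<B(\|\theta_*\|+\sigma)$ you cite in part (b), the remainder is merely $O(1)$ and does not dominate anything. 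Likewise, your quantitative upgrade of the benign hypothesis requires compactness of a \emph{fixed} bounded portion of $\cV_{k-1}(\theta_*)$; if that portion grows with $\sigma$, the constant $\gamma$ may shrink and the eigenvalue comparison across scales fails. In part (a) you seem to appeal to coercivity, but coercivity of $R$ alone does not give a $\sigma$-independent bound on the location of critical points. The paper's Lemma~\ref{lemma:localizecrit} establishes this localization via a separate nontrivial gradient estimate $\|\nabla R(\theta)\|\geq c\sigma^{-4}$ for $\|\theta\|>M$, using integration by parts on the likelihood and a cumulant analysis; this is an independent ingredient you would need to supply.

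Even granting localization, your route differs from the paper's. You propose an induction in which critical points are shown to lie in $\sigma$-dependent shrinking tubes around the strata $\cV_k(\theta_*)$, with tangent/normal block decompositions and perturbation across scales. The paper instead takes a \emph{finite-cover} approach: for each fixed $\ttheta\in\overline{B_M}$, it builds a $\sigma$-independent neighborhood $U_{\ttheta}$ and a local coordinate system $\varphi=(\varphi^1,\ldots,\varphi^{k-1},\bar\varphi)$ from Lemma~\ref{lemma:transcendencebasis} (with $k$ determined by which stratum $\ttheta$ lies on). The benign hypothesis is invoked directly at $\ttheta\in\cV_{k-1}(\theta_*)$, extended to $U_{\ttheta}$ by continuity, and then the expansion is differentiated in $\varphi$ to transfer to $R$. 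Compactness of $\overline{B_M}$ gives a finite subcover and a single $\sigma_0$. This avoids the delicate issue you flag as the ``main obstacle'': because the paper works at points \emph{on} the stratum rather than merely near it, there is no approximate-criticality transfer to manage. Your shrinking-tube induction is closer in spirit to the argument the paper uses for Theorem~\ref{thm:landscape}(a1), and could likely be made to work, but it is more intricate and still requires the $\sigma$-independent localization as a starting point.
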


In Theorem \ref{thm:benignlandscape}(b), we have restricted to a ball
$\{\theta \in \R^d:\|\theta\|<B(\|\theta_*\|+\sigma)\}$, as the
landscape of $R(\theta)$ outside this ball may depend on 
the specific interaction between $\G$ and $\Pi$. In practice, such a bound
for $\|\theta\|$ may be known a priori, so that optimization may indeed
be restricted to this ball. (In unprojected models,
we show that $R(\theta)$ cannot have critical points outside this
ball for any group $\G$, allowing us to remove such a restriction in
part (a).)

Whether the conditions of Theorem \ref{thm:benignlandscape} hold
depends on the specific model, and both positive and negative examples for
discrete groups were exhibited in \cite{fan2020likelihood}.
In models where they do not hold,
$R(\theta)$ may in fact have spurious local minimizers in high noise, and Theorem
\ref{thm:seriesexpansion} can be used to further establish a correspondence 
between the local minimizers of $R(\theta)$ and those of the above moment
optimizations. We formalize one such result---not fully general, but
sufficient to study many examples of interest---as follows.

\begin{definition}\label{def:nondegenerate}
Suppose $\cV_{K-1}(\theta_*)$ is a smooth manifold.
A critical point $\theta$ of
$s_K(\theta)|_{\cV_{K-1}(\theta_*)}$ is \emph{non-degenerate up to orbit}
if $\orbit_\theta$ is a smooth manifold of dimension $d_0$ in a local
neighborhood of $\theta$, and
\[\rank\big(\nabla^2 s_K(\theta)|_{\cV_{K-1}(\theta_*)}\big)
=\dim(\cV_{K-1}(\theta_*))-d_0.\]
\end{definition}

Note that $\nabla^2 s_K(\theta)|_{\cV_{K-1}(\theta_*)}$ is a symmetric matrix of
dimension $\dim(\cV_{K-1}(\theta_*))$. For any critical point $\theta$ of
$s_K|_{\cV_{K-1}(\theta_*)}$, the null space of this Hessian must contain the
tangent space to $\orbit_\theta$, and Definition \ref{def:nondegenerate}
ensures that this Hessian has no further rank degeneracy.

\begin{theorem}\label{thm:landscape}
For generic $\theta_* \in \R^d$:
\begin{enumerate}[(a)]
\item In the unprojected model, suppose that 
$\der M_k(\theta)$ has constant rank over $\cV_k(\theta_*)$, and the
minimization of $s_k(\theta)$ over $\cV_{k-1}(\theta_*)$ is globally benign for
each $k=1,\ldots,K-1$. Then
for some $(\theta_*,\G)$-dependent constant $\sigma_0>0$ and function
$\eps(\sigma)$ satisfying $\eps(\sigma) \to 0$ as $\sigma \to \infty$,
and for all $\sigma>\sigma_0$:
\begin{enumerate}[1.]
\item Let $\theta_+$ be any \emph{local} minimizer
of $s_K(\theta)$ over $\cV_{K-1}(\theta_*)$ that is non-degenerate up to orbit.
Then there exists a local minimizer $\theta_+'$ of $R(\theta)$ where
$\|\theta_+-\theta_+'\|<\eps(\sigma)$.
\item Conversely, suppose that all critical points of
$s_K(\theta)$ over $\cV_{K-1}(\theta_*)$ are non-degenerate up to orbit.
Let $\theta_+$ be any local minimizer of $R(\theta)$.
Then there exists a local
minimizer $\theta_+'$ of $s_K(\theta)$ over $\cV_{K-1}(\theta_*)$ where
$\|\theta_+-\theta_+'\|<\eps(\sigma)$.
\end{enumerate}
\item In the projected model satisfying (\ref{eq:losslessPi}), 
statement (1.) holds with $K$, $M_k$, $\cV_k$, and $s_k$ replaced by
$\tK$, $\tM_k$, $\tcV_k$, and $\ts_k$, where $\sigma_0$ and $\eps(\sigma)$
may depend also on the projection $\Pi$.
Statement (2.) holds for local minimizers $\theta_+$
of $R(\theta)$ satisfying $\|\theta_+\|<B(\|\theta_*\|+\sigma)$ for any constant
$B>0$, where $\sigma_0$ and $\eps(\sigma)$ may depend also on $\Pi$ and $B$.
\end{enumerate}
\end{theorem}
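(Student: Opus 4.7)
The plan is to build on Theorem \ref{thm:seriesexpansion} and the strategy underlying Theorem \ref{thm:benignlandscape}. The guiding picture is that for $\sigma$ large,
\[R(\theta) = C_0 + \sum_{k=1}^{K-1} \sigma^{-2k}\bigl(s_k(\theta)+q_k(\theta)\bigr) + \sigma^{-2K}\bigl(s_K(\theta)+q_K(\theta)\bigr) + q(\theta),\]
where $q(\theta)$ and its first two derivatives are controlled by (\ref{eq:remainderbound}). Because $s_k$ vanishes identically on $\cV_k(\theta_*)$ and $\cV_{K-1}(\theta_*)\subseteq\cV_k(\theta_*)$ for $k<K$, while the benign-landscape hypothesis for lower orders provides a restoring force transverse to each $\cV_k(\theta_*)$, the high-noise landscape of $R$ is essentially that of $\sigma^{-2K}\,s_K$ on $\cV_{K-1}(\theta_*)$ together with a rapidly-shrinking normal potential well.

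I would first make this separation of scales precise. Using the constant-rank hypothesis on $\der M_k$, each $\cV_k(\theta_*)$ is a smooth manifold, and there is a smooth local splitting $\theta=(u,v)$ near any chosen $\theta_+\in\cV_{K-1}(\theta_*)$ with $u$ tangent to $\cV_{K-1}(\theta_*)$ and $v$ its normal coordinates. Since $s_k(u,0)\equiv 0$ and $\nabla_u s_k(u,0)\equiv 0$ for $k<K$, Lemma \ref{lem:trdeg} combined with Theorem \ref{thm:seriesexpansion}(a) yields the block-Hessian structure
\[\nabla^2 R(\theta_+) = \begin{pmatrix} \sigma^{-2K}\,\nabla_u^2 s_K(\theta_+) + O(\sigma^{-2K-2}) & O(\sigma^{-2K-2}) \\ O(\sigma^{-2K-2}) & D(\theta_+) \end{pmatrix},\]
where $D(\theta_+)$ collects the $\nabla_v^2 s_k$ contributions at scales $\sigma^{-2},\ldots,\sigma^{-2(K-1)}$ and is uniformly bounded below on the normal bundle by the benign-landscape hypothesis together with the rank identity of Lemma \ref{lem:trdeg}. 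This scale separation drives the entire proof.

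For statement (1.), I would fix a non-degenerate local minimizer $\theta_+$ of $s_K$ on $\cV_{K-1}(\theta_*)$, quotient by the orbit action (whose $d_0$-dimensional tangent lies in $\ker\nabla^2 s_K|_{\cV_{K-1}}$ by $\G$-invariance), and apply a quantitative implicit function theorem to $\nabla R=0$ on a slice transverse to $\orbit_{\theta_+}$. Invertibility of the slice Hessian follows by combining invertibility of $\nabla_u^2 s_K$ on orbit-normal tangent directions (Definition \ref{def:nondegenerate}) with invertibility of $D$ on the normal bundle, and the remainder bound on $\nabla q$ then delivers a local minimizer $\theta_+'$ of $R$ with $\|\theta_+-\theta_+'\|<\eps(\sigma)$. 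For statement (2.), given a local minimizer $\theta_+$ of $R$ (with $\|\theta_+\|<B(\|\theta_*\|+\sigma)$ in the projected case), inverting $D$ on the relation $\nabla_v R(\theta_+)=0$ forces $v_+$ within $O(\sigma^{-2})$ of $\cV_{K-1}(\theta_*)$, after which $\nabla_u R(\theta_+)=0$ reads $\nabla_u s_K(\theta_+)=O(\sigma^{-2})$. Non-degeneracy of every critical point of $s_K|_{\cV_{K-1}}$ together with a compactness argument and the IFT then locates a unique critical point $\theta_+'$ within $\eps(\sigma)$; that $\theta_+'$ is actually a local minimizer follows because the block structure preserves the Morse index, so a saddle or local maximum of $s_K$ would survive as a negative eigenvalue of $\nabla^2 R$, contradicting the minimality of $\theta_+$. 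The projected case (b) goes through identically via Theorem \ref{thm:seriesexpansion}(b), since the extra cross terms $\langle \tT_k,P_k\rangle$ satisfy $P_k(\theta_*)=0$ and so contribute only to the subdominant remainder near orbits equivalent to $\theta_*$.

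The main obstacle is handling the positive-dimensional orbit: because $\G$ may be continuous, $\orbit_{\theta_+}$ is a $d_0$-dimensional submanifold along which $\nabla^2 R$ is intrinsically singular, and unlike the discrete setting of \cite{fan2020likelihood} one cannot simply treat orbit points as isolated. The cleanest workaround is to perform all IFT arguments on a local slice transverse to $\orbit_{\theta_+}$ (equivalently, locally on the quotient $\R^d/\G$), verifying that Definition \ref{def:nondegenerate} furnishes exactly the Hessian invertibility needed on that slice; additional bookkeeping is required because the varieties $\cV_k(\theta_*)$ are only generically smooth, so all IFT steps must be localized to the generic locus guaranteed by the constant-rank hypothesis on $\der M_k$.
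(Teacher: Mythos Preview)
Your overall strategy is sound, and for part (a2) your sketch is close in spirit to the paper's: the paper also argues that away from the local minimizers of $s_K|_{\cV_{K-1}(\theta_*)}$ the function $R$ has either a nonzero gradient or a negative Hessian direction, patching this together over a compact ball via Lemma~\ref{lemma:localizecrit}. One point you gloss over: you cannot ``invert $D$ on $\nabla_v R(\theta_+)=0$'' until you already know $\theta_+$ lies near $\cV_{K-1}(\theta_*)$, which is what you are trying to prove. The paper avoids this circularity by working on the \emph{complement} of an $\eps$-neighborhood of the critical points and showing directly, via the benign-landscape assumption at levels $k<K$ (Case~1) and non-degeneracy at level $K$ (Case~2), that no local minimizer of $R$ can lie there.

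For part (a1) there is a genuine gap. Your displayed block Hessian asserts the off-diagonal $(u,v)$ block is $O(\sigma^{-2K-2})$, but this is false: the term $\sigma^{-2K}s_K$ contributes $\sigma^{-2K}\nabla^2_{\varphi^K,\varphi^j}s_K$ to the $(K,j)$ block for every $j<K$, and there is no reason for these mixed partials to vanish at $\theta_+$. The correct size is $O(\sigma^{-2K})$. More seriously, the ``normal'' block $D$ is not uniformly well-conditioned---it itself has the graded structure of Definition~\ref{def:gradedblock}, with eigenvalues ranging from $\sigma^{-2}$ down to $\sigma^{-2(K-1)}$. Meanwhile the gradient $\nabla R(\theta_+)$ in the $\varphi^1$-direction receives a contribution $\sigma^{-4}\nabla_{\varphi^1}q_2(\theta_+)$, which is generically nonzero since $\theta_+\notin\orbit_{\theta_*}$. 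A one-shot quantitative IFT then gives a bound of order $\|[\nabla^2 R]^{-1}\|\cdot\|\nabla R(\theta_+)\|\asymp\sigma^{2K}\cdot\sigma^{-4}=\sigma^{2K-4}$, which diverges once $K\geq 3$. To salvage the IFT route you would need a block-by-block Newton scheme that respects the graded scales, which is substantially more delicate than what you wrote.

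The paper takes a different and more robust route for (a1): it never applies IFT to $\nabla R=0$. Instead, after establishing that $\bar R(\varphi^1,\ldots,\varphi^K)$ is strictly convex on a product box $\bar V=\bar V_1\times\cdots\times\bar V_K$ (via the graded block structure and Lemma~\ref{lemma:gradedblock}(b)), it takes $\hat\varphi=\arg\min_{\bar V}\bar R$, which exists by compactness, and shows inductively in $k$ that $\|\hat\varphi^k-\varphi_+^k\|<\sigma^{-\eta_k}$. Each inductive step compares the convex function $\varphi^k\mapsto s_k(\hat\varphi^1,\ldots,\hat\varphi^{k-1},\varphi^k)$ with its counterpart at $\varphi_+$ via Lemma~\ref{lemma:convexcompare}, and then uses that $\hat\varphi$ minimizes $\bar R$ to bound $\|\hat\varphi^k-\bar\varphi^k\|$. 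This sequential convex-comparison argument automatically handles the multi-scale structure because each step works at a single scale $\sigma^{-2k}$; the conclusion is that $\hat\varphi$ lies in the interior of $\bar V$ and is therefore a genuine critical point of $R$.
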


The guarantees of Theorems \ref{thm:benignlandscape} and \ref{thm:landscape}
may be translated to the sample log-likelihood $R_n(\theta)$ by establishing
concentration of $\nabla R_n(\theta)$ and $\nabla^2 R_n(\theta)$
around $\nabla R(\theta)$ and $\nabla^2 R(\theta)$
\cite{mei2018landscape}. For orbit recovery models in the
high-noise regime, we believe that it may be possible to obtain
sharp concentration bounds by deriving a series expansion also of the
\emph{empirical} log-likelihood function $R_n(\theta)$ in powers of
$\sigma^{-1}$, and analyzing
the concentration term-by-term. Some results of this form were obtained for
models without linear projection in \cite[Lemma 4.11 and Corollary
4.18]{fan2020likelihood}, and we leave the analysis of the empirical
log-likelihood function and landscape
for more general models as an open problem for future work.

\begin{example}[Landscape of orthogonal Procrustes alignment]\label{ex:procrustes}
We illustrate Theorems \ref{thm:benignlandscape} and \ref{thm:landscape} using
a simple example of \emph{orthogonal Procrustes alignment}
\cite{gower1975generalized,goodall1991procrustes,pumir2019generalized}.

In this problem,
samples of an object consisting of $m \geq 3$ atoms in $\R^3$ are observed
under random orthogonal rotations and reflections.
We represent the object as $\theta_* \in \R^{3 \times
m} \cong \R^d$ where $d=3m$. The rotational group is
$\G=\O(3) \otimes \Id_m \subset \O(d)$, where a common orthogonal matrix in
3-dimensions is applied to all $m$ atoms. Assuming the generic condition that
$\rank(\theta_*)=3$, i.e.\ these $m$ atoms do not lie on a common
2-dimensional subspace, we study the likelihood landscape
for estimating $\theta_*$ from many independently rotated samples.

In this model, we check in Appendix \ref{appendix:procrustes} that
$K=2$, $(d_0,d_1,d_2)=(3,0,d-3)$, $\cV_1(\theta_*)=\R^d$,
and $\cV_2(\theta_*)=\{g \cdot \theta_*:g \in \G\}=\orbit_{\theta_*}$.
The first two moment tensors $T_1(\theta)$ and $T_2(\theta)$ are given
by $T_1(\theta)=0$ and $T_2(\theta)=\frac{1}{3}\Id_{3 \times 3} \otimes
(\theta^\top \theta) \in \R^{d \times d}$, and
the terms $s_1(\theta)$ and $s_2(\theta)$ in
(\ref{eq:seriesexpansionunprojected}) are given by $s_1(\theta)=0$ and
\[s_2(\theta)=\frac{1}{12}\|\theta^\top \theta-\theta_*^\top \theta_*\|_\HS^2,\]
where $\theta^\top \theta,\theta_*^\top \theta_* \in \R^{m \times m}$.
The minimization of $s_1(\theta)$ over $\cV_0(\theta_*)=\R^d$ is
trivially globally benign. We show in Appendix \ref{appendix:procrustes} that
$\der M_2(\theta)$ has constant rank over $\cV_2(\theta_*)$, and that
the minimization of $s_2(\theta)$ over 
$\cV_1(\theta_*)=\R^d$ is also globally benign, with minimizers given exactly by
$\cV_2(\theta_*)=\orbit_{\theta_*}$. Thus, Theorem
\ref{thm:benignlandscape}(a) implies that the landscape of $R(\theta)$ is also
globally benign for sufficiently high noise, and the only local minimizers of
$R(\theta)$ are rotations and reflections of the true object.

A variation of this problem is the rotation-only variant,
where we observe 3-dimensional rotations (but not reflections) of
the object. Then the rotational group is instead
$\G=\SO(3) \otimes \Id_m \subset \O(d)$.
We show in Appendix \ref{appendix:procrustes} that still
$K=2$, $(d_0,d_1,d_2)=(3,0,d-3)$, and the forms of
$T_1(\theta),T_2(\theta),\cV_1(\theta_*),\cV_2(\theta_*),s_1(\theta),s_2(\theta)$ are identical to the
above (even though the full log-likelihood $R(\theta)$ is not).
Thus the minimization of $s_2(\theta)$ over
$\cV_1(\theta_*)=\R^d$ is still globally benign, with minimizers
$\cV_2(\theta_*)$. However, this set of minimizers is now written as
\[\cV_2(\theta_*)
=\{g \cdot \theta_*:g \in \G\} \cup \{-g \cdot \theta_*:g \in \G\}
=\orbit_{\theta_*} \cup \orbit_{-\theta_*}\]
constituting two distinct orbits under this more restrictive group action.
The first orbit $\orbit_{\theta_*}$ are the global
minimizers of $R(\theta)$. The second orbit corresponds to the mirror reflection
$-\theta_*$, which does not
globally minimize $R(\theta)$, but the difference between $R(\theta_*)$ and
$R(-\theta_*)$ lies in the remainder term of the expansion
(\ref{eq:seriesexpansionunprojected}). Theorem \ref{thm:landscape}(a) shows
that for high noise, $R(\theta)$ will have spurious local minimizers near
(but not exactly equal to) this second orbit $\orbit_{-\theta_*}$.
\end{example}

\section{Continuous multi-reference alignment}\label{sec:func-est-so2}

We now specialize the preceding general results to the problem
of estimating a periodic function on the circle, observed under $\SO(2)$
rotations of its domain. We will refer to this as the \emph{continuous
MRA} model. We study the unprojected model in this section,
and a version with a two-fold projection in
Appendix \ref{appendix:projectedSO2}.
These provide simpler 1-dimensional analogues
of the 2-dimensional and 3-dimensional problems that we will discuss in
Section \ref{sec:func-est-so3}.

To describe the model, let $f:\sS^1 \to \R$ be a periodic function on
the unit circle $\sS^1 \cong [0,1)$. We identify the rotational group $\SO(2)$
also with $[0,1)$, and represent the rotation of $f$ by an element
$\frakg \in \SO(2) \cong [0,1)$ as $f_\frakg(t)=f(t+\frakg \bmod 1)$.
Each sample is an observation of the rotated function $f_\frakg$
with additive white noise,
\[f_\frakg(t)\der t+\sigma\,\der W(t)\]
where $\frakg \sim \Unif([0,1))$ and
$\der W(t)$ denotes a standard Gaussian white noise process on $\sS^1$.
This may be understood as observing a realization of the
Gaussian process $\{F(h)\}_{h \in L_2(\sS^1)}=\{\int h(t)[f_\frakg(t)\der t+
\sigma \der W(t)]\}_{h \in L_2(\sS^1)}$ with mean and covariance functions
\begin{equation}\label{eq:GPwhitenoise}
\E[F(h)]=\int_0^1 h(t)f_\frakg(t)\der t,
\qquad \Cov[F(h_1),F(h_2)]=\sigma^2 \int_0^1 h_1(t)h_2(t)\,\der t,
\end{equation}
or equivalently as observing all coefficients of $f_\frakg$
in a complete orthonormal
basis $\{h_j(t)\}_{j=1}^\infty$ of $L_2(\cS^1)$ with independent $\N(0,\sigma^2)$
noise for each basis coefficient.

We consider the real Fourier basis on $\sS^1$, given by
\begin{equation}\label{eq:Fourierbasis}
h_0(t)=1, \quad h_{l1}(t)=\sqrt{2} \cos 2\pi lt, \quad
h_{l2}(t)=\sqrt{2} \sin 2\pi lt \quad \text{for } l=1,2,3,\ldots.
\end{equation}
We then restrict our model to the finite-dimensional space of functions
$f:\sS^1 \to \R$ that have finite bandlimit $L \geq 1$ in this basis,
i.e.\ $f$ admits a representation
\begin{equation}\label{eq:fourierbasis}
f(t)=\theta^{(0)}h_0(t)+\sum_{l=1}^L \theta_1^{(l)}h_{l1}(t)+
\sum_{l=1}^L\theta_2^{(l)}h_{l2}(t).
\end{equation}
Importantly, the space of such bandlimited functions is closed under rotations
of $\sS^1$. Writing
\[\theta=(\theta^{(0)},\theta_1^{(1)},\theta_2^{(1)},\ldots,
\theta_1^{(L)},\theta_2^{(L)}) \in \R^d, \qquad d=2L+1\]
for the vector of Fourier coefficients, the rotation $f \mapsto f_\frakg$
corresponds to $\theta \mapsto g \cdot \theta$, where $g$ belongs to
the block-diagonal representation
\begin{equation}\label{eq:MRAG}
\G=\left\{\diag\left(1,\begin{pmatrix} \cos 2\pi \frakg & \sin 2\pi \frakg \\
-\sin 2\pi \frakg & \cos 2\pi \frakg \end{pmatrix},
\ldots,\begin{pmatrix} \cos 2\pi L\frakg & \sin 2\pi L\frakg \\
-\sin 2\pi L\frakg & \cos 2\pi L\frakg \end{pmatrix}\right):
\;\frakg \in [0,1)\right\}
\end{equation}
of $\SO(2)$. The observation model for the Fourier coefficients of $f$
then takes the form of (\ref{eq:unprojectedmodel}), where we observe
coordinates of $g \cdot \theta$ with entrywise i.i.d.\ $\N(0,\sigma^2)$ noise.

Theorem \ref{thm:MRA} below first characterizes, for this model,
the decomposition of total dimension described in Theorem \ref{thm:FI}.
As a direct consequence of this result, we state Corollary \ref{cor:MRA-cor}
which summarizes the implications for identifying $\theta_*$ based on its
low-order moments, and for the spectral structure of the Fisher information
matrix $I(\theta_*)$.

\begin{theorem}\label{thm:MRA}
For any $L \geq 1$, we have
\[\trdeg(\cR_{\leq 1}^\G)=1, \quad
\trdeg(\cR_{\leq 2}^\G)=L+1, \quad
\trdeg(\cR_{\leq 3}^\G)=\trdeg(\cR^\G)=2L=d-1.\]
\end{theorem}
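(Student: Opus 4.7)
The plan is to verify the three transcendence degree equalities in turn, using complex coordinates so that the group action becomes diagonal. Setting $z_l = \theta_1^{(l)} + \i \theta_2^{(l)}$ for $l = 1, \ldots, L$, the element of $\G$ corresponding to $\frakg \in [0,1)$ acts by $\theta^{(0)} \mapsto \theta^{(0)}$ and $z_l \mapsto e^{-2\pi \i l \frakg} z_l$. A monomial in $\theta^{(0)}, \{z_l\}, \{\bar z_l\}$ is $\G$-invariant if and only if its total weight $\sum_l l(a_l - b_l)$ vanishes, where $a_l, b_l$ are the exponents of $z_l, \bar z_l$. This weight grading lets me enumerate the low-degree invariants directly.

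For the first claim, the only weight-zero monomial of degree one is $\theta^{(0)}$, giving $\cR_{\leq 1}^\G = \R[\theta^{(0)}]$ and $\trdeg(\cR_{\leq 1}^\G) = 1$. For the second claim, the weight-zero monomials of degree at most two are $(\theta^{(0)})^2$ and $z_l \bar z_l = (\theta_1^{(l)})^2 + (\theta_2^{(l)})^2$ for $l = 1, \ldots, L$; combined with the degree-one generator, this gives $\cR_{\leq 2}^\G = \R[\theta^{(0)}, |z_1|^2, \ldots, |z_L|^2]$, and these $L+1$ polynomials are algebraically independent because they involve disjoint variables. Finally, Proposition~\ref{prop:trdegorbitdim} with $\dim \G = 1$ and trivial generic stabilizer gives $\trdeg(\cR^\G) = d - 1 = 2L$, which supplies the upper bound $\trdeg(\cR_{\leq 3}^\G) \leq 2L$ for free.

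The main step is to produce $2L$ algebraically independent invariants of degree at most three. I would augment the $L+1$ quadratic invariants above by the $L-1$ ``adjacent-frequency'' bispectrum entries
\[
J_l(\theta) = \Im\big(z_1 \, z_{l-1} \, \bar z_l\big), \quad l = 2, \ldots, L,
\]
and apply the Jacobian criterion at the real reference point $\theta_*$ with $\theta^{(0)}$ arbitrary, $\theta_1^{(l)} = r_l \neq 0$, and $\theta_2^{(l)} = 0$ for every $l$. A direct expansion shows that at $\theta_*$ every partial derivative $\partial J_l / \partial \theta_1^{(m)}$ vanishes (each monomial in $J_l$ contains an odd number of imaginary-part factors), and the only nonzero $\theta_2$-derivatives of $J_l$ lie in the columns $\theta_2^{(1)}, \theta_2^{(l-1)}, \theta_2^{(l)}$. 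After deleting the single column $\theta_2^{(1)}$ and ordering the remaining $2L$ columns as $(\theta^{(0)}, \theta_1^{(1)}, \ldots, \theta_1^{(L)}, \theta_2^{(2)}, \ldots, \theta_2^{(L)})$, the resulting $2L \times 2L$ sub-Jacobian becomes block diagonal: the upper $(L+1) \times (L+1)$ block equals $\diag(1, 2r_1, \ldots, 2r_L)$, while the lower $(L-1) \times (L-1)$ block is lower bidiagonal with diagonal entries $-r_1 r_{l-1}$ for $l = 2, \ldots, L$ and subdiagonal entries $r_1 r_l$ for $l = 3, \ldots, L$. Both diagonal blocks are invertible whenever $r_1, \ldots, r_{L-1}$ are nonzero, so the Jacobian has full rank $2L$, and the $2L$ proposed invariants are algebraically independent.

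The main obstacle is engineering the invariants and the evaluation point so that this Jacobian splits cleanly. Choosing the adjacent-frequency bispectrum entries (rather than arbitrary $z_a z_b \bar z_{a+b}$) is what produces the bidiagonal structure, and evaluating at an entirely real point decouples the $\theta_1$- and $\theta_2$-derivatives of the $J_l$. The deleted column $\theta_2^{(1)}$ --- the one column in which every $J_l$ has a nonzero entry at $\theta_*$ --- corresponds geometrically to the one-dimensional tangent direction of the $\SO(2)$-orbit at $\theta_*$, which must lie in the kernel of the Jacobian of any set of $\G$-invariants; omitting it is precisely what exposes the remaining $2L$ independent directions.
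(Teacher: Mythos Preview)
Your proof is correct. Both you and the paper establish $\trdeg(\cR^\G)=2L$ via Proposition~\ref{prop:trdegorbitdim} and the trivial generic stabilizer, handle $k=1,2$ by the obvious invariants, and for $k=3$ exploit the same adjacent-frequency bispectrum entries (indices $l=1+(l-1)$) together with an evaluation point where all phases vanish, so that the relevant matrix becomes bidiagonal/triangular and its rank is read off from the diagonal.

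The route differs: you pick the $2L$ invariants $\theta^{(0)},\,|z_l|^2,\,J_l=\Im(z_1z_{l-1}\bar z_l)$ and apply the Jacobian criterion directly at a real point, obtaining a clean block-diagonal Jacobian. The paper instead invokes its general Lemma~\ref{lem:trdeg}, reparametrizes by magnitudes and phase differences $(r_l,t_l)$, and computes the Hessians $\nabla^2 s_k(\theta_*)$ from the explicit formulas in Theorem~\ref{thm:MRA-mom}; the $k=3$ step reduces to showing that the matrix $WW^\top$ with columns $w_{l,1,l-1}=e_l-e_1-e_{l-1}$ has rank $L-1$. Your argument is more elementary and self-contained for this particular group, while the paper's version illustrates the $s_k$-Hessian machinery that it reuses for the $\SO(3)$ models in Section~\ref{sec:func-est-so3}. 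One minor imprecision: your final ``whenever $r_1,\ldots,r_{L-1}$ are nonzero'' covers only the lower block; the upper block $\diag(1,2r_1,\ldots,2r_L)$ also needs $r_L\neq 0$, but you already assumed all $r_l\neq 0$ at the outset.
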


\begin{corollary} \label{cor:MRA-cor}
A generic signal $\theta_* \in \R^d$ in this
continuous MRA model has the following properties:
\begin{enumerate}
\item[(a)] $\theta_*$ is identified up to a finite list
of orbits by the moments of $g \cdot \theta_*$ up to order $K = 3$ when $L \geq
2$, and order $K=2$ when $L=1$.

\item[(b)] For $(\theta_*, \G)$-dependent constants $C, c > 0$ independent of
$\sigma$, the Fisher information $I(\theta_*)$ has
$d_0=1$ eigenvalue of 0 and $d_k$
eigenvalues in $[c \sigma^{-2k}, C \sigma^{-2k}]$ for
$k = 1,2,3$ and $(d_1,d_2,d_3) = (1, L, L-1)$.
\end{enumerate}
\end{corollary}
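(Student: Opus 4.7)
The plan is to apply Lemma \ref{lem:trdeg}, which identifies $\trdeg \cR_{\leq k}^\G$ with the rank, at a generic $\theta_* \in \R^d$, of the Jacobian of the vectorized moment map $M_k(\theta)$---equivalently, the dimension of $\linspan\{\nabla p(\theta_*) : p \in \cR_{\leq k}^\G\}$. The stabilizer of the action (\ref{eq:MRAG}) is trivial whenever $(\theta_1^{(1)},\theta_2^{(1)}) \neq (0,0)$, so Proposition \ref{prop:trdegorbitdim} gives the universal upper bound $\trdeg \cR^\G = d - 1 = 2L$. It therefore suffices to exhibit algebraically independent families of sizes $1$, $L+1$, and $2L$ inside $\cR_{\leq k}^\G$ for $k = 1, 2, 3$ respectively, with the $k=3$ family attaining the upper bound.

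I would pass to the complex Fourier coordinates $z_l = \theta_1^{(l)} + \i \theta_2^{(l)}$ (with $z_0 = \theta^{(0)}$), on which (\ref{eq:MRAG}) becomes the diagonal circle action $z_l \mapsto e^{2\pi \i l \frakg} z_l$. Each entry of $T_k(\theta)$ is the Haar average of a $\{z_l, \bar z_l\}$-monomial, which survives only when its signed total frequency vanishes. This classifies the new generators at each order: degree $\leq 1$ yields only $\theta^{(0)}$; degree $\leq 2$ additionally yields $|z_l|^2$ for $l = 1, \ldots, L$; and degree $\leq 3$ additionally yields the bispectral invariants $\Re(z_{l_1} z_{l_2} \bar z_{l_1+l_2})$ and $\Im(z_{l_1} z_{l_2} \bar z_{l_1+l_2})$ with $l_1, l_2 \geq 1$ and $l_1 + l_2 \leq L$. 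Introducing polar coordinates $z_l = r_l e^{\i \phi_l}$ at a generic $\theta_*$, the gradient of $\theta^{(0)}$ and the $L$ gradients of $|z_l|^2$ (each placing a unique nonzero entry $2 r_l$ in its own radial block) are manifestly independent, yielding the stated ranks $1$ and $L + 1$ at $k = 1, 2$.

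For $k = 3$ I would isolate the $L - 1$ cubic invariants
\[
p_l(\theta) = \Im(z_1 z_{l-1} \bar z_l) = r_1 r_{l-1} r_l \sin(\phi_1 + \phi_{l-1} - \phi_l), \qquad l = 2, \ldots, L,
\]
and check that their phase-gradient components, after being reduced against the $L+1$ radial and constant gradients above, span an $(L-1)$-dimensional slice of the $L$-dimensional phase space. At a generic $\theta_*$ the cosine factors $\cos(\phi_1 + \phi_{l-1} - \phi_l)$ are nonzero, so rescaling each row turns the phase block into a matrix whose row $l \geq 3$ reads $(1, 0, \ldots, 0, 1, -1, 0, \ldots, 0)$ with the $1$ in column $l-1$ and the $-1$ in column $l$, and whose row $l = 2$ reads $(2, -1, 0, \ldots, 0)$. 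Back-substituting downward from column $L$---where only row $p_L$ contributes a $-1$---forces every coefficient in a vanishing linear combination to vanish in the order $\alpha_L, \alpha_{L-1}, \ldots, \alpha_2$. Combined with the $L+1$ radial and constant gradients, this yields $\trdeg \cR_{\leq 3}^\G \geq 2L$, matching the upper bound.

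The principal obstacle is picking the right $L - 1$ cubic invariants so that the phase block is transparently full rank; once the family $\{p_l\}_{l=2}^L$ is in hand, every step reduces to linear algebra in polar coordinates. Corollary \ref{cor:MRA-cor} is then immediate: part (a) is the identifiability criterion of \cite{bandeira2017estimation} recalled before Proposition \ref{prop:Kdef}, giving $K = 3$ when $L \geq 2$ (since $L + 1 < 2L$) and $K = 2$ when $L = 1$ (since $L + 1 = 2L = 2$); part (b) is Theorem \ref{thm:FI}(a) applied to the decomposition $(d_0, d_1, d_2, d_3) = (1, 1, L, L - 1)$ read off from Theorem \ref{thm:MRA}.
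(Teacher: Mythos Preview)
Your proposal is correct and lands on the same linear-algebraic core as the paper: the $L-1$ phase vectors $e_1 + e_{l-1} - e_l$ (the paper's $w_{l,1,l-1}$ up to sign), whose independence is checked by back-substitution from the last coordinate. The difference is in the front end. Rather than exhibiting individual invariants $\theta^{(0)}$, $|z_l|^2$, $\Im(z_1 z_{l-1}\bar z_l)$ and computing their gradients directly, the paper invokes the Hessian formulation of Lemma~\ref{lem:trdeg} together with the closed forms of $s_1,s_2,s_3$ from Theorem~\ref{thm:MRA-mom}, passes to the coordinates $\zeta=(\theta_0,r_1,\ldots,r_L,t_1,\ldots,t_L)$ with $t_l=\lambda_l-\lambda_{*,l}$, observes that the mixed $(r,t)$-block of the Hessian vanishes at $\zeta_*$, and then reduces to the same rank computation on the $t$-block (specializing further to $r_{*,l}=1$). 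Your route is a bit more elementary in that it bypasses Theorem~\ref{thm:MRA-mom} entirely and works at a generic point throughout; the paper's route is chosen deliberately to exercise the $s_k$ machinery as a warm-up for the $\SO(3)$ analyses that follow. The deduction of the corollary itself---part (a) from \cite[Theorem~4.9]{bandeira2017estimation} (restated here as Lemma~\ref{lemma:genericlist}) and part (b) from Theorem~\ref{thm:FI}---matches the paper exactly.
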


\noindent
Part (a) of this corollary follows immediately from
\cite[Theorem 4.9]{bandeira2017estimation} (which we review in Appendix
\ref{appendix:trdeg}), and part (b) follows from Theorem \ref{thm:FI}.

In Appendix \ref{sec: proof SO2 unprojected}, we provide a proof of
Theorem \ref{thm:MRA} using our general result of
Lemma \ref{lem:trdeg}, as a warm-up for our analyses of the
$\SO(3)$-rotational models to follow. We note that for a similar
observation model of continuous MRA studied in \cite{bandeira2020optimal}, a
stronger form of Corollary \ref{cor:MRA-cor}(a) is already known,
namely that $3^\text{rd}$-order
moments are sufficient to identify generic signals $\theta_* \in \R^d$ up
to a single unique orbit.

Next, we study the moment optimization problems of
(\ref{eq:momentoptunprojected}), and we describe more
explicit forms for these optimization problems in this continuous MRA example.
Denote the Fourier coefficients of the true function $f$
by $\theta_* \in \R^d$. Define the complex Fourier coefficients
\[u^{(0)}(\theta)=\theta^{(0)} \in \R, \qquad
u^{(l)}(\theta)=\theta_1^{(l)}+\i \theta_2^{(l)}=r_l(\theta)e^{\i
\lambda_l(\theta)} \in \C,\]
where $(r_l(\theta),\lambda_l(\theta))$ for $l \geq 1$
are the magnitude and phase of $u^{(l)}(\theta)$.
Write as shorthand
\[r_{l,l',l''}(\theta)=r_l(\theta)r_{l'}(\theta)r_{l''}(\theta),
\qquad \lambda_{l,l',l''}(\theta)=\lambda_l(\theta)-\lambda_{l'}(\theta)
-\lambda_{l''}(\theta).\]
Here $\lambda_{l,l',l''}(\theta)$ are the elements of the Fourier bispectrum
of $\theta$.

\begin{theorem}\label{thm:MRA-mom}
For any $L \geq 1$,
{\footnotesize
\begin{align*}
s_1(\theta)&=\frac{1}{2}\Big(\theta^{(0)}-\theta_*^{(0)}\Big)^2\\
s_2(\theta)&
=\frac{1}{4}\Big((\theta^{(0)})^2-(\theta_*^{(0)})^2\Big)^2
+\frac{1}{8}\sum_{l=1}^L \Big(r_l(\theta)^2-r_l(\theta_*)^2\Big)^2\\
s_3(\theta)&=\frac{1}{48}\Big((u^{(0)}(\theta))^3-(u^{(0)}(\theta_*))^3\Big)^2
+\frac{1}{16}\mathop{\sum_{l,l',l''=0}^L}_{l=l'+l''}
\Big|u^{(l)}(\theta)\overline{u^{(l')}(\theta) u^{(l'')}(\theta)} -
u^{(l)}(\theta_*)\overline{u^{(l')}(\theta_*)u^{(l'')}(\theta_*)} \Big|^2\\
&=\frac{1}{12}\Big((\theta^{(0)})^3-(\theta_*^{(0)})^3\Big)^2
+\frac{1}{8}\sum_{l=1}^L
\Big(\theta^{(0)} \cdot r_l(\theta)^2-\theta_*^{(0)} \cdot
r_l(\theta_*)^2\Big)^2\\
&\hspace{0.2in}+\frac{1}{16}\mathop{\sum_{l,l',l''=1}^L}_{l=l'+l''}\bigg(
r_{l,l',l''}(\theta)^2+r_{l,l',l''}(\theta_*)^2
-2r_{l,l',l''}(\theta)r_{l,l',l''}(\theta_*)
\cos\big(\lambda_{l,l',l''}(\theta_*)-\lambda_{l,l',l''}(\theta)\big)\bigg).
\end{align*}}
\end{theorem}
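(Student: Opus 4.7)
The plan is to compute $T_k(\theta)$ from its definition in \eqref{eq:Tk} after a unitary change of basis from the real Fourier basis $\{h_0,h_{l1},h_{l2}\}$ of \eqref{eq:Fourierbasis} to the complex exponential basis $\{e^{2\pi\i lt}\}_{l=-L}^L$. In the latter, the $\SO(2)$ action is diagonal: the coefficient at frequency $l$ is $c_0(\theta)=\theta^{(0)}$, $c_l(\theta)=\overline{u^{(l)}(\theta)}/\sqrt{2}$ for $l\ge 1$, and $c_{-l}(\theta)=u^{(l)}(\theta)/\sqrt{2}$, and rotation acts as $c_l\mapsto e^{2\pi\i l\frakg}c_l$. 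Since the change of basis is unitary, the tensor Hilbert--Schmidt norm is preserved, so integrating over $\frakg$ gives the compact identity
\[
\|T_k(\theta)-T_k(\theta_*)\|_{\HS}^2 = \sum_{\substack{l_1,\ldots,l_k\in\{-L,\ldots,L\}\\ l_1+\cdots+l_k=0}}\bigg|\prod_{j=1}^k c_{l_j}(\theta) - \prod_{j=1}^k c_{l_j}(\theta_*)\bigg|^2.
\]
For $k=1$ only $l_1=0$ contributes and yields $s_1=\tfrac{1}{2}(\theta^{(0)}-\theta_*^{(0)})^2$; for $k=2$ the constraint $l_2=-l_1$ gives one term of value $(\theta^{(0)})^2$ and, for each $l\ge 1$, two ordered pairs $(\pm l,\mp l)$ with value $|c_l|^2=r_l^2/2$, and dividing by $2\cdot 2!=4$ recovers the claimed $s_2$.

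The main computation is $k=3$. Partition the triples $(l_1,l_2,l_3)$ with $l_1+l_2+l_3=0$ according to how many zero entries they contain. The single triple $(0,0,0)$ contributes $((\theta^{(0)})^3-(\theta_*^{(0)})^3)^2$. Triples with exactly one zero are the six orderings of $(0,l,-l)$ for each $l\ge 1$, each of value $\theta^{(0)}r_l^2/2$. Triples with no zero entries have one index of one sign and two of the opposite sign, and are parametrized by $(l,l',l'')$ with $l=l'+l''$, $l,l',l''\ge 1$: the complex entry at $(l,-l',-l'')$ equals $\tfrac{1}{2\sqrt 2}\overline{u^{(l)}}u^{(l')}u^{(l'')}=\tfrac{1}{2\sqrt 2}\,r_{l,l',l''}\,e^{-\i\lambda_{l,l',l''}}$, and the sign-reversed triple $(-l,l',l'')$ carries the complex conjugate. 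Expanding $|r_\theta e^{-\i\lambda_\theta}-r_{\theta_*}e^{-\i\lambda_{\theta_*}}|^2$ by the law of cosines produces the $r_{l,l',l''}(\theta)^2+r_{l,l',l''}(\theta_*)^2-2\,r_{l,l',l''}(\theta)r_{l,l',l''}(\theta_*)\cos(\lambda_{l,l',l''}(\theta_*)-\lambda_{l,l',l''}(\theta))$ factor appearing in the statement. Counting orderings (six for each sign-pattern when $l'\ne l''$, three when $l'=l''$), incorporating the factor $\tfrac{1}{8}$ from squaring $\tfrac{1}{2\sqrt 2}$, and multiplying by $\tfrac{1}{12}=\tfrac{1}{2\cdot 3!}$ yields the second displayed form of $s_3$, with coefficients $\tfrac{1}{12}$, $\tfrac{1}{8}$, and $\tfrac{1}{16}$ as claimed.

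The equivalence with the first displayed form of $s_3$ is then a short arithmetic check: in the ordered sum $\sum_{l,l',l''=0,\,l=l'+l''}^L$ the triple $(0,0,0)$ appears with coefficient $\tfrac{1}{16}$, and $\tfrac{1}{48}+\tfrac{1}{16}=\tfrac{1}{12}$ recovers the coefficient on $((\theta^{(0)})^3-(\theta_*^{(0)})^3)^2$; for each $l\ge 1$ the two ordered pairs $(0,l)$ and $(l,0)$ combine into $\tfrac{2}{16}=\tfrac{1}{8}$ on each $(r_l^2\theta^{(0)})^2$ term. The only subtlety is the combinatorial bookkeeping in the bispectrum case---the six-versus-three orderings of $(l,-l',-l'')$ and its sign-reversed companion---which must be reconciled with the ordered-pair enumeration $\sum_{l,l',l''=1}^L$ in the statement.
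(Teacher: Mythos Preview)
Your proof is correct and takes essentially the same route as the paper. Both approaches diagonalize the $\SO(2)$ action via the complex Fourier coefficients and reduce the computation of $s_k$ to enumerating index tuples satisfying a sum-to-zero constraint. The only organizational difference is that the paper first invokes Lemma~\ref{lem:skform} to rewrite $s_k$ in terms of $\E_g[\langle\theta,g\cdot\vartheta\rangle^k]$ and then expands this scalar in powers of $e^{2\pi\i l\frakg}$, whereas you compute the tensor $T_k$ itself in the diagonalizing basis and read off the Hilbert--Schmidt norm directly as your displayed identity $\|T_k(\theta)-T_k(\theta_*)\|_{\HS}^2=\sum_{l_1+\cdots+l_k=0}|\prod_j c_{l_j}(\theta)-\prod_j c_{l_j}(\theta_*)|^2$. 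Your formulation makes the role of the sum-zero constraint slightly more transparent and organizes the $k=3$ case cleanly by number of zero indices; the paper's route is a term-by-term expansion of the cube that reaches the same place. The combinatorial reconciliation you flag (six versus three orderings, matched against the ordered sum $\sum_{l=l'+l''}$) works out exactly as you indicate: both cases give coefficient $\tfrac{1}{16}$ per ordered triple.
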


Since each moment variety $\cV_k(\theta_*)$ in (\ref{eq:Vk}) is
precisely the set of points $\{\theta \in \R^d:
s_1(\theta)=0,\ldots,s_k(\theta)=0\}$, this implies also that
\[\cV_0(\theta_*)=\R^d, \qquad
\cV_1(\theta_*)=\{\theta:\theta^{(0)}=\theta_*^{(0)}\},\]
\[\cV_2(\theta_*)=\{\theta:\theta^{(0)}=\theta_*^{(0)} \text{ and }
r_l(\theta)=r_l(\theta_*) \text{ for each } l=1,\ldots,L\}.\]
Thus the minimization of $s_1(\theta)$ on $\cV_0(\theta_*)$ is over the
global function mean $\theta^{(0)}$, the minimization of $s_2(\theta)$ on
$\cV_1(\theta_*)$ is over the Fourier power spectrum
$\{r_l(\theta):l=1,\ldots,L\}$, and the minimization of $s_3(\theta)$ on
$\cV_2(\theta_*)$ is over the Fourier bispectrum
$\{\lambda_{l,l',l''}(\theta):l=l'+l''\}$.

In high noise, minimizing the
population log-likelihood function $R(\theta)$ becomes similar to successively
minimizing $s_1(\theta)$, $s_2(\theta)$, and $s_3(\theta)$.
The following result describes the nature of these three
optimization landscapes.

\begin{theorem}\label{thm:MRAlandscape}
For any $L \geq 1$ and generic $\theta_* \in \R^d$, the minimizations 
of $s_1(\theta)$ over $\cV_0(\theta_*)$ and of $s_2(\theta)$ over
$\cV_1(\theta_*)$ are globally benign. However, for any $L \geq 30$, there
exists a non-empty open subset $U \subset \R^d$ such that
for any $\theta_* \in U$, the minimization of $s_3(\theta)$ over
$\cV_2(\theta_*)$ has a local minimizer outside $\orbit_{\theta_*}$ that
is non-degenerate up to orbit.
\end{theorem}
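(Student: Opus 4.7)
The first two claims reduce to short direct computations. The function $s_1(\theta)=\tfrac{1}{2}(\theta^{(0)}-\theta_*^{(0)})^2$ is a convex quadratic depending only on $\theta^{(0)}$, so minimizing it on $\cV_0(\theta_*)=\R^d$ is trivially globally benign. On $\cV_1(\theta_*)=\{\theta:\theta^{(0)}=\theta_*^{(0)}\}$, the formula in Theorem~\ref{thm:MRA-mom} shows $s_2$ decouples as a sum over $l$ of the radial objective $\tfrac{1}{8}\bigl((\theta_1^{(l)})^2+(\theta_2^{(l)})^2-r_l(\theta_*)^2\bigr)^2$; its critical set in each frequency plane is the minimizing circle together with the origin, where the local $2\times 2$ Hessian block equals $-r_l(\theta_*)^2\Id$ and supplies a direction of strictly negative curvature. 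Hence $s_2|_{\cV_1(\theta_*)}$ is also globally benign.

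For the third claim the plan is first to reduce to a bispectrum-phase optimization. On $\cV_2(\theta_*)$ the first two terms of $s_3$ in Theorem~\ref{thm:MRA-mom} vanish, and parametrizing the remaining freedom by the phase shifts $\mu_l=\lambda_l(\theta_*)-\lambda_l(\theta)$ reduces the objective to
\[F(\mu)=\frac{1}{8}\sum_{l=l'+l''}r_{l,l',l''}(\theta_*)^2\bigl(1-\cos(\mu_l-\mu_{l'}-\mu_{l''})\bigr)\]
on the torus $(\R/2\pi\mathbb{Z})^L$. For generic $\theta_*$ all weights are positive and $\nabla F=0$ forces $\mu_l-\mu_{l'}-\mu_{l''}\in\pi\mathbb{Z}$ for every admissible triple; since the resulting map $l\mapsto e^{2\i\mu_l}$ must then be a multiplicative homomorphism of $\mathbb{Z}_{>0}$ into $\C^{\times}$, every critical $\mu$ has the form $\mu_l=l\mu_1+\pi\chi_l$ for some $\mu_1\in\R$ and $\chi\in\{0,1\}^L$. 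Quotienting by the orbit (freely varying $\mu_1$) and by the equivalence $\chi\sim\chi+(l\bmod 2)$ generated by the half-rotation $\mu_1=\pi$, the critical set in $\cV_2(\theta_*)/\G$ consists of exactly $2^{L-1}$ classes, of which one is $\orbit_{\theta_*}$ and the other $2^{L-1}-1$ are candidate spurious minimizers.

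At $\mu^\chi=\pi\chi$ the Hessian is the signed sum of rank-one matrices
\[H^\chi=\frac{1}{8}\sum_{l=l'+l''}r_{l,l',l''}(\theta_*)^2(-1)^{\chi_l+\chi_{l'}+\chi_{l''}}\,(e_l-e_{l'}-e_{l''})(e_l-e_{l'}-e_{l''})^\top,\]
which always annihilates the orbit tangent $v_{\mathrm{orbit}}=(1,2,\ldots,L)^\top$; so non-degeneracy up to orbit becomes the condition $H^\chi|_V\succ 0$ on $V=v_{\mathrm{orbit}}^\perp$. Splitting by sign as $H^\chi=H_+^\chi-H_-^\chi$ with both parts positive semidefinite, the heart of the proof is to exhibit, for each $L\geq 30$, a non-orbit $\chi^\star\in\{0,1\}^L$ together with an open set $U\subset\R^d$ of base signals such that (i) the hypergraph of ``good'' triples (those with $\chi^\star_l+\chi^\star_{l'}+\chi^\star_{l''}$ even) has only $v_{\mathrm{orbit}}$ in its kernel, so $H_+^{\chi^\star}|_V\succ 0$, and (ii) the frequency magnitudes $r_l(\theta_*)$ can be tuned (for instance by choosing them small on $\{l:\chi^\star_l=1\}$) so that $\|H_-^{\chi^\star}|_V\|<\lambda_{\min}(H_+^{\chi^\star}|_V)$. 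Continuous dependence of both matrices on $\theta_*$ then propagates this strict inequality to a nonempty open neighborhood $U$. The main obstacle is the combinatorial/linear-algebraic design in (i): one must find a non-trivial $0/1$ labelling whose good-triple hypergraph is rigid enough modulo the single orbit direction, and the threshold $L\geq 30$ reflects the fact that for smaller $L$ every non-orbit $\chi$ leaves an extra null coordinate in $V$ along which $H_-^\chi$ contributes strictly, yielding a saddle rather than a minimum.
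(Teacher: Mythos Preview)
Your treatment of $s_1$ and $s_2$ is correct and matches the paper's argument.

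For $s_3$, the reduction to the phase objective $F(\mu)$ on $\cV_2(\theta_*)$ is right, and the points $\mu^\chi=\pi\chi$ are indeed critical. However, two steps break down. First, the classification is unsound: generically positive weights do \emph{not} force every $\sin(\mu_l-\mu_{l'}-\mu_{l''})$ to vanish at a critical point, since the gradient system has $L$ equations but $O(L^2)$ sine terms and cancellations can occur. This is inessential, as you only need to exhibit one spurious minimum.

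Second, and this is the real gap, the proposed mechanism for (ii) does not work. Shrinking $r_l$ on $S=\{l:\chi^\star_l=1\}$ suppresses every triple touching $S$, good and bad alike; the surviving good triples involve only indices outside $S$, so their vectors $e_l-e_{l'}-e_{l''}$ vanish on the coordinates in $S$, and $H_+^{\chi^\star}$ acquires an $|S|$-dimensional near-null space. Concretely, for the natural choice $\chi^\star=e_1$ one has
\[
e_1^\top H^{\chi^\star} e_1=\frac{r_1^2}{8}\Bigl(4r_2^2r_1^2-2\sum_{l\ge 3}r_l^2 r_{l-1}^2\Bigr),
\]
which is strictly negative once $r_1$ is small, so $\pi e_1$ becomes a saddle rather than a minimum. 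No alternative tuning is given, and the stated reason for the threshold $L\ge 30$ is speculation.

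The paper uses the \emph{same} critical point $\hat t=(\pi,0,\dots,0)$ but the opposite idea: it makes $r_2$ very large, taking $r_*=(1,L^{\kappa/2},0,1,\dots,1)$ for large $\kappa$. This amplifies the good triples $(2,1,1)$, $(4,2,2)$ and $\{(l{+}2,l,2)\}_{l\ge 4}$, and a case analysis (splitting on whether $|v_{l+2}-v_l-v_2|\le\epsilon$ for all $l\ge 4$) shows $\nabla^2 s(\hat t)\succeq 0$ with kernel exactly $\mathrm{span}\{e,e_3\}$, the direction $e_3$ arising because $r_3=0$. A second step perturbs $r_3$ to a small positive value and verifies by first-order eigenvalue perturbation that the $e_3$ null direction lifts, yielding rank $L-1$. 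The bound $L\ge 30$ comes from the concrete inequality $(L-8)(L-10)/8\ge 2(L-4)$ in that case analysis, not from a combinatorial obstruction ranging over all $\chi$.
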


The correspondence between optimization landscapes shown in
Theorem \ref{thm:landscape}(a) then implies that,
for the class of signals $\theta_* \in U$ described in Theorem
\ref{thm:MRAlandscape} and in sufficiently high noise,
the landscape of the population negative log-likelihood function $R(\theta)$
must also have spurious local minimizers near those of $s_3(\theta)$.
The particular local minimizers of $s_3(\theta)$ that we
exhibit in the proof of Theorem \ref{thm:MRAlandscape} correspond to
certain Fourier phase shifts of the true signal. This example is somewhat
analogous to the spurious local minimizers discovered in
dimensions $d \geq 53$ for the log-likelihood landscape of
discrete MRA in \cite[Section 4.6]{fan2020likelihood}.

We conjecture, based on the algebraic similarities between these
models, that spurious local minimizers of $R(\theta)$ may also exist for
generic $\theta_* \in \R^d$ in the $\SO(3)$-rotational models to be discussed in
Section \ref{sec:func-est-so3}, and we leave this as an open question.

\section{Spherical registration and cryo-EM} \label{sec:func-est-so3}

We now describe examples of estimating a function in 2 or 3
dimensions, observed under $\SO(3)$ rotations of its domain.
Section \ref{subsec:sphericalregistration}
studies estimation on the sphere, Section \ref{subsec:cryoEM} studies
estimation in $\R^3$, and Section \ref{subsec:projectedcryoEM} studies a
simplified ``cryo-EM model'' of estimation in $\R^3$ with a tomographic
projection onto a 2-dimensional plane.

\subsection{Spherical registration}\label{subsec:sphericalregistration}

Let $\sS^2 \subset \R^3$ be the unit sphere, and let $f:\sS^2 \to \R$ be a
function on this sphere. We parametrize $\sS^2$ by the latitude $\phi_1 \in
[0,\pi]$ and longitude $\phi_2 \in [0,2\pi)$. Writing
$f_\frakg(\phi_1,\phi_2)=f(\frakg^{-1} \cdot
(\phi_1,\phi_2))$ for the rotation of the function $f$, we
consider the observation model with samples
\[f_\frakg(\phi_1,\phi_2)\,\der(\phi_1,\phi_2)+\sigma\,\der W(\phi_1,\phi_2)\]
where $\frakg \in \SO(3)$ is a uniform random rotation for each sample,
$\der(\phi_1,\phi_2)=\sin\phi_1\,\der \phi_1\,\der \phi_2$ denotes
the surface area measure on $\sS^2$, and $\der W(\phi_1,\phi_2)$ is
a standard Gaussian white noise process on $\sS^2$. This observation model may
be understood as observing a realization of the Gaussian process
$\{\int h(\phi_1,\phi_2)[f_\frakg(\phi_1,\phi_2)\der(\phi_1,\phi_2)
+\sigma\,\der W(\phi_1,\phi_2)]\}_{h \in L_2(\cS^2)}$
defined analogously to (\ref{eq:GPwhitenoise}),
or equivalently, as observing each coefficient of $f_\frakg$ in an
orthonormal basis of $L_2(\cS^2)$ with i.i.d.\ $\N(0,\sigma^2)$ noise.

We choose as our orthonormal basis the real spherical harmonics
\[h_{lm}(\phi_1, \phi_2) \quad \text{ for } l=0,1,2,\ldots \text{ and }
m=-l,-l+1,\ldots,l-1,l.\]
We assume that
$f:\sS^2 \to \R$ has a finite bandlimit $L \geq 1$ in this basis, i.e.\ it
takes the form
\begin{equation}\label{eq:bandlimitedspherical}
f(\phi_1,\phi_2)=\sum_{l=0}^L \sum_{m=-l}^l \theta_m^{(l)}h_{lm}(\phi_1,\phi_2).
\end{equation}
We may then represent $f$ by its vector of real spherical harmonic coefficients
\[\theta=(\theta_m^{(l)}:\,l=0,\ldots,L \text{ and } m=-l,\ldots,l) \in \R^d,
\qquad d=(L+1)^2.\]
This subspace of bandlimited functions is closed under
$\SO(3)$-rotations of $\cS^2$, and we review the forms of $h_{lm}$ and
of the rotational action on the basis coefficients in Appendix \ref{sec:sr}.

The following result describes the decomposition of total dimension in Theorem
\ref{thm:FI}(a) for bandlimits $L \geq 10$.

\begin{theorem} \label{thm:S2registration}
For any $L \geq 10$, we have
\begin{align*}
\trdeg(\cR_{\leq 1}^\G)=1, \quad
\trdeg(\cR_{\leq 2}^\G)=L+1, \quad
\trdeg(\cR_{\leq 3}^\G)=\trdeg(\cR^\G)=d-3.
\end{align*}
\end{theorem}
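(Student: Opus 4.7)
The plan is to apply Lemma \ref{lem:trdeg}, reducing each of the three transcendence degree equalities to the rank of $\nabla^2(s_1+\cdots+s_k)|_{\theta=\theta_*}$ at a generic $\theta_*$. The equality $\trdeg(\cR^\G)=d-3$ is immediate from Proposition \ref{prop:trdegorbitdim}: for $L\geq 1$, a generic bandlimit-$L$ function on $\cS^2$ has trivial $\SO(3)$-stabilizer, so its orbit attains the full group dimension $3$.

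For the first two degrees I use the isotypic decomposition $\theta=(\theta^{(0)},\theta^{(1)},\ldots,\theta^{(L)})$ with $\theta^{(l)}\in\R^{2l+1}$ carrying the real spin-$l$ representation of $\SO(3)$. By Schur's lemma, Haar averaging annihilates every $l\geq 1$ component of $T_1$, so $s_1(\theta)=\tfrac{1}{2}(\theta^{(0)}-\theta_*^{(0)})^2$ has Hessian of rank $1$. Again by Schur, $T_2(\theta)$ is block-diagonal with the $l$-th block a scalar multiple of the identity on $\R^{2l+1}$ proportional to $\|\theta^{(l)}\|^2$, so the invariants from $T_1,T_2$ are algebraically generated by $\theta^{(0)}$ together with $\|\theta^{(l)}\|^2$ for $l=1,\ldots,L$, whence $\trdeg(\cR_{\leq 2}^\G)=L+1$.

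The main content is $\trdeg(\cR_{\leq 3}^\G)=d-3$, which I will establish by induction on $L$ starting from a base case at $L=10$ verified by a direct rank computation on $\nabla^2(s_1+s_2+s_3)$ at a carefully chosen rational point, reducible to an explicit finite matrix of Clebsch-Gordan triple products checkable in exact arithmetic. For the inductive step from bandlimit $L-1$ to $L$, one adjoins the $(2L+1)$-dimensional block $\theta^{(L)}$, and I must show the rank of the cumulative Hessian increases by exactly $2L+1$. A cubic $\SO(3)$-invariant coupling $\theta^{(L)}$ with lower-frequency blocks $\theta^{(l_1)}, \theta^{(l_2)}$ exists precisely when $|l_1-l_2|\leq L\leq l_1+l_2$, and its gradient in $\theta^{(L)}$ at $\theta_*$ is a contraction of $\theta_*^{(l_1)}\otimes\theta_*^{(l_2)}$ with Clebsch-Gordan coefficients. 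By specializing $\theta_*$ to a convenient configuration supported on a few suitable low frequencies and invoking lower semicontinuity of rank, the inductive step reduces to showing that, as $(l_1,l_2)$ ranges over triangle-admissible pairs, these gradients span all of $\R^{2L+1}$.

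The principal obstacle is not merely producing $2L+1$ new rank contributions in $\theta^{(L)}$, but ensuring nonsingularity of the appropriate Schur complement against the previously accounted-for coordinates, so that the new and old Hessian blocks do not cancel rank against one another. I plan to select a short window of pairs $(l_1,l_2)$ centered around $L/2$, exploit the multiplicity-one Clebsch-Gordan structure of $V_{l_1}\otimes V_{l_2}\otimes V_L$ to write the relevant matrix in closed form, and reduce the full-rank assertion to a determinantal identity or generating-function argument. The threshold $L\geq 10$ in the statement corresponds precisely to the smallest bandlimit at which enough triangle-admissible pairs $(l_1,l_2)$ become available both to close the inductive step and to render the base case verifiable by an explicit finite computation.
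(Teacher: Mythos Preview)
Your high-level strategy matches the paper's: Proposition \ref{prop:trdegorbitdim} for $\trdeg(\cR^\G)=d-3$, Lemma \ref{lem:trdeg} for the Hessian ranks, the explicit forms of $s_1,s_2$ for the first two degrees, a numerical base case at $L=10$, and an inductive rank-increase argument for $L\geq 11$. However, the execution of the inductive step differs substantially, and your plan has a soft spot worth flagging.

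First, the Schur complement concern you raise is real in general but disappears in the paper's setup. The paper works with the Jacobian $\der_\eta B(\eta_*)$ (equivalent in rank to $\nabla^2 s_3(\theta_*)$), and observes that if one restricts to bispectral functions $B_{l,l',l''}$ with $\max(l,l',l'')\leq L$ partitioned by the value of that maximum, the Jacobian is block lower-triangular: $B^{(l)}$ depends only on $\eta^{(l')}$ for $l'\leq l$. Thus the inductive step reduces cleanly to showing the single diagonal block $\der_{\eta^{(l)}}B^{(l)}(\eta_*)$ has full column rank $2l+1$, with no interaction against lower blocks. You would save yourself the Schur complement analysis by making this observation.

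Second, the paper's construction for that diagonal block (Lemma \ref{lem:S2rankincrease}) is quite different from what you sketch. Rather than pairs $(l_1,l_2)$ near $L/2$ and a determinantal or generating-function identity, the paper selects $2l+1$ specific rows $B_{l,l',l''}$ with $l'\in\{l-1,l\}$ and $l''$ ranging over small fixed constants $\{1,\ldots,5\}$ together with a few values near $l$, then specializes $\eta_*$ so that in each frequency $l'$ only one or two $m'$-components are nonzero. This sparsity forces each selected row of $\der_{\eta^{(l)}}B^{(l)}(\eta_*)$ to have at most a couple of nonzero entries, and the resulting $(2l+1)\times(2l+1)$ matrix becomes block lower-triangular in $1\times 1$ and $2\times 2$ blocks whose nonsingularity is checked by hand using a Clebsch--Gordan nonvanishing lemma (Lemma \ref{lemma:CGnonvanishing}). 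Your proposal to use pairs centered at $L/2$ would force the support of the specialized $\theta_*$ to move with $L$, and the promised determinantal identity is left unspecified; it is not clear this route closes without an argument of comparable explicitness.
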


\begin{corollary} \label{cor:S2registration}
A generic signal $\theta_* \in \R^d$ in
this spherical registration model for $L \geq 10$ has the following properties:
\begin{enumerate}
\item[(a)] $\theta_*$ may be identified up to a finite list
of orbits by the moments of $g \cdot \theta_*$ up to order $K = 3$.

\item[(b)] For $(\theta_*, \G)$-dependent constants $C, c > 0$ independent of
$\sigma$, the Fisher information $I(\theta_*)$ has $d_0 = 3$
eigenvalues of 0 and $d_k$
eigenvalues in $[c \sigma^{-2k}, C \sigma^{-2k}]$ for
$k = 1, 2, 3$ and $(d_1, d_2, d_3) = (1, L, L(L + 1) - 3)$.
\end{enumerate}
\end{corollary}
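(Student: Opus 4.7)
The plan is to obtain Corollary \ref{cor:S2registration} as an immediate consequence of Theorem \ref{thm:S2registration} combined with the general framework of Section \ref{sec:general}. Since all the substantive algebraic content is already packaged into Theorem \ref{thm:S2registration}, the proof of the corollary is essentially a bookkeeping exercise, and the main obstacle lies upstream in proving Theorem \ref{thm:S2registration} itself.

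For part (a), I would first use Proposition \ref{prop:Kdef}(a) to extract from Theorem \ref{thm:S2registration} the value of $K$. Since $\trdeg(\cR_{\leq 2}^\G) = L+1$ while $\trdeg(\cR^\G) = d - 3 = (L+1)^2 - 3$, and $L+1 < (L+1)^2 - 3$ for $L \geq 10$, the equality $\trdeg(\cR_{\leq K}^\G) = \trdeg(\cR^\G)$ fails at $K = 2$ and holds at $K = 3$, so $K = 3$. Then invoke \cite[Theorem 4.9]{bandeira2017estimation} (reviewed in Appendix \ref{appendix:trdeg}), which states that the moments of $g \cdot \theta_*$ up to order $K$ identify a generic $\theta_*$ up to a finite list of orbits precisely when they generate a subalgebra of transcendence degree $\trdeg(\cR^\G)$. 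This gives (a).

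For part (b), I would apply Theorem \ref{thm:FI}(a1), which says that for generic $\theta_*$ and large enough $\sigma$, exactly $d_k$ eigenvalues of $I(\theta_*)$ lie in $[c\sigma^{-2k}, C\sigma^{-2k}]$ for $k = 1, \dots, K$, with the $d_k$ defined by (\ref{eq:dk}). By Proposition \ref{prop:trdegorbitdim},
\[
d_0 = \dim(\orbit_{\theta_*}) = d - \trdeg(\cR^\G) = (L+1)^2 - \big((L+1)^2 - 3\big) = 3,
\]
matching $\dim(\SO(3)) = 3$. Using Theorem \ref{thm:S2registration} and the convention $\trdeg(\cR_{\leq 0}^\G) = 0$,
\[
d_1 = 1, \qquad d_2 = (L+1) - 1 = L, \qquad d_3 = (d - 3) - (L+1) = L(L+1) - 3,
\]
as claimed. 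A consistency check is that $d_0 + d_1 + d_2 + d_3 = 3 + 1 + L + L(L+1) - 3 = (L+1)^2 = d$. Combined with Theorem \ref{thm:FI}(a1), this yields (b).

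The only non-trivial input is Theorem \ref{thm:S2registration}, whose proof (later in the paper) is where the genuine work takes place: via Lemma \ref{lem:trdeg}, each transcendence degree claim reduces to a rank computation for $\nabla^2 s_1(\theta) + \cdots + \nabla^2 s_k(\theta)$ at a generic $\theta$, and one must argue --- through the inductive frequency-marching argument on $L$ with explicit verification at small base cases --- that $\nabla^2 s_3(\theta)$ attains the maximum possible rank $d - 3$. The corollary itself requires none of that machinery beyond citing the outputs of Theorems \ref{thm:FI} and \ref{thm:S2registration}.
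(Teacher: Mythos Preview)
Your proposal is correct and matches the paper's approach: the corollary is not given a separate proof in the paper but is treated as an immediate consequence of Theorem \ref{thm:S2registration} together with Theorem \ref{thm:FI} and \cite[Theorem 4.9]{bandeira2017estimation}, exactly as you describe (compare the analogous remark after Corollary \ref{cor:MRA-cor}). Your arithmetic for $(d_0,d_1,d_2,d_3)$ and the identification $K=3$ is correct.
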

%\begin{proof}
%This follows from Theorem \ref{thm:S2registration} by applying
%\cite[Theorem 4.9]{bandeira2017estimation} and Theorem \ref{thm:FI}(a).
%\end{proof}

\begin{remark}
The result of Theorem \ref{thm:S2registration} was conjectured for all
bandlimits $L \geq 10$ in \cite[Conjecture 5.6]{bandeira2017estimation}, and
it was verified numerically in exact-precision arithmetic for $L \in \{10,
\ldots, 16\}$. Our result resolves this conjecture for all $L \geq 10$.
Conversely, for low bandlimits $L \leq 9$, it was shown
in \cite[Section 5.4]{bandeira2017estimation} that $K > 3$ strictly, meaning
that moments up to $3^\text{rd}$ order are insufficient to locally identify
$\theta_*$ up to its orbit.
\end{remark}

Turning to the forms of $s_k(\theta)$ in (\ref{eq:momentoptunprojected}),
let us denote the real spherical harmonic coefficients of the true function
by $\theta_* \in \R^d$. We write as shorthand
\[u^{(l)}(\theta)=(u_m^{(l)}(\theta):m=-l,\dots,l) \in \C^{2l+1}\]
for the complex spherical harmonic coefficients at frequency $l$, which
are defined from the real coefficients $(\theta_m^{(l)}:m=-l,\ldots,l)$ by a
unitary transform described in (\ref{eq:u-theta}). We denote
\begin{equation}\label{eq:Bl}
B_{l,l',l''}(\theta)=
\mathop{\sum_{m=-l}^l \sum_{m'=-l'}^{l'}
\sum_{m''=-l''}^{l''}}_{m''=m+m'}
\langle l,m;l',m'|l'',m'' \rangle
\overline{u_m^{(l)}(\theta)u_{m'}^{(l')}(\theta)} u_{m''}^{(l'')}(\theta)
\end{equation}
where $\langle l,m;l',m'|l'',m'' \rangle \in \R$ is the Clebsch-Gordan
coefficient. These quantities express the
integrals of three-fold products of spherical harmonics over $\cS^2$ and
arise naturally in the computation of $3^\text{rd}$-order
moments of $g \cdot \theta$.
We review their definition in Appendix \ref{sec:rc-harmonic}. The functions
$B_{l,l',l''}(\theta)$ are analogous to the scaled components
$r_{l,l',l''}(\theta)\lambda_{l,l',l''}(\theta)$ of the Fourier bispectrum
that appeared in the 1-dimensional MRA example of Section \ref{sec:func-est-so2}.
The minimizations of $s_1(\theta)$, $s_2(\theta)$, and $s_3(\theta)$
described in Theorem \ref{thm:landscape} may then
be analogously understood as minimizing the global function mean, the
power in each spherical harmonic frequency, and certain ``bispectrum'' variables
for each frequency.

\begin{theorem} \label{thm:S2registration-mom}
For any $L \geq 1$,
\begin{align*}
s_1(\theta)&=\frac{1}{2}\Big(u^{(0)}(\theta)-u^{(0)}(\theta_*)\Big)^2\\
s_2(\theta)&=\frac{1}{4}\sum_{l=0}^L \frac{1}{2l+1}\Big(\|u^{(l)}(\theta)\|^2
-\|u^{(l)}(\theta_*)\|^2\Big)^2\\
s_3(\theta)&=\frac{1}{6}\mathop{\sum_{l,l',l''=0}^L}_{|l-l'| \leq l''
\leq l+l'} \frac{1}{2l''+1}\Big|B_{l,l',l''}(\theta)-B_{l,l',l''}(\theta_*)\Big|^2.
\end{align*}
\end{theorem}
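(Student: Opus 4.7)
My plan is to directly evaluate $s_k(\theta) = \frac{1}{2(k!)}\|T_k(\theta) - T_k(\theta_*)\|_\HS^2$ for $k = 1, 2, 3$ by combining the identity
\begin{equation*}
\langle T_k(\theta), T_k(\theta')\rangle = \int_\G \langle \theta, g \cdot \theta'\rangle^k\,\der \Lambda(g),
\end{equation*}
which follows from $\langle a^{\otimes k}, b^{\otimes k}\rangle = \langle a, b\rangle^k$ together with left-invariance of $\Lambda$, with the irreducible decomposition of $\SO(3)$. After passing to the complex spherical harmonic basis $u^{(l)}(\theta)$ via the unitary transform in (\ref{eq:u-theta}), the action of $\G$ is block-diagonal and given by the Wigner $D$-matrices $D^l(g)$, and the Euclidean inner product decomposes as $\langle \theta, g \cdot \theta'\rangle = \sum_{l=0}^L A_l(g)$ with $A_l(g) = \langle u^{(l)}(\theta), D^l(g) u^{(l)}(\theta')\rangle_\C$. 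Each $A_l$ is individually real, because $u^{(l)}(\theta)$ and $D^l(g) u^{(l)}(\theta')$ satisfy the same reality condition $v_{-m} = (-1)^m \overline{v_m}$.

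For $k = 1$, I would use the fact that $\int_\G D^l(g)\,\der \Lambda(g) = 0$ for every nontrivial irreducible $l \geq 1$, since such a representation carries no invariant vector; only the trivial $l = 0$ block survives, immediately giving the stated form for $s_1$. For $k = 2$, I would expand $(\sum_l A_l)^2$, rewrite one factor $A_l$ as $\overline{A_l}$ using its reality, and apply Schur orthogonality
\begin{equation*}
\int_\G D^l_{mn}(g)\,\overline{D^{l'}_{m'n'}(g)}\,\der \Lambda(g) = \frac{1}{2l+1}\,\delta_{ll'}\delta_{mm'}\delta_{nn'}.
\end{equation*}
Together with the reality identity $u^{(l)}_{-m} = (-1)^m \overline{u^{(l)}_m}$, the cross sums collapse diagonally in $l$, yielding $\langle T_2(\theta), T_2(\theta')\rangle = \sum_l (2l+1)^{-1}\|u^{(l)}(\theta)\|^2 \|u^{(l)}(\theta')\|^2$, and completing the square produces the stated form of $s_2$.

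For $k = 3$, I would expand $(\sum_l A_l)^3$ as a sum over ordered triples $(l_1, l_2, l_3)$ and again conjugate the third factor. The key integral is the triple Wigner identity
\begin{equation*}
\int_\G D^{l_1}_{m_1 n_1}(g)\,D^{l_2}_{m_2 n_2}(g)\,\overline{D^{l_3}_{m_3 n_3}(g)}\,\der \Lambda(g) = \frac{1}{2 l_3 + 1}\langle l_1, m_1; l_2, m_2 | l_3, m_3\rangle\,\langle l_1, n_1; l_2, n_2 | l_3, n_3\rangle,
\end{equation*}
obtained by inserting the Clebsch-Gordan decomposition $D^{l_1} \otimes D^{l_2} \cong \bigoplus_{l_3 = |l_1-l_2|}^{l_1+l_2} D^{l_3}$ into the integrand and applying Schur orthogonality to the reduced pieces. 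Substituting this identity and separating the $m$-sums from the $n$-sums, the coefficients of $\theta$ together with the Clebsch-Gordan weights reassemble into exactly the quantity $B_{l_1, l_2, l_3}(\theta)$ defined in (\ref{eq:Bl}); the corresponding $\theta'$-contractions yield $\overline{B_{l_1, l_2, l_3}(\theta')}$. The triangle constraint $|l_1 - l_2| \leq l_3 \leq l_1 + l_2$ emerges precisely from the support of the Clebsch-Gordan coefficients. Expanding $\|T_3(\theta) - T_3(\theta_*)\|_\HS^2$ and collecting terms then gives the claimed bispectrum form.

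The main obstacle is the bookkeeping of complex conjugates: consistently tracking the reality conditions on $u^{(l)}$ and on the matrix elements $D^l_{mn}$, so that the ordered triple sum reassembles cleanly as $\sum_{l_1, l_2, l_3} (2l_3+1)^{-1}|B_{l_1, l_2, l_3}(\theta) - B_{l_1, l_2, l_3}(\theta_*)|^2$ rather than a more intricate combination of $B$'s and $\overline{B}$'s. Once the reality bridge between real and complex coefficients is fixed at the outset, the three calculations are otherwise mechanical applications of standard $\SO(3)$ representation-theoretic integrals.
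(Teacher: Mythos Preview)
Your proposal is correct and follows essentially the same route as the paper's proof: both reduce $s_k$ to computing $\E_g[\langle \theta, g\cdot\vartheta\rangle^k]$ via Lemma~\ref{lem:skform}, pass to the complex spherical-harmonic coordinates $u^{(l)}$, and evaluate using the Wigner $D$-matrix integration identities (\ref{eq:ED}), (\ref{eq:WignerDorthog}), and (\ref{eq:WignerDtriple}). The only cosmetic difference is that you plan to conjugate one $A_l$ factor (using its reality) so as to invoke Schur orthogonality in its standard conjugated form, whereas the paper keeps all $D$-factors unconjugated and absorbs the resulting $(-1)^{m+q}$ signs via the symmetry (\ref{eq:usymmetryS2}); these are equivalent bookkeepings and lead to the same computation.
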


We prove Theorems \ref{thm:S2registration} and
\ref{thm:S2registration-mom} in Appendix \ref{sec:sr}.
Here, let us describe the high-level proof idea for Theorem
\ref{thm:S2registration}, which is used
also in our analyses of the cryo-EM models to follow. By Lemma
\ref{lem:trdeg}, it suffices to analyze the ranks of the Hessians $\nabla^2
s_1(\theta_*)$, $\nabla^2 s_2(\theta_*)$, and $\nabla^2 s_3(\theta_*)$ at a
generic point $\theta_* \in \R^d$. This analysis is straightforward for
$s_1,s_2$, and the core of
the proof is to show that $\nabla^2 s_3(\theta_*)$ has full rank $d-3$
(which accounts for the 3-dimensional orbit of $\theta_*$)
when $L \geq 10$.

Importantly, for any matrix $M(\theta)$ that is analytic in $\theta$,
we have $\rank(M(\theta))<k$ if and only if
every $k \times k$ submatrix of $M(\theta)$ has determinant 0. Because the
$k \times k$ minors are themselves analytic in $\theta$, 
this holds either for all $\theta \in \R^d$, or only for $\theta$ outside a
generic subset of $\R^d$. This implies the following fact.

\begin{fact}\label{fact:fullrank}
For any $k \geq 1$ and matrix $M(\theta)$ whose entries are
analytic in $\theta$, we have $\rank M(\theta_*) \geq k$ for generic points
$\theta_* \in \R^d$ if and only if there exists at least one point
$\theta_* \in \R^d$ for which this inequality holds.
\end{fact}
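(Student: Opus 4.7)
The plan is to reduce the rank condition to the non-vanishing of an analytic function, so that Definition \ref{def:generic} applies directly.

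First I would recall the standard characterization: for any matrix $M$, $\rank M \geq k$ if and only if there exists at least one $k \times k$ submatrix with nonzero determinant. Applied pointwise to $M(\theta)$, this means
\[
\rank M(\theta) \geq k \iff \exists\, I,J \text{ with } |I|=|J|=k \text{ such that } \det M_{I,J}(\theta) \neq 0,
\]
where $M_{I,J}$ denotes the submatrix indexed by rows $I$ and columns $J$. Since each entry of $M$ is analytic in $\theta$ by hypothesis, each minor $\det M_{I,J}(\theta)$ is a polynomial in analytic functions and hence itself analytic on $\R^d$.

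For the non-trivial ``if'' direction, suppose $\theta_0 \in \R^d$ satisfies $\rank M(\theta_0) \geq k$. Then at least one minor, say $\psi(\theta) := \det M_{I_0,J_0}(\theta)$, satisfies $\psi(\theta_0) \neq 0$. In particular $\psi$ is not identically zero on $\R^d$, and its zero set $Z = \{\theta \in \R^d : \psi(\theta) = 0\}$ is the vanishing locus of a single non-zero analytic function. By Definition~\ref{def:generic}, the set $S = \R^d \setminus Z$ is generic, and by construction every $\theta_* \in S$ satisfies $\det M_{I_0,J_0}(\theta_*) \neq 0$, hence $\rank M(\theta_*) \geq k$.

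The converse ``only if'' direction is immediate: if $\rank M(\theta) \geq k$ on some generic set $S \subseteq \R^d$, then $S$ has full Lebesgue measure (as noted after Definition~\ref{def:generic}, citing \cite{mityagin2020zero}) and in particular is non-empty, so at least one such $\theta_*$ exists. There is no substantive obstacle here; the only subtlety is in invoking the precise form of ``generic'' used in this paper (zero set of a non-zero analytic function rather than merely a Lebesgue-null set), but this is exactly what the minor construction furnishes.
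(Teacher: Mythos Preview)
Your proof is correct and follows essentially the same approach as the paper: the paper's justification (the paragraph immediately preceding the Fact) also reduces the rank condition to the vanishing of the $k\times k$ minors and invokes their analyticity. The only cosmetic difference is that you single out one nonvanishing minor to serve as the analytic function $\psi$, whereas the paper's phrasing implicitly packages all the minors together (Definition~\ref{def:generic} permits vector-valued $\psi$); either choice works.
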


\noindent Thus, to show that
$\rank(\nabla^2 s_3(\theta_*)) \geq d-3$ for generic $\theta_* \in \R^d$,
it suffices to construct a single point
$\theta_* \in \R^d$ where this holds. We do this by analyzing the explicit
form of $\nabla^2 s_3(\theta_*)$ derived from
Theorem \ref{thm:S2registration-mom}. For $L=10$, we exhibit such a point
$\theta_*$ numerically. We then use this as a base case to inductively
construct $\theta_*$ for all $L \geq 10$, by carefully choosing certain
coordinates of $\theta_*$ to be 0 so that
$\nabla^2 s_3(\theta_*)$ has a sparse structure and its rank
may be explicitly deduced from the ranks of $2 \times 2$ submatrices.

\subsection{Unprojected cryo-EM}\label{subsec:cryoEM}

Consider now a function $f:\R^3 \to \R$, and the action of $\SO(3)$ on 
$\R^3$ given by rotation about the origin. Write 
$f_\frakg(x)=f(\frakg^{-1} \cdot x)$ for the rotated function.
We consider the observation model with samples
\[f_\frakg(x)\,\der x+\sigma\,\der W(x)\]
where $\frakg \in \SO(3)$ is uniformly random for each sample, and $\der W(x)$
is a standard Gaussian white noise process on $\R^3$.
This is an unprojected model of the
single-particle reconstruction problem in cryo-EM, to which we will add a
tomographic projection in the next section. This model may be of
independent interest for applications to cryo-ET, described in Appendix
\ref{sec:cryoET}.

We model $f$ using a basis representation for its Fourier transform
$\hat{f}:\R^3 \to \C$, similar to the approach of
\cite[Section 5.5]{bandeira2017estimation}.
We parametrize the Fourier domain $\R^3$ by
spherical coordinates $(\rho,\phi_1,\phi_2)$ with radius
$\rho\geq 0$, latitude $\phi_1\in[0,\pi]$ and longitude $\phi_2\in[0,2\pi)$, and decompose
$\hat{f}(\rho,\phi_1,\phi_2)$ in a complex basis $\{\hat{j}_{lsm}\}$ given by
the product of the complex spherical harmonics $y_{lm}(\phi_1,\phi_2)$
(reviewed in Appendix \ref{sec:rc-harmonic})
with radial functions $z_s(\rho)$:
\begin{equation}\label{eq:C3basis}
\hat{j}_{lsm}(\rho,\phi_1,\phi_2)=z_s(\rho)y_{lm}(\phi_1,\phi_2)
\quad \text{ for } s \geq 1, \quad l \geq 0, \quad m \in \{-l,\ldots,l\}.
\end{equation}
Here $\{z_s:s \geq 1\}$ may be any system of radial
basis functions $z_s:[0,\infty) \to \R$ satisfying the orthogonality relation
\begin{equation}\label{eq:zorthogonality}
\int_0^\infty \rho^2 z_s(\rho) z_{s'}(\rho)d\rho=\1\{s=s'\},
\end{equation}
so that $\{\hat{j}_{lsm}\}$ are orthonormal over $L_2(\R^3,\C)$.
The inverse Fourier
transforms $\{j_{lsm}\}$ of $\{\hat{j}_{lsm}\}$ then provide
a complex orthonormal basis in the original signal domain of $f$.

Fixing integer bandlimits $L \geq 1$ and $S_0,\ldots,S_L \geq 1$, we define the 
index set
\begin{equation}\label{eq:indices}
\cI=\Big\{(l,s,m):0 \leq l \leq L,\; 1 \leq s \leq S_l,\;-l \leq m \leq l
\Big\}, \qquad d=|\cI|=\sum_{l=0}^L (2l+1)S_l
\end{equation}
and assume that $f$ is \emph{$(L, S_0, \ldots, S_L)$-bandlimited} in the sense
of admitting the finite basis representation
\begin{equation}\label{eq:bandlimitedS2}
f=\sum_{(l,s,m) \in \cI} u_m^{(ls)} \cdot j_{lsm}, \qquad
u=\big(u_m^{(ls)}:(l,s,m) \in \cI\big) \in \C^d.
\end{equation}
This corresponds to modeling the Fourier transform $\hat{f}$ up to the
spherical frequency $L$, and up to the radial frequency $S_l$ for each spherical
component $l=0,1,\ldots,L$. For real-valued functions $f$, writing
$u=\hat{V}^*\theta$ for a
unitary transform $\hat{V} \in \C^{d \times d}$ defined explicitly in
(\ref{eq:hatv-trans}), we then obtain a real sequence representation
\begin{equation}\label{eq:cryoEMbandlimited}
f=\sum_{(l,s,m) \in \cI} \theta_m^{(ls)} \cdot h_{lsm}, \qquad
\theta=\big(\theta_m^{(ls)}:(l,s,m) \in \cI\big) \in \R^d
\end{equation}
for a real-valued orthonormal basis $\{h_{lsm}\}$.
We describe the forms of $h_{lsm}$ and the rotational action on the basis
coefficients $\theta \in \R^d$ in Appendix \ref{sec:unprojCEM}.

The following result describes the decomposition of total dimension in Theorem
\ref{thm:FI}(a), assuming $L \geq 1$ and $S_l \geq 2$ for each
$l=0,\ldots,L$.
%In particular, this verifies that
%$\trdeg \cR_{\leq K}^\G=\trdeg \cR^\G$ for $K=3$.
(Note that the case of
$S_0=\ldots=S_L=1$ would be similar to the spherical registration example of
Section \ref{subsec:sphericalregistration}, and a lower bound of $L \geq 10$
would be needed in this case to ensure $K=3$.) 

\begin{theorem} \label{thm:cryoEM}
For any $L \geq 1$ and $S_0,\ldots,S_L \geq 2$, we have
\begin{align*}
\trdeg(\cR_{\leq 1}^\G)&=S_0\\
\trdeg(\cR_{\leq 2}^\G)&=\sum_{l=0}^L d(S_l),
\qquad d(S_l) \equiv
\begin{cases} \frac{S_l(S_l+1)}{2} & \text{ for } S_l<2l+1\\
(2l+1)(S_l-l) & \text{ for } S_l \geq 2l+1 \end{cases}\\
\trdeg(\cR_{\leq 3}^\G)&=\trdeg(\cR^\G)=d-3.
\end{align*}
\end{theorem}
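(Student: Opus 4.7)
The plan is to apply Lemma \ref{lem:trdeg}, which identifies $\trdeg(\cR_{\leq k}^\G)$ at a generic $\theta_* \in \R^d$ with the rank of $\nabla^2 s_1(\theta_*) + \ldots + \nabla^2 s_k(\theta_*)$, reducing each transcendence degree to a Hessian rank computation. First I would derive explicit forms for $T_1, T_2, T_3$ using that $g \in \SO(3)$ acts at each spherical frequency $l$ by the $(2l+1)$-dimensional (real form of the) Wigner matrix $D^{(l)}(g)$, applied identically across radial indices $s$. Haar integration of $D^{(l)}(g)$ vanishes for $l \geq 1$, so $s_1$ depends on $\theta$ only through the $S_0$ coordinates $\{\theta_0^{(0,s)}\}$, giving $\trdeg(\cR_{\leq 1}^\G) = S_0$.

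For $\trdeg(\cR_{\leq 2}^\G)$, Schur's lemma implies that the $\SO(3)$-invariants in $\mathrm{Sym}^2(\R^{2l+1})$ form a one-dimensional subspace spanned by the invariant bilinear form, so $s_2$ depends on $\theta$ only through the first-moment coordinates and the per-frequency Gram matrices $G^{(l)}_{s,s'} = \sum_m \theta_m^{(l,s)} \theta_m^{(l,s')}$. The algebraic dimension of the image of $(\theta^{(l,s)})_{s=1}^{S_l} \mapsto G^{(l)}$ is the dimension of the variety of $S_l \times S_l$ symmetric positive semidefinite matrices of rank at most $2l+1$, namely $\frac{S_l(S_l+1)}{2}$ for $S_l \leq 2l+1$ and $(2l+1)S_l - \binom{2l+1}{2} = (2l+1)(S_l - l)$ for $S_l \geq 2l+1$. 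Summing over $l$ yields the claimed formula for $\trdeg(\cR_{\leq 2}^\G)$, noting that the $l = 0$ contribution $S_0$ is consistent with and absorbs the first-moment content already captured by $s_1$.

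The main work is the claim $\trdeg(\cR_{\leq 3}^\G) = d - 3$. Since $\SO(3)$ acts with generically $3$-dimensional orbits, Proposition \ref{prop:trdegorbitdim} gives $\trdeg(\cR^\G) = d - 3$ as the target, so by Fact \ref{fact:fullrank} it suffices to exhibit a single $\theta_* \in \R^d$ at which $\rank(\nabla^2 s_1 + \nabla^2 s_2 + \nabla^2 s_3)|_{\theta_*} = d - 3$. I would derive $s_3$ from the $\SO(3)$-invariants of $\theta^{\otimes 3}$, which are parametrized by Clebsch-Gordan couplings summed against products of the form $\overline{u_m^{(l,s)} u_{m'}^{(l',s')}}\, u_{m''}^{(l'',s'')}$, yielding radial analogues $B_{l,l',l''}^{(s,s',s'')}(\theta)$ of the bispectral invariants in Theorem \ref{thm:S2registration-mom}. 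I would then induct on the tuple $(L; S_0, \ldots, S_L)$, starting from a small explicit base case (for instance $L = 1$ with $S_0 = S_1 = 2$) verified by direct or numerical computation of $\nabla^2 s_3$. At each inductive step I would introduce either one additional radial mode at some frequency $l$ or one additional spherical frequency $L+1$, choosing $\theta_*$ so that most newly introduced coordinates vanish; this makes the new block of $\nabla^2 s_3$ approximately decouple, and its rank contribution can be read off from small sub-matrices whose determinants factor through specific non-vanishing Clebsch-Gordan coefficients.

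The main obstacle will be managing the two-parameter induction on $(L, (S_l))$. Unlike the spherical registration case, where only $L$ varies and each increment adds $2L+3$ to the rank via a uniform construction, here each radial index couples multiplicatively through the $3$rd-order invariants, so the inductive step must designate which of the three factors in each Clebsch-Gordan sum carries the newly introduced mode, and the sparsity pattern of $\theta_*$ must be chosen to avoid accidental cancellations from zeros of Clebsch-Gordan coefficients or from Wigner-matrix symmetries. A further subtlety is that the hypothesis $S_l \geq 2$ must already play a genuine role at the base case, reflecting that the radial degrees of freedom substitute for a high-bandlimit assumption; the base-case rank calculation for $\nabla^2 s_3$ at a minimal configuration (with $S_l = 2$) is therefore likely the most intricate step, and is what makes the threshold $L \geq 1$ attainable here in contrast to $L \geq 10$ for spherical registration.
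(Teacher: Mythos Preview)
Your proposal is correct and follows essentially the same approach as the paper: apply Lemma~\ref{lem:trdeg}, compute the Hessian ranks for $k=1,2$ directly from the forms of $s_1,s_2$ (the paper packages the $k=2$ Jacobian rank as Lemma~\ref{lem:s2-rank}, matching your PSD-rank-variety count), and for $k=3$ reduce to showing $\rank(\der B(\theta_*)) \geq d-3$ for the bispectral map $B$, verified by specializing $\theta_*$ to have many zero coordinates so that the relevant Jacobian becomes sparse.

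The one organizational difference worth noting is that the paper avoids the two-parameter induction you anticipate. Rather than inducting separately on $L$ and on the $S_l$, it orders the pairs $(l,s)$ lexicographically and observes that $\der_\eta B$ is block lower-triangular in this ordering: the block $B^{(ls)}$ collects only those bispectral invariants $B_{(l,s),(l',s'),(l'',s'')}$ with $(l',s'),(l'',s'') \leq (l,s)$, and such invariants cannot depend on variables $\eta^{(l''',s''')}$ with $(l''',s''') > (l,s)$. The rank lower bound is then the sum of the diagonal block ranks $\rank(\der_{\eta^{(ls)}} B^{(ls)})$, and each such block is analyzed independently (Lemma~\ref{lemma:S3cryoEMrankincrease}) at its own carefully chosen $\eta_*$; Fact~\ref{fact:fullrank} then lets one pass to a single generic $\eta_*$ where all blocks simultaneously have full rank. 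This sidesteps the need to maintain a coherent inductive witness across steps. Also, contrary to your expectation, the base cases $l \leq 1$ are relatively light; the bulk of the work is the per-block verification for $l \geq 2$, where the paper selects $2l+1$ specific rows of $\der_{\eta^{(ls)}} B^{(ls)}$ and checks a further block lower-triangular structure whose $1\times 1$ and $2\times 2$ diagonal blocks are shown nonsingular via Lemma~\ref{lemma:CGnonvanishing} on nonvanishing Clebsch--Gordan coefficients.
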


\begin{corollary} \label{cor:cryoEM}
In this unprojected cryo-EM model with $S_0, \ldots, S_L \geq 2$,
a generic signal $\theta_* \in \R^d$ may be identified up to a finite list
of orbits by the moments of $g \cdot \theta_*$ up to order $K = 3$ if $L \geq
2$, and up to order $K = 2$ if $L = 1$.
\end{corollary}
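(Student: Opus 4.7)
The plan is to derive the corollary directly from Theorem \ref{thm:cryoEM} combined with the algebraic identifiability criterion of \cite[Theorem 4.9]{bandeira2017estimation} (reviewed in Appendix \ref{appendix:trdeg}): a generic $\theta_* \in \R^d$ is identified up to a finite list of $\G$-orbits by moments of order $\leq K$ if and only if $\trdeg(\cR_{\leq K}^\G) = \trdeg(\cR^\G)$. Applying this, Theorem \ref{thm:cryoEM} already gives $\trdeg(\cR_{\leq 3}^\G) = \trdeg(\cR^\G) = d - 3$, so $K \leq 3$ in all cases, and the work reduces to locating $K$ exactly.

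The next step is to compare $\trdeg(\cR_{\leq 2}^\G) = \sum_{l=0}^{L} d(S_l)$ (from Theorem \ref{thm:cryoEM}) with $d = \sum_{l=0}^{L}(2l+1) S_l$. For $l=0$ one has $d(S_0) = S_0$ since $S_0 \geq 1 = 2l+1$, contributing no deficit. For $l=1$ with $S_1 \geq 2$, a quick case split on whether $S_1 < 3$ or $S_1 \geq 3$ shows $d(S_1) = 3 S_1 - 3$ in both regimes, contributing exactly $3$ to the deficit $d - \trdeg(\cR_{\leq 2}^\G)$.

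In the case $L = 1$ this gives $\trdeg(\cR_{\leq 2}^\G) = S_0 + (3 S_1 - 3) = d - 3 = \trdeg(\cR^\G)$, so $K \leq 2$; matching this against $\trdeg(\cR_{\leq 1}^\G) = S_0 < d - 3$ (which holds precisely because $S_1 \geq 2$) gives $K = 2$. In the case $L \geq 2$, I would check by direct case analysis on $S_2 \in \{2, 3, 4\}$ and $S_2 \geq 5$ that the $l = 2$ term contributes an additional deficit of at least $7$ (e.g.\ $S_2 = 2$ yields $5 \cdot 2 - 3 = 7$, and the deficit is larger for every larger $S_2$). Combining with the $l = 1$ deficit of $3$ gives $\trdeg(\cR_{\leq 2}^\G) \leq d - 10 < d - 3$, so $K > 2$, and Theorem \ref{thm:cryoEM} then forces $K = 3$.

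There is no real obstacle here: the corollary is a bookkeeping consequence of Theorem \ref{thm:cryoEM}. The only thing to be careful about is remembering that the formula for $d(S_l)$ changes between the regimes $S_l < 2l + 1$ and $S_l \geq 2l + 1$, so the small-$S_l$ cases for $l = 1, 2$ should be checked individually to ensure the deficit bound of $3$ (for $l = 1$) and $7$ (for $l = 2$) actually holds as stated.
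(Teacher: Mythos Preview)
Your proposal is correct and follows exactly the approach the paper intends: the corollary is presented as an immediate consequence of Theorem~\ref{thm:cryoEM} together with \cite[Theorem~4.9]{bandeira2017estimation} (reviewed as Lemma~\ref{lemma:genericlist}), and the paper does not spell out a separate proof. Your explicit deficit bookkeeping to pin down $K=2$ versus $K=3$ is accurate; the only implicit step is that the contributions from $l \geq 3$ also satisfy $d(S_l) \leq (2l+1)S_l$, which is clear from either branch of the formula for $d(S_l)$.
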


\begin{remark}
In \cite[Conjecture B.1]{bandeira2017estimation}, the authors conjectured
that a generic signal $\theta_*$ in this model may be identified by
$3^\text{rd}$-order moments up to a \emph{single unique} orbit when $L \geq 1$ and $S_0 = \cdots
= S_L \geq 3$. As discussed in \cite{bandeira2017estimation}, this would
hold if the fraction field of all $\G$-invariant rational functions coincides
with that generated by $\cR_{\leq 3}^\G$. Theorem \ref{thm:cryoEM} shows the
weaker statement that these fraction fields have the same transcendence
degree. Thus Corollary \ref{cor:cryoEM}(a) guarantees only that
$\theta_*$ may be identified up to a finite list of orbits, and we show this
under a slightly weaker requirement that $S_0,\ldots,S_L \geq 2$.
\end{remark}

Turning to the forms of $s_k(\theta)$ that define the moment optimization
(\ref{eq:momentoptunprojected}), write $\theta_* \in \R^d$ for the true
coefficients in the above real basis $\{h_{lsm}\}$. Let
\begin{equation}\label{eq:uls}
u^{(ls)}(\theta)=(u_m^{(ls)}(\theta):m=-l,\ldots,l) \in \C^{2l+1}
\end{equation}
be the components of the \emph{complex} coefficients $u=\hat{V}^*\theta$
for the frequency pair $(l,s)$, and define analogously to (\ref{eq:Bl})
\begin{equation}\label{eq:Bls}
B_{(l,s),(l',s'),(l'',s'')}(\theta)=
\mathop{\sum_{m=-l}^l \sum_{m'=-l'}^{l'} \sum_{m''=-l''}^{l''}}_{m''=m+m'}
\langle l,m;l',m'|l'',m'' \rangle
\overline{u_m^{(ls)}(\theta)u_{m'}^{(l's')}(\theta)}
u_{m''}^{(l''s'')}(\theta).
\end{equation}
When the original function $f:\R^3 \to \R$ is real-valued,
we verify in the proof of Theorem \ref{thm:cryoEM-mom} below that each
$B_{(l,s),(l',s'),(l'',s'')}(\theta)$ is also real-valued.

\begin{theorem}\label{thm:cryoEM-mom}
For any $L \geq 1$ and $S_0,\ldots,S_L \geq 1$,
\begin{align*}
s_1(\theta)&=\frac{1}{2}\sum_{s=1}^{S_0} \Big(u^{(0s)}(\theta)-
u^{(0s)}(\theta_*)\Big)^2\\
s_2(\theta)&=\frac{1}{4}\sum_{l=0}^L \frac{1}{2l+1}
\sum_{s,s'=1}^{S_l} \Big(\langle u^{(ls)}(\theta),u^{(ls')}(\theta) \rangle
-\langle u^{(ls)}(\theta_*),u^{(ls')}(\theta_*) \rangle\Big)^2\\
s_3(\theta)&=\frac{1}{12}\mathop{\sum_{l,l',l''=0}^L}_{|l-l'| \leq l'' \leq
l+l'} \frac{1}{2l''+1}\sum_{s=1}^{S_l}\sum_{s'=1}^{S_{l'}}\sum_{s''=1}^{S_{l''}}
\Big(B_{(l,s),(l',s'),(l'',s'')}(\theta)-B_{(l,s),(l',s'),(l'',s'')}(\theta_*)\Big)^2.\label{eq:S3cryoEM}
\end{align*}
\end{theorem}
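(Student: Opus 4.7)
These are all instances of $s_k(\theta) = \frac{1}{2(k!)}\|T_k(\theta) - T_k(\theta_*)\|_\HS^2$ from Theorem \ref{thm:seriesexpansion}(a). Since the Hilbert--Schmidt norm is invariant under a unitary change of basis, I pass to the complex coordinates $u = \hat{V}^* \theta$ of the basis $\{j_{lsm}\}$, in which $\SO(3)$ acts on each $(l, s)$-block by the Wigner matrix $D^{(l)}(g)$. Expanding the HS inner product and changing variables $g' \mapsto g g'$ in the resulting double Haar integral gives the key identities
\[
\|T_k(u)\|_\HS^2 = \int_\G \langle u, g \cdot u\rangle^k\,\der\Lambda(g),
\qquad
\langle T_k(u), T_k(u_*)\rangle = \int_\G \langle u, g \cdot u_*\rangle^k\,\der\Lambda(g),
\]
where $\langle a, b\rangle = \sum_i \bar{a}_i b_i$ is the Hermitian pairing. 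Decomposing $\langle u, g \cdot u\rangle = \sum_{l,s} \chi^{(ls)}(g;u)$ with $\chi^{(ls)}(g;u) = \sum_{m,n} \overline{u_m^{(ls)}}\, D^{(l)}_{mn}(g)\, u_n^{(ls)}$ reduces each term in $\|T_k(u) - T_k(u_*)\|_\HS^2$ to a Haar integral of a $k$-fold product of matrix coefficients of Wigner $D$'s, which I then evaluate by representation-theoretic identities.

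\textbf{The three cases.} For $k = 1$, Schur's lemma gives $\int D^{(l)}_{mn}\,\der\Lambda = \delta_{l,0}\delta_{m,0}\delta_{n,0}$, so $T_1(u)$ is supported on indices $(0, s, 0)$ with entries $u^{(0s)} \in \R$, yielding the stated $s_1$. For $k = 2$, apply Schur orthogonality
\[
\int_\G D^{(l)}_{mn}(g)\, D^{(l')}_{m' n'}(g)\,\der\Lambda(g) = \frac{(-1)^{m+n}}{2l+1}\,\delta_{l,l'}\,\delta_{m,-m'}\,\delta_{n,-n'}
\]
to each $\int \chi^{(ls)}\chi^{(l's')}\,\der\Lambda$; after substituting the reality relation $u_{-m}^{(ls)} = (-1)^m\,\overline{u_m^{(ls)}}$ (which follows from the conjugation symmetry $y_{l,-m} = (-1)^m \overline{y_{lm}}$ of complex spherical harmonics for real-valued $f$), the double $(m,n)$-sum collapses to $\frac{1}{2l+1}\langle u^{(ls)}, u^{(ls')}\rangle^2$, with the inner products real by the same substitution. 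The analogous cross-term computation together with $\|T_2(u) - T_2(u_*)\|^2 = \|T_2(u)\|^2 - 2\Re\langle T_2(u), T_2(u_*)\rangle + \|T_2(u_*)\|^2$ yields $s_2$. For $k = 3$, I use the triple-product identity
\[
\int_\G D^{(l_1)}_{m_1 n_1} D^{(l_2)}_{m_2 n_2} D^{(l_3)}_{m_3 n_3}\,\der\Lambda = \frac{(-1)^{-m_3-n_3}}{2l_3+1}\,\langle l_1, m_1; l_2, m_2|l_3, -m_3\rangle\,\langle l_1, n_1; l_2, n_2|l_3, -n_3\rangle,
\]
obtained by expanding $D^{(l_1)} \otimes D^{(l_2)}$ via Clebsch--Gordan and then applying the $k=2$ Schur orthogonality to the resulting $D^{(L)} \cdot D^{(l_3)}$ integral, with selection rules $m_1+m_2+m_3 = n_1+n_2+n_3 = 0$ and $|l_1-l_2| \leq l_3 \leq l_1+l_2$. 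Substituting into $\int \chi^{(l_1 s_1)} \chi^{(l_2 s_2)} \chi^{(l_3 s_3)}\,\der\Lambda$, relabeling $m_3 \mapsto -m_3$ and $n_3 \mapsto -n_3$, and using the reality relation to absorb the accumulated sign factors, the $m$-sum reorganizes exactly into $B_{(l_1,s_1),(l_2,s_2),(l_3,s_3)}(\theta)$ as defined in (\ref{eq:Bls}) and the $n$-sum into its complex conjugate. Hence $\int \chi\chi\chi\,\der\Lambda = |B(\theta)|^2 / (2l_3+1)$, while the cross-term similarly gives $B(\theta)\,\overline{B(\theta_*)} / (2l_3+1)$; assembling produces $\|T_3(u) - T_3(u_*)\|^2 = \sum |B(\theta) - B(\theta_*)|^2/(2l_3+1)$, and dividing by $12$ gives $s_3$.

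\textbf{Reality of $B$ and main obstacle.} To reconcile the $|B - B_*|^2$ produced by the computation with the squared form $(B - B_*)^2$ in the statement, I verify that $B_{(l_1,s_1),(l_2,s_2),(l_3,s_3)}(\theta)$ is real for real-valued $f$. This I do by applying complex conjugation to the defining sum, substituting $u_{-m} = (-1)^m \overline{u_m}$ in each factor, relabeling $m_i \mapsto -m_i$, and using the Clebsch--Gordan symmetry $\langle l_1, -m_1; l_2, -m_2|l_3, -m_3\rangle = (-1)^{l_1+l_2-l_3}\langle l_1, m_1; l_2, m_2|l_3, m_3\rangle$, so that the collected sign simplifies to $+1$ under the selection rule $m_1+m_2+m_3 = 0$. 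The main technical burden is the careful sign and index bookkeeping in the $k = 3$ case --- simultaneously tracking the $(-1)^{-m_3-n_3}$ from the triple integral, the $(-1)^m$ from the reality condition, and the Clebsch--Gordan sign symmetry --- so that the doubly indexed sums collapse cleanly into $B$ and $\overline{B}$, and so that the reality of $B$ emerges after cancellation.
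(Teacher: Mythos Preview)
Your approach is essentially the paper's: both reduce $s_k$ to the Haar integral of $\langle u, D(\frakg) v\rangle^k$ (this is precisely Lemma~\ref{lem:skform}) and evaluate via the Wigner-$D$ moment identities (\ref{eq:ED}), (\ref{eq:WignerDorthog}), (\ref{eq:WignerDtriple}). The cases $k=1,2$ go through as you describe.

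There is, however, a genuine sign error in your $k=3$ argument. The reality relation you invoke, $u_{-m}^{(ls)} = (-1)^m\,\overline{u_m^{(ls)}}$, is the one for the spherical-registration model of Section~\ref{subsec:sphericalregistration}, where a real $f$ on $\cS^2$ is expanded directly in spherical harmonics. In the cryo-EM model it is the \emph{Fourier transform} $\hat f$ that is expanded in the product basis $z_s(\rho)\,y_{lm}(\phi_1,\phi_2)$, and the reality of $f$ translates not to real-valuedness of $\hat f$ but to the Hermitian condition $\hat f(-k)=\overline{\hat f(k)}$. Since the antipodal map in spherical coordinates is $(\phi_1,\phi_2)\mapsto(\pi-\phi_1,\pi+\phi_2)$ and $y_{lm}(\pi-\phi_1,\pi+\phi_2)=(-1)^l\,y_{lm}(\phi_1,\phi_2)$, one picks up an extra $(-1)^l$: the correct relation is $u_{-m}^{(ls)}=(-1)^{l+m}\,\overline{u_m^{(ls)}}$, as in (\ref{eq:usymmetry}).

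This matters exactly for the reality of $B$. With your $(-1)^m$ relation, the sign collected in the conjugation argument is $(-1)^{l+l'+l''}$: the $m$-piece $(-1)^{m+m'+m''}=(-1)^{2m''}=1$ under $m''=m+m'$, but the Clebsch--Gordan sign $(-1)^{l+l'-l''}=(-1)^{l+l'+l''}$ survives, so you would only get $\overline B=(-1)^{l+l'+l''}B$, i.e.\ $B$ real merely when $l+l'+l''$ is even. That is the spherical-registration outcome (compare the proof of Theorem~\ref{thm:S2registration-mom}), not the cryo-EM one. With the correct $(-1)^{l+m}$ relation, three extra factors $(-1)^l,(-1)^{l'},(-1)^{l''}$ cancel the Clebsch--Gordan sign, giving $\overline B=B$ unconditionally; this is what justifies writing $(B-B_*)^2$ rather than $|B-B_*|^2$. (Your derivation of the $B\,\overline B/(2l''+1)$ form for $\E_g[\langle u,D(\frakg)v\rangle^3]$ happens to survive the wrong relation, because it is applied once on each of the $m$-side and the $n$-side and the two spurious $(-1)^{l''}$ factors cancel; but the reality step applies it three times with no such cancellation.)
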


In this model, the optimization of $s_1(\theta)$ is over the mean
component $u^{(0s)}(\theta)$ corresponding to each radial frequency $s$.
The optimization of $s_2(\theta)$ is over not just the
power $\|u^{(ls)}(\theta)\|^2$ within each frequency pair $(l,s)$, but also
the cross-correlations between $u^{(ls)}$ and $u^{(ls')}$ for different radial
frequencies $s$ and $s'$.

The proofs of Theorems \ref{thm:cryoEM} and \ref{thm:cryoEM-mom} are
deferred to Appendix \ref{sec:unprojCEM}. The argument for Theorem \ref{thm:cryoEM} is similar to
that of Theorem \ref{thm:S2registration}: When $S_0,\ldots,S_L \geq 2$, the
claim that $\rank(\nabla^2 s_3(\theta_*)) \geq d-3$ may be established by
induction on $L$ down to the base case of $L=1$ rather than $L=10$, using a
different construction of the point $\theta_* \in \R^d$ that induces a sparse
structure in $\nabla^2 s_3(\theta_*)$.

\subsection{Projected cryo-EM}\label{subsec:projectedcryoEM}

We now extend the model of the preceding section to include the
tomographic projection arising in cryo-EM. In this projected model, we
observe samples
\begin{equation}\label{eq:projectedobservations}
(\Pi \cdot f_\frakg)(x)\der x+\sigma\,\der W(x)
\end{equation}
on $\R^2$ where, for $x=(x_1,x_2) \in \R^2$, the tomographic projection $\Pi$ is
defined by
\begin{equation}\label{eq:cryoEMmodel}
(\Pi \cdot f_\frakg)(x_1,x_2)=
\int_{-\infty}^\infty f_{\frakg}(x_1,x_2,x_3)\der x_3,
\end{equation}
and $\der W(x)$ in (\ref{eq:projectedobservations}) is a
standard Gaussian white noise process on the projected domain $\R^2$.

Our model setup is similar to \cite[Section 5.5 and Appendix A.4]{bandeira2017estimation}.
We again model the Fourier transform of $f$ in a basis $\{\hat{j}_{lsm}\}$
given by the product of complex spherical harmonics with radial functions.
We restrict $f$ to a space of $(L,S_0,\ldots,S_L)$-bandlimited functions
with representation
\begin{equation}\label{eq:projcryoEMbandlimited}
f=\sum_{(l,s,m) \in \cI} u_m^{(ls)} \cdot j_{lsm}
=\sum_{(l,s,m) \in \cI} \theta_m^{(ls)} \cdot h_{lsm}
\end{equation}
for the index set $\cI$ defined in (\ref{eq:indices}), where
$\{j_{lsm}\}$ are the inverse Fourier transforms of $\{\hat{j}_{lsm}\}$, and
the second equality describes a parametrization by an equivalent real
orthonormal basis $\{h_{lsm}\}$ as before.
In Appendix \ref{sec:projCEM}, we apply the Fourier slice theorem to derive
basis representations for the tomographic projection $\Pi \cdot f$. These take
the forms
\begin{equation}\label{eq:projbandlimited}
\Pi \cdot f=\sum_{(s,m) \in \tcI} \tu_m^{(s)} j_{sm}
=\sum_{(s,m) \in \tcI} \ttheta_m^{(s)} h_{sm},
\end{equation}
where $\{j_{sm}\}$ and $\{h_{sm}\}$ are (complex and real, resp.) basis
functions over $\R^2$, and $\Pi \cdot f$ is bandlimited to an index set
\begin{equation}\label{eq:indicesproj}
\tcI=\Big\{(s,m):1 \leq s \leq S,\; -L \leq m \leq L\Big\},
\qquad \td=|\tcI|=S(2L+1)
\end{equation}
for $S=\max(S_0,\ldots,S_L)$. This expresses $\Pi$ as a linear map from
$\theta \in \R^d$ to $\ttheta \in \R^{\td}$, and we give its
explicit form in (\ref{eq:cryoEMPi}).
We choose radial functions to ensure that the basis $\{h_{sm}\}$ is
orthonormal in $L_2(\R^2)$, so that
(\ref{eq:projectedobservations}) is equivalent to observing the
coefficients of $\Pi \cdot f$ in this basis
with i.i.d.\ $\N(0,\sigma^2)$ noise.
Further details of the setup are described in Appendix~\ref{sec:projCEM}.

The following result verifies that when the bandlimits satisfy $L \geq 1$ and
$S_1,\ldots,S_L \geq 4$, we have also
$\trdeg \tcR_{\leq \tK}^\G=\trdeg \cR^\G$ for $\tK=3$.

\begin{theorem} \label{thm:projectedcryoEM}
For any $L \geq 1$ and $S_0,\ldots,S_L \geq 4$, we have
\begin{align*}
\trdeg(\tcR_{\leq 1}^\G)&=S_0\\
\trdeg(\tcR_{\leq 2}^\G)&=\sum_{l=0}^L d(S_l),
\qquad d(S_l) \equiv
\begin{cases} \frac{S_l(S_l+1)}{2} & \text{ for } S_l<2l+1\\
(2l+1)(S_l-l) & \text{ for } S_l \geq 2l+1 \end{cases}\\
\trdeg(\tcR_{\leq 3}^\G)&=\trdeg(\cR^\G)=d-3,
\end{align*}
which matches the values of $\trdeg(\cR_{\leq 1}^\G)$, $\trdeg(\cR_{\leq 2}^\G)$, 
and $\trdeg(\cR_{\leq 3}^\G)$ in the unprojected setting of Theorem
\ref{thm:cryoEM}.
\end{theorem}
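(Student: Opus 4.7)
The strategy parallels the proof of Theorem \ref{thm:cryoEM}: I would apply Lemma \ref{lem:trdeg}(b) to reduce the transcendence degree computation to a rank computation for $\nabla^2 \tilde s_1(\theta_*) + \nabla^2 \tilde s_2(\theta_*) + \nabla^2 \tilde s_3(\theta_*)$ at a generic $\theta_* \in \R^d$. The key preliminary step is to derive explicit expressions for $\tilde s_1, \tilde s_2, \tilde s_3$ by combining the identity $\tilde T_k(\theta) = \Pi^{\otimes k} T_k(\theta)$ with the Fourier slice theorem. The latter implies that the projected complex coefficients are Legendre-weighted sums
\[\tu_m^{(s)}(\theta) \;=\; \sum_{l \geq |m|,\; l+m \text{ even}}^{L} c_{lm}\; u_m^{(ls)}(\theta)\]
for known constants $c_{lm}$ proportional to the associated Legendre values $P_l^m(0)$ (coming from evaluating $y_{lm}$ at the equator $\phi_1=\pi/2$). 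Substituting these into $\tilde T_k(\theta)=\int (\Pi g \theta)^{\otimes k}\,\der \Lambda(g)$ produces expressions for $\tilde s_k$ that are structurally analogous to the formulas of Theorem \ref{thm:cryoEM-mom}, but with the underlying $\SO(3)$-invariants contracted across the spherical index $l$ via the Legendre weights $c_{lm}$.

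The analyses of $\tilde s_1$ and $\tilde s_2$ should be relatively direct. The mean coefficients $u^{(0s)}$ survive projection since $c_{00}\neq 0$, so $\nabla^2 \tilde s_1(\theta_*)$ contributes rank $S_0$. For $\tilde s_2$, the key observation is that after Haar averaging, the projected second moments recover the same quadratic $\SO(3)$-invariants $\langle u^{(ls)}(\theta),u^{(ls')}(\theta)\rangle$ that appear in the unprojected setting of Theorem \ref{thm:cryoEM-mom}, up to a change of basis mediated by the matrix $\{c_{lm}\}$. Nondegeneracy of this change of basis at each fixed $m$ requires enough radial frequencies to be present, which is where the requirement $S_l\geq 4$ first enters; with this slack, the rank increment from $\nabla^2 \tilde s_1$ to $\nabla^2 \tilde s_1 + \nabla^2 \tilde s_2$ matches the claimed $\sum_{l=0}^L d(S_l) - S_0$.

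The principal obstacle is the case $k=3$: showing $\rank \nabla^2 \tilde s_3(\theta_*) \geq d-3$. By Fact \ref{fact:fullrank}, it suffices to exhibit a single $\theta_* \in \R^d$ achieving this bound. I would follow the frequency marching strategy of Theorems \ref{thm:S2registration} and \ref{thm:cryoEM}, inducting on $L$ (and on the $S_l$) while choosing, at each step, a sparse $\theta_*$ with many zero coordinates so that $\nabla^2 \tilde s_3(\theta_*)$ splits into a block inherited from a smaller bandlimit (handled by the inductive hypothesis) together with residual blocks whose ranks can be read off from $2\times 2$ submatrices built out of the expressions in the analogue of (\ref{eq:Bls}). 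The new difficulty relative to Theorem \ref{thm:cryoEM} is that the projected third-order invariants are no longer the individual $B_{(l,s),(l',s'),(l'',s'')}$ but Legendre-weighted linear combinations of them indexed by $(s,s',s'')$ and $(m,m',m'')$; the inductive step must exploit the presence of several radial frequencies at each azimuthal signature to disentangle these combinations. This is precisely why the hypothesis $S_l\geq 4$ is needed, compared to $S_l\geq 2$ in the unprojected cryo-EM case: two extra radial degrees of freedom per spherical frequency provide sufficient non-collinearity in the Legendre-weighted system to recover the full unprojected invariant content at the Hessian level. The base case of the induction, at the smallest admissible bandlimits, would be verified either by direct symbolic calculation exploiting the sparse structure of a carefully chosen $\theta_*$, or by a numerical exact-precision rank check in the spirit of the verifications performed in \cite{bandeira2017estimation}.
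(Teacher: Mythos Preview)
Your high-level strategy---reducing via Lemma~\ref{lem:trdeg} to a Hessian rank computation, deriving explicit forms for $\tilde s_k$ through the Fourier-slice relation, and proving the $k=3$ rank bound by induction on $L$ with a sparse choice of $\theta_*$---matches the paper's approach. However, two points are incorrect, the second being a genuine gap.

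First, a minor correction: the condition $S_l\geq 4$ is \emph{not} used in the $\tilde s_2$ analysis. The paper shows that the projected second-moment Hessian factors as $GDD^\top G^\top$ where $G$ is the Jacobian of the unprojected invariants $\langle u^{(ks)},u^{(ks')}\rangle$ and $D$ encodes the Legendre weights $p_{kq}^2$. A suitable square submatrix of $D$ is lower-triangular with nonzero diagonal (since $p_{kk}\neq 0$), so the column span of $GD$ equals that of $G$ regardless of the $S_l$, and $\trdeg(\tcR_{\leq 2}^\G)=\trdeg(\cR_{\leq 2}^\G)$ holds without any lower bound on $S_l$.

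Second, and more seriously, your proposed treatment of the residual block in the $k=3$ induction will not go through as described. You suggest that after specializing $\theta_*$, the residual block's rank ``can be read off from $2\times 2$ submatrices'' as in Theorems~\ref{thm:S2registration} and~\ref{thm:cryoEM}. The paper explicitly flags this as the point where the projected argument diverges: writing $\tilde s_3(\theta)=\tfrac{1}{12}(B-B_*)^\top N^\top N(B-B_*)$, one has $\rank\nabla^2\tilde s_3(\theta_*)=\rank(N\cdot\der B(\theta_*))$, but for large $L$ the matrix $N$ has $O(L^3)$ columns and only $O(L^2)$ rows, so $N^\top N$ is genuinely rank-deficient and one cannot simply strip off $N$ to reduce to $\rank(\der B)$. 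Moreover, no sparse choice of $\theta_*$ makes the residual block $N_k\cdot D_k(\eta_*)$ itself sparse enough for a block-triangular $2\times 2$ argument when $k\geq 4$. The paper instead selects $2k+1$ specific rows of $N_k$ and shows that the determinant $P(\eta_*)$ of the resulting $(2k+1)\times(2k+1)$ matrix is a nonzero polynomial by isolating the terms divisible by a carefully chosen degree-$(2k+1)$ monomial $M$ in the nonzero coordinates of $\eta_*$; the quotient $P/M$ factors as a product of small determinants that can be checked individually. This monomial-tracking argument is where the four radial frequencies $A,B,C,D$ are actually deployed (two of each ``Type''), and it is qualitatively different from, and more delicate than, the sparse-Hessian technique you propose.
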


\begin{corollary} \label{cor:proj-cryoEM}
In this projected cryo-EM model with $S_0, \ldots, S_L \geq 4$,
a generic signal $\theta_* \in \R^d$ may be identified up to a finite list
of orbits by the moments of $\Pi(g \cdot \theta_*)$ up to order $\tK=3$
if $L \geq 2$, and order $\tK = 2$ if $L = 1$.
\end{corollary}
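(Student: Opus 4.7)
The plan is to deduce Corollary~\ref{cor:proj-cryoEM} directly from Theorem~\ref{thm:projectedcryoEM}, combined with the algebraic identification criterion of \cite[Theorem 4.9]{bandeira2017estimation} (reviewed in Appendix~\ref{appendix:trdeg}) applied to the subalgebra $\tcR_{\leq k}^\G \subseteq \cR^\G$: a generic $\theta_*$ is determined by the values $\tT_1(\theta_*),\ldots,\tT_k(\theta_*)$ up to a finite union of orbits if and only if $\trdeg(\tcR_{\leq k}^\G)=\trdeg(\cR^\G)$. This matches the definition of $\tK$ in Proposition~\ref{prop:Kdef}(b), so the whole corollary reduces to identifying the numerical value of $\tK$ from the transcendence-degree formulas already proved.

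Concretely, I would evaluate the gap
\[d-\trdeg(\tcR_{\leq 2}^\G)=\sum_{l=0}^L \bigl[(2l+1)S_l-d(S_l)\bigr]\]
using $d=\sum_{l=0}^L(2l+1)S_l$ and the piecewise formula for $d(S_l)$ in Theorem~\ref{thm:projectedcryoEM}. The $l=0$ summand vanishes since $d(S_0)=S_0$. For $l\geq 1$, the assumption $S_l\geq 4$ already gives $S_1\geq 3=2\cdot 1+1$, so at $l=1$ one has $d(S_1)=3(S_1-1)$, contributing exactly $3$. For $l\geq 2$, a routine case check of the two branches of $d(S_l)$ at $S_l\geq 4$ shows the contribution is at least $10$ (the minimum being attained on the $l=2$ term).

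For $L=1$, the total gap is exactly $3$, so $\trdeg(\tcR_{\leq 2}^\G)=d-3=\trdeg(\cR^\G)$, giving $\tK\leq 2$. To rule out $\tK=1$, note that $\trdeg(\tcR_{\leq 1}^\G)=S_0<S_0+3S_1-3=d-3$ since $S_1\geq 4$, and hence $\tK=2$.

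For $L\geq 2$, the gap is at least $3+10=13>3$, so $\trdeg(\tcR_{\leq 2}^\G)<d-3=\trdeg(\cR^\G)$ and $\tK\geq 3$. Combined with $\trdeg(\tcR_{\leq 3}^\G)=d-3$ from Theorem~\ref{thm:projectedcryoEM}, we conclude $\tK=3$. There is no genuine obstacle beyond this arithmetic bookkeeping; all the difficulty (in particular the inductive frequency-marching argument establishing $\trdeg(\tcR_{\leq 3}^\G)=d-3$ in the projected setting, which is more involved than in the unprojected models) has already been absorbed into Theorem~\ref{thm:projectedcryoEM}.
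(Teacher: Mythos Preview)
Your proposal is correct and matches the paper's intended (and largely implicit) argument: the corollary is deduced from Theorem~\ref{thm:projectedcryoEM} together with the generic list-recovery criterion of \cite[Theorem 4.9]{bandeira2017estimation} (Lemma~\ref{lemma:genericlist}), exactly as in the analogous Corollaries~\ref{cor:MRA-cor} and~\ref{cor:cryoEM}. Your explicit arithmetic verifying that $\tK$ is precisely $3$ for $L\geq 2$ (and precisely $2$ for $L=1$), via the termwise gap $\sum_l[(2l+1)S_l-d(S_l)]$, is accurate and simply spells out what the paper leaves to the reader.
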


We prove Theorem \ref{thm:projectedcryoEM} in Appendix
\ref{sec:projCEM}, where we also
state an analogue of Theorem \ref{thm:cryoEM-mom}
that describes the explicit forms of $\ts_k(\theta)$ for $k=1,2,3$
in this projected model.

Our proof of Theorem \ref{thm:projectedcryoEM} again
constructs a point $\theta_* \in \R^d$ where
$\rank(\nabla^2 \ts_3(\theta_*)) \geq d-3$.
However, the form of $\nabla^2 \ts_3(\theta_*)$
now involves the precise form of the projection $\Pi$, and our choice of
$\theta_*$ does not induce sparsity in this Hessian.
Instead, we choose $\theta_*$ to have many coordinates equal to 0,
and track the dependence of minors of $\nabla^2 \ts_3(\theta_*)$ on
the non-zero coordinates of $\theta_*$ to show they are generically
non-vanishing. We give this argument in the proof of
Lemma \ref{lemma:SO3projinduction} in Appendix \ref{sec:projCEM}.

\begin{remark}
Taking $S_0=\ldots=S_L=S$ yields a model equivalent to the projected cryo-EM
model with $S$ spherical shells in \cite[Section 5.5]{bandeira2017estimation}. In
\cite[Conjecture 5.11]{bandeira2017estimation}, the authors conjectured that
a generic signal $\theta_*$ may be identified up to a finite list of orbits by
$3^\text{rd}$-order moments
if $S \geq 2$.  Corollary \ref{cor:proj-cryoEM}(a) thus resolves this
conjecture positively when $S \geq 4$.  The constraint $S \geq 4$ is technical, and
we believe that the conjecture holds as stated for $S \in \{2, 3\}$ as well, but we
do not pursue these cases in this work.
\end{remark}

\section{Numerical evaluations of the Fisher information}\label{sec:simulations}

\begin{figure}
\begin{picture}(420,340)
\put(0,200){\includegraphics[width=0.3\textwidth]{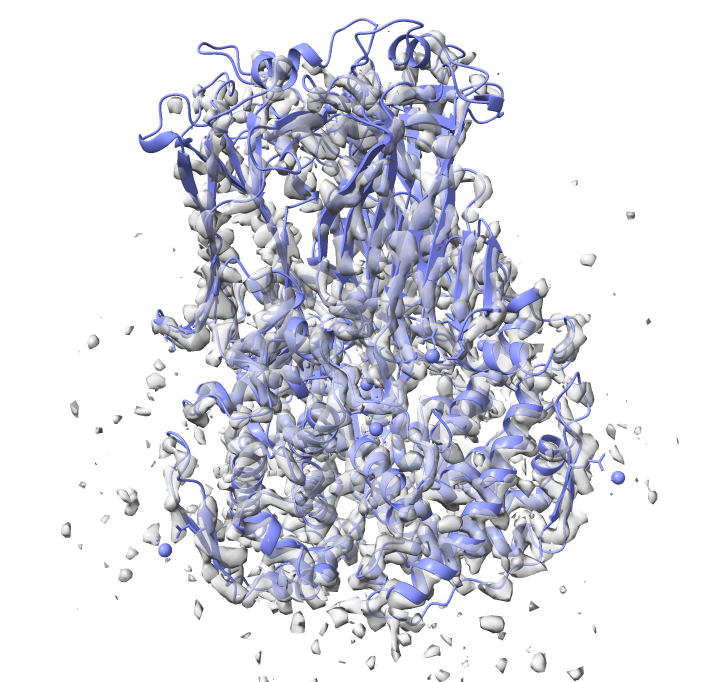}}
\put(140,200){\includegraphics[width=0.3\textwidth]{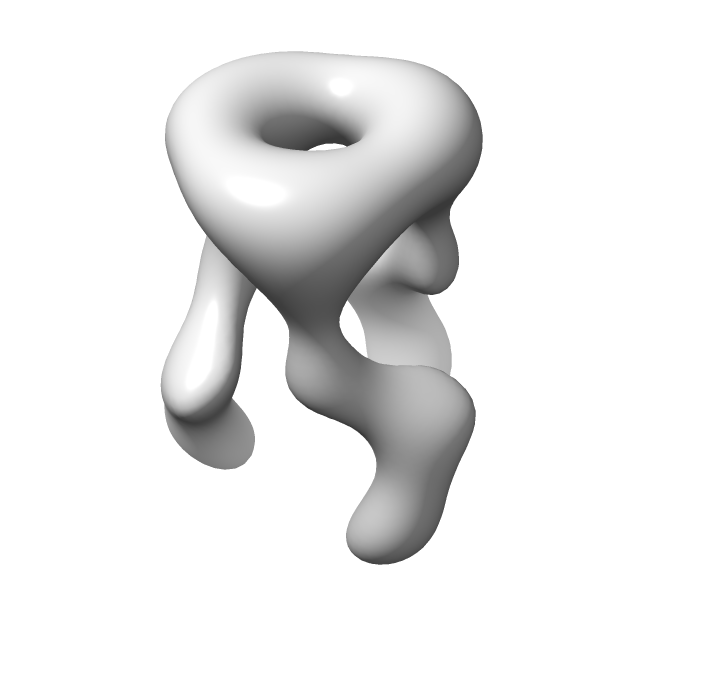}}
\put(280,200){\includegraphics[width=0.3\textwidth]{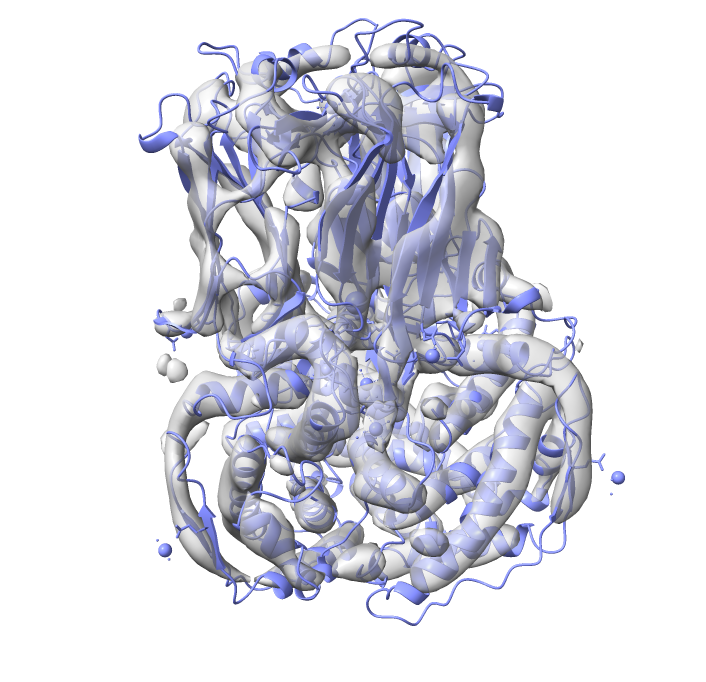}}
\put(0,100){\includegraphics[width=0.3\textwidth]{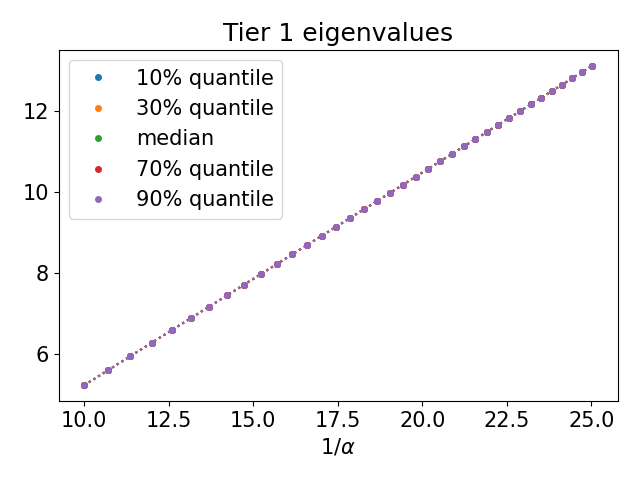}}
\put(140,100){\includegraphics[width=0.3\textwidth]{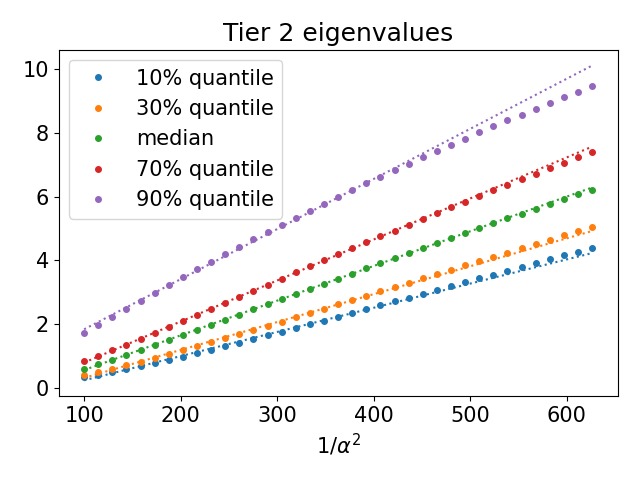}}
\put(280,100){\includegraphics[width=0.3\textwidth]{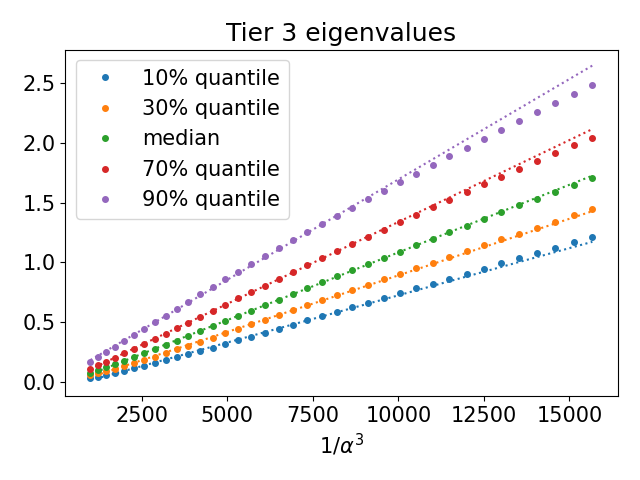}}
\put(0,0){\includegraphics[width=0.3\textwidth]{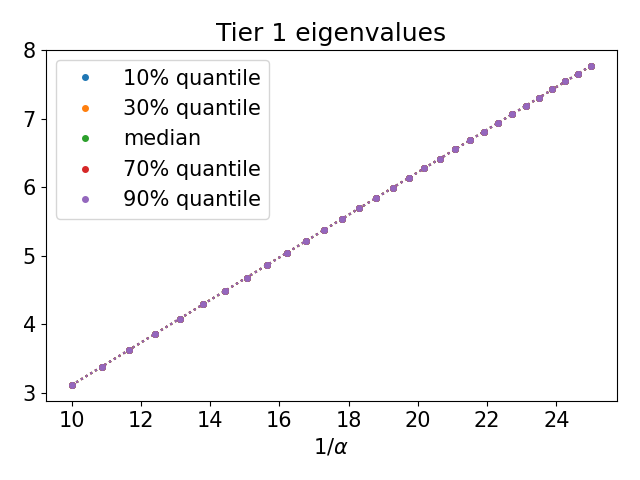}}
\put(140,0){\includegraphics[width=0.3\textwidth]{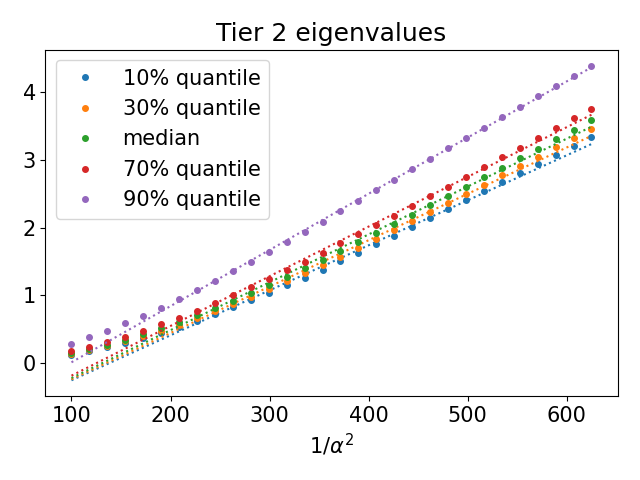}}
\put(280,0){\includegraphics[width=0.3\textwidth]{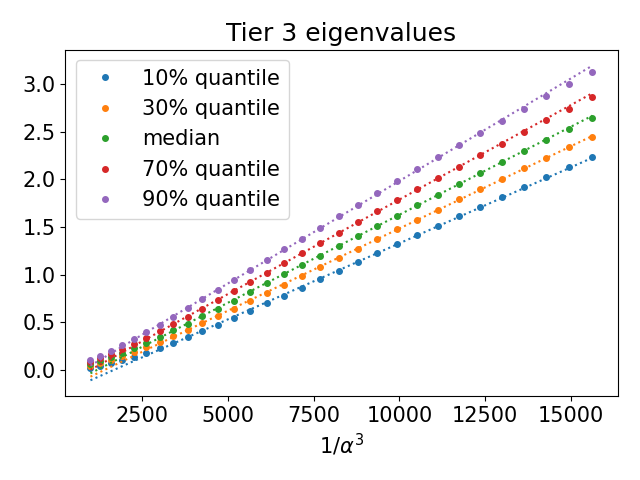}}
\put(0,330){{\small(a)}}
\put(140,330){{\small(b)}}
\put(280,330){{\small(c)}}
\put(0,200){{\small(d)}}
\put(140,200){{\small(e)}}
\put(280,200){{\small(f)}}
\put(0,100){{\small(g)}}
\put(140,100){{\small(h)}}
\put(280,100){{\small(i)}}
\end{picture}
\caption{(a) 3.8\AA-resolution cryo-EM map of the
rotavirus VP6 trimer, overlaid with the atomic structure. (b) A
finite-dimensional approximation using 405 basis functions at 24.6\AA-resolution
(displayed in a rotated orientation for clarity). (c) An
approximation using 4410 basis functions at 8.2\AA-resolution. (d--f)
We stratify the eigenvalues of the 405-dimensional observed Fisher information
corresponding to (b) into three ``eigenvalue tiers'' according to
Theorem \ref{thm:cryoEM}, and plot the scalings of the $10^\text{th}$,
$30^\text{th}$, $50^\text{th}$, $70^\text{th}$, and $90^\text{th}$
percentiles of eigenvalues in each tier against
$1/\alpha \propto \sigma^{-2}$, $1/\alpha^2 \propto \sigma^{-4}$, and
$1/\alpha^3 \propto \sigma^{-6}$. (These quantiles nearly overlap for Tier 1.)
Linear trends fitted using least squares are shown as dashed lines. 
(g--i) The same for the 4410-dimensional Fisher information matrix
corresponding to (c).}\label{fig:rotavirus}
\end{figure}

We conclude with an empirical investigation of the spectrum of the
Fisher information matrix in two simulated examples of the unprojected cryo-EM
model described in Section \ref{subsec:cryoEM}.

In each example, we begin with a near-atomic-resolution electric potential map
estimated from a cryo-EM experiment. We obtain a
finite-dimensional approximation to this map by applying a low-pass filter to
its Fourier transform, followed by a basis approximation for the filtered map.
We simulate noisy and rotated samples 
using this finite-dimensional approximation as the underlying true signal,
for various inverse-SNR parameters
\[\alpha \equiv \sigma^2/\|\theta_*\|^2.\]
We then study the dependence of eigenvalues of the
observed information matrix $\nabla^2 R_n(\theta_*)$ on $\alpha$.\\

\noindent {\bf Rotavirus VP6 trimer.}
We consider a map of the VP6 trimer in bovine rotavirus,
reported in \cite{zhang2008near} (EMDB:1461). A contour plot
of this map is overlaid with the atomic structure previously obtained
by \cite{mathieu2001atomic} (PDB:1QHD), in Figure \ref{fig:rotavirus}(a).
We applied low-pass filters in the Fourier domain at two different cutoff
frequencies, a ``low-resolution'' frequency of $(24.6\text{\AA})^{-1}$ and a
``medium-resolution'' frequency of $(8.2\text{\AA})^{-1}$. The corresponding
smoothed maps in the spatial domain are depicted in Figure \ref{fig:lowpass}
of Appendix \ref{appendix:simulations}.

We approximated each smoothed map using a finite basis of the form
(\ref{eq:C3basis}), with an adaptive construction of the radial functions
$\{z_s\}$ to maximize the power captured by
each successive radial frequency. Details of our numerical
procedures are described in Appendix \ref{appendix:simulations}.
Choosing bandlimits $(S,L)=(5,8)$ and total dimension
$d=L(S+1)^2=405$ gave an accurate approximation to the 
$24.6\text{\AA}$-resolution map that reveals the trimer composition of the VP6
complex, as depicted in Figure \ref{fig:rotavirus}(b).
Choosing bandlimits $(S,L)=(10,20)$ and total dimension
$d=L(S+1)^2=4410$ gave an accurate approximation of the
$8.2\text{\AA}$-resolution map that captures interesting aspects of the
tertiary and secondary structure, as shown in Figure \ref{fig:rotavirus}(c). 
We denote the basis coefficients of these approximated
maps as $\theta_* \in \R^d$.

We computed the Hessians $\nabla^2 R_n(\theta_*)$ of the empirical negative
log-likelihood functions from $n=500{,}000$ simulated samples, with inverse-SNR
$\alpha=\sigma^2/\|\theta_*\|^2 \in [0.04,0.10]$. We then
separated the largest $d-3$ eigenvalues of $\nabla^2 R_n(\theta_*)$ into three
``tiers'' with cardinalities $(d_1,d_2,d_3)$ as implied by
Theorem \ref{thm:cryoEM}.
Figure \ref{fig:rotavirus}(d--f) depicts representative eigenvalues in each
tier, plotted against $1/\alpha \propto \sigma^{-2}$, $1/\alpha^2 \propto
\sigma^{-4}$, and $1/\alpha^3 \propto \sigma^{-6}$.
A linear trend is observed in all
settings, in agreement with the prediction of Theorem \ref{thm:FI}.
This may be contrasted with Figure \ref{fig:alltiers} in Appendix
\ref{appendix:simulations}, which instead plots eigenvalues in all three
tiers against $1/\alpha \propto \sigma^{-2}$, and where non-linearity of the
scaling is visually apparent for Tiers 2 and 3.\\

\begin{figure}
\begin{picture}(420,280)
\put(20,100){\includegraphics[width=0.4\textwidth]{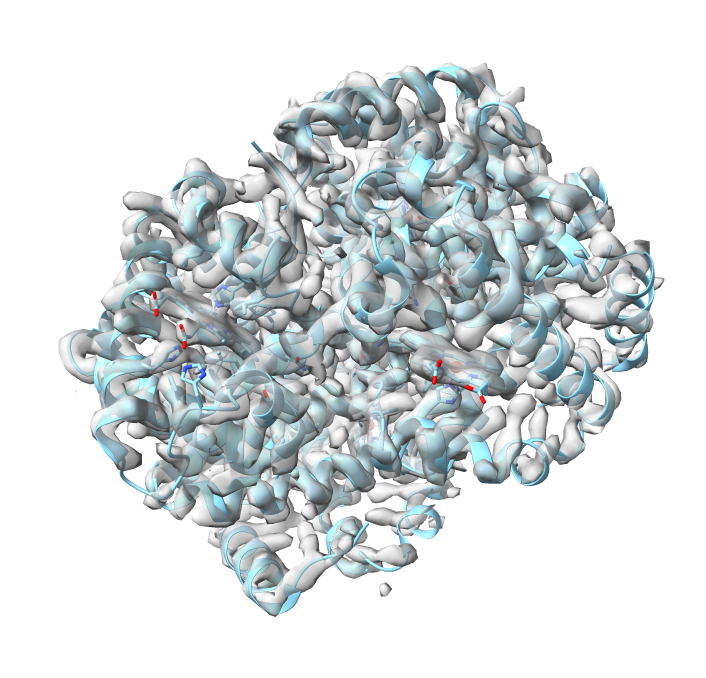}}
\put(230,100){\includegraphics[width=0.4\textwidth]{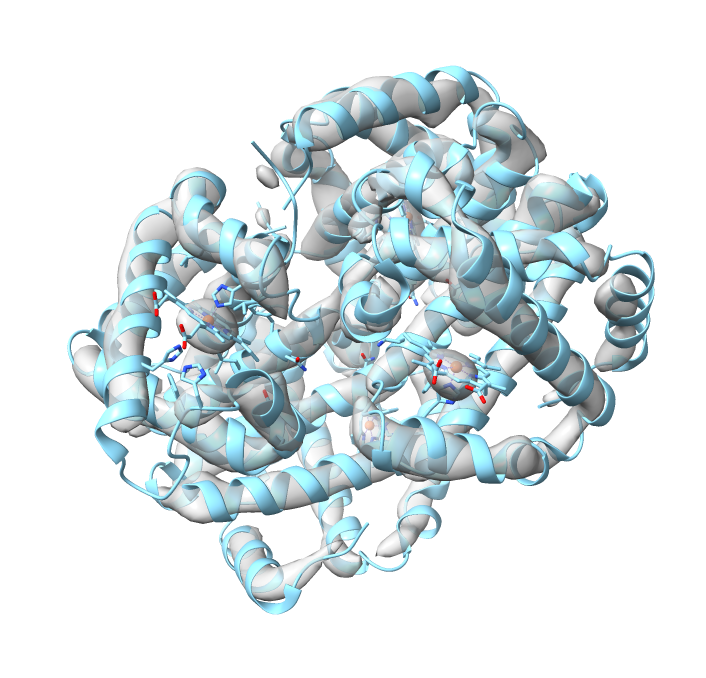}}
\put(0,0){\includegraphics[width=0.3\textwidth]{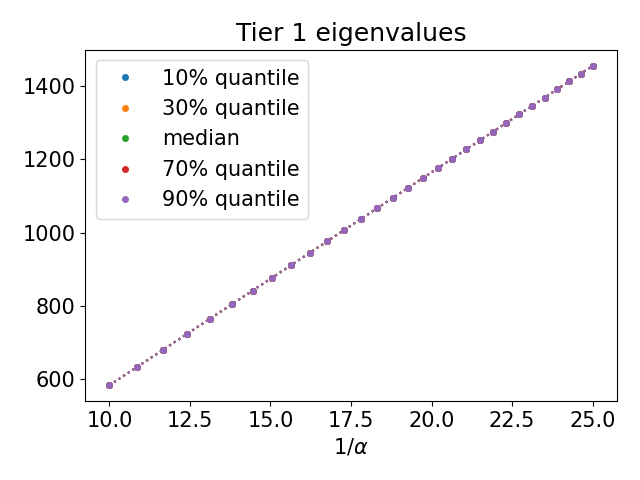}}
\put(140,0){\includegraphics[width=0.3\textwidth]{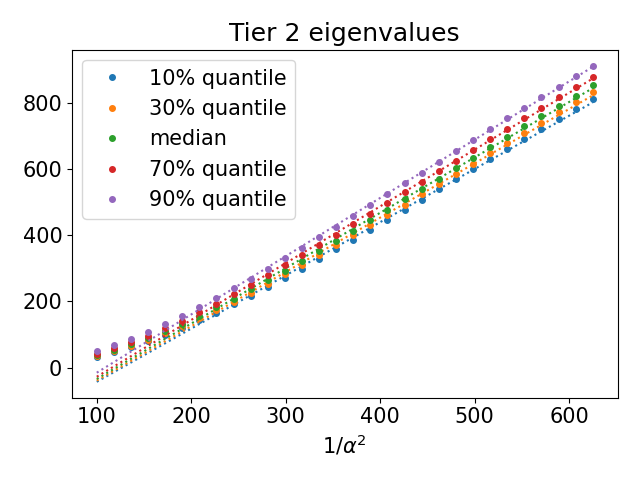}}
\put(280,0){\includegraphics[width=0.3\textwidth]{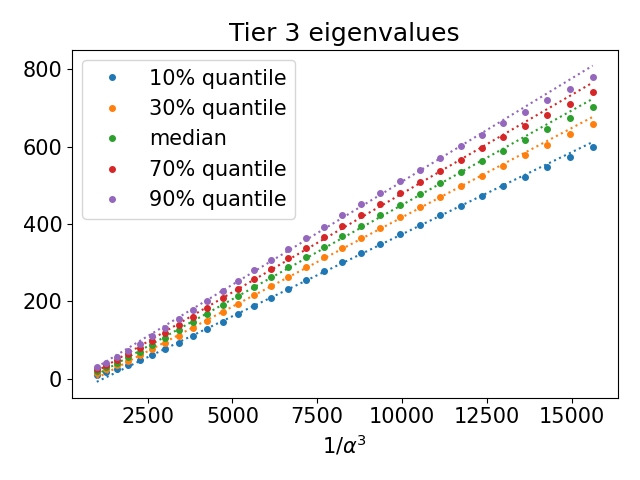}}
\put(20,270){{\small(a)}}
\put(230,270){{\small(b)}}
\put(0,100){{\small(c)}}
\put(140,100){{\small(d)}}
\put(280,100){{\small(e)}}
\end{picture}
\caption{(a) 3.4\AA-resolution cryo-EM map of hemoglobin, overlaid with the
atomic structure. (b) A finite-dimensional approximation using 3528 basis
functions at 7.0\AA-resolution. (c--e) The $10^\text{th}$, $30^\text{th}$,
$50^\text{th}$, $70^\text{th}$, and $90^\text{th}$ percentiles
of eigenvalues within each ``eigenvalue tier'' of the 3528-dimensional
observed Fisher information,
plotted against $1/\alpha \propto \sigma^{-2}$, $1/\alpha^2 \propto
\sigma^{-4}$, $1/\alpha^3 \propto \sigma^{-6}$ as in Figure
\ref{fig:rotavirus}.}\label{fig:hemoglobin}
\end{figure}

\noindent {\bf Hemoglobin.}
We consider a map of hemoglobin, reported in \cite{khoshouei2017cryo} (EMDB:3650, PDB:5NI1). A contour
plot overlaid with the atomic structure is presented in Figure
\ref{fig:hemoglobin}(a). We applied a low-pass filter with cutoff
frequency $(7.0\text{\AA})^{-1}$ in the Fourier domain, depicted
in Figure \ref{fig:lowpass}. We then applied a basis approximation
with bandlimits $(S,L)=(8,20)$ and total dimension $d=3528$.
The approximated map is shown in Figure \ref{fig:hemoglobin}(b), and captures
important aspects of the secondary structure including the locations of the
$\alpha$-helices and embedded prosthetic heme groups.
We denote the basis coefficients of this approximation as $\theta_*$.

Figure \ref{fig:hemoglobin}(c--e) again depicts the leading $d-3$ eigenvalues
of $\nabla^2 R_n(\theta_*)$ computed from $n=500{,}000$ simulated samples,
stratified into three tiers of sizes
$(d_1,d_2,d_3)$. Linear trends with $1/\alpha \propto \sigma^{-2}$,
$1/\alpha^2 \propto \sigma^{-4}$, and $1/\alpha^3 \propto \sigma^{-6}$ are
again observed, and may be contrasted with the non-linear scalings of
eigenvalues in Tiers 2 and 3 with $1/\alpha$ as depicted in Figure
\ref{fig:alltiers}.\\

We note that although the eigenvalues of $\nabla^2 R_n(\theta_*)$
do scale with powers of the SNR $1/\alpha$
according to our theoretical predictions, at any fixed SNR and for basis
dimensions exceeding $d \approx 100$, we do not observe
a clear separation between the eigenvalues of Tier 2 and of Tier 3,
due to the variation in magnitude of eigenvalues corresponding to differing
radial frequencies within each tier.

In these examples, we also begin to observe some deviations from the
predicted eigenvalue scalings at the higher and lower ends of tested SNR.
Deviations in higher basis dimensions $d$ and at lower SNR $1/\alpha$
(seen in Figures \ref{fig:rotavirus}(h--i) and \ref{fig:hemoglobin}(d))
are likely finite-sample effects due to differences
between the observed information matrix $\nabla^2 R_n(\theta_*)$ and the
(population) Fisher information $I(\theta_*)=\nabla^2 R(\theta_*)$. We believe
that deviations at higher SNR $1/\alpha$ (seen in
Figures \ref{fig:rotavirus}(e--f) and \ref{fig:hemoglobin}(e))
reflect a departure of the behavior of the population Fisher
information $I(\theta_*)$ from the predictions of the large-$\sigma$
theoretical regime. Our largest tested SNR $1/\alpha=25$ yields a spectral SNR
(average power of signal / average power of noise at a fixed Fourier radius)
of 0.2--0.4 near the origin of the Fourier domain, which we believe reflects a
level of noise that may be slightly higher than that of modern cryo-EM experiments.

\section{Conclusion}

In this work, we characterized properties of the Fisher information
matrix and log-likelihood function landscape for continuous group orbit
estimation problems in a high noise regime, showing that they are related to
the structure of the invariant algebra of the rotational
group. We applied these results to study several models of
function estimation in finite-dimensional function spaces, in particular
establishing that $3^\text{rd}$-order moment information is sufficient to
locally identify generic signals in these models.

In many interesting applications including single-particle cryo-EM, the target 
function at full spatial resolution may not admit an accurate low-dimensional
approximation. In such settings, our theoretical results may have relevance to
estimating lower-dimensional smoothed approximations of the function. We
demonstrated in simulation that this theory can accurately predict the noise
scalings of the Fisher information eigenvalues for two small protein molecules
over a range of sufficiently high noise, or low SNR. We highlight the
theoretical understanding of likelihood-based estimation in high-dimensional
and infinite-dimensional settings and over a broader range of 
SNR as a question for future work.

\appendix

\section{Proofs for general results on the orbit recovery model}\label{appendix:general}

\subsection{High-noise expansion}

We first provide a form for $s_k(\theta)$ and $\ts_k(\theta)$ that will be more
convenient for later computations. We then
prove Theorem \ref{thm:seriesexpansion}.

\begin{lemma} \label{lem:skform}
The expressions $s_k(\theta)$ and $\ts_k(\theta)$ from (\ref{eq:sk})
and (\ref{eq:tildesk}) have the equivalent forms
\begin{align}
s_k(\theta) &= \frac{1}{2 (k!)} \E_g[\langle \theta, g \cdot \theta\rangle^k
- 2 \langle \theta, g \cdot \theta_* \rangle^k + \langle \theta_*, g \cdot \theta_*\rangle^k]\label{eq:skequiv}\\
\ts_k(\theta) &= \frac{1}{2 (k!)} \E_{g, h}[\langle \Pi\cdot  g\cdot  \theta, \Pi\cdot  h \cdot \theta\rangle^k
- 2 \langle \Pi\cdot  g \cdot \theta, \Pi\cdot  h \cdot \theta_* \rangle^k + \langle \Pi\cdot  g \cdot \theta_*, \Pi \cdot h \cdot \theta_*\rangle^k]\label{eq:tskequiv}
\end{align}
where the expectations are over independent Haar-uniform random
elements $g,h \in \G$.
\end{lemma}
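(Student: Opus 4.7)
The plan is to expand the squared Hilbert--Schmidt norm defining $s_k(\theta)$ (resp.\ $\ts_k(\theta)$) into three terms, rewrite each as an iterated integral over $\G$ using the integral definition of $T_k$ (resp.\ $\tT_k$), and then collapse tensor inner products of rank-one tensors using the elementary identity
\[
\big\langle u^{\otimes k},v^{\otimes k}\big\rangle_\HS=\langle u,v\rangle^k
\qquad \text{for all } u,v \in \R^m.
\]

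\emph{Projected case.} Starting from the definition,
\[
\ts_k(\theta)=\frac{1}{2(k!)}\Big(\|\tT_k(\theta)\|_\HS^2
-2\langle \tT_k(\theta),\tT_k(\theta_*)\rangle+\|\tT_k(\theta_*)\|_\HS^2\Big).
\]
Substituting $\tT_k(\theta)=\int_\G (\Pi \cdot g \cdot \theta)^{\otimes k}\,\der\Lambda(g)$, pulling the two independent $g,h$-integrals outside the inner product by linearity, and applying the rank-one tensor identity term by term gives
\[
\|\tT_k(\theta)\|_\HS^2=\E_{g,h}\big[\langle \Pi\cdot g\cdot\theta,\Pi\cdot h\cdot\theta\rangle^k\big],
\]
and analogous expressions for the cross term and for $\|\tT_k(\theta_*)\|_\HS^2$. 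Summing produces exactly \eqref{eq:tskequiv}. This part requires no Haar invariance argument because $\Pi$ does not commute with the action of $\G$.

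\emph{Unprojected case.} The same expansion yields
\[
s_k(\theta)=\frac{1}{2(k!)}\E_{g,h}\Big[\langle g\cdot\theta,h\cdot\theta\rangle^k
-2\langle g\cdot\theta,h\cdot\theta_*\rangle^k
+\langle g\cdot\theta_*,h\cdot\theta_*\rangle^k\Big].
\]
Now use $\G\subseteq\O(d)$ to write $\langle g\cdot\theta,h\cdot\theta\rangle=\langle \theta,g^{-1}h\cdot\theta\rangle$ (and similarly for the other two terms), and then apply the left-invariance of the Haar measure $\Lambda$ to change variables $g'=g^{-1}h$ with $h$ fixed; by Fubini the outer $h$-integral is trivial. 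This collapses each double expectation to a single expectation and yields \eqref{eq:skequiv}.

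\emph{Expected obstacles.} There is essentially no conceptual obstacle: the argument is a direct combination of the bilinearity of $\langle\cdot,\cdot\rangle_\HS$, Fubini, the rank-one tensor identity, and orthogonality/Haar-invariance in the unprojected case. The only point needing minor care is the change of variables $g'=g^{-1}h$ in the unprojected case, which uses both that $\G\subseteq\O(d)$ preserves inner products and that $\Lambda$ is both left- and right-invariant. This is immediate from the standing assumption that $\G$ is compact with Haar probability measure $\Lambda$.
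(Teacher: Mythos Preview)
Your proposal is correct and follows essentially the same approach as the paper: expand the squared Hilbert--Schmidt norm, use $\langle u^{\otimes k},v^{\otimes k}\rangle=\langle u,v\rangle^k$, and in the unprojected case collapse the double expectation to a single one via orthogonality and Haar invariance (the paper writes $g^\top h$ where you write $g^{-1}h$, which coincide on $\O(d)$). The only cosmetic difference is that you treat the projected case first and the unprojected case as a further simplification, whereas the paper reverses the order.
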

\begin{proof}
For the first statement, expanding the square in
the definition of $s_k$ from (\ref{eq:sk}), we have
\begin{align*}
s_k(\theta)&=\frac{1}{2(k!)}\|T_k(\theta)-T_k(\theta_*)\|_\HS^2\notag\\
&=\frac{1}{2(k!)}\big\|\E_g[(g\cdot\theta)^{\otimes k}]-\E_g[(g\cdot\theta_*)^{\otimes k}]\big\|_\HS^2\notag\\
&=\frac{1}{2(k!)}\E_{g,h}\left[\big\langle (g\cdot\theta)^{\otimes k},
(h\cdot\theta)^{\otimes k}\big\rangle
-2\big\langle (g\cdot\theta)^{\otimes k},(h\cdot\theta_*)^{\otimes k}\big\rangle
+\big\langle (g\cdot\theta_*)^{\otimes k},(h\cdot\theta_*)^{\otimes k}\big\rangle\right]\notag\\
&=\frac{1}{2(k!)}\E_{g,h}\left[\langle g\cdot\theta,h\cdot\theta \rangle^k
-2\langle g\cdot\theta,h\cdot\theta_*\rangle^k+\langle g\cdot\theta_*,h\cdot\theta_*\rangle^k
\right]\notag\\
&=\frac{1}{2(k!)}\E_g\left[\langle \theta,g\cdot\theta \rangle^k
-2\langle \theta,g\cdot\theta_*\rangle^k+\langle \theta_*,g\cdot\theta_*\rangle^k\right].
\end{align*} 
The last step above applies $\langle g \cdot u,h \cdot v \rangle
=\langle u,(g^\top h) \cdot v \rangle$ and the equality in law
$g^\top h \overset{L}{=} g$. The second statement follows similarly from
(\ref{eq:tildesk}), omitting this last step.
\end{proof}

\begin{proof}[Proof of Theorem \ref{thm:seriesexpansion}]
Part (a) follows from specializing part (b) to $\td=d$ and $\Pi=\Id$,
and observing that in this case, the term
$\langle \tilde{T}_k(\theta),P_k(\theta) \rangle$ in
(\ref{eq:seriesexpansionprojected}) also belongs to $\cR_{\leq k-1}^\G$ and
hence may be absorbed into $q_k(\theta)$---see \cite[Proposition
2.3]{katsevich2020likelihood} or \cite[Lemma 4.8]{fan2020likelihood}.

Most of the claims in part (b) follow directly from \cite[Lemma
2.2]{katsevich2020likelihood}: Specializing to our setting (where
$\rho$ and $\rho_*$ in \cite{katsevich2020likelihood} are the distributions of
$\Pi \cdot g \cdot \theta$ and $\Pi \cdot h \cdot \theta_*$ for
$g,h \sim \Lambda$, and where $\delta$ in \cite[Eqs.\ (2.12--2.13)]{katsevich2020likelihood} is
bounded as $\delta \leq C(1 \vee \|\theta\|)$ for a
$(\Pi,\G,\theta_*)$-dependent constant $C>0$ and all $\theta \in \R^d$),
this result guarantees that the expansion
(\ref{eq:seriesexpansionprojected}) holds for
$\tilde{s}_k$, $\tilde{T}_k$, $P_k$, and $q_k$ having all of the stated
properties, and for a remainder $q(\theta)$ that satisfies
\begin{equation}\label{eq:KBremainder}
|q(\theta)| \leq \frac{C_K(1 \vee \|\theta\|)^{2K+2}}{\sigma^{2K+2}}
\end{equation}
when $\|\theta\| \leq \sigma$. This remainder $q(\theta)$ must
also be $\G$-invariant, as all of the other terms in
(\ref{eq:seriesexpansionprojected}) are $\G$-invariant.

It remains to verify the bounds for $\|\nabla q(\theta)\|$ and $\|\nabla^2
q(\theta)\|$ in (\ref{eq:remainderbound}). These types of bounds were shown
in the unprojected setting of $\Pi=\Id$
in \cite[Lemma 4.7]{fan2020likelihood}. They were not stated explicitly in
\cite{katsevich2020likelihood}, but may be deduced from a small extension
of the analysis: Denote by $\E_g,\E_h$ the expectations over $g,h \sim \Lambda$,
and by $\E_\eps,\E_{\eps'}$ those over $\eps,\eps' \sim \N(0,\Id)$. Write
\[t=1/\sigma, \qquad Y=\Pi \cdot h \cdot \theta_*+t^{-1}\eps, \qquad
w=\Pi \cdot g \cdot \theta-\Pi \cdot h \cdot \theta_* \in \R^{\td},\]
and define
\[f(t)=-\log M(t), \qquad M(t)=\E_g\left[\exp\left(-\frac{t^2\|w\|^2}{2}
+tw^\top \eps\right)\right]\]
Comparing with (\ref{eq:likelihood}), this function $f(t)$ is the negative
log-likelihood for the single sample $Y$, up to a $\theta$-independent constant
and viewed as a function of $t=1/\sigma$.
Applying a Taylor expansion of $f(t)$ around
$t=0$, and then taking expectations over $(h,\eps)$ that define $Y$, we have
\begin{equation}\label{eq:KBtaylorexpansion}
R(\theta)=\text{constant}+\sum_{p=1}^{2K+1} \frac{t^p}{p!}
\E_{h,\eps}[f^{(p)}(0)]+\frac{t^{2K+2}}{(2K+2)!}
\E_{h,\eps}[f^{(2K+2)}(\xi(h,\eps))]
\end{equation}
for a random point $\xi(h,\eps)$ between 0 and $t=1/\sigma$. This is a
rewriting of the Taylor expansion
in \cite[Eq.\ (5.7)]{katsevich2020likelihood}.
It is shown in \cite{katsevich2020likelihood} that the leading terms in
(\ref{eq:KBtaylorexpansion}) of orders $t^1,\ldots,t^{2K+1}$ give exactly the
leading terms of (\ref{eq:seriesexpansionprojected}), and the last term of
(\ref{eq:KBtaylorexpansion}) is the remainder $q(\theta)$ in
(\ref{eq:seriesexpansionprojected}). The bound (\ref{eq:KBremainder}) for
$q(\theta)$ follows from \cite[Eq.\ (5.22)]{katsevich2020likelihood}.

To bound $\partial_{\theta_a} q(\theta)$ for any index $a \in \{1,\ldots,d\}$,
we may apply a similar Taylor expansion
for $\partial_{\theta_a} R(\theta)$, and write
\[\partial_{\theta_a} R(\theta)=\sum_{p=1}^{2K+1}
\frac{t^p}{p!}\E_{h,\eps}[\partial_t^p \partial_{\theta_a} f(0)]
+\frac{t^{2K+2}}{(2K+2)!} \E_{h,\eps}[\partial_t^{2K+2} \partial_{\theta_a}
f(\xi(h,\eps))]\]
for a possibly different point $\xi(h,\eps) \in (0,t)$
depending on the index $a$. Then
\begin{equation}\label{eq:derq}
\partial_{\theta_a} q(\theta)=\frac{t^{2K+2}}{(2K+2)!}
\E_{h,\eps}[\partial_t^{2K+2} \partial_{\theta_a} f(\xi(h,\eps))]
\end{equation}
and we wish to bound this term for each $a \in \{1,\ldots,d\}$. The function
$\partial_{\theta_a} f(t)$ is the $\theta_a$-derivative of $f$, given by
\[\partial_{\theta_a} f(t)=-\frac{\partial_{\theta_a} M(t)}{M(t)}.\]
Then differentiating $2K+2$ times in $t$, we see that
$\partial_t^{2K+2}\partial_{\theta_a} f(t)$ is a sum of at most
$C_K$ terms of the form
\[C_{\ell_0,\ldots,\ell_j} \cdot
\frac{\partial_t^{\ell_0} \partial_{\theta_a}M(t)}{M(t)}
\cdot \frac{\partial_t^{\ell_1} M(t)}{M(t)} \cdot \ldots
\cdot \frac{\partial_t^{\ell_j} M(t)}{M(t)}\]
for some integers $j \geq 0$ and $\ell_0,\ldots,\ell_j \geq 0$ such that
$\ell_0+\ldots+\ell_j=2K+2$, and for some universal
constants $C_{\ell_0,\ldots,\ell_j}$ depending only on $\ell_0,\ldots,\ell_j$.

From \cite[Eq.\ (5.20)]{katsevich2020likelihood}, we have
\[\left|\frac{\partial_t^\ell M(\xi)}{M(\xi)}\right| \leq
\delta^\ell\E_{\eps'}\Big[(\|W\|+|\xi|\delta)^\ell\Big],
\qquad W=\eps+\i\eps', \qquad
\delta=\sup_{g,h \in \G} \|\Pi \cdot g \cdot \theta-
\Pi \cdot h \cdot \theta_*\|\]
where $\eps' \sim \N(0,\Id_{\td \times \td})$ is an independent copy
of $\eps$. Applying this to any $\xi \in (0,t)$, and applying $\delta
\leq C(1 \vee \|\theta\|) \leq C\sigma=Ct^{-1}$, we obtain
\begin{equation}\label{eq:Mbound}
\left|\frac{\partial_t^\ell M(\xi)}{M(\xi)}\right| \leq
C_\ell\delta^\ell(1+\|\eps\|)^\ell.
\end{equation}
We may bound the $t$-derivatives of $\partial_{\theta_a} M(t)$ using a similar
argument: Introducing the $a^{\text{th}}$ standard basis vector
$e_a \in \R^d$, observe that
\begin{align*}
\partial_{\theta_a} M(t)&=\E_g\left[e_a^\top g^\top \Pi^\top (t\eps-t^2w)
\cdot \exp\left(-\frac{t^2\|w\|^2}{2}+tw^\top \eps\right)\right]\\
&=\E_g\left[e_a^\top g^\top \Pi^\top (t\eps-t^2w)
\cdot \E_{\eps'}[e^{tw^\top W}]\right], \qquad \text{ where } W=\eps+\i\eps'.
\end{align*}
Then applying Leibniz' rule and the same argument as
\cite[Eqs.\ (5.18--5.19)]{katsevich2020likelihood} to differentiate in $t$,
we obtain
\begin{align*}
\partial_t^\ell \partial_{\theta_a} M(t)
&=\E_g\left[e_a^\top g^\top \Pi^\top (t\eps-t^2w) \cdot \E_{\eps'}[e^{t w^\top
W}] \cdot \E_{\eps'}[(w^\top (W-tw))^\ell]\right]\\
&\hspace{1in}+
\ell \cdot
\E_g\left[e_a^\top g^\top \Pi^\top (\eps-2tw) \cdot \E_{\eps'}[e^{t w^\top
W}] \cdot \E_{\eps'}[(w^\top (W-tw))^{\ell-1}]\right]\\
&\hspace{1in}+
\binom{\ell}{2} \cdot
\E_g\left[e_a^\top g^\top \Pi^\top (-2w) \cdot \E_{\eps'}[e^{t w^\top
W}] \cdot \E_{\eps'}[(w^\top (W-tw))^{\ell-2}]\right].
\end{align*}
Applying also $M(t)=\E_g[\E_{\eps'}[e^{tw^\top W}]]$, so that
$\E_g[(\cdot)\E_{\eps'}[e^{tw^\top W}]]/M(t)$ is a reweighted average over $g
\in \G$, this yields
analogously to \cite[Eq.\ (5.20)]{katsevich2020likelihood} that
\begin{align*}
\left|\frac{\partial_t^\ell \partial_{\theta_a}M(\xi)}{M(\xi)}\right|
&\leq C_\ell \Big[(|\xi|\|\eps\|+\xi^2\delta)
\cdot \delta^\ell\E_{\eps'}[(\|W\|+|\xi|\delta)^\ell]\nonumber\\
&\hspace{0.3in}+(\|\eps\|+|\xi|\delta)
\cdot \delta^{\ell-1}\E_{\eps'}[(\|W\|+|\xi|\delta)^{\ell-1}]
+\delta \cdot \delta^{\ell-2}\E_{\eps'}[(\|W\|+|\xi|\delta)^{\ell-2}]\Big]
\end{align*}
where we have absorbed $\|e_a^\top g^\top \Pi\|$ into the constant $C_\ell$.
Applying this with $\xi \in (0,t)$ and
$\delta \leq C(1 \vee \|\theta\|) \leq Ct^{-1}$, we get
\begin{equation}\label{eq:derMbound}
\left|\frac{\partial_t^\ell \partial_{\theta_a}M(\xi)}{M(\xi)}\right|
\leq C_\ell\delta^{\ell-1}(1+\|\eps\|)^{\ell+1}.
\end{equation}
Then combining with (\ref{eq:Mbound}) and applying to the previously
stated form of $\partial_t^{2K+2}\partial_{\theta_a} f$,
\[|\partial_t^{2K+2}\partial_{\theta_a} f(\xi)| \leq C_K\!\!\!
\mathop{\sum_{\ell_0,\ldots,\ell_j \geq 0}}_{\ell_0+\ldots+\ell_j=2K+2}
\delta^{\ell_0+\ldots+\ell_j-1}(1+\|\eps\|)^{\ell_0+\ldots+\ell_j+1}
\leq C_K' (1 \vee \|\theta\|)^{2K+1}(1+\|\eps\|)^{2K+3}.\]
Finally, taking the expectation over $\eps$ and applying this
to (\ref{eq:derq}) for each $a \in \{1,\ldots,d\}$, we obtain the desired bound
\[\|\nabla q(\theta)\| \leq
\frac{C_K(1 \vee \|\theta\|)^{2K+1}}{\sigma^{2K+2}}.\]

The argument to bound $\|\nabla^2 q(\theta)\|$ is similar: For any $a,b \in
\{1,\ldots,d\}$, applying a Taylor
expansion of $\partial_{\theta_a,\theta_b}^2 R(\theta)$, we wish to bound
\[\partial_{\theta_a,\theta_b}^2 q(\theta)
=\frac{t^{2K+2}}{(2K+2)!}\E_{h,\eps}[\partial_t^{2K+2}
\partial_{\theta_a,\theta_b}^2 f(\xi(h,\eps))]\]
for some $\xi(h,\eps) \in (0,t)$ depending on $a,b$. We may compute
\[\partial_{\theta_a,\theta_b}^2 f(t)
=-\frac{\partial_{\theta_a,\theta_b}^2 M(t)}{M(t)}
+\frac{\partial_{\theta_a} M(t)}{M(t)}\frac{\partial_{\theta_b} M(t)}{M(t)},\]
differentiate this $2K+2$ times in $t$, and apply 
(\ref{eq:Mbound}), (\ref{eq:derMbound}), and the analogous bound
\[\left|\frac{\partial_t^\ell \partial_{\theta_a,\theta_b}^2
M(\xi)}{M(\xi)}\right| \leq C_\ell\delta^{\ell-2}(1+\|\eps\|)^{\ell+2}\]
which is derived similarly. This yields
$|\partial_t^{2K+2}\partial_{\theta_a} f(\xi)| \leq 
C_K (1 \vee \|\theta\|)^{2K}(1+\|\eps\|)^{2K+4}$,
and taking the expectation over $\eps$ gives the desired bound for $\|\nabla^2
q(\theta)\|$.
\end{proof}

\subsection{Identifiability and transcendence degree}\label{appendix:trdeg}

We prove Propositions \ref{prop:trdegorbitdim} and \ref{prop:Kdef}.
An analogue of Proposition \ref{prop:trdegorbitdim} for algebraic groups
over algebraically closed fields may be found in
e.g.\ \cite[Section 2.3]{popov1994invariant}; we provide here an argument in
our context of a compact subgroup $\G \subseteq \O(d)$ acting on $\R^d$.

\begin{proof}[Proof of Proposition \ref{prop:Kdef}(a)]
The algebra of all polynomials $\R[\theta_1,\ldots,\theta_d]$ has transcendence
degree $d$ over $\R$, so also $\trdeg \cR^\G \leq d<\infty$ for the subalgebra
$\cR^\G$. Taking any finite transcendence basis $\varphi$
of $\cR^\G$ and letting $k$ be the maximum degree of polynomials constituting
$\varphi$, we have $\trdeg \cR_{\leq k}^\G=|\varphi|=\trdeg \cR^\G$. Then there
must be a smallest integer $k$ for which this holds.
\end{proof}

For the remaining statements of
Propositions \ref{prop:trdegorbitdim} and \ref{prop:Kdef}, we will use
the following Jacobian criterion for algebraic independence, and a relation between
generic list recovery of $\theta_*$ and transcendence degree.

\begin{lemma}[Jacobian criterion, \cite{beecken2013algebraic} Theorem 8]
\label{lemma:jaccrit}
For any real polynomials $p_1,\ldots,p_m$ in $\theta_1,\ldots,\theta_d$,
\[\trdeg(\{p_1,\ldots,p_m\})=\operatorname{rank}_{\R(\theta_1,\ldots,\theta_d)}
[\nabla p_1,\ldots,\nabla p_m]^\top\]
where the right side denotes the rank of the Jacobian matrix
$[\nabla p_1,\ldots,\nabla p_m]^\top \in \R^{m \times d}$ over
the field of rational functions $\R(\theta_1,\ldots,\theta_d)$.
\end{lemma}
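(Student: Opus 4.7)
The plan is to establish $\trdeg(\{p_1,\ldots,p_m\})=r$, where $r:=\operatorname{rank}_{\R(\theta_1,\ldots,\theta_d)}[\nabla p_1,\ldots,\nabla p_m]^\top$, by proving both inequalities. After relabeling, I assume $\nabla p_1,\ldots,\nabla p_r$ are linearly independent over $\R(\theta)$, while $\nabla p_{r+1},\ldots,\nabla p_m$ lie in their $\R(\theta)$-linear span.

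For $\trdeg\ge r$, the strategy is to show $p_1,\ldots,p_r$ are algebraically independent. Suppose not, and choose a nonzero polynomial $q\in\R[y_1,\ldots,y_r]$ of minimal total degree with $q(p_1,\ldots,p_r)\equiv 0$ in $\R[\theta]$. Differentiating this identity in each $\theta_j$ and applying the chain rule yields the vector relation
\[
\sum_{i=1}^r (\partial_{y_i}q)(p_1,\ldots,p_r)\,\nabla p_i \;=\; 0 \qquad \text{in } \R(\theta)^d.
\]
Since $q$ is nonconstant and $\R$ has characteristic zero, some $\partial_{y_i}q$ is a nonzero polynomial of strictly smaller total degree than $q$; by minimality of $\deg q$, the evaluation $(\partial_{y_i}q)(p_1,\ldots,p_r)$ is therefore nonzero in $\R[\theta]$. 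This yields a nontrivial $\R(\theta)$-linear relation among $\nabla p_1,\ldots,\nabla p_r$, contradicting their assumed independence.

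For $\trdeg\le r$, it suffices to show that each $p_i$ with $i>r$ is algebraic over $\R(p_1,\ldots,p_r)$. Consider the polynomial map $\psi=(p_1,\ldots,p_r,p_i)\colon\R^d\to\R^{r+1}$. The assumption that $\nabla p_i$ lies in the $\R(\theta)$-span of $\nabla p_1,\ldots,\nabla p_r$ implies that the Jacobian of $\psi$ has rank at most $r$ over $\R(\theta)$, and since evaluation at a point can only lower the rank of a matrix, $\operatorname{rank}_{\R} d\psi(\theta)\le r$ at every $\theta\in\R^d$. Sard's theorem then forces $\psi(\R^d)$ to have Lebesgue measure zero in $\R^{r+1}$; being semi-algebraic, it has semi-algebraic dimension at most $r$, so its Zariski closure in $\R^{r+1}$ is a proper algebraic subvariety. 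Any nonzero defining polynomial $Q\in\R[y_1,\ldots,y_{r+1}]$ of that subvariety then satisfies $Q(p_1,\ldots,p_r,p_i)\equiv 0$, which is the sought algebraic relation.

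The main subtlety is the characteristic-zero step embedded in the forward direction: in positive characteristic the partial derivatives $(\partial_{y_i}q)(p_1,\ldots,p_r)$ could vanish identically (for instance if $q$ were a $p$-th power), so the lemma as stated would require a separability hypothesis there, and only becomes automatic because we work over $\R$. A cleaner conceptual packaging runs through K\"ahler differentials: since $\R(\theta_1,\ldots,\theta_d)/\R$ is separably generated of transcendence degree $d$, the module $\Omega_{\R(\theta)/\R}$ is a $d$-dimensional $\R(\theta)$-vector space in which the differentials $dp_1,\ldots,dp_m$ span a subspace whose dimension simultaneously computes both sides of the claimed identity.
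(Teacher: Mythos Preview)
Your proof is correct. Note, however, that the paper does not actually prove this lemma: it is stated as a citation to an external reference (Theorem~8 of the cited work) and used as a black box throughout, so there is no proof in the paper to compare against.

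On the substance: your forward direction (linear independence of gradients over $\R(\theta)$ implies algebraic independence) via a minimal-degree relation and the chain rule is the standard characteristic-zero argument, and your observation that this is exactly where characteristic zero enters is apt. Your reverse direction via Sard's theorem and Tarski--Seidenberg is less conventional---purely algebraic treatments, including the cited reference, typically argue through derivations or K\"ahler differentials, as you yourself sketch in your closing paragraph---but it is valid over~$\R$. The trade-off is that your analytic route is self-contained for readers comfortable with real geometry but does not transfer to arbitrary fields of characteristic zero, whereas the algebraic route does.
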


Note that this implies $\trdeg(\{p_1,\ldots,p_m\}) \geq \rank[\nabla
p_1(\theta),\ldots,\nabla p_m(\theta)]^\top$ (the rank over $\R$)
evaluated at any $\theta \in \R^d$, with equality holding at generic points
$\theta \in \R^d$.

\begin{lemma}[Generic list recovery, \cite{bandeira2017estimation} Theorem 4.9]
\label{lemma:genericlist}
Let $\G \subseteq \O(d)$ be a compact subgroup, and let $U$ be a
finite-dimensional subspace of $\cR^\G$. If $\trdeg U=\trdeg \cR^G$,
then for generic $\theta_* \in \R^d$, the set of points
$\{\theta \in \R^d:P(\theta)=P(\theta_*) \text{ for all } P \in U\}$
is a union of a finite number of orbits.
\end{lemma}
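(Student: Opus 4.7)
The plan is to reduce the lemma to the basic fact that a dominant polynomial map whose source and target have equal dimension has finite generic fibers, applied to the invariant-theoretic quotient induced by a transcendence basis of $U$. Fix a transcendence basis $p_1, \ldots, p_r \in U$ with $r = \trdeg U = \trdeg \cR^\G$, and let $\Phi : \R^d \to \R^r$ be the $\G$-invariant polynomial map $\Phi(\theta) = (p_1(\theta), \ldots, p_r(\theta))$. Every $P \in U$ is algebraic over $p_1, \ldots, p_r$, so the level set $V_{\theta_*} := \{\theta : P(\theta) = P(\theta_*) \text{ for all } P \in U\}$ is contained in the fiber $F_{\theta_*} := \Phi^{-1}(\Phi(\theta_*))$, and both are $\G$-invariant. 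It therefore suffices to prove that $F_{\theta_*}$ is a union of finitely many $\G$-orbits for generic $\theta_*$.

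For this I would invoke Hilbert's theorem on finite generation of invariants of a compact group to write $\cR^\G = \R[q_1, \ldots, q_N]$. Since $p_1, \ldots, p_r$ are algebraically independent and $\trdeg \cR^\G = r$, each $q_i$ is algebraic over $\R(p_1, \ldots, p_r)$, so the fraction field of $\cR^\G$ is a finite algebraic extension of $\R(p_1, \ldots, p_r)$. Geometrically, Proposition \ref{prop:trdegorbitdim} identifies the dimension of the real quotient $\R^d/\G$ as $d - d_0 = r$, and the finiteness of this field extension translates into the induced map $\R^d/\G \to \R^r$ being generically finite. Hence for generic $\theta_*$ the preimage of $\Phi(\theta_*)$ in $\R^d/\G$ is finite, which is exactly the statement that $F_{\theta_*}$ is a finite union of $\G$-orbits.

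An equivalent direct approach over $\R$ uses the Jacobian criterion of Lemma \ref{lemma:jaccrit}: for generic $\theta_*$ the vectors $\nabla p_1(\theta_*), \ldots, \nabla p_r(\theta_*)$ are linearly independent in $\R^d$, so by the implicit function theorem $F_{\theta_*}$ is locally a smooth manifold of dimension $d - r = d_0$ at $\theta_*$. The orbit $\orbit_{\theta_*}$, being a compact submanifold of the same dimension, is open and closed in the smooth part of $F_{\theta_*}$, and by a classical theorem of Whitney the real algebraic set $F_{\theta_*}$ has only finitely many connected components, bounding the number of top-dimensional orbits.

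The main obstacle is handling the possibly non-generic lower-dimensional orbit strata within $F_{\theta_*}$: a priori, $F_{\theta_*}$ could contain points whose $\G$-orbit has dimension strictly less than $d_0$, and one must argue that such degenerate orbits contribute only finitely many pieces. The algebraic route above is cleanest here because the degree of the field extension $[\R(\cR^\G) : \R(p_1, \ldots, p_r)]$ provides an intrinsic uniform bound on the total number of orbits in any generic fiber, independent of any stratification. Alternatively, one can stratify $\R^d$ by orbit dimension and iterate the smooth-manifold argument on strata of progressively lower dimension, using that orbits of dimension $< d_0$ lie in a proper algebraic subset of $\R^d$ whose image under $\Phi$ has strictly smaller dimension in $\R^r$ and is therefore avoided by $\Phi(\theta_*)$ for generic $\theta_*$.
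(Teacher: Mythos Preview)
The paper does not supply its own proof of this lemma; it is quoted directly from \cite{bandeira2017estimation} (their Theorem~4.9) and used as a black box in the proof of Proposition~\ref{prop:Kdef}(b). So there is no in-paper argument to compare against.

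Your primary route (the second paragraph) is correct and is essentially the standard proof. Two small comments: the inclusion $V_{\theta_*}\subseteq F_{\theta_*}$ is immediate from $p_1,\ldots,p_r\in U$ and does not need the algebraic dependence of the other $P\in U$; and you do not need to invoke Proposition~\ref{prop:trdegorbitdim}, since $\trdeg\cR^\G=r$ is already the hypothesis. To make the generic-finiteness step precise, one passes to $\C$: the inclusion $\R[p_1,\ldots,p_r]\hookrightarrow\cR^\G$ induces a dominant morphism between irreducible affine varieties of equal dimension $r$, hence generically finite; the bad locus in the target pulls back through $\Phi$ to a proper Zariski-closed set in $\C^d$, whose real points are non-generic. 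Compactness of $\G$ ensures that $\cR^\G$ separates real orbits, so a finite fiber in $\operatorname{Spec}(\cR^\G\otimes\C)$ translates to finitely many $\G$-orbits in $\R^d$. Your alternative Whitney-based route is more delicate than you indicate: besides avoiding the low-orbit-dimension locus $S$, you also need $F_{\theta_*}$ to be smooth of dimension $d_0$ at \emph{every} one of its points (so that each $d_0$-dimensional orbit is open, not just closed, in its component), and the claim that $\dim\Phi(S)<r$ does not follow merely from $S$ being a proper subvariety---it requires control on the fiber dimensions of $\Phi|_S$. Both gaps can be closed, but as you rightly note, the field-degree argument bypasses them entirely.
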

\noindent (Theorem 4.9 of \cite{bandeira2017estimation} shows also a converse of
this statement, but we will not directly use this converse.)

Recall from (\ref{eq:dk}--\ref{eq:tdk}) the values
\[d_k=\trdeg \cR_{\leq k}^\G-\trdeg \cR_{\leq k-1}^\G,
\qquad \td_k=\trdeg \tcR_{\leq k}^\G-\trdeg \tcR_{\leq k-1}^\G\]
and from (\ref{eq:Mk}--\ref{eq:tMk})
the combined moment functions $M_k(\theta)$ and $\tM_k(\theta)$.
The following structural lemma is an important ingredient for our proofs, and provides
explicit coordinate systems in local neighborhoods of points $\theta \in
\R^d$ using $\G$-invariant polynomials. Versions of these statements were shown
in \cite[Section 4]{fan2020likelihood} when $\G$ is a discrete
group and generic orbits have dimension 0, and the following lemma
provides an extension to models where orbits have positive dimension.

\begin{lemma}\label{lemma:transcendencebasis}
Let $K$ be the smallest integer for which $\trdeg \cR_{\leq K}^\G=\trdeg
\cR^\G$.
In the unprojected model, for generic $\theta \in \R^d$ and every
$k=1,\ldots,K$, the rank of $\der M_k(\theta)$ equals $d_1+\ldots+d_k$.

Furthermore, for any $k \in \{1,\ldots,K\}$ and any
$\theta \in \R^d$ where $\rank \der M_k(\theta)=d_1+\ldots+d_k$, there exist
functions $\varphi^j:\R^d \to \R^{d_j}$ for each $j=1,\ldots,k$
and $\bar\varphi:\R^d \to \R^{d-d_1-\ldots-d_k}$, such that:
\begin{enumerate}[(a)]
\item For each $j=1,\ldots,k$, the $d_j$ coordinates of $\varphi^j$ are
entries of the moment tensor $T_j$.
\item The combined map $\varphi=(\varphi^1,\ldots,\varphi^k,\bar\varphi):
\R^d \to \R^d$ has non-singular derivative $\der \varphi(\theta) \in
\R^{d \times d}$ at this point $\theta$, and hence an analytic inverse function $\theta(\varphi)$ over a neighborhood $U$ of $\theta$.
\item For any $j=1,\ldots,k$, any polynomial $p \in \cR_{\leq j}^\G$, and
sufficiently small such neighborhood $U$, $\varphi \mapsto p(\theta(\varphi))$
is a function only of the $d_1+\ldots+d_j$ coordinates
$\varphi^1,\ldots,\varphi^j$, over $\varphi(U)$.
\item Suppose $k=K$. Then for any $\G$-invariant continuous function $f:\R^d \to
\R$ and sufficiently small such neighborhood $U$, 
$\varphi \mapsto f(\theta(\varphi))$ is a function only of the
$d_1+\ldots+d_K$ coordinates $\varphi^1,\ldots,\varphi^K$, over $\varphi(U)$.
Also,
\begin{equation}\label{eq:barphiorbit}
\Big\{\theta' \in U:(\varphi^1(\theta'),\ldots,\varphi^K(\theta'))
=(\varphi^1(\theta),\ldots,\varphi^K(\theta))\Big\}
=U \cap \orbit_\theta.
\end{equation}
\end{enumerate}
In the projected model, suppose there exists a smallest integer
$\tK<\infty$ for which $\trdeg \tcR_{\leq \tK}^\G=\trdeg \cR^G$. Then
the same statements hold with $\tK$, $\tM_j$, $\tT_j$,
$\td_j$, and $\tcR_{\leq j}^\G$ in place of $K$, $M_j$, $T_j$, $d_j$, and
$\cR_{\leq j}^\G$.
\end{lemma}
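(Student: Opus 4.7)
The plan is to first prove the rank equality from the Jacobian criterion of Lemma~\ref{lemma:jaccrit}, and then at any $\theta$ achieving the maximal rank construct the coordinate system $\varphi$ by iteratively selecting $d_j$ entries from $T_j$ whose gradients extend a running basis, completing with arbitrary coordinates $\bar\varphi$, and invoking the analytic inverse function theorem. Properties~(c) and~(d) will follow by tracking gradient spans in a neighborhood of $\theta$ and by a dimension count identifying level sets of $(\varphi^1,\ldots,\varphi^K)$ with orbits.

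Since the entries of $T_1,\ldots,T_k$ generate $\cR_{\leq k}^\G$, Lemma~\ref{lemma:jaccrit} gives $\rank_{\R(\theta)}\der M_k(\theta)=\trdeg\cR_{\leq k}^\G=d_1+\ldots+d_k$, which equals the generic rank over $\R$ and yields the first statement upon intersecting over $k=1,\ldots,K$. For the construction, fix $\theta$ with $\rank \der M_k(\theta) = d_1+\ldots+d_k$ and write $V_j(\theta)\subseteq\R^d$ for the row span of $\der M_j(\theta)$. I first argue that $\dim V_j(\theta)=d_1+\ldots+d_j$ for every $j\leq k$: bounding each increment $\dim V_j-\dim V_{j-1}\leq d_j$ pointwise (its generic value, via a relative Jacobian argument applied to the tower $\cR_{\leq j-1}^\G\subseteq\cR_{\leq j}^\G$) and telescoping $\sum_j(\dim V_j-\dim V_{j-1})=\dim V_k=d_1+\ldots+d_k$ forces equality in each. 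With lower ranks maximal, I choose $\varphi^j$ inductively by picking $d_j$ entries from $T_j$ whose gradients at $\theta$ extend a basis of $V_{j-1}(\theta)$ to a basis of $V_j(\theta)$. Completing $(\varphi^1,\ldots,\varphi^k)$ by any $d-(d_1+\ldots+d_k)$ linear coordinate functions $\bar\varphi$ (e.g., a complementary subset of the canonical coordinates of $\R^d$) yields $\varphi:\R^d\to\R^d$ with invertible derivative at $\theta$, and the analytic inverse function theorem supplies a local analytic inverse $\theta(\varphi)$ over a neighborhood $U$, giving (a) and (b).

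For (c), shrink $U$ so that $\rank \der M_j$ remains maximal on $U$ (by lower semi-continuity) and $\der\varphi$ remains invertible. Then the $d_1+\ldots+d_j$ gradients of $\varphi^1,\ldots,\varphi^j$ at any $\theta'\in U$ are linearly independent and lie in $V_j(\theta')$, hence span it; thus $\nabla p(\theta')\in V_j(\theta')$ for any $p\in\cR_{\leq j}^\G$ is a linear combination of these gradients, and the chain rule gives $\partial(p\circ\theta)/\partial\varphi_l\equiv 0$ on $\varphi(U)$ for every $l>d_1+\ldots+d_j$, proving (c). For (d) with $k=K$, I would first prove the level-set identity~(\ref{eq:barphiorbit}) by a tangent-space dimension count: both $\orbit_\theta$ (at generic $\theta$, by Proposition~\ref{prop:trdegorbitdim}) and the level set $\{\theta'\in U:\varphi^j(\theta')=\varphi^j(\theta),\,j=1,\ldots,K\}$ are smooth submanifolds of dimension $d_0=d-(d_1+\ldots+d_K)$, and $\orbit_\theta$ is contained in the level set by $\G$-invariance of the $\varphi^j$, so $T_\theta\orbit_\theta\subseteq\ker\der(\varphi^1,\ldots,\varphi^K)(\theta)$ forces equality of tangent spaces and hence local equality of submanifolds. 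Any $\G$-invariant continuous $f$ is then constant on the level sets in $U$, so $f\circ\theta$ depends only on $\varphi^1,\ldots,\varphi^K$.

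The projected case is handled by uniform substitution of $\tM_k,\tT_j,\td_j,\tcR_{\leq j}^\G,\tK$ for $M_k,T_j,d_j,\cR_{\leq j}^\G,K$ throughout; every step carries over verbatim, using finiteness of $\tK$ from Proposition~\ref{prop:Kdef}(b). The hard part will be establishing the uniform quotient-rank bound $\dim V_j(\theta)-\dim V_{j-1}(\theta)\leq d_j$ at every $\theta$, which is needed to deduce the lower-rank equalities from the top-rank hypothesis; this requires a careful relative Jacobian argument to show that any generators of $\cR_{\leq j}^\G$ over $\cR_{\leq j-1}^\G$ have at most $d_j$ gradients linearly independent modulo $V_{j-1}(\theta)$, uniformly in $\theta$.
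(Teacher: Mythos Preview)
Your treatment of the generic rank statement and parts (a)--(c) follows the same line as the paper: apply the Jacobian criterion, select $d_j$ entries from each $T_j$ to build the chart, and use that $\nabla p$ for any $p\in\cR_{\leq j}^\G$ lies in the row span of $\der M_j$ (which coincides with $\linspan\{\nabla\varphi^1,\ldots,\nabla\varphi^j\}$ on a small enough $U$). You are in fact more explicit than the paper about the step deducing $\rank\der M_j(\theta)=d_1+\ldots+d_j$ from the top rank $\rank\der M_k(\theta)=d_1+\ldots+d_k$; the paper simply asserts this implication. Be cautious with your proposed relative-Jacobian justification, though: the pointwise bound ``relative rank $\le$ relative transcendence degree'' can fail at points where $V_{j-1}$ degenerates (e.g.\ take $p=\theta_1^2$ and $q=\theta_1$ at $\theta_1=0$), so a direct telescoping argument would need to exploit the grading of the $T_j$ in an essential way.

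Your route to (d), however, has a genuine gap. You invoke Proposition~\ref{prop:trdegorbitdim} to obtain $\dim\orbit_\theta=d_0$, but in the paper that proposition is \emph{proved using} the identity~(\ref{eq:barphiorbit}) of this very lemma, so the reference is circular. Even setting that aside, your dimension count addresses only generic $\theta$, whereas (d) is claimed for every $\theta$ with maximal $\rank\der M_K(\theta)$; and deducing that a continuous invariant depends only on $\varphi^1,\ldots,\varphi^K$ throughout $U$ would require the orbit identity at every $\theta''\in U$, not just at $\theta$. The paper avoids all of this by reversing the order. First, for any polynomial $f\in\cR^\G$, the set $\{\varphi^1,\ldots,\varphi^K,f\}$ is algebraically dependent (since $\trdeg\cR^\G=d_1+\ldots+d_K$), so by the pointwise Jacobian upper bound the matrix $[\nabla\varphi^1\,|\cdots|\,\nabla\varphi^K\,|\,\nabla f]$ has rank at most $d_1+\ldots+d_K$ everywhere; on $U$ the $\nabla\varphi^i$ are independent, forcing $\nabla f\in\linspan\{\nabla\varphi^i\}$, and the chain rule gives that $f\circ\theta(\varphi)$ depends only on $\varphi^1,\ldots,\varphi^K$. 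Second, extend to continuous invariants by Stone--Weierstrass approximation on a closed ball followed by Reynolds averaging $\bar p(x)=\int_\G p(g\cdot x)\,\der\Lambda(g)$ to restore invariance. Finally, (\ref{eq:barphiorbit}) follows by applying the continuous case to a $\G$-invariant continuous function separating any two disjoint compact orbits $\orbit_\theta$ and $\orbit_{\theta'}$.
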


\begin{proof}
In the unprojected model, $\cR_{\leq k}^\G$ is generated by
the entries of $T_1,\ldots,T_k$, and $\trdeg \cR_{\leq k}^\G=d_1+\ldots+d_k$.
Thus there are exactly $d_1+\ldots+d_k$ algebraically independent entries of
$T_1,\ldots,T_k$ (c.f.\ \cite[Chapter 8, Theorem 1.1]{lang2002algebra}).
Then by Lemma \ref{lemma:jaccrit},
$\der M_k(\theta)$ has rank $d_1+\ldots+d_k$ at generic $\theta \in \R^d$,
and rank at most $d_1+\ldots+d_k$ at every $\theta \in \R^d$.

If $\theta \in \R^d$ has $\rank \der M_k(\theta)=d_1+\ldots+d_k$, then this
implies that also
$\rank \der M_j(\theta)=d_1+\ldots+d_j$ for each $j=1,\ldots,k$. Then for each
$j=1,\ldots,k$, we may pick $d_j$
entries of $T_j$ to be $\varphi^j$, such that $(\varphi^1,\ldots,\varphi^k)$
have linearly independent gradients at $\theta$. We may arbitrarily
pick $d-d_1-\ldots-d_k$ additional analytic functions to be $\bar\varphi$, so
that $\varphi=(\varphi^1,\ldots,\varphi^k,\bar\varphi)$
has non-singular derivative $\der \varphi(\theta)$. This shows properties
(a) and (b), where the existence of an analytic inverse $\theta(\varphi)$ on
$U$ follows from the inverse function theorem.

For (c), denote $q(\varphi)=p(\theta(\varphi))$. Applying the
chain rule, for any $x \in U$,
\[\nabla p(x)=\der_x \varphi(x)^\top \nabla_\varphi q(\varphi(x)).\]
Since $p$ is a function of $T_1,\ldots,T_j$, its gradient is a linear
combination of the gradients of the entries of $T_1,\ldots,T_j$,
and hence linearly dependent with the gradients of $\varphi^1,\ldots,\varphi^j$.
Thus $\nabla p(x)$ belongs to the
span of the columns of $\der_x \varphi(x)^\top$ corresponding to
$\varphi^1,\ldots,\varphi^j$, implying that $\nabla_\varphi q(\varphi(x))$
is 0 in the remaining coordinates $\varphi^{j+1},\ldots,\varphi^k,\bar\varphi$. 
This holds at every $x \in U$, so $q$ is a function only of
$\varphi^1,\ldots,\varphi^j$.

For (d), denote $h(\varphi)=f(\theta(\varphi))$.
Suppose first that $f \in \cR^\G$ is a $\G$-invariant polynomial.
Since $d_1+\ldots+d_K=\trdeg \cR_{\leq K}^\G=\trdeg \cR^\G$, we must have that
$(\varphi^1,\ldots,\varphi^K,f)$ are algebraically dependent. Then their
gradients are linearly dependent at every $x \in U$.
Then the same argument as in (c) shows that $h(\varphi)$ depends only on
$\varphi^1,\ldots,\varphi^K$. For a general $\G$-invariant continuous
function $f$, let $r>0$ be large enough such that $U \subset
\overline{B_r}=\{x \in \R^d:\|x\| \leq r\}$. For any $\eps>0$,
by the Stone-Weierstrass theorem, there is a polynomial
$p$ such that $|p(x)-f(x)|<\eps$ for all $x \in
\overline{B_r}$. Applying the Reynolds operator
$\bar{p}(x)=\int p(g \cdot x) \der\Lambda(g)$,
we then have $\bar{p} \in \cR^\G$, and also
\[|\bar{p}(x)-f(x)|
=\bigg|\int \Big(p(g \cdot x)-f(g \cdot x)\Big)\der\Lambda(g)\bigg|<\eps
\quad \text{ for all } x \in \overline{B_r}\]
because $g \cdot x \in \overline{B_r}$ for any orthogonal matrix
$g$. Writing $\bar{q}(\varphi)=\bar{p}(\theta(\varphi))$, we have shown that
$\bar{q}$ depends only on $\varphi^1,\ldots,\varphi^K$. So for any
$\varphi,\varphi' \in \varphi(U)$ differing in only the coordinates of
$\bar\varphi$, we have $\bar{q}(\varphi)=\bar{q}(\varphi')$ and
$\theta(\varphi),\theta(\varphi') \in U \subset \overline{B_r}$, hence
$|h(\varphi)-h(\varphi')| \leq |h(\varphi)-\bar{q}(\varphi)|
+|h(\varphi')-\bar{q}(\varphi')|<2\eps$.
Here $\eps>0$ is arbitrary, so in fact $h(\varphi)=h(\varphi')$. Thus $h$
depends only on $\varphi^1,\ldots,\varphi^K$.

To show (\ref{eq:barphiorbit}), clearly if $\orbit_{\theta'}=\orbit_\theta$,
then $(\varphi^1(\theta'),\ldots,\varphi^K(\theta'))=
(\varphi^1(\theta),\ldots,\varphi^K(\theta))$. For the converse direction,
if $\orbit_{\theta'}$ and $\orbit_{\theta}$ are distinct, then they are
disjoint compact subsets of $\R^d$. Then there is a continuous function
$f:\R^d \to \R$ taking value 1 on $\orbit_{\theta'}$ and 0 on $\orbit_\theta$.
Then $\bar{f}(x)=\int f(g \cdot x) \der \Lambda(g)$ is a $\G$-invariant
continuous function with the same property. Thus $\bar{f}$ depends only on
$\varphi^1,\ldots,\varphi^K$, implying that
$(\varphi^1(\theta'),\ldots,\varphi^K(\theta')) \neq
(\varphi^1(\theta),\ldots,\varphi^K(\theta))$. This shows
(\ref{eq:barphiorbit}), and concludes the proof in the unprojected setting.

The proof in the projected setting is the same, where the given
condition for $\tK$ is used in part (d) to show
$\td_1+\ldots+\td_\tK=\trdeg \cR^\G$,
and hence $(\varphi^1,\ldots,\varphi^{\tK},f)$ are algebraically
dependent for any $f \in \cR^\G$.
\end{proof}

\begin{proof}[Proof of Proposition \ref{prop:trdegorbitdim}]
Since $\G$ is a compact Lie group acting smoothly on $\R^d$, its action is
proper. Then each orbit $\orbit_\theta$ is an embedded submanifold of $\R^d$,
and the tangent space to $\orbit_\theta$ at $\theta$ is given by
$T_\theta \orbit_\theta=\{\mathfrak{g} \cdot \theta:\mathfrak{g} \in T_{\Id}
\G\}$ where $T_{\Id} \G$ is the Lie algebra, i.e.\ the tangent space to $\G$
at $g=\Id$ (c.f.\ \cite[Section I.1.b and Corollary I.1.2]{audin2004torus}).
Parametrizing $\G$ around $g=\Id$ by any local
chart $x$ such that $g(x)=\Id$ at $x=0$, we then have
$\dim(\orbit_\theta)=\dim(\{\mathfrak{g} \cdot \theta:\mathfrak{g} \in T_{\Id}
G\})=\rank(\der_x[g(x) \cdot \theta]_{x=0})$. Since
$\der_x[g(x) \cdot \theta]_{x=0}$ is an analytic matrix in $\theta$, this
implies that $\max_{\theta \in \R^d} \dim(\orbit_\theta)$ is
attained at generic points $\theta \in \R^d$ (c.f.\ Fact \ref{fact:fullrank}). On the other hand,
for generic $\theta \in \R^d$, the statement (\ref{eq:barphiorbit}) of
Lemma \ref{lemma:transcendencebasis}(d) shows that
$\dim(\orbit_\theta)=d-(d_1+\ldots+d_K)
=d-\trdeg \cR_{\leq K}^\G=d-\trdeg \cR^\G$. Hence $\trdeg \cR^\G=d-\max_{\theta
\in \R^d} \dim(\orbit_\theta)$.
\end{proof}

\begin{proof}[Proof of Proposition \ref{prop:Kdef}(b)]

$\Pi(\orbit_\theta)$ is a continuous image of the compact group $\G$, and
hence is also compact. Then the distribution
of $\Pi \cdot g \cdot \theta$ is uniquely determined by its sequence of mixed
moments. Thus
\begin{equation}\label{eq:allpolynomialsequal}
\Big\{\theta:\Pi(\orbit_\theta) \equiv \Pi(\orbit_{\theta_*})\Big\}
=\Big\{\theta:\tT_k(\theta)=\tT_k(\theta_*) \text{ for all } k \geq 1\Big\}.
\end{equation}
Suppose that $\trdeg \tcR_{\leq k}^\G=\trdeg \cR^\G$ for some integer $k$. 
Let $U_k$ be the linear subspace of $\cR^\G$ spanned by 1 and all entries of
$\tT_1,\ldots,\tT_k$. Since $U_k$ generates $\tcR_{\leq k}^\G$, we have
$\trdeg U_k=\trdeg \tcR_{\leq k}^\G$
(c.f.\ \cite[Chapter 8, Theorem 1.1]{lang2002algebra}).
Then Lemma \ref{lemma:genericlist} implies that
for generic $\theta_* \in \R^d$, there are only finitely many orbits
$\orbit_\theta$ such that $P(\theta)=P(\theta_*)$ for all
$P \in U_k$. This condition must hold for all $\theta$ belonging to
(\ref{eq:allpolynomialsequal}), so (\ref{eq:allpolynomialsequal})
also consists of finitely many orbits.

Conversely, suppose $\trdeg \tcR_{\leq k}^\G<\trdeg \cR^\G$ for
all $k \geq 1$. Consider $\tcR^\G=\bigcup_{k \geq 1} \tcR_{\leq k}^\G$
(the subalgebra generated by entries of $\tT_k$ for all $k \geq 1$). By the
same argument as in the proof of Proposition \ref{prop:Kdef}(a), 
we have $\trdeg \tcR^\G \leq d<\infty$, so
$\trdeg \tcR^\G=\trdeg \tcR_{\leq k}^\G$ for some integer
$k$. Then also $\trdeg \tcR^\G<\trdeg \cR^\G$. We now apply an argument similar
to that of the proof of Lemma \ref{lemma:transcendencebasis}(c):
Fix any transcendence
basis $\varphi^0$ of $\tcR^\G$. Then the gradient vectors of $\varphi^0$ are
linearly independent at generic $\theta_* \in \R^d$ by Lemma
\ref{lemma:jaccrit}. Fix any such $\theta_*$. In a sufficiently small
open neighborhood $O$ of $\theta_*$, we claim that
\begin{equation}\label{eq:localparamclaim}
\Big\{\theta \in O:\tT_k(\theta)=\tT_k(\theta_*) \text{ for all } k \geq
1\Big\}=\Big\{\theta \in O:\varphi^0(\theta)=\varphi^0(\theta_*)\Big\}.
\end{equation}
To see this, choose
any $d-|\varphi^0|$ additional functions $\bar{\varphi}$ for which
$\varphi=(\varphi^0,\bar{\varphi})$ has non-singular derivative at $\theta_*$,
and hence forms an invertible local reparametrization
over a sufficiently small such neighborhood
$O$, by the inverse function theorem. Let $p(\theta)$ be any entry of
$\tT_k$ for any $k \geq 1$, and write $q(\varphi)=p(\theta(\varphi))$ for its
reparametrization by the local coordinates $\varphi$ on $O$. By the chain rule,
\[\nabla p(\theta)=\der_\theta \varphi(\theta)^\top \nabla_\varphi
q(\varphi(\theta)).\]
Since $p \in \tcR^\G$, and $\varphi^0$ is a transcendence basis for $\tcR^\G$,
we have that $(p,\varphi^0)$ is algebraically dependent.
Then the gradients of $p$ and
$\varphi^0$ are linearly dependent at every $\theta \in O$
by Lemma \ref{lemma:jaccrit}, so $\nabla p(\theta)$
belongs to the span of columns of $\der_\theta \varphi(\theta)^\top$
corresponding to only the coordinates of $\varphi^0$. Then $\nabla_\varphi
q(\varphi(\theta))$ must be 0 in the remaining coordinates $\bar{\varphi}$.
This holds for all $\theta \in O$, so $q(\varphi)$ is a function only of
$\varphi^0$ in this local parametrization over $O$. This shows our claim
(\ref{eq:localparamclaim}). The set (\ref{eq:localparamclaim}) forms a
manifold of dimension $d-|\varphi^0|=d-\trdeg \tcR^\G$. On the other hand,
Proposition \ref{prop:trdegorbitdim}
shows that every orbit $\orbit_\theta$ has dimension at most
$d-\trdeg \cR^\G$, which is
strictly smaller when $\trdeg \tcR^\G<\trdeg \cR^\G$. Thus
(\ref{eq:allpolynomialsequal}) must contain infinitely many orbits corresponding
to $\theta \in O$.
\end{proof}

\subsection{Fisher information}

We prove Theorem \ref{thm:FI} and Lemma \ref{lem:trdeg}.
Throughout, we assume that
(\ref{eq:losslessPi}) holds in the projected setting, and we denote by
$K$ and $\tK$ the (smallest) integers satisfying Proposition \ref{prop:Kdef}. 
All constants $C,C',c,c',\sigma_0>0$ in the proofs may depend
implicitly on $(\theta_*,\G,\Pi)$.

The proofs are analogous to the arguments of \cite[Section
4.4]{fan2020likelihood}: Locally around any generic point $\theta_* \in \R^d$,
we reparametrize $\theta$ by a transcendence basis $\varphi^1,\ldots,\varphi^K$
for $\cR^\G$ having full-rank Jacobian, and then
compute the Hessian of $R(\theta(\varphi))$ in $\varphi$ by taking derivatives
of the series expansion (\ref{eq:seriesexpansionunprojected}) or
(\ref{eq:seriesexpansionprojected}) term-by-term. When $\G$ is a continuous
group, we extend this transcendence basis using the additional analytic
functions $\bar\varphi \in \R^{d_0}$ provided in
Lemma \ref{lemma:transcendencebasis} to obtain a complete system of coordinates.
The properties stated in Theorem \ref{thm:seriesexpansion}
will guarantee that the Hessian of each term of order $\sigma^{-2k}$
depends only on $\varphi^1,\ldots,\varphi^k$ in this system of
coordinates, and that at the true parameter $\theta=\theta_*$,
the block of this Hessian
corresponding to the coordinates of $\varphi^k$ is strictly
positive definite. In the projected setting, this latter property uses the
condition given in Theorem \ref{thm:seriesexpansion}(b) that $P_k(\theta_*)=0$.
Then Theorem \ref{thm:FI} will follow from the chain rule and some linear algebra.

We recall here the following definition and elementary
linear-algebraic result from \cite{fan2020likelihood}.

\begin{figure}
\[\begin{pmatrix} \sigma^{-2} & \sigma^{-4} & \sigma^{-6} & \sigma^{-8} &
\cdots & \sigma^{-2K} \\
\sigma^{-4} & \sigma^{-4} & \sigma^{-6} & \sigma^{-8} & \cdots & \sigma^{-2K} \\
\sigma^{-6} & \sigma^{-6} & \sigma^{-6} & \sigma^{-8} & \cdots & \sigma^{-2K} \\
\sigma^{-8} & \sigma^{-8} & \sigma^{-8} & \sigma^{-8} & \cdots & \sigma^{-2K} \\
\vdots & \vdots & \vdots & \vdots & \ddots & \vdots \\
\sigma^{-2K} & \sigma^{-2K} & \sigma^{-2K} & \sigma^{-2K} &
\cdots & \sigma^{-2K}
\end{pmatrix}\]
\caption{An illustration of the block scalings with $\sigma^{-2}$ for
  matrices $H$ with graded block structure, where each $(k,\ell)$ entry
represents the scaling for a single block $H_{k\ell}$ of $H$.}\label{fig:gradedblocks}
\end{figure}

\begin{definition}[\cite{fan2020likelihood} Definition 4.14]
\label{def:gradedblock}
Let $(\varphi^1,\ldots,\varphi^K)$ be a partition of coordinates for
$\R^{d'}$. Let $H \equiv H(\sigma) \in \R^{d' \times d'}$ be a symmetric matrix,
and write its $K \times K$ block decomposition with respect to this partition as
\[H=\begin{pmatrix} H_{11} & \cdots & H_{1K} \\
\vdots & \ddots & \vdots \\ H_{K1} & \cdots & H_{KK} \end{pmatrix}.\]
The matrix $H(\sigma)$ has a {\bf graded block structure} with respect to this
partition if there are constants $C,c,\sigma_0>0$ such that for all
$\sigma>\sigma_0$ and all $k,\ell \in \{1,\ldots,K\}$ where $\varphi^k$ and
$\varphi^\ell$ have non-zero dimension,
\[C\sigma^{-2k} \geq \lambda_{\max}(H_{kk}) \geq \lambda_{\min}(H_{kk})
\geq c\sigma^{-2k} \qquad \text{ and } \qquad \|H_{k\ell}\| \leq
C\sigma^{-2\max(k,\ell)}\]
where $\lambda_{\max},\lambda_{\min}$ denote the largest and smallest
eigenvalues.
\end{definition}

A visual illustration of this structure is depicted in Figure
\ref{fig:gradedblocks}.

\begin{lemma}[\cite{fan2020likelihood} Lemma 4.17]
\label{lemma:gradedblock}
Suppose $H \equiv H(\sigma) \in \R^{d' \times d'}$ has a graded block structure
with respect to $(\varphi^1,\ldots,\varphi^K)$. Let $d_k \geq 0$ be the
dimension of $\varphi^k$. Let $H_{:k,:k}$ and $(H^{-1})_{:k,:k}$ be the submatrices
of upper-left $k \times k$ blocks of $H$ and $H^{-1}$. Then for
some constants $C,c,\sigma_0>0$ and all $\sigma>\sigma_0$:
\begin{enumerate}[(a)]
\item $H$ has $d_k$ eigenvalues belonging to $[c\sigma^{-2k},C\sigma^{-2k}]$ for
each $k=1,\ldots,K$.
\item For each $k$ where $d_1+\ldots+d_k>0$, $\lambda_{\min}(H_{:k,:k}) \geq
c\sigma^{-2k}$.
\item For each $k$ where $d_1+\ldots+d_k>0$, $\lambda_{\max}((H^{-1})_{:k,:k})
\leq C\sigma^{2k}$.
\end{enumerate}
\end{lemma}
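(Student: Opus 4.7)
The plan is to remove the graded scaling by a diagonal congruence, analyze the resulting well-conditioned matrix, and then transfer eigenvalue information back to $H$ via Courant--Fischer. I will introduce the block-diagonal rescaling $D = \diag(\sigma I_{d_1}, \sigma^2 I_{d_2}, \ldots, \sigma^K I_{d_K})$ and set $\tilde H(\sigma) = DHD$. The $(k,\ell)$ block of $\tilde H$ is $\sigma^{k+\ell} H_{k\ell}$, so the diagonal $(k,k)$ block has eigenvalues in $[c,C]$ by hypothesis, while the off-diagonal $(k,\ell)$ block with $k \neq \ell$ has operator norm at most $C \sigma^{k+\ell-2\max(k,\ell)} = C\sigma^{-|k-\ell|}$. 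Thus $\tilde H$ is a vanishing (as $\sigma \to \infty$) perturbation of a block-diagonal matrix whose spectrum lies in $[c,C]$, and a Gershgorin or Weyl-type bound produces constants $c',C',\sigma_0 > 0$ with $c' I \preceq \tilde H(\sigma) \preceq C' I$ for all $\sigma > \sigma_0$.

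For part (a), I would use the change of variables $v = Du$, under which a unit vector $v$ satisfies $\sum_j \sigma^{2j}\|u_j\|^2 = 1$ and $v^\top H v = u^\top \tilde H u \in [c'\|u\|^2,\,C'\|u\|^2]$. Restricting to the subspace of vectors supported on blocks $1,\ldots,k$, the constraint forces $\|u\|^2 \geq \sigma^{-2k}$, so $v^\top H v \geq c'\sigma^{-2k}$, and Courant--Fischer gives $\lambda_{d_1+\ldots+d_k}(H) \geq c'\sigma^{-2k}$. Conversely, on the subspace supported on blocks $k,\ldots,K$ (of dimension $d_k+\ldots+d_K$), for each $j \geq k$ one has $\|u_j\|^2 \leq \sigma^{-2j} \leq \sigma^{-2k}$, hence $\|u\|^2 \leq (K-k+1)\sigma^{-2k}$, yielding $\lambda_{d_1+\ldots+d_{k-1}+1}(H) \leq C''\sigma^{-2k}$. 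Combining these inequalities for each $k$ pins exactly $d_k$ eigenvalues of $H$ into a band of order $\sigma^{-2k}$.

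For part (b), the principal submatrix $H_{:k,:k}$ itself inherits the graded block structure hypothesis with $K$ replaced by $k$, so applying the lower-bound half of the argument for (a) to $H_{:k,:k}$ immediately gives $\lambda_{\min}(H_{:k,:k}) \geq c\sigma^{-2k}$. For part (c), I will invert the congruence: from $\tilde H \succeq c' I$ we get $\|\tilde H^{-1}\| \leq 1/c'$, and $H^{-1} = D \tilde H^{-1} D$ yields $(H^{-1})_{:k,:k} = D_{:k}\,(\tilde H^{-1})_{:k,:k}\,D_{:k}$, where $D_{:k} = \diag(\sigma I_{d_1},\ldots,\sigma^k I_{d_k})$ has operator norm $\sigma^k$. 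Hence $\|(H^{-1})_{:k,:k}\| \leq \sigma^{2k}/c'$, which is the desired bound.

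The main obstacle is the first paragraph: showing that the normalized matrix $\tilde H$ is uniformly well-conditioned for large $\sigma$, given only block-norm control rather than full spectral control on the off-diagonal blocks. The key is that the off-diagonal norms decay as $\sigma^{-|k-\ell|}$ when $k \neq \ell$, so their joint contribution is absorbed into an arbitrarily small change in the lower and upper spectral bounds of the block-diagonal part, uniformly in the fixed block sizes. After that, everything reduces to standard Courant--Fischer bookkeeping under the substitution $v = Du$, with empty blocks ($d_k = 0$) simply omitted throughout.
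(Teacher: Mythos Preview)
The paper does not contain a proof of this lemma; it is cited without proof from \cite{fan2020likelihood} (as Lemma 4.17 there). Your argument is correct and is essentially the standard way to prove such a statement: the diagonal rescaling $\tilde H = DHD$ with $D=\diag(\sigma I_{d_1},\ldots,\sigma^K I_{d_K})$ turns the graded block hypotheses into ``diagonal blocks uniformly bounded above and below, off-diagonal blocks of norm $O(\sigma^{-|k-\ell|})$,'' from which the uniform spectral bounds $c'I\preceq\tilde H\preceq C'I$ follow by Weyl's inequality for large $\sigma$. Your Courant--Fischer bookkeeping for (a) is right in both directions, and your reductions for (b) and (c) (principal submatrix inherits graded structure; $H^{-1}=D\tilde H^{-1}D$ with $\|D_{:k}\|=\sigma^k$) are clean and correct. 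The only point worth making explicit in a write-up is the block-norm bound you invoke for the off-diagonal part of $\tilde H$: the operator norm of a block matrix is at most that of the $K\times K$ matrix of block norms, which here tends to $0$ as $\sigma\to\infty$.
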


Let us now fix $d_k$ and $\td_k$ as the constants defined by
(\ref{eq:dk}--\ref{eq:tdk}). Recall
the combined moment functions $M_k(\theta)$ and
$\tM_k(\theta)$ from (\ref{eq:Mk}--\ref{eq:tMk}), and the moment varieties
$\cV_k(\theta_*)$ and $\tcV_k(\theta_*)$ from (\ref{eq:Vk}--\ref{eq:tVk}).

\begin{lemma}\label{lemma:skfullrank}
In the unprojected model, fix any $\theta_* \in \R^d$ and any $k \in
\{1,\ldots,K\}$. Let $\ttheta \in \cV_k(\theta_*)$ be such that $\rank \der
M_k(\ttheta)=d_1+\ldots+d_k$, and let
$\varphi=(\varphi^1,\ldots,\varphi^k,\bar\varphi)$ be the map defined by
Lemma \ref{lemma:transcendencebasis} with inverse $\theta(\varphi)$
in a neighborhood of $\ttheta$.
Let $s_k(\theta)$ be as defined in (\ref{eq:sk}). Then in the parametrization by
$\varphi$,
\[\nabla_{\varphi^k}^2 s_k(\theta(\varphi))\Big|_{\varphi=\varphi(\ttheta)}
\text{ has full rank } d_k \text{ and is positive definite}.\]
In the projected model, the same statements hold for $\tcV_k$, $\tM_k$, $\td_k$,
and $\ts_k$ in place of $\cV_k$, $M_k$, $d_k$, and $s_k$.
\end{lemma}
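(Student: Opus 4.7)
The plan is to exploit that $\ttheta \in \cV_k(\theta_*)$ forces both $s_k(\ttheta)=0$ and $\nabla s_k(\ttheta)=0$, so that the chain-rule expression for the Hessian of $s_k\circ\theta$ in $\varphi$-coordinates has no ``connection'' terms arising from second derivatives of the reparametrization and collapses to a pullback of a Gauss--Newton matrix by the Jacobian.

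First I will use that $\ttheta\in\cV_k(\theta_*)$ implies $T_k(\ttheta)=T_k(\theta_*)$: differentiating $s_k(\theta)=\tfrac{1}{2(k!)}\|T_k(\theta)-T_k(\theta_*)\|_\HS^2$ twice at $\ttheta$ annihilates every term proportional to $T_k(\ttheta)-T_k(\theta_*)$ and yields
\[\nabla^2 s_k(\ttheta)=\tfrac{1}{k!}\,\der T_k(\ttheta)^\top\,\der T_k(\ttheta),\]
with $T_k$ viewed as a vectorized map $\R^d\to\R^{d^k}$. The matrix on the right is manifestly positive semidefinite. The same calculation also gives $\nabla s_k(\ttheta)=0$, so the chain rule applied at this critical point simplifies to
\[\nabla_\varphi^2 s_k(\theta(\varphi))\Big|_{\varphi(\ttheta)}
=\tfrac{1}{k!}\,\der_\varphi (T_k\circ\theta)^\top\,\der_\varphi (T_k\circ\theta)\Big|_{\varphi(\ttheta)},\]
and in particular the $\varphi^k$-block equals $\tfrac{1}{k!}A^\top A$, where $A=\der_{\varphi^k}(T_k\circ\theta)|_{\varphi(\ttheta)}$. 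It therefore suffices to show that $A$ has full column rank $d_k$.

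The key observation is supplied by Lemma~\ref{lemma:transcendencebasis}(a): the coordinates of $\varphi^k$ are, by construction, a specific choice of $d_k$ entries of the moment tensor $T_k$. In the reparametrization $\theta(\varphi)$, those same entries of the vectorized $T_k\circ\theta$ are identically equal to the corresponding coordinates of $\varphi^k$. Consequently the $d_k\times d_k$ submatrix of $A$ formed by the rows indexed by those distinguished entries of $T_k$ is exactly $\Id_{d_k}$. This forces $\rank(A)=d_k$, so $\tfrac{1}{k!}A^\top A\succ 0$, giving the claim.

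The projected case runs verbatim: $\ttheta\in\tcV_k(\theta_*)$ gives $\tT_k(\ttheta)=\tT_k(\theta_*)$, so $\nabla\ts_k(\ttheta)=0$ and $\nabla^2\ts_k(\ttheta)=\tfrac{1}{k!}\der\tT_k(\ttheta)^\top\der\tT_k(\ttheta)$; the projected form of Lemma~\ref{lemma:transcendencebasis}(a) guarantees that $\varphi^k$ is chosen from the entries of $\tT_k$, so the analogous matrix $A$ again contains an $\Id_{\td_k}$ submatrix and has full column rank $\td_k$. The only item requiring care is the bookkeeping of rows and columns of the vectorized tensor Jacobian so that the identity submatrix is correctly identified; no genuine obstacle arises.
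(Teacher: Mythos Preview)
Your proposal is correct and follows essentially the same argument as the paper's proof: both use that $\ttheta\in\cV_k(\theta_*)$ forces $T_k(\ttheta)=T_k(\theta_*)$ so the Hessian collapses to the Gauss--Newton form $\tfrac{1}{k!}\,\der_{\varphi^k}T_k^\top\,\der_{\varphi^k}T_k$, and then invoke Lemma~\ref{lemma:transcendencebasis}(a) to identify an $\Id_{d_k}$ submatrix in $\der_{\varphi^k}T_k$. Your write-up is slightly more explicit about why the connection terms from the reparametrization vanish (via $\nabla s_k(\ttheta)=0$), but the substance is identical.
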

\begin{proof}
We focus on the unprojected model; the proof in the projected model is
the same.

Since $s_k$ is globally minimized at all points of $\cV_k(\theta_*)$,
we must have
$\nabla_{\varphi^k}^2 s_k(\theta(\varphi))|_{\varphi=\varphi(\ttheta)}
\succeq 0$. To show this has full rank $d_k$,
observe that $\varphi^k$ consists of a subset of entries of $T_k$. Thus the
corresponding $d_k \times d_k$ submatrix of $\der_{\varphi^k} T_k$ is the
identity, so $\der_{\varphi^k} T_k$ has full column rank $d_k$.
Applying the chain rule and the observation
$T_k(\ttheta)-T_k(\theta_*)=0$ because $\ttheta \in \cV_k(\theta_*)$,
we may differentiate
$s_k(\theta(\varphi))$ twice in $\varphi^k$ to obtain
\[\nabla_{\varphi^k}^2
s_k(\theta(\varphi))\Big|_{\varphi=\varphi(\ttheta)}
=\frac{1}{k!} \cdot \der_{\varphi^k} T_k(\theta(\varphi))^\top
\der_{\varphi^k} T_k(\theta(\varphi))\Big|_{\varphi=\varphi(\ttheta)}.\]
Thus this matrix has full rank $d_k$.
\end{proof}

\begin{proof}[Proof of Theorem \ref{thm:FI}]
Consider the unprojected setting of part (a). For generic $\theta_* \in \R^d$,
by Lemma \ref{lemma:transcendencebasis},
we have $\rank\der M_K(\theta_*)=d_1+\ldots+d_K$. Let
$\varphi=(\varphi^1,\ldots,\varphi^K,\bar\varphi)$ be the map defined by
Lemma \ref{lemma:transcendencebasis}, with inverse $\theta(\varphi)$ in
a neighborhood $U$ of $\theta_*$. We may assume without loss of
generality that
\begin{equation}\label{eq:orthogradients}
A=\der \varphi(\theta_*) \text{ is orthogonal}
\end{equation}
upon replacing each function in $\varphi$ by a ($\theta_*$-dependent)
linear combination of itself
and its preceding functions. Note that statements (c) and (d)
of Lemma \ref{lemma:transcendencebasis} continue to hold after such a
replacement. We denote
$\varphi_*=\varphi(\theta_*)$. With slight abuse of notation, we write as
shorthand $f(\varphi)$ for $f(\theta(\varphi))$. In particular,
recalling the expansion (\ref{eq:seriesexpansionunprojected}), we denote by
$q_k(\varphi),s_k(\varphi),q(\varphi)$ the terms of this expansion parametrized
by $\varphi \in \varphi(U)$. 

For (a1), observe that Theorem \ref{thm:seriesexpansion}(a)
guarantees $s_k \in \cR_{\leq k}^\G$, so Lemma \ref{lemma:transcendencebasis}(c)
shows that $s_k(\varphi)$ depends only on $\varphi^1,\ldots,\varphi^k$ in
the reparametrization by $\varphi$. Similarly, $q_k \in \cR_{\leq k-1}^\G$, so
$q_k(\varphi)$ depends only on
$\varphi^1,\ldots,\varphi^{k-1}$, and $q$ is a continuous $\G$-invariant
function, so $q(\varphi)$ depends only on
$\varphi^1,\ldots,\varphi^K$ by Lemma \ref{lemma:transcendencebasis}(d).
Let us decompose $\nabla^2_{\varphi}R(\varphi_*)$
into $(K+1)\times (K+1)$ blocks according to the partition
$(\varphi^1,\ldots,\varphi^K,\bar\varphi)$, of sizes $(d_1,\ldots,d_K,d_0)$.
Differentiating the expansion
(\ref{eq:seriesexpansionunprojected}) term-by-term,
it then follows that the entries of
$\nabla^2_{\varphi}R(\varphi_*)$ are non-zero only in the upper-left $K\times
K$ blocks, and that the $(k,\ell)$ block corresponding to
$\nabla^2_{\varphi^k,\varphi^\ell}R(\varphi_*)$ has operator norm bounded above
by $C\sigma^{-2\max(k,\ell)}$ for a constant $C>0$.
Furthermore, as $\nabla_{\varphi^k}^2 q_k(\varphi)=0$
and $\nabla_{\varphi^k}^2 s_k(\varphi_*)$ is strictly positive definite
by Lemma \ref{lemma:skfullrank} (applied with $\ttheta=\theta_*$),
there are constants $c,\sigma_0>0$ such that for all $\sigma>\sigma_0$,
\begin{equation}\label{eq:gradedblocks2}
\lambda_{\min}\Big(\nabla_{\varphi^k}^2 R(\varphi_*)\Big) \geq
c\sigma^{-2k}>0 \text{ for all } k=1,\ldots,K.
\end{equation}
So the upper-left $K \times K$ blocks of
$\nabla^2_{\varphi}R(\varphi_*)$ have the graded block structure
of Definition \ref{def:gradedblock} with $d'=d-d_0$.
Then by Lemma \ref{lemma:gradedblock}(a),
$\nabla^2_{\varphi}R(\varphi_*)$ has $d_0$ eigenvalues equal to 0, and $d_k$
eigenvalues in $[c\sigma^{-2k},C\sigma^{-2K}]$ for each $k=1,\ldots,K$.
For the Hessian in $\theta$ rather than in $\varphi$,
since $\nabla_{\theta}R(\theta_*)=0$, we have by the chain rule
\begin{equation}\label{eq:chainruleI}
I(\theta_*)=\nabla^2_{\theta}R(\theta_*)=A^\top\cdot\nabla^2_{\varphi}R(\varphi_*)\cdot
A, \qquad A=\der \varphi(\theta_*).
\end{equation}
By the orthogonality of $A$ assumed in (\ref{eq:orthogradients}), the
eigenvalues of $I(\theta_*)$ are the same as those of
$\nabla^2_{\varphi}R(\varphi_*)$, and this shows (a1).

For (a2), observe from (\ref{eq:chainruleI})
that the subspace $V_k$ spanned by the
$d_1+\ldots+d_k$ leading eigenvectors of $I(\theta_*)$ is given by $V_k=A^\top
\cdot V_{k,\varphi}$, where $V_{k,\varphi}$ is the subspace spanned by the
$d_1+\ldots+d_k$ leading eigenvectors of 
$\nabla^2_{\varphi}R(\varphi_*)$. By Lemma \ref{lemma:gradedblock}(b),
the submatrix of upper-left $k \times k$ blocks 
of $\nabla^2_{\varphi}R(\varphi_*)$ has smallest eigenvalue at least
$c\sigma^{-2k}$, while the remaining blocks have operator norm
at most $C\sigma^{-2(k+1)}$ by Definition \ref{def:gradedblock}. Let
$W_{k,\varphi}$ be the subspace of vectors having only first $d_1+\ldots+d_k$
coordinates non-zero. Then the Davis-Kahan theorem implies that, for a
function $\eps(\sigma)$ satisfying $\eps(\sigma) \to 0$ as $\sigma \to \infty$,
\[\|\sin \Theta(V_{k,\varphi},W_{k,\varphi})\|<\eps(\sigma).\]
For any polynomial $p \in \cR_{\leq k}^\G$,
we have $\nabla_{\theta}p(\theta_*)=A^\top \nabla_{\varphi}p(\varphi_*)$ by the
chain rule. Lemma \ref{lemma:transcendencebasis}(d) shows that $p$ depends only
on $\varphi^1,\ldots,\varphi^k$ in the parametrization by $\varphi$,
so $\nabla_{\varphi}p(\varphi_*) \in W_{k,\varphi}$, and hence
$\nabla_{\theta}p(\theta_*) \in A^\top \cdot W_{k,\varphi}$.
For generic $\theta_* \in \R^d$, the linear span $W_k$ of all
such gradient vectors $\nabla_{\theta}p(\theta_*)$ has dimension exactly
$d_1+\ldots+d_k$ by Lemma \ref{lemma:jaccrit}, so this shows $W_k=A^\top
\cdot W_{k,\varphi}$. Thus also
\[\|\sin \Theta(V_k,W_k)\|=\|\sin
\Theta(V_{k,\varphi},W_{k,\varphi})\|<\eps(\sigma).\]

For (a3), observe that since $A$ is orthogonal, we have from
(\ref{eq:chainruleI}) that $I(\theta_*)^\dagger=A^\top \cdot \nabla_\varphi^2
R(\varphi_*)^\dagger \cdot A$ for the Moore-Penrose pseudo-inverse.
Combining this with $\nabla_{\theta}p(\theta_*)=A^\top
\nabla_{\varphi}p(\varphi_*)$, we have
\[\nabla_\theta p(\theta_*)^\top I(\theta_*)^\dagger \nabla_\theta p(\theta_*)
=\nabla_\varphi p(\varphi_*)^\top \cdot
\nabla_\varphi^2 R(\varphi_*)^\dagger \cdot \nabla_\varphi p(\varphi_*).\]
Lemma \ref{lemma:gradedblock}(c) shows that the maximum eigenvalue of 
the upper-left $k \times k$ blocks of $\nabla_\varphi^2 R(\varphi_*)^\dagger$
is at most $C\sigma^{2k}$. Since $\nabla_{\varphi}p(\varphi_*)$ is non-zero
only in its first $k$ blocks, this implies
\[\nabla_\theta p(\theta_*)^\top I(\theta_*)^\dagger \nabla_\theta p(\theta_*)
\leq C\sigma^{2k}.\]
Finally,
if $w$ is in the null space of $I(\theta_*)$, then $Aw$ is in the null space of
$\nabla_\varphi^2 R(\varphi_*)$, i.e.\ $Aw$
is non-zero only in the last block corresponding to $\bar\varphi$. Then
$\nabla_\theta p(\theta_*)^\top w=\nabla_\varphi p(\varphi_*)^\top Aw=0$,
so $\nabla_\theta p(\theta_*)$ is orthogonal to the null space of $I(\theta_*)$.
This shows (a3).

The proof of part (b) in the projected setting is similar: We let
$\varphi=(\varphi^1,\ldots,\varphi^{\tK},\bar\varphi)$ be the map defined by
Lemma \ref{lemma:transcendencebasis}, and compute the Hessian of
(\ref{eq:seriesexpansionprojected}) in the parametrization by $\varphi$
term-by-term. In this computation, there is an additional contribution
from each term $\sigma^{-2k} \langle \tT_k(\varphi),P_k(\varphi) \rangle$.
This term depends only on $\varphi^1,\ldots,\varphi^k$, so its
Hessian lies only in the upper-left $k \times k$ blocks of the $(\tK+1) \times
(\tK+1)$ block decomposition of $\nabla_\varphi^2 R(\varphi_*)$.
Hence each block
$\nabla_{\varphi^k,\varphi^\ell}^2 R(\varphi_*)$ still has operator norm bounded
above by $C\sigma^{-2\max(k,\ell)}$.
Furthermore, the Hessian of $\sigma^{-2k} \langle \tT_k(\varphi),P_k(\varphi)
\rangle$ is the sum of three terms, corresponding to
differentiating twice $\tT_k(\varphi)$, twice $P_k(\varphi)$, and
once each $\tT_k(\varphi)$ and $P_k(\varphi)$. The first term vanishes upon
evaluating at $\varphi=\varphi_*$, because $P_k(\varphi_*)=0$ by its
characterization in Theorem \ref{thm:seriesexpansion}(b). The remaining two
terms are 0 on the $(k,k)$ block, because $P_k(\varphi)$
depends only on $\varphi^1,\ldots,\varphi^{k-1}$. Thus the Hessian of
$\sigma^{-2k} \langle \tT_k(\varphi),P_k(\varphi) \rangle$ at
$\varphi=\varphi_*$ is 0 in the $(k,k)$ block, so we still have
$\nabla_{\varphi^k}^2 R(\varphi_*) \succeq c\sigma^{-2k}$.
Then the upper-left $\tK \times \tK$
blocks of $\nabla_\varphi^2 R(\varphi_*)$ still have the graded block structure
of Definition \ref{def:gradedblock}, and the remainder of the
proof is the same as in the unprojected setting of part (a).
\end{proof}

\begin{proof}[Proof of Lemma \ref{lem:trdeg}]
We focus on the unprojected setting; the proof in the projected setting is
the same.

Note that $\theta_*$ is a global minimizer of $s_k(\theta)$, so $\nabla_\theta
s_k(\theta_*)=0$ and $\nabla_\theta^2 s_k(\theta_*) \succeq 0$.
For generic $\theta_*$, we have $\rank \der M_K(\theta_*)=d_1+\ldots+d_K$
by Lemma \ref{lemma:transcendencebasis}.
Let $\varphi=(\varphi^1,\ldots,\varphi^K,\bar\varphi)$ be the map
defined by Lemma \ref{lemma:transcendencebasis}, with inverse $\theta(\varphi)$
in a neighborhood $U$ of $\theta_*$.
Let $\varphi_*=\varphi(\theta_*)$. Then by the chain rule,
\begin{equation}\label{eq:skchainrule}
\nabla_\varphi^2 s_k(\theta(\varphi))\big|_{\varphi=\varphi_*}
=\der_\varphi \theta(\varphi_*)^\top \cdot \nabla_\theta^2 s_k(\theta_*) \cdot
\der_\varphi \theta(\varphi_*).
\end{equation}
Since $\der_\varphi \theta(\varphi_*)$ is non-singular, this yields
\[\rank\Big(\nabla_\theta^2 s_1(\theta_*)+\ldots+\nabla_\theta^2 s_k(\theta_*)
\Big)=\rank\Big(\nabla_\varphi^2 s_1(\theta(\varphi))+\ldots
+\nabla_\varphi^2 s_k(\theta(\varphi))\Big|_{\varphi=\varphi_*}\Big).\]
Lemma \ref{lemma:transcendencebasis}(c) ensures that
$s_1(\theta(\varphi)),\ldots,s_k(\theta(\varphi))$ depend only on
$\varphi^1,\ldots,\varphi^k$, so
\[\rank\Big(\nabla_\varphi^2 s_1(\theta(\varphi))+\ldots
+\nabla_\varphi^2 s_k(\theta(\varphi))\Big|_{\varphi=\varphi_*}\Big)
\leq d_1+\ldots+d_k=\trdeg(\cR_{\leq k}^\G).\]
To show that this holds with equality, consider any non-zero vector
$v=(v_1,\ldots,v_k,0,\ldots,0) \in \R^d$, where $v_j$ is the subvector
corresponding to the coordinates of $\varphi^j$. Let $j \in
\{1,\ldots,k\}$ be the smallest index for which $v_j \neq 0$.
Lemma \ref{lemma:skfullrank} applied with $\ttheta=\theta_*$
shows that $\nabla_{\varphi^j}^2
s_j(\theta(\varphi))|_{\varphi=\varphi_*} \succ 0$ strictly, so
\[v^\top \Big[\nabla_\varphi^2
s_j(\theta(\varphi))\Big|_{\varphi=\varphi_*}\Big] v
=v_j^\top \Big[\nabla_{\varphi^j}^2
s_j(\theta(\varphi))\Big|_{\varphi=\varphi_*}\Big] v_j>0\]
where the first equality holds because $v_1=\ldots=v_{j-1}=0$, whereas
$s_j(\theta(\varphi))$ depends only on $\varphi^1,\ldots,\varphi^j$.
Furthermore, $v^\top [\nabla_\varphi^2
s_i(\theta(\varphi))|_{\varphi=\varphi_*}] v \geq 0$ for all
$i \neq j$, because (\ref{eq:skchainrule}) and the condition $\nabla_\theta^2
s_i(\theta_*) \succeq 0$ imply that
$\nabla_\varphi^2 s_i(\theta(\varphi))|_{\varphi=\varphi_*}$
is positive semidefinite. Then
$v^\top[\nabla_\varphi^2 s_1(\theta(\varphi))+\ldots
+\nabla_\varphi^2 s_k(\theta(\varphi))|_{\varphi=\varphi_*}]v>0$ strictly.
This holds for every non-zero vector $v=(v_1,\ldots,v_k,0,\ldots,0) \in \R^d$,
so in fact
\[\rank\Big(\nabla_\varphi^2 s_1(\theta(\varphi))+\ldots
+\nabla_\varphi^2 s_k(\theta(\varphi))\Big|_{\varphi=\varphi_*}\Big)
=d_1+\ldots+d_k=\trdeg(\cR_{\leq k}^\G).\]

This shows also that the column span of 
$\nabla_\varphi^2 s_1(\theta(\varphi))+\ldots
+\nabla_\varphi^2 s_k(\theta(\varphi))|_{\varphi=\varphi_*}$ is exactly the
space of vectors $v$ with only its first $d_1+\ldots+d_k$ coordinates non-zero.
Then by (\ref{eq:skchainrule}),
the column span of $\nabla_\theta^2 s_1(\theta_*)+\ldots+\nabla_\theta^2
s_k(\theta_*)$ is the span of the first $d_1+\ldots+d_k$ rows of
$\der_\varphi \theta(\varphi_*)^{-1}=\der_\theta \varphi(\theta_*)$,
which are the gradients of $\varphi^1(\theta_*),\ldots,\varphi^k(\theta_*)$. By Lemma
\ref{lemma:transcendencebasis}, the span of these gradients is exactly the span
of $\{\nabla p(\theta_*):p \in \cR_{\leq k}^\G\}$, concluding the proof.
\end{proof}

\subsection{Global landscape}

We prove Theorems \ref{thm:benignlandscape} and \ref{thm:landscape}.
The following lemma first shows that all critical points of $R(\theta)$
described by Theorems \ref{thm:benignlandscape} and \ref{thm:landscape} in fact
belong to a ball of constant radius $M>0$, independent of $\sigma$.
In unprojected models, this result was proven in
\cite[Lemmas 2.10 and 4.19]{fan2020likelihood}. The argument is reviewed
and extended in the proof below, to the domain
$\{\theta:\|\theta\|<B(\|\theta_*\|+\sigma)\}$ for projected models
under the assumption (\ref{eq:losslessPi}) for the projection $\Pi$.

\begin{lemma}\label{lemma:localizecrit}
In the unprojected model,
for some constants $M,c,\sigma_0>0$ depending on $\theta_*,\G$ and 
for all $\sigma>\sigma_0$,
\[\|\nabla R(\theta)\| \geq c\sigma^{-4} \quad \text{ for all } \theta \text{
satisfying } \|\theta\|>M.\]
In the projected model with projection $\Pi$, for any $B>0$, some
constants $M,c,\sigma_0>0$ depending on $\theta_*,\G,\Pi,B$, and 
all $\sigma>\sigma_0$,
\[\|\nabla R(\theta)\| \geq c\sigma^{-4} \quad \text{ for all } \theta \text{
satisfying } B(\|\theta_*\|+\sigma)>\|\theta\|>M.\]
\end{lemma}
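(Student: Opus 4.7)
The plan is to combine direct differentiation of (\ref{eq:likelihood}) with the high-noise expansion of Theorem~\ref{thm:seriesexpansion}, decomposing $\nabla R$ along the $\G$-invariant subspace of $\R^d$ and its orthogonal complement. Let $P = \int_\G g\,\der\Lambda(g) \in \R^{d \times d}$ denote the orthogonal projection onto $\G$-invariants.

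Differentiating (\ref{eq:likelihood}) directly in the unprojected setting yields the exact identity $\nabla R(\theta) = \sigma^{-2}(\theta - m(\theta))$, where $m(\theta) = \E_{Y \sim p_{\theta_*}}\bigl[\E_{g \mid Y,\theta}[g^\top Y]\bigr]$ and $g \mid Y,\theta$ has posterior density proportional to $\exp(Y^\top g\theta/\sigma^2)$ on $\G$. Orthogonality of $g$ yields $\|m(\theta)\| \leq \E\|Y\| \leq \|\theta_*\| + \sigma\sqrt d$, so for $\|\theta\| \geq 2(\|\theta_*\|+\sigma\sqrt d)$ the triangle inequality gives $\|\nabla R(\theta)\| \geq \|\theta\|/(2\sigma^2) \geq \sqrt d/\sigma \geq c/\sigma^4$ once $\sigma \geq 1$, disposing of the outer regime of the unprojected statement; in the projected statement the restriction $\|\theta\| < B(\|\theta_*\|+\sigma)$ keeps us below this threshold. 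A second consequence of the identity is that $gP = P$ for every $g \in \G$ forces $P\E_{g \mid Y, \theta}[g^\top Y] = PY$ pointwise, and taking $\E_Y$ yields the exact identity
\[P\nabla R(\theta) = \sigma^{-2}P(\theta - \theta_*) = \sigma^{-2}\nabla s_1(\theta)\]
with no higher-order corrections. Consequently, whenever $\|P(\theta - \theta_*)\| \geq \eta$ for a fixed $\eta > 0$, we already have $\|\nabla R\| \geq c/\sigma^2 \geq c/\sigma^4$.

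The remaining and hardest case is $\|\theta\| > M$ together with $\|P(\theta - \theta_*)\| < \eta$, so that $\theta$ has a large orthogonal component. Here I would apply Theorem~\ref{thm:seriesexpansion} with $K = 2$, and use that $\nabla s_1 = P(\theta - \theta_*)$ and $\nabla q_2$ both lie in the range of $P$ (the latter because $q_2 \in \cR_{\leq 1}^\G$ depends on $\theta$ only through $T_1(\theta) = P\theta$) to reduce to
\[(I - P)\nabla R(\theta) = \sigma^{-4}(I - P)\nabla s_2(\theta) + (I - P)\nabla q(\theta).\]
Lemma~\ref{lem:skform} together with $\Tr T_2(\theta) = \|\theta\|^2$ gives $\theta^\top \nabla s_2(\theta) = \|T_2(\theta)\|_{\HS}^2 - \E_g[\langle\theta, g\theta_*\rangle^2] \geq \|\theta\|^4/d - \|\theta\|^2\|\theta_*\|^2$, which forces $\|\nabla s_2(\theta)\| \geq c_0 \|\theta\|^3$ once $M > \sqrt{2d}\|\theta_*\|$; and since $\|P\theta\|$ is controlled by $\|P\theta_*\| + \eta$, one transfers this lower bound to the orthogonal component $\|(I - P)\nabla s_2(\theta)\|$.

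The main obstacle is that the remainder estimate $\|\nabla q(\theta)\| \leq C_K \|\theta\|^{2K+1}/\sigma^{2K+2}$ is not negligible relative to $\sigma^{-4}$ once $\|\theta\|$ approaches $\sigma$; to address this I would choose $K$ larger in the expansion (depending on $B$ in the projected case) so that the remainder is suppressed uniformly on $\{\|\theta\| \leq B(\|\theta_*\|+\sigma)\}$. The projected case follows the same pattern, with $\ts_k$ replacing $s_k$ and the projection $\Pi$ entering the formulas; the auxiliary $P_k(\theta)$ terms in (\ref{eq:seriesexpansionprojected}) are handled using $P_k(\theta_*)=0$ and the fact that they belong to $\tcR_{\leq k-1}^\G$, so that they perturb only the parallel component of $\nabla R$ at leading order and do not disrupt the orthogonal lower bound.
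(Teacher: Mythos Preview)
Your proposal has a genuine gap in the regime where $\|\theta\|$ is comparable to $\sigma$, and the suggested fix of ``choosing $K$ larger'' does not close it.

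Concretely, on the set $\{\|\theta\|\leq B(\|\theta_*\|+\sigma)\}$ (or, in the unprojected case, on $\{\|\theta\|\leq 2(\|\theta_*\|+\sigma\sqrt d)\}$), the ratio $\|\theta\|/\sigma$ can be a fixed constant $r>0$. Then the remainder bound in Theorem~\ref{thm:seriesexpansion} gives $\|\nabla q(\theta)\|\lesssim \|\theta\|^{2K+1}/\sigma^{2K+2}$, while your target term satisfies $\sigma^{-4}\|(I-P)\nabla s_2(\theta)\|\asymp \sigma^{-4}\|\theta\|^3$. Their ratio is $(\|\theta\|/\sigma)^{2K-2}\asymp r^{2K-2}$, which is $O(1)$ independently of $K$ whenever $r$ is bounded away from zero; taking $K$ large does not suppress it (and if $r>1$ makes it worse). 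Moreover, once you expand to order $K$ you must also control the intermediate terms $\sigma^{-2k}\nabla(s_k+q_k)$ for $3\leq k\leq K$; each has degree $2k-1$ in $\theta$, hence is of the \emph{same} order $\sigma^{-1}$ as $\sigma^{-4}\nabla s_2$ when $\|\theta\|\asymp\sigma$, and for $k\geq 3$ neither $\nabla s_k$ nor $\nabla q_k$ lies in the range of $P$. So there is no domination by the $s_2$ term in this regime.

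The paper's proof avoids this by inserting an intermediate regime: it shows directly that $\|\nabla R(\theta)\|\geq c\sigma^{-2}$ for $C_0\sigma^{2/3}\leq\|\theta\|\leq B(\|\theta_*\|+\sigma)$ via an argument that does not use the series expansion at all. One writes $\sigma^2\bar\theta^\top\nabla R(\theta)$ using Gaussian integration by parts and recognizes a cumulant generating function $K(t)$ for the law of $\bar Y^\top \Pi g\bar\theta$ under a tilted measure on $\G$; convexity and a uniform lower bound on $\max(\kappa_1,\kappa_2)$ over a positive-measure set of directions $\bar Y$ then yield the gradient lower bound. Only once $\|\theta\|\leq C_0\sigma^{2/3}$ does the paper invoke the expansion (at $K=2$), where the remainder $\|\theta\|^5/\sigma^6\leq \|\theta\|^3\cdot C_0^2\sigma^{-14/3}$ is genuinely negligible relative to $\sigma^{-4}\|\theta\|^3$. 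Your exact identity $P\nabla R(\theta)=\sigma^{-2}P(\theta-\theta_*)$ in the unprojected model is a clean way to handle the case of large first-moment component (and mirrors the paper's case split on $\|\E_g[\Pi gv]\|$ within that final regime), but it does not substitute for the missing intermediate-$\|\theta\|$ argument.
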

\begin{proof}

{\bf Step 1: Forms of $\nabla R(\theta)$.} 
We consider the projected model, which will reduce to the unprojected model
when $\Pi = \Id$. 
Write $\E_g$ and $\E_{g,g'}$ for expectations over independent group elements
$g,g' \sim \Lambda$, and $\E_Y$ for that over the sample
$Y \sim p_{\theta_*}$. Introduce the weight
\[p(g,Y)=\frac{\exp\left(-\frac{1}{2\sigma^2}\|Y-\Pi g \theta\|^2\right)}
{\E_{g'}\left[\exp\left(-\frac{1}{2\sigma^2}\|Y-\Pi g'\theta\|^2\right)\right]}
=\frac{\exp\left(\frac{1}{\sigma^2}Y^\top \Pi g\theta
-\frac{1}{2\sigma^2}\|\Pi g\theta\|^2\right)}
{\E_{g'}\left[\exp\left(\frac{1}{\sigma^2}Y^\top \Pi
g'\theta-\frac{1}{2\sigma^2}\|\Pi g'\theta\|^2\right)\right]}.\]
Then
\begin{align}
\sigma^2 \nabla R(\theta)=\sigma^2 \nabla_\theta \E_Y[-\log
p_\theta(Y)]&=-\sigma^2 \nabla_\theta \E_Y\left[\log \E_g\left[
\exp\left(-\frac{\|Y-\Pi g \theta\|^2}{2\sigma^2}\right)\right]\right]\nonumber\\
&=-\E_Y\left[\E_g\left[p(g,Y)
g^\top \Pi^\top(Y-\Pi g\theta) \right]\right].\label{eq:gradRtmp}
\end{align}

We derive a second alternative form for $\sigma^2 \nabla R(\theta)$ using
Gaussian integration by parts.
Let us represent $Y=\Pi h \theta_*+\sigma \eps$, where $h \sim \Lambda$ and
$\eps \sim \N(0,\Id)$, and write $\E_Y=\E_{h,\eps}$. It follows
from (\ref{eq:gradRtmp}) that 
\begin{align}\label{eq:gradRtmp2}
\sigma^2 \nabla R(\theta)=&-\E_{h,\eps}\left[\E_g\left[p(g,Y)
g^\top \Pi^\top\Pi h\theta_* \right]\right]+\E_{h,\eps}\left[\E_g\left[p(g,Y)
g^\top \Pi^\top\Pi g\theta \right]\right]\notag\\
&\hspace{1in}-\E_{h,\eps}\left[\E_g\left[p(g,Y) g^\top \Pi^\top (\sigma \eps)
\right]\right].
\end{align}
For the third term above, applying the integration by parts
identity $\E_{\xi \sim \N(0,1)}[\xi f(\xi)]=\E_{\xi \sim \N(0,1)}[f'(\xi)]$ to
each coordinate of $\eps$, we have for $Y=\Pi h\theta_*+\sigma \eps$ and any fixed $h$,
\begin{align*}
\E_\eps\left[\E_g\left[p(g,Y) g^\top \Pi^\top (\sigma \eps)
\right]\right]
=\sigma \, \E_\eps\left[\E_g\left[g^\top \Pi^\top \nabla_\eps
p(g,Y)\right]\right].
\end{align*}
Explicitly differentiating $p(g,Y)=p(g,\Pi h\theta_*+\sigma \eps)$ in $\eps$ gives
\begin{align*}
\nabla_\eps p(g,Y)=
\sigma\,\nabla_Y p(g,Y)=\frac{1}{\sigma}\Big(
p(g,Y) \Pi g\theta - p(g,Y) \E_{g'}\left[ p(g',Y) \Pi g'\theta\right]\Big).
\end{align*}
Thus,
\begin{align*}
\E_\eps\left[\E_g\left[p(g,Y) g^\top \Pi^\top (\sigma \eps)
\right]\right]
&=\E_\eps\left[\E_g\left[p(g,Y) g^\top \Pi^\top \Pi g \theta\right]
-\E_{g,g'}\left[p(g,Y)p(g',Y) g^\top \Pi^\top \Pi g' \theta\right]\right].
\end{align*}
Then, taking the expectation also over $h \sim \Lambda$ and
substituting this for the third term in (\ref{eq:gradRtmp2}),
\begin{equation}\label{eq:gradR}
\sigma^2 \nabla R(\theta)
=\E_{h,\eps}\left[\E_{g,g'}\left[p(g,Y)p(g',Y)g^\top \Pi^\top
\Pi g'\theta\right]
-\E_g\left[p(g,Y)g^\top \Pi^\top \Pi h\theta_*\right]\right].
\end{equation}
The expressions (\ref{eq:gradRtmp}) and (\ref{eq:gradR}) hold also in
the unprojected model upon substituting $\Pi=\Id$, where they may be
further reduced to \cite[Eqs.\ (2.8--2.9)]{fan2020likelihood}.\\

{\bf Step 2: Gradient bound for $\|\theta\| \geq B(\|\theta_*\|+\sigma)$.}
In the unprojected model, fixing a sufficiently large constant $B>0$, let us
first derive a bound $\|\nabla R(\theta)\| \geq c\sigma^{-1}$ for
$\|\theta\| \geq B(\|\theta_*\|+\sigma)$. 
(This is the same argument as in \cite[Lemma 2.9]{fan2020likelihood},
which for convenience we reproduce here.)
Restricting to $\Pi=\Id$ and taking
the inner-product of (\ref{eq:gradRtmp}) with $\theta$,
\[\sigma^2\|\theta\| \cdot \|\nabla R(\theta)\|
\geq \sigma^2 \theta^\top \nabla R(\theta)
=\|\theta\|^2-\E_Y\Big[\E_g\Big[p(g,Y)\theta^\top g^\top Y\Big]\Big]
\geq \|\theta\|^2-C(\|\theta_*\|+\sigma)\|\theta\|\]
for a constant $C=C(\theta_*,\G)>0$. Then for sufficiently large $B>0$ and large
$\sigma$, this shows $\|\nabla R(\theta)\| \geq c\sigma^{-1}$ as claimed.
\\

{\bf Step 3: Gradient bound for
$\|\theta\| \geq C_0\sigma^{2/3}$.} We now show
the lower bound $\|\nabla R(\theta)\| \geq c\sigma^{-2}$
when $B(\|\theta_*\|+\sigma) \geq \|\theta\| \geq C_0\sigma^{2/3}$, for a
large enough constant $C_0>0$ and $\sigma>\sigma_0(\theta_*,\G,\Pi,B)$.
The bound in the unprojected model follows from specializing to $\Pi=\Id$.

Define unit vectors
$\bar{\theta}=\theta/\|\theta\|$ and $\bar{Y}=Y/\|Y\|$. Now taking the
inner-product of (\ref{eq:gradR}) with $\bar{\theta}$,
\begin{align*}
\sigma^2\|\nabla R(\theta)\|
&\geq \sigma^2 \cdot \bar{\theta}^\top \nabla R(\theta)\\
&=\|\theta\| \cdot
\E_{h,\eps}\left[\left\|\E_g\left[p(g,Y)\Pi g\bar{\theta}\right]
\right\|^2\right]-\bar{\theta}^\top \E_{h,\eps}\left[
\E_g\left[p(g,Y)g^\top \Pi^\top \Pi h\right]\right] \cdot \theta_*\\
&\geq \|\theta\| \cdot \E_{h,\eps}\left[
\left(\bar{Y}^\top \E_g\left[p(g,Y) \Pi g\bar{\theta}\right]\right)^2\right]
-\|\Pi\|^2 \cdot \|\theta_*\|.
\end{align*}
For fixed $\theta$ and $Y$, define
\[K(t)=\log \E_g\left[\exp\left(t \bar{Y}^\top \Pi g\bar{\theta}
-\frac{1}{2\sigma^2}\|\Pi g\theta\|^2\right)\right],
\qquad t(Y,\theta)=\frac{\|Y\|\cdot\|\theta\|}{\sigma^2}.\]
Then $\bar{Y}^\top\E_g[p(g,Y) \Pi g\bar{\theta}]=\E_g[p(g,Y) \bar{Y}^\top \Pi
g\bar{\theta}]=K'(t(Y,\theta))$, so
\begin{equation}\label{eq:gradlowertmp}
\sigma^2 \|\nabla R(\theta)\|
\geq \|\theta\| \cdot \E_{h,\eps}\left[K'(t(Y,\theta))^2\right]
-\|\Pi\|^2 \cdot \|\theta_*\|.
\end{equation}

Define a tilted probability distribution $\bar{\Lambda}$ for $g \in \G$,
having density $\der \bar{\Lambda}(g)
\propto \exp(-\frac{\|\Pi g\theta\|^2}{2\sigma^2}) \der \Lambda(g)$
with respect to the Haar measure $\Lambda$. 
Observe that $K(t)-K(0)$ is the cumulant generating function for the law of
$\bar{Y}^\top \Pi g\bar{\theta}$ (fixing $\bar{Y}$) that is induced by $g \sim
\bar{\Lambda}$. We proceed to analyze the cumulants of $\bar{Y}^\top \Pi
g\bar{\theta}$ under this law. This is simpler in the unprojected setting
of $\Pi=\Id$, where $\bar{\Lambda}=\Lambda$ and $\bar{Y}^\top \Pi
g\bar{\theta}=\bar{Y}^\top g\bar{\theta}$; in this setting,
upper and lower bounds for the
cumulants were established in the proof of
\cite[Lemma 2.10]{fan2020likelihood}. Here, we extend
these bounds to the setting of a general projection
$\Pi$ that satisfies (\ref{eq:losslessPi}).

Note that for $\|\theta\|<B(\|\theta_*\|+\sigma)$ and $\sigma>\sigma_0$, we have
\begin{equation}\label{eq:tiltingbound}
\frac{\der \bar{\Lambda}}{\der \Lambda}(g) \in [c,C]
\end{equation}
for some $(\theta_*,\G,\Pi,B)$-dependent constants $C,c,\sigma_0>0$.
The random variable $\bar{Y}^\top \Pi g \bar{\theta}$ is bounded as
$|\bar{Y}^\top \Pi g \bar{\theta}| \leq \|\Pi\|$,
so for $|t|<1/(\|\Pi\|e)$, $K(t)-K(0)$ is defined equivalently by the convergent
cumulant series
\[K(t)-K(0)=\sum_{\ell \geq 1} \kappa_\ell (\bar{Y}^\top \Pi g\bar{\theta})
\frac{t^\ell}{\ell!}.\]
Here, $\kappa_\ell=\kappa_\ell(\bar{Y}^\top \Pi g\bar{\theta})$
is the $\ell^\text{th}$ cumulant of
$\bar{Y}^\top \Pi g \bar{\theta}$ under its law induced by $g \sim \bar{\Lambda}$,
satisfying $|\kappa_\ell| \leq (\|\Pi\|\ell)^\ell$
(c.f.\ \cite[Lemma A.1]{fan2020likelihood}). In particular, $\kappa_1$ is the
mean and $\kappa_2$ is the variance. Then for any $0<t<1/(\|\Pi\|e)$,
applying also $\ell! \geq (\ell/e)^\ell$ and 
convexity of the cumulant generating function $K(t)$,
\begin{equation}\label{eq:CGFlower}
K'(t) \geq \frac{K(t)-K(0)}{t}
=\sum_{\ell \geq 1} \kappa_\ell \frac{t^{\ell-1}}{\ell!}
\geq \kappa_1+\frac{t}{2}\kappa_2-\sum_{\ell \geq 3} (\|\Pi\| e)^\ell
t^{\ell-1}.
\end{equation}

We now lower-bound the mean and variance $\kappa_1,\kappa_2$ when
$\bar{Y}$ belongs to some ``good'' subset $U$ of the unit sphere:
First note that for any non-zero $\theta \in \R^d$, $\Pi g\theta$ cannot be
identically 0 over all $g \in \G$. This is because otherwise, 
$\Pi(\orbit_{\theta_*}) \equiv \Pi(\orbit_{\theta_*+c\theta})$ for any $\theta_*
\in \R^d$ and any $c \in \R$, where $\{\orbit_{\theta_*+c\theta}:c \in \R\}$ is
an infinite family of distinct orbits, violating (\ref{eq:losslessPi}). Thus,
denoting by $\cS^{d-1}$ the unit sphere in $\R^d$,
\[\sup_{\bar{y}:\|\bar{y}\|=1} \sup_{g \in \G} \bar{y}^\top
\Pi g\bar{\theta}>0 \text{ for all } \bar{\theta} \in \cS^{d-1}\]
By continuity of the left side as a function of $\bar{\theta}$ and by
compactness of $\cS^{d-1}$, there is then a constant $c=c(\Pi,\G)>0$
such that
\[\sup_{\bar{y}:\|\bar{y}\|=1} \sup_{g \in \G} \bar{y}^\top
\Pi g\bar{\theta}>c \text{ for all } \bar{\theta} \in \cS^{d-1}.\]
Let $\Gamma$ denote the uniform probability measure on $\cS^{d-1}$. 
Since $\{(\bar{y},g):\bar{y}^\top \Pi g\bar{\theta}>c\}$ is an open
subset of $\cS^{d-1} \times \G$, the above implies
\[\Gamma \times \Lambda\Big((\bar{y},g):\bar{y}^\top
\Pi g\bar{\theta}>c\Big)>0 \text{ for all } \bar{\theta} \in \cS^{d-1}.\]
By the bounded convergence theorem and lower-semicontinuity of $x
\mapsto \1\{x>c\}$, if $\bar{\theta}_k \in \cS^{d-1}$
is a sequence converging to $\bar{\theta} \in \cS^{d-1}$, then
\[\liminf_{k \to \infty} \Gamma \times \Lambda\Big((\bar{y},g):\bar{y}^\top \Pi
g\bar{\theta}_k>c\Big)
=\E_{\bar{y},g \sim \Gamma \times \Lambda}
\Big[\liminf_{k \to \infty} \1\{\bar{y}^\top \Pi
g\bar{\theta}_k>c\}\Big]
\geq \Gamma \times \Lambda\Big((\bar{y},g):\bar{y}^\top \Pi
g\bar{\theta}>c\Big).\]
Thus $\bar{\theta} \mapsto \Gamma \times \Lambda((\bar{y},g):\bar{y}^\top \Pi
g\bar{\theta}>c)$ is lower-semicontinuous on $\cS^{d-1}$, so again by
compactness of $\cS^{d-1}$, there is a constant $\delta=\delta(\Pi,\G)>0$
such that
\[\Gamma \times \Lambda\Big((\bar{y},g):\bar{y}^\top
\Pi g\bar{\theta}>c\Big)>\delta \text{ for all } \bar{\theta} \in \cS^{d-1}.\]
Then by (\ref{eq:tiltingbound}), for a constant
$\delta'=\delta'(\theta_*,\Pi,\G,B)$, we get
$\Gamma \times \bar{\Lambda}((\bar{y},g):\bar{y}^\top
\Pi g\bar{\theta}>c)>\delta'$.
Define the $\bar{\theta}$-dependent subset of the unit sphere
\[U'=\Big\{\bar{y}:\bar{\Lambda}(g \in \G:\bar{y}^\top \Pi
g\bar{\theta}>c)>\delta'/2\Big\}.\]
Then the above implies $\Gamma(U')+(\delta'/2)(1-\Gamma(U'))>\delta'$,
so $\Gamma(U')>\delta'/2$.
If any random variable $X \in \R$ satisfies $\P[X>c]>\delta'/2$ for 
constants $c,\delta'>0$, then
$\max(\E[X],\Var[X])>c'$ for a constant $c'=c'(c,\delta')>0$, and
furthermore either $\E[X] \geq 0$ or $\E[-X] \geq 0$. Thus, defining $U$ from
$U'$ by multiplying each element $\bar{y} \in U'$ by an appropriate choice of
$\pm$ sign, we obtain $\Gamma(U)>\delta'/4$ and
\begin{equation}\label{eq:kappa12bound}
\max\Big(\kappa_1(\bar{Y}^\top \Pi g\bar{\theta}),
\kappa_2(\bar{Y}^\top \Pi g\bar{\theta})\Big)>c' \text{ and }
\kappa_1(\bar{Y}^\top \Pi g\bar{\theta}),
\kappa_2(\bar{Y}^\top \Pi g\bar{\theta}) \geq 0 \text{ whenever } \bar{Y} \in
U.
\end{equation}

Recalling that $Y=\Pi h \theta_*+\sigma \eps$ and $\bar{Y}=Y/\|Y\|$, the law of
$\bar{Y}$ converges to the uniform measure $\Gamma$ on the sphere
as $\sigma \to \infty$. Thus, for $\sigma>\sigma_0(\Pi,\G,\theta_*,B)$, we have
$\P[\bar{Y} \in U] \geq \Gamma(U)/2>\delta'/8$.
For a constant $C_0=C_0(\Pi,\theta_*,\delta',c')>0$ large enough and to be
determined, if $\|\theta\| \geq C_0\sigma^{2/3}$, then
\[t(Y,\theta)=\frac{\|Y\| \cdot \|\theta\|}{\sigma^2}
=\frac{\|\Pi h\theta_*+\sigma \eps\| \cdot \|\theta\|}{\sigma^2}
\geq \sigma^{-1/3}\]
with probability at least $1-\delta'/16$. Then,
on an event of probability at least $\delta'/16$ where both $\bar{Y} \in U$
and $t(Y,\theta) \geq \sigma^{-1/3}$, and for
$\sigma>\sigma_0(\Pi,\G,\theta_*,B)$, we have
\[K'(t(Y,\theta)) \geq K'(\sigma^{-1/3}) \geq (c'/3)\sigma^{-1/3},\]
the first inequality applying convexity of $K(t)$ and the second applying
(\ref{eq:CGFlower}) with the bound (\ref{eq:kappa12bound}). Then, applying this
to (\ref{eq:gradlowertmp}),
\[\sigma^2\|\nabla R(\theta)\|^2
\geq C_0\sigma^{2/3} \cdot (\delta'/16) \cdot (c'/3)^2 \sigma^{-2/3}
-\|\Pi\|^2 \cdot \|\theta_*\|.\]
Here, the constants $c',\delta'>0$ are as defined in the argument leading to
(\ref{eq:kappa12bound}), and do not depend on $C_0$.
Then taking $C_0=C_0(\Pi,\theta_*,\delta',c')$ large enough ensures that
$\|\nabla R(\theta)\|^2 \geq c\sigma^{-2}$ as desired.\\

{\bf Step 4: Gradient bound for $\|\theta\|>M$.}
Finally, we show $\|\nabla R(\theta)\| \geq c\sigma^{-4}$ for 
$C_0\sigma^{2/3} \geq \|\theta\|>M$ and a sufficiently large constant $M>0$.
This is again simpler in the unprojected setting of $\Pi=\Id$, and
was shown in \cite[Lemma 4.19]{fan2020likelihood}. Here, we extend the
argument to the projected model using the series expansion of Theorem
\ref{thm:seriesexpansion}(b), and specializing to $\Pi=\Id$ again
recovers the result in the unprojected setting.

Define $v=(\theta-\theta_*)/\|\theta-\theta_*\|$, and suppose first that
$\|\E_g[\Pi g v]\| \geq c_0$ for any $(\Pi,\G,\theta_*)$-dependent
constant $c_0>0$.
We apply the expansion (\ref{eq:seriesexpansionprojected}) to the order $K=1$.
Noting
that $P_1(\theta)=0$ (because it is a constant that is 0 at $\theta=\theta_*$)
and $q_1(\theta)$ is a constant,
\[\nabla R(\theta)=\frac{1}{\sigma^2}\nabla \ts_1(\theta)+\nabla q(\theta),
\qquad \|\nabla q(\theta)\| \leq \frac{C\|\theta\|^3}{\sigma^4}.\]
Applying the form of $\ts_1(\theta)$ in Lemma \ref{lem:skform},
\[\nabla R(\theta)=\frac{1}{\sigma^2}\E_{g,h}[g^\top \Pi^\top \Pi
h](\theta-\theta_*)+\nabla q(\theta).\]
Then
\[\sigma^2\|\nabla R(\theta)\| \geq \sigma^2\,v^\top \nabla R(\theta)
\geq \|\theta-\theta_*\| \cdot \|\E_g[\Pi g v]\|^2-C\|\theta\|^3/\sigma^2.\]
When $C_0\sigma^{2/3} \geq \|\theta\|>M$, $\|\E_g[\Pi g v]\| \geq c_0$,
and $M=M(C_0,c_0,\Pi,\theta_*)$ is
large enough, this is lower-bounded by a positive constant, so $\|\nabla
R(\theta)\| \geq c\sigma^{-2}$.

Now suppose $\|\E_g[\Pi g v]\|<c_0$, where we will choose this
$(\Pi,\G,\theta_*)$-dependent constant $c_0 \in (0,1)$ sufficiently small and
to be determined. Constants $C,C',c,c'>0$ below
are independent of $c_0$, and we will track explicitly the dependence of the
argument on $c_0$. We apply the expansion (\ref{eq:seriesexpansionprojected})
to the order $K=2$. Then similarly,
\begin{equation}\label{eq:sigma4grad}
\sigma^4\|\nabla R(\theta)\| \geq
\sigma^4\,v^\top \nabla R(\theta) \geq 
v^\top \nabla \ts_2(\theta)+v^\top
\nabla [\langle \tT_2(\theta),P_2(\theta) \rangle]
+v^\top \nabla q_2(\theta)-C\|\theta\|^5/\sigma^2,
\end{equation}
where we have applied $v^\top \nabla \ts_1(\theta) \geq 0$ from the form of
$\nabla \ts_1(\theta)$ above to
drop the contribution from the $k=1$ term. We bound each expression on the
right side of (\ref{eq:sigma4grad}):
First, applying the form of $\ts_2(\theta)$ in Lemma \ref{lem:skform},
\begin{align*}
v^\top \nabla \ts_2(\theta)
&=v^\top \E_{g,h}\Big[g^\top \Pi^\top \Pi h \theta \cdot
\theta^\top g^\top \Pi^\top \Pi h \theta
-g^\top \Pi^\top \Pi h \theta_* \cdot
\theta^\top g^\top \Pi^\top \Pi h \theta_*\Big]\\
& \geq \|\theta-\theta_*\|^3 \cdot \E_{g,h}[(v^\top g^\top \Pi^\top \Pi h v)^2]
-C\|\theta\|^2,
\end{align*}
where the second line is obtained by writing each $\theta$ as
$(\theta-\theta_*)+\theta_*$ and absorbing all but the term with
cubic dependence on $(\theta-\theta_*)$ into the $C\|\theta\|^2$ remainder.
As noted in Step 2 above, $\Pi g v$ is not identically 0 over $g \in \G$,
for any unit vector $v$. Then $(v^\top g^\top \Pi^\top \Pi h v)^2$ is the
squared-inner product between two i.i.d.\ non-zero vectors, and hence is
strictly positive with positive probability. So
$\E_{g,h}[(v^\top g^\top \Pi^\top \Pi h v)^2]>0$. Then by compactness of the
unit sphere, $\E_{g,h}[(v^\top g^\top \Pi^\top \Pi h v)^2]>c>0$ for every
unit vector $v$ and some constant $c=c(\Pi,\G)>0$. So for $\|\theta\|>M$ and
large enough $M=M(\Pi,G,\theta_*)$, this shows
\begin{equation}\label{eq:sigma4grad1}
v^\top \nabla \ts_2(\theta) \geq c'\|\theta\|^3.
\end{equation}
Next, consider $v^\top \nabla q_2(\theta)$. Since $q_2 \in \tcR_{\leq 1}^\G$
which is generated by $\tT_1(\theta)=\E_g[\Pi g \theta]$,
$q_2(\theta)$ is a quartic polynomial of the entries of
$\E_g[\Pi g \theta]$, whose
specific form depends only on $\Pi,\G,\theta_*$. Then, applying
the chain rule to differentiate $q_2(\theta)$, we have
\[\|\nabla q_2(\theta)\| \leq C(\|\E_g[\Pi g\theta]\|^3+1).\]
Now applying
$\theta=(\theta-\theta_*)+\theta_*=\|\theta-\theta_*\|v+\theta_*$
and $\|\E_g[\Pi g v]\|<c_0$, we have
\[\|\E_g[\Pi g\theta]\| \leq c_0\|\theta-\theta_*\|+\|\Pi\|\|\theta_*\|
\leq C'(c_0\|\theta\|+1).\]
Then for a $(\theta_*,\G,\Pi)$-dependent constant $C>0$ independent of $c_0$,
we get
\begin{equation}\label{eq:sigma4grad2}
\big|v^\top \nabla q_2(\theta)\big| \leq \|\nabla q_2(\theta)\|
\leq C(c_0^3\|\theta\|^3+1).
\end{equation}
Similarly, consider
$v^\top \nabla[\langle \tT_2(\theta),P_2(\theta) \rangle]$. 
Each entry of $P_2(\theta)$ is a quadratic polynomial of
$\E_g[\Pi g \theta]$. Noting that $\tT_2(\theta)$ is also quadratic in $\theta$,
by a similar argument as above,
\begin{align}
\big|v^\top \nabla[\langle \tT_2(\theta),P_2(\theta) \rangle]\big|
&\leq C\Big((\|\E_g[\Pi g \theta]\|+1) \cdot \|\theta\|^2
+(\|\E_g[\Pi g \theta]\|+1)^2 \cdot \|\theta\|\Big)\nonumber\\
&\leq C'\big(c_0\|\theta\|^3+\|\theta\|^2\big).\label{eq:sigma4grad3}
\end{align}
Finally, we may apply the condition $\|\theta\| \leq C_0\sigma^{2/3}$ to bound
\begin{equation}\label{eq:sigma4grad4}
\|\theta\|^5/\sigma^2 \leq \|\theta\|^3 \cdot C_0^2\sigma^{-2/3}.
\end{equation}
Applying (\ref{eq:sigma4grad1}), (\ref{eq:sigma4grad2}), (\ref{eq:sigma4grad3}),
and (\ref{eq:sigma4grad4}) to (\ref{eq:sigma4grad}),
for sufficiently small $c_0=c_0(\Pi,G,\theta_*) \in (0,1)$, sufficiently
large $M=M(\Pi,G,\theta_*)>0$, any $\theta$ satisfying $C_0\sigma^{2/3} \geq
\|\theta\|>M$, and all sufficiently large $\sigma>\sigma_0$,
we obtain that $\sigma^4\|\nabla R(\theta)\|$ is
lower-bounded by a $(\Pi,\G,\theta_*)$-dependent constant,
so $\|\nabla R(\theta)\| \geq c\sigma^{-4}$ as desired.
\end{proof}

To complete the proofs of Theorems \ref{thm:benignlandscape} and
\ref{thm:landscape}, it remains to analyze the optimization landscape
of $R(\theta)$ over the ball $\{\theta:\|\theta\| \leq M\}$. Our arguments are similar to those of \cite[Sections 4.3 and
4.5]{fan2020likelihood}: Fixing any $\ttheta$ in this ball, we may apply
Lemma \ref{lemma:transcendencebasis} to reparametrize $\theta$ by $\G$-invariant
polynomials in a sufficiently small
neighborhood $U_{\ttheta}$ of $\ttheta$, and then deduce statements about
the landscape of $R(\theta)$ within $U_{\ttheta}$ by sequentially analyzing
the landscapes of the terms $s_1,s_2,s_3,\ldots$
in this new system of coordinates. The conclusions about the landscape of
$R(\theta)$ over the full ball $\{\theta:\|\theta\| \leq M\}$ then follow from
patching together these analyses for local neighborhoods $U_{\ttheta}$ that
form a finite cover of this ball. Importantly, this argument requires both the radius
$M$ and the local neighborhoods $U_{\ttheta}$ to be independent of $\sigma$.

To describe the full landscape of $R(\theta)$ in this ball
$\{\theta:\|\theta\| \leq M\}$,
we must consider non-generic points $\theta$ (even if $\theta_*$ is generic).
Thus it may be necessary to use
non-generic points $\ttheta$ in constructing this finite cover, where the
system of coordinates given by Lemma \ref{lemma:transcendencebasis} locally
around $\ttheta$ may not contain a complete transcendence basis for $\cR^\G$.
Instead, we separate the cases for $\ttheta$ by the largest index
$k$ for which $\ttheta \in \cV_{k-1}(\theta_*)$ but $\ttheta \notin
\cV_k(\theta_*)$, and we provide a separate analysis for each $k$,
using the given condition that $\der M_k(\theta)$ has constant rank on $\cV_k(\theta_*)$.
Our argument in the final case $\ttheta \in \cV_K(\theta_*)$ extends the
analyses of \cite{fan2020likelihood} to handle orbits of positive dimension
arising in the setting of a continuous group,
and we then explain how these arguments may be adapted to use the expansion
of (\ref{eq:seriesexpansionprojected}) in models with projection.

\begin{proof}[Proof of Theorem \ref{thm:benignlandscape}]
Consider the unprojected setting of part (a). Fixing a generic point $\theta_*
\in \R^d$, Lemma \ref{lemma:transcendencebasis} shows that for
any $k=1,\ldots,K$, the rank of $\der M_k(\theta_*)$ is $d_1+\ldots+d_k$.
Then by the assumption that $\der M_k(\theta)$ has constant rank over
$\cV_k(\theta_*)$, since $\theta_* \in \cV_k(\theta_*)$,
this constant rank must be $d_1+\ldots+d_k$.

We now consider two cases for a (possibly non-generic) point $\ttheta \in \R^d$:
\begin{itemize}
\item[\bf Case 1:] $\ttheta \in \cV_{k-1}(\theta_*) \subseteq \ldots \subseteq
\cV_0(\theta_*)=\R^d$, but $\ttheta \notin \cV_k(\theta_*)$, for some
$k \in \{1,\ldots,K\}$. The argument in this case is the same as that of
\cite[Theorem 4.27]{fan2020likelihood}, and we reproduce it here for the
reader's convenience. By the constant rank assumption,
$\der M_{k-1}(\ttheta)$ has rank $d_1+\ldots+d_{k-1}$. Let
$\varphi=(\varphi^1,\ldots,\varphi^{k-1},\bar\varphi)$ be the map of Lemma
\ref{lemma:transcendencebasis}, with inverse $\theta(\varphi)$ in
a neighborhood $U_{\ttheta}$ of $\ttheta$. (If $k=1$, we take
$\varphi=\bar\varphi:\R^d \to \R^d$ to be an arbitrary invertible map, say the
identity map.) We write $f(\varphi)$ as shorthand for $f(\theta(\varphi))$.

In the parametrization by $\varphi$, each entry of $T_1,\ldots,T_{k-1}$
depends only on the coordinates
$\varphi^1,\ldots,\varphi^{k-1}$, by Lemma \ref{lemma:transcendencebasis}(c).
Thus
\[\cV_{k-1}(\theta_*) \cap U_{\ttheta}=\Big\{\theta \in
U_{\ttheta}:\varphi^1(\theta)=\varphi^1(\theta_*),\ldots,
\varphi^{k-1}(\theta)=\varphi^{k-1}(\theta_*)\Big\},\]
and the remaining coordinates $\bar\varphi$ form a local chart for the
manifold $\cV_{k-1}(\theta_*)$ over $U_{\ttheta}$. This holds trivially
also for $k=1$.

Consider now the minimization of $s_k$ over $\cV_{k-1}(\theta_*)$.
By the form of $s_k$ in (\ref{eq:sk}), its global minimizers over
$\cV_{k-1}(\theta_*)$ are exactly the points of $\cV_k(\theta_*)$. Since
$\ttheta \notin \cV_k(\theta_*)$, and the minimization of $s_k$ over
$\cV_{k-1}(\theta_*)$ is globally benign by assumption, this implies that
\[\text{either } \qquad
\nabla_{\bar\varphi} s_k(\varphi(\ttheta)) \neq 0 \qquad \text{ or } \qquad 
\lambda_{\min}\Big(\nabla_{\bar\varphi}^2 s_k(\varphi(\ttheta))\Big)<0.\]
Applying continuity of $s_k$ and its derivatives, and reducing the size of
$U_{\ttheta}$ as necessary, we may then ensure
\[\text{either } \quad
\|\nabla_{\bar\varphi} s_k(\varphi)\|>c \quad \text{ or } \quad 
\lambda_{\min}\Big(\nabla_{\bar\varphi}^2 s_k(\varphi)\Big)<-c \quad \text{ for
all } \varphi \in \varphi(U_{\ttheta}).\]
Here, the size of the neighborhood $U_{\ttheta}$ and the constant $c>0$ are
independent of $\sigma$, as $s_k$ does not depend on $\sigma$.
Now applying the expansion (\ref{eq:seriesexpansionunprojected}) to the order
$k$ and differentiating term-by-term in
$\varphi=(\varphi^1,\ldots,\varphi^{k-1},\bar\varphi)$,
observe that $\{s_j:j \leq k-1\}$ and $\{q_j:j \leq k\}$ depend only on
$(\varphi^1,\ldots,\varphi^{k-1})$ and not on $\bar\varphi$.
Then for some constant $\sigma_0=\sigma_0(\ttheta)>0$,
all $\sigma>\sigma_0$, and all
$\varphi \in \varphi(U_{\ttheta})$,
\begin{equation}\label{eq:typeItmp}
\text{either } \quad
\|\nabla_\varphi R(\varphi)\|>(c/2)\sigma^{-2k} \quad \text{ or } \quad 
\lambda_{\min}\Big(\nabla_\varphi^2 R(\varphi)\Big)<-(c/2)\sigma^{-2k}.
\end{equation}
Finally, changing variables back to $\theta$ by the chain rule, this implies
\begin{equation}\label{eq:typeIpoint}
\text{either } \quad
\nabla_\theta R(\theta) \neq 0 \quad \text{ or } \quad 
\lambda_{\min}\Big(\nabla_\theta^2 R(\theta)\Big)<0
\qquad \text{ for all } \theta \in U_{\ttheta}.
\end{equation}

\item[\bf Case 2:]  $\ttheta \in \cV_K(\theta_*)$. By the constant rank assumption, $\der
M_K(\ttheta)=d_1+\ldots+d_K$. Let
$\varphi=(\varphi^1,\ldots,\varphi^K,\bar\varphi)$ be the map of
Lemma \ref{lemma:transcendencebasis} in a neighborhood $U_{\ttheta}$ of
$\ttheta$, with inverse $\theta(\varphi)$. We again write as shorthand
$f(\varphi)=f(\theta(\varphi))$.

Let us write $\nabla_\varphi^2 R(\varphi)$ in
the $(K+1) \times (K+1)$ block decomposition corresponding to
$(\varphi^1,\ldots,\varphi^K,\bar\varphi)$. Applying the expansion
(\ref{eq:seriesexpansionunprojected}) now to order $K$, each
$s_k(\varphi)$ depends only on $\varphi^1,\ldots,\varphi^k$, each $q_k(\varphi)$
depends only on $\varphi^1,\ldots,\varphi^{k-1}$, and $q(\varphi)$ and
$R(\varphi)$ depend only on $\varphi^1,\ldots,\varphi^K$.
Furthermore, Lemma \ref{lemma:skfullrank} shows
$\nabla_{\varphi^k}^2 s_k(\varphi(\ttheta)) \succ 0$ strictly for each 
$k=1,\ldots,K$, so $\nabla_{\varphi^k}^2 s_k(\varphi) \succ c\Id$ for all
$\varphi \in \varphi(U_{\ttheta})$ by continuity, for a sufficiently small
neighborhood $U_{\ttheta}$ and constant $c>0$. Then differentiating
(\ref{eq:seriesexpansionunprojected}) term-by-term,
$\nabla_\varphi^2 R(\varphi)$ is zero outside the upper-left $K \times K$
blocks, and these $K \times K$ blocks have a graded block structure
in the sense of Definition \ref{def:gradedblock} with $d'=d-d_0$,
for any $\sigma>\sigma_0=\sigma_0(\ttheta)$ and all
points $\varphi \in \varphi(U_{\ttheta})$.
Then, applying Lemma \ref{lemma:gradedblock}(b),
the upper-left $K \times K$ blocks of $\nabla_\varphi^2 R(\varphi)$ form
a strictly positive-definite matrix. Recalling that $R(\varphi)$
depends only on $\varphi^1,\ldots,\varphi^K$ and not on $\bar\varphi$, 
let us write
$\bar{R}(\varphi^1,\ldots,\varphi^K)=R(\varphi)$, and also reduce to a smaller
neighborhood $U_{\ttheta}$ such that $\varphi(U_{\ttheta})$ has a product form
$V \times W$, where $V \subset \R^{d_1+\ldots+d_K}$ and $W \subset \R^{d_0}$.
Then this shows that
\begin{equation}\label{eq:barRconvex}
\bar{R}(\varphi^1,\ldots,\varphi^K) \text{ is strictly convex on } V.
\end{equation}

Now applying the assumption that $\cV_K(\theta_*)=\orbit_{\theta_*}$, we have
that $\ttheta \in \orbit_{\theta_*}$ is a global minimizer of $R(\theta)$. Thus
$(\varphi^1(\ttheta),\ldots,\varphi^K(\ttheta))$ is the global minimizer and
unique critical point of $\bar{R}$ on $V$. Changing coordinates back to
$\theta$ by the chain rule, the
critical points of $R(\theta)$ on $U_{\ttheta}$ are then given exactly by
\[\Big\{\theta \in U_{\ttheta}:(\varphi^1(\theta),\ldots,\varphi^K(\theta))
=(\varphi^1(\ttheta),\ldots,\varphi^K(\ttheta))\Big\}.\]
Applying (\ref{eq:barphiorbit}), this shows
\begin{equation}\label{eq:typeIIpoint}
\Big\{\theta \in U_{\ttheta}:\nabla R(\theta)=0\Big\}
=U_{\ttheta} \cap \orbit_{\ttheta}=U_{\ttheta} \cap \orbit_{\theta_*}
\end{equation}
\end{itemize}

Finally, we combine these two cases using a compactness argument: By Lemma
\ref{lemma:localizecrit}, there are no critical points of $R(\theta)$ outside
a sufficiently large
ball $\overline{B_M}=\{\theta:\|\theta\| \leq M\}$. For each point $\ttheta
\in \overline{B_M}$, construct the neighborhood $U_{\ttheta}$ as above, and take
a finite set $S$ of such points $\ttheta$ for which $\bigcup_{\ttheta \in S}
U_{\ttheta}$ covers $\overline{B_M}$. Set $\sigma_0=\max_{\ttheta \in S}
\sigma_0(\ttheta)$, where $\sigma_0(\ttheta)$ is as defined in the two cases
above. Then for any $\sigma>\sigma_0$, the conditions (\ref{eq:typeIpoint}) and
(\ref{eq:typeIIpoint}) combine to show that any critical point of $R(\theta)$
inside $\overline{B_M}$ either belongs to the locus $\orbit_{\theta_*}$ of
global minimizers, or has $\lambda_{\min}(\nabla^2 R(\theta))<0$. Thus the
minimization of $R(\theta)$ is globally benign, concluding the proof of part
(a).

The proof in the projected setting of part (b) is similar, with the following
modifications: For the first case where $\ttheta \in \tcV_{k-1}(\theta_*)
\subseteq \ldots \subseteq \tcV_0(\theta_*)=\R^d$ but $\ttheta \notin 
\tcV_k(\theta_*)$, Lemma \ref{lemma:transcendencebasis} still yields a local
parametrization
$\varphi=(\varphi^1,\ldots,\varphi^{k-1},\bar\varphi)$ where $\bar\varphi$ forms
a local chart for $\tV_{k-1}(\theta_*)$. Differentiating
(\ref{eq:seriesexpansionprojected}) applied to the order $k$
term-by-term in
$\varphi=(\varphi^1,\ldots,\varphi^{k-1},\bar\varphi)$, the gradient and Hessian
of $R(\varphi)$ in $\bar\varphi$ have an additional
contribution from $\sigma^{-2k}\langle \tT_k(\varphi),P_k(\varphi)\rangle$. Since $P_k$
depends only on $\varphi^1,\ldots,\varphi^{k-1}$ and not on $\bar\varphi$, the
gradient and Hessian in $\bar\varphi$ are obtained by differentiating only
$\tT_k$. Then both $\nabla_{\bar\varphi}[\langle
\tT_k(\varphi),P_k(\varphi)\rangle]|_{\varphi=\varphi(\ttheta)}=0$ and
$\nabla_{\bar\varphi}^2[\langle
\tT_k(\varphi),P_k(\varphi)\rangle]|_{\varphi=\varphi(\ttheta)}=0$, because
$P_k(\ttheta)=P_k(\theta_*)=0$ for any $\ttheta \in \cV_{k-1}(\theta_*)$.
Then for a sufficiently small neighborhood $U_{\ttheta}$, 
we still obtain (\ref{eq:typeItmp}) for all $\varphi \in \varphi(U_{\ttheta})$,
and hence (\ref{eq:typeIpoint}) still holds.

For the second case where $\ttheta \in \tcV_{\tK}(\theta_*)$, similarly when
computing the Hessian of (\ref{eq:seriesexpansionprojected}) applied to the
order $\tK$ term-by-term in
$\varphi=(\varphi^1,\ldots,\varphi^{\tK},\bar\varphi)$, we have
additional contributions from the terms $\sigma^{-2k}\langle
\tT_k(\varphi),P_k(\varphi) \rangle$. In each $(k,k)$ block, we again have
$\nabla_{\varphi^k}^2 [\langle \tT_k(\varphi),P_k(\varphi)
\rangle]|_{\varphi=\varphi(\ttheta)}=0$, because only $\tT_k$ depends on
$\varphi^k$ whereas $P_k(\ttheta)=P_k(\theta_*)=0$.
Then $\nabla_\varphi^2 R(\varphi)$ still has a graded block structure in
its upper-left $\tK \times \tK$ blocks, implying (\ref{eq:barRconvex}) for
$\varphi(U_{\ttheta})=V \times W$ and a sufficiently small neighborhood
$U_{\ttheta}$ of $\ttheta$. Now by the assumption given in part (b) of the theorem, we have
$\Pi(\orbit_{\ttheta}) \equiv \Pi(\orbit_{\theta_*})$ for $\ttheta \in
\tcV_{\tK}(\theta_*)$. Then $\ttheta$ is a global minimizer of $R(\theta)$.
The convexity of (\ref{eq:barRconvex}) then implies, by the same argument as in
the unprojected setting, that
$\{\theta \in U_{\ttheta}:\nabla R(\theta)=0\}=U_{\ttheta} \cap
 \orbit_{\ttheta}$.
Over the given domain $\{\theta:\|\theta\|<B(\|\theta_*\|+\sigma)\}$,
Lemma \ref{lemma:localizecrit} ensures that there are no critical points of
$R(\theta)$ outside the smaller ball $\{\theta:\|\theta\| \leq M\}$, which is
independent of $\sigma$. We conclude the proof by
applying the same compactness argument over $\{\theta:\|\theta\| \leq M\}$ as in
the unprojected setting.
\end{proof}

Finally, we show Theorem \ref{thm:landscape} on a correspondence
between local minimizers of $R(\theta)$ and $s_K(\theta)|_{\cV_{K-1}(\theta_*)}$.
We remark that in the preceding proof of Theorem \ref{thm:benignlandscape},
\emph{global} minimizers of $R(\theta)$ must also minimize each function
$s_k(\theta)$ over $\cV_{k-1}(\theta_*)$, and in particular, they are also
exactly the minimizers of $s_K(\theta)|_{\cV_{K-1}(\theta_*)}$.
Such a statement is not true for local minimizers, and we will instead show
that local minimizers of $R(\theta)$ and $s_K(\theta)|_{\cV_{K-1}(\theta_*)}$
are close for large $\sigma$. We will use the following elementary lemma from
\cite{fan2020likelihood}, which ensures that minimizers of convex functions are
close if the functions are pointwise close to each other.

\begin{lemma}[\cite{fan2020likelihood} Lemma 2.8]
\label{lemma:convexcompare}
Let $B_\eps(\theta_0)$ be the ball of radius $\eps>0$ around $\theta_0 \in
\R^d$. Let $f_1,f_2:B_\eps(\theta_0) \to \R$ be two functions which are twice
continuously differentiable. Suppose that $\theta_0$ is a critical point of $f_1$,
and $\lambda_{\min}(\nabla^2 f_1(\theta_0)) \geq c_0$ for a constant $c_0>0$ and
all $\theta \in B_\eps(\theta_0)$. If
\[|f_1(\theta)-f_2(\theta)| \leq \delta \qquad \text{ and } 
\qquad \|\nabla^2 f_1(\theta)-\nabla^2 f_2(\theta)\| \leq \delta\]
for some $\delta<\min(c_0,c_0\eps^2/4)$ and all $\theta \in B_\eps(\theta_0)$,
then $f_2$ has a unique critical point in $B_\eps(\theta_0)$, which is a local
minimizer of $f_2$.
\end{lemma}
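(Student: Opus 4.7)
The plan is to verify the lemma by the standard strategy of showing that both $f_1$ and $f_2$ are strictly convex on $B_\eps(\theta_0)$, bounding $f_1$ from below on the boundary using the Taylor expansion around the critical point $\theta_0$, and then transferring this boundary lower bound to $f_2$ via the pointwise closeness $|f_1-f_2|\le\delta$. The conclusion will follow because a strictly convex function on an open ball that is dominated on the boundary by a constant smaller than its value at an interior point must attain its minimum in the interior, giving existence of a critical point of $f_2$; strict convexity then forces uniqueness.

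First I would establish strict convexity of $f_2$ on $B_\eps(\theta_0)$. From the Hessian closeness assumption and Weyl's inequality, for every $\theta\in B_\eps(\theta_0)$,
\[
\lambda_{\min}(\nabla^2 f_2(\theta)) \ge \lambda_{\min}(\nabla^2 f_1(\theta)) - \|\nabla^2 f_1(\theta)-\nabla^2 f_2(\theta)\| \ge c_0-\delta > 0,
\]
using $\delta<c_0$. In particular $f_2$ is strictly convex on $B_\eps(\theta_0)$, so it can have at most one critical point there. By the same Weyl inequality applied to $f_1$ itself, we also have $\lambda_{\min}(\nabla^2 f_1(\theta))\ge c_0$ on the ball, so $f_1$ is strictly convex too.

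Next I would prove existence. Since $\theta_0$ is a critical point of $f_1$ and $\nabla^2 f_1\succeq c_0 \Id$ on the ball, a second-order Taylor expansion with integral remainder gives, for any $\theta\in B_\eps(\theta_0)$,
\[
f_1(\theta)\ge f_1(\theta_0)+\tfrac{c_0}{2}\|\theta-\theta_0\|^2.
\]
Combined with $|f_1-f_2|\le\delta$, for any $\theta$ on the sphere $\|\theta-\theta_0\|=\eps$,
\[
f_2(\theta)\ge f_1(\theta)-\delta\ge f_1(\theta_0)+\tfrac{c_0\eps^2}{2}-\delta\ge f_2(\theta_0)+\tfrac{c_0\eps^2}{2}-2\delta.
\]
The assumption $\delta<c_0\eps^2/4$ makes this strictly larger than $f_2(\theta_0)$. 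Hence $f_2$ restricted to the closed ball $\overline{B_\eps(\theta_0)}$ attains its minimum at some interior point $\theta_*'\in B_\eps(\theta_0)$, and at this point $\nabla f_2(\theta_*')=0$.

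The main obstacle, if any, is the bookkeeping with the constants to ensure the boundary barrier $c_0\eps^2/2-2\delta$ is strictly positive; this is exactly what the hypothesis $\delta<\min(c_0,c_0\eps^2/4)$ is calibrated to give. Finally, uniqueness of the critical point $\theta_*'$ follows from strict convexity of $f_2$ on $B_\eps(\theta_0)$ established in the first step, and $\theta_*'$ is automatically a local minimizer since $\nabla^2 f_2(\theta_*')\succ 0$.
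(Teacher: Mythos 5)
Your proof is correct and follows essentially the same strong-convexity-plus-boundary-barrier argument as the proof of Lemma 2.8 in \cite{fan2020likelihood}, which this paper cites without reproducing: Weyl's inequality gives $\nabla^2 f_2 \succeq (c_0-\delta)\Id$ on the ball, the Taylor bound around the critical point of $f_1$ plus $|f_1-f_2|\leq\delta$ yields the boundary barrier, and strict convexity gives uniqueness. The only cosmetic point is that if $B_\eps(\theta_0)$ is read as the open ball, you should run the boundary estimate on a sphere of radius slightly smaller than $\eps$, which the strict inequality $\delta<c_0\eps^2/4$ permits.
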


\begin{proof}[Proof of Theorem \ref{thm:landscape}]
Consider the unprojected setting of part (a). Fix a generic point $\theta_* \in
\R^d$. Lemma \ref{lemma:transcendencebasis} shows that $\rank \der
M_k(\theta_*)=d_1+\ldots+d_k$ for each $k=1,\ldots,K$. Then the given constant
rank assumption ensures that $\rank \der M_k(\theta)=d_1+\ldots+d_k$ for all
$\theta \in \cV_k(\theta_*)$ and $k=1,\ldots,K-1$.

Statement (a2) is established by a small extension of the argument in
Theorem \ref{thm:benignlandscape}, using the given condition that
critical points of $s_K(\theta)|_{\cV_{K-1}(\theta_*)}$
are non-degenerate up to orbit: Lemma \ref{lemma:localizecrit} ensures that
all critical points of $R(\theta)$ belong to the ball
$\overline{B_M}=\{\theta:\|\theta\| \leq M\}$. Fix any constant
$\eps>0$, and let $N_{\eps,M}$ be the points in $\overline{B_M}$ at distance
$\geq \eps$ from all critical points of $s_K(\theta)|_{\cV_{K-1}(\theta_*)}$.
We consider two cases for a point $\ttheta \in N_{\eps,M}$:
\begin{itemize}
\item[\bf Case 1:]  $\ttheta \in \cV_{k-1}(\theta_*) \subseteq \ldots \subseteq
\cV_0(\theta_*)$, but $\ttheta \notin \cV_k(\theta_*)$, for some $k \in
\{1,\ldots,K-1\}$. Then we have
\[\text{either } \quad
\nabla_\theta R(\theta) \neq 0 \quad \text{ or } \quad 
\lambda_{\min}\Big(\nabla_\theta^2 R(\theta)\Big)<0
\qquad \text{ for all } \theta \in U_{\ttheta}\]
by the same argument as leading to (\ref{eq:typeIpoint})
in Theorem \ref{thm:benignlandscape}.
\item[\bf Case 2:] $\ttheta \in \cV_{K-1}(\theta_*)$. Then Lemma
\ref{lemma:transcendencebasis} provides a local reparametrization
$\varphi=(\varphi^1,\ldots,\varphi^{K-1},\bar\varphi)$ on a neighborhood
$U_{\ttheta}$ of $\ttheta$, where $\varphi^j:\R^d \to \R^{d_j}$ and $T_j$
depends only on $(\varphi^1,\ldots,\varphi^j)$ for each $j=1,\ldots,K-1$. Then
$\bar\varphi$ forms a local chart for $\cV_{K-1}(\theta_*)$ at $\ttheta$.
Let $\tvarphi=\varphi(\ttheta)$.

If $\nabla_{\bar\varphi} s_K(\tvarphi)=0$, then $\ttheta$ is a critical
point of $s_K|_{\cV_{K-1}(\theta_*)}$, which by the given assumption must be
non-degenerate up to orbit. Hence $\orbit_{\ttheta}$ is locally a manifold of
dimension $d_0$ at $\ttheta$, so we may choose the
parametrization $\bar{\varphi}$ above to have a decomposition
$\bar{\varphi}=(\varphi^K,\varphi^0)$, where $\varphi^0$ has $d_0$ coordinates
forming a local chart for $\orbit_{\ttheta}$, and $\varphi^K$ has $d_K$
remaining coordinates. Since $s_K$ is constant over $\orbit_{\ttheta}$, we must 
have $\nabla_{\varphi^0} s_K=0$, so $s_K$ depends only on $\varphi^K$ and not on
$\varphi^0$ in this chart $\bar\varphi=(\varphi^K,\varphi^0)$ for
$\cV_{K-1}(\theta_*)$. Then non-degeneracy of $\ttheta$ up to orbit further
implies that $\nabla_{\varphi^K}^2 s_K(\tvarphi)$ is a
$d_K \times d_K$ matrix of full rank $d_K$.
If this were positive definite, then $\ttheta$ would be a local minimizer
of $s_K$ on $\cV_{K-1}(\theta_*)$, but we have assumed $\ttheta \in N_{\eps,M}$
which does not include such local minimizers. Therefore
$\nabla_{\varphi^K}^2 s_K(\tvarphi)$ must have a negative
eigenvalue. This shows that
\[\text{either } \quad \nabla_{\bar\varphi} s_K(\tvarphi) \neq 0
\quad \text{ or } \quad \lambda_{\min}\Big(\nabla_{\bar\varphi}^2
s_K(\tvarphi)\Big)<0.\]
Thus, differentiating (\ref{eq:seriesexpansionunprojected}) applied to the order
$K-1$ term-by-term in
$\varphi=(\varphi^1,\ldots,\varphi^{K-1},\bar\varphi)$, also in this case
\[\text{either } \quad
\nabla_\theta R(\theta) \neq 0 \quad \text{ or } \quad 
\lambda_{\min}\Big(\nabla_\theta^2 R(\theta)\Big)<0
\qquad \text{ for all } \theta \in U_{\ttheta}.\]
\end{itemize}

Combining these two cases,
taking a finite cover of $N_{\eps,M}$ by such neighborhoods $U_{\ttheta}$,
this shows that for any $\eps>0$ and all $\sigma>\sigma_0(\eps)$, all local
minimizers of $R(\theta)$ must be $\eps$-close to some local minimizer of
$s_K(\theta)$ on $\cV_{K-1}(\theta_*)$. Then there exists a slowly
decreasing sequence $\eps(\sigma) \to 0$ as $\sigma \to \infty$, for which
each local minimizer of $R(\theta)$ is $\eps(\sigma)$-close to a local
minimizer of $s_K(\theta)$. This establishes (a2). The proof
of (b2) in the projected setting is the same, where the gradients and
Hessians in $\bar{\varphi}$ of the additional terms
$\sigma^{-2k}\langle \widetilde{T}_k(\varphi),P_k(\varphi) \rangle$
from (\ref{eq:seriesexpansionprojected}) are handled in the same way as 
in the proof of Theorem \ref{thm:benignlandscape}.

We now show the converse direction (a1). Let $\theta_+ \in
\cV_{K-1}(\theta_*)$ be a local
minimizer of $s_K|_{\cV_{K-1}(\theta_*)}$ that is non-degenerate
up to orbit. By Lemma \ref{lemma:transcendencebasis} and the same
argument as above, there is a local reparametrization
$\varphi=(\varphi^1,\ldots,\varphi^{K-1},\varphi^K,\varphi^0)$ on a neighborhood
$U_{\theta_+}$ of $\theta_+$ such that $T_k(\varphi)$ depends only on $\varphi^1,\ldots,\varphi^k$ for
each $k=1,\ldots,K-1$, and $s_K(\varphi)$ and $R(\varphi)$ depend only
on $\varphi^1,\ldots,\varphi^K$. Let $\varphi_+=\varphi(\theta_+)$.
For $k=1,\ldots,K-1$, some constant $c>0$, and all $\varphi
\in \varphi(U_{\theta_+})$,
\begin{equation}\label{eq:sksecondorder}
\lambda_{\min}\Big(\nabla_{\varphi^k}^2 s_k(\varphi)\Big)>c
\end{equation}
by Lemma \ref{lemma:skfullrank} applied with $\ttheta=\theta_+$ and by
continuity of this Hessian. This holds also for $k=K$, by the non-degeneracy of
$\theta_+$ up to orbit. Then, writing the Hessian $\nabla_\varphi^2
R(\varphi)$ in the $(K+1) \times (K+1)$ block structure corresponding to
$(\varphi^1,\ldots,\varphi^K,\varphi^0)$, we obtain as in Theorem
\ref{thm:benignlandscape} that the upper-left $K \times K$ blocks have a graded
block structure, in a sufficiently small neighborhood $U_{\theta_+}$ where
$\varphi(U_{\theta_+})=V \times W$ has a product form. So, defining
$\bar{R}(\varphi^1,\ldots,\varphi^K)=R(\varphi)$, $\bar{R}$ is strictly convex
over $V$.

However, in contrast to Theorem \ref{thm:benignlandscape}, $\theta_+$ is not
necessarily a global (or local) minimizer of $R(\theta)$, so the existence of a
local minimizer of $\bar{R}(\varphi^1,\ldots,\varphi^K)$
in $V$ is less immediate. By further reducing $U_{\theta_+}$, we may
assume $V$ takes a product form $V=V_1 \times \ldots \times V_K$ where $V_k$
corresponds to the coordinates of $\varphi^k$. Let $\bar{V},\bar{V}_k$ be the
closures of $V,V_k$, which are compact.
Let $\hat\varphi=(\hat\varphi^1,\ldots,\hat\varphi^K)$ be
a point which minimizes $\bar{R}$ over $\bar{V}$. We aim to show that
$\hat\varphi$ in fact belongs to the interior of $\bar{V}$, and hence is a critical
point and local minimizer of $\bar{R}$ in $V$.
To show this, we will inductively show that each subvector $\hat\varphi^k$
belongs to the interior of $\bar{V}_k$, by using Lemma
\ref{lemma:convexcompare} to argue that it is close to $\varphi_+^k$
where $(\varphi_+^1,\ldots,\varphi_+^k)$ minimizes $s_k(\varphi)$.
The argument is similar to that of \cite[Lemma 4.15]{fan2020likelihood} (fixing
a minor error therein), and we reproduce this argument here.

Let $C,C',c>0$ denote $(\theta_*,\G)$-dependent
constants changing from instance to
instance. Let us write $s_k(\varphi^1,\ldots,\varphi^k)=s_k(\varphi)$,
as this does not depend on the remaining coordinates of $\varphi$.
Because $\theta_+ \in \cV_1(\theta_*)$, $\theta_+$ is a global minimizer of
$s_1$ over $\bar{V}_1$. Then, applying (\ref{eq:sksecondorder}) for $k=1$, we
get for some constant $c>0$ and sufficiently small neighborhood $V_1$ that
\begin{align*}
s_1(\hat\varphi^1)-s_1(\varphi_+^1)\geq c\|\hat\varphi^1-\varphi_+^1\|^2.
\end{align*}
Applying the series expansion (\ref{eq:seriesexpansionunprojected})
to order $K=1$, and
noting that $q_1 \in \cR_{\leq 0}^\G$ must be a constant, this implies that
\begin{equation}\label{eq:R1comparison}
\bar{R}(\hat\varphi^1,\ldots,\hat\varphi^K)-\bar{R}(\varphi_+^1,\ldots,\varphi_+^K)\geq
c\sigma^{-2}\|\hat\varphi^1-\varphi_+^1\|^2-C\sigma^{-4},
\end{equation}
for all $\sigma>\sigma_0$ and large enough $\sigma_0$. Since $\hat\varphi$
minimizes $\bar{R}(\varphi)$ over $\bar{V}$, the left side is non-positive,
so we obtain
$c\sigma^{-2}\|\hat\varphi^1-\varphi_+^1\|^2-C\sigma^{-4}\leq 0$. This
shows
\begin{equation}\label{eq:varphi1close}
\|\hat\varphi^1-\varphi_+^1\|\leq C'\sigma^{-1}<\sigma^{-\eta_1}
\end{equation}
for large enough $\sigma$ and, say, $\eta_1=0.1$.

Suppose inductively that we have shown
\begin{equation}\label{eq:varphiclose}
\|\hat\varphi^1-\varphi_+^1\|,\ldots,\|\hat\varphi^{k-1}-\varphi_+^{k-1}\|
<\sigma^{-\eta_{k-1}}
\end{equation}
for some constant $\eta_{k-1}>0$. Consider the functions
$h_+(\varphi^k)=s_k(\varphi_+^1,\ldots,\varphi_+^{k-1},\varphi^k)$
and $\hat{h}(\varphi^k)=s_k(\hat\varphi^1,\ldots,\hat\varphi^{k-1},\varphi^k)$
on $\bar{V}_k$. Since $s_k$ is a polynomial function of its arguments, both
$s_k$ and its Hessian are 
Lipschitz over the bounded domain $\bar{V}_1 \times \ldots \times \bar{V}_k$.
Then applying (\ref{eq:varphiclose}), for a constant $C>0$
depending on $\bar{V}_1 \times \ldots \times \bar{V}_k$ and $s_k$ but not on
$\sigma$, we have
\begin{equation}\label{eq:comparison}
\sup_{\varphi^k \in \bar{V}_k} |h_+(\varphi^k)-
\hat{h}(\varphi^k)|<C\sigma^{-\eta_{k-1}},
\qquad \sup_{\varphi^k \in \bar{V}_k} \|\nabla^2 h_+(\varphi^k)-
\nabla^2 \hat{h}(\varphi^k)\|<C\sigma^{-\eta_{k-1}}.
\end{equation}
Both $h_+$ and $\hat{h}$ are strongly convex over $\bar{V}_k$, with Hessians
lower bounded as (\ref{eq:sksecondorder}). For $k \leq K-1$, since $\theta_+
\in \cV_k(\theta_*)$, we know that $\varphi_+^k$ minimizes $h_+$ on $\bar{V}_k$.
For $k=K$, since $\theta_+$ is a local minimizer of
$s_K|_{\cV_{K-1}(\theta_*)}$ by assumption, we know also that $\varphi_+^K$
minimizes $h_+$ on a sufficiently small neighborhood $\bar{V}_K$.
Then (\ref{eq:comparison}) and Lemma \ref{lemma:convexcompare}
applied with $\eps=C'\sqrt{\sigma^{-\eta_{k-1}}}$ for a large enough constant
$C'>0$ guarantee that the global minimizer
$\bar{\varphi}^k$ of $\hat{h}$ over $\bar{V}_k$ satisfies
\begin{equation}\label{eq:bartildephi2close}
\|\bar{\varphi}^k-\varphi_+^k\|<\eps=C'\sqrt{\sigma^{-\eta_{k-1}}}.
\end{equation}
In particular, $\bar{\varphi}^k$ must be in the interior of $\bar{V}_k$ and is a
critical point of $\hat{h}$, for sufficiently large $\sigma>\sigma_0$. So
(\ref{eq:sksecondorder}) implies for a constant $c>0$ that
\begin{align*}
s_k(\hat\varphi^1,\ldots,\hat\varphi^{k-1},\hat\varphi^k)
-s_k(\hat\varphi^1,\ldots,\hat\varphi^{k-1},\bar\varphi^k) \geq
c\|\hat\varphi^k-\bar\varphi^k\|^2.
\end{align*}
Applying the series expansion (\ref{eq:seriesexpansionunprojected}) to the
order $k$, and recalling that $s_1,\ldots,s_{k-1}$ and $q_1,\ldots,q_k$ depend only on
$\varphi^1,\ldots,\varphi^{k-1}$, we get
\begin{equation}\label{eq:R2comparison}
\bar{R}(\hat\varphi^1,\ldots,\hat\varphi^K)-\bar{R}(\hat\varphi^1,\ldots,
\hat\varphi^{k-1},\bar\varphi^k,\varphi_+^{k+1},\ldots,\varphi_+^K)
\geq c\sigma^{-2k}\|\hat\varphi^k-\bar\varphi^k\|^2-C\sigma^{-2(k+1)}.
\end{equation}
This is again non-positive because $\hat{\varphi}$ minimizes $\bar{R}$ over
$\bar{V}$. So $\|\hat\varphi^k-\bar\varphi^k\|\leq C'\sigma^{-1}$. Combining
with (\ref{eq:bartildephi2close}) and (\ref{eq:varphiclose}), we obtain
\begin{equation}\label{eq:varphiclosedone}
\|\hat\varphi^1-\varphi_+^1\|,\ldots,
\|\hat\varphi^k-\varphi_+^k\| <\sigma^{-\eta_k}
\end{equation}
for large enough $\sigma$ and, say, $\eta_k=\eta_{k-1}/3$. This
completes the induction, showing that (\ref{eq:varphiclosedone}) holds up to
$k=K$, and hence that $\hat\varphi=(\hat\varphi^1,\ldots,\hat\varphi^K)$
is in the interior of $\bar{V}$ for $\sigma>\sigma_0$ sufficiently large.
Hence $\hat\varphi$ is a
critical point and local minimizer of $\bar{R}$. Then the
point $\theta(\hat\varphi^1,\ldots,\hat\varphi^K,\varphi_+^0)$ is a local
minimizer of $R(\theta)$ on $\R^d$, which is $\eps(\sigma)$-close to $\theta_+$
for $\eps(\sigma)=C\sigma^{-\eta_K}$. This shows (a1).

The proof of (b1) in the projected setting is similar, applying
(\ref{eq:seriesexpansionprojected}) in place of
(\ref{eq:seriesexpansionunprojected}). In the first step for $k=1$ we observe,
in addition to $q_1 \in \cR_{\leq 0}^\G$ being constant, that $P_1=0$ because
it is also constant and equals 0 at $\theta_*$. Thus we obtain
(\ref{eq:R1comparison}) and (\ref{eq:varphi1close}) without modification.
In the inductive step, in place of (\ref{eq:R2comparison}), we have
\begin{align*}
&\bar{R}(\hat\varphi^1,\ldots,\hat\varphi^K)
-\bar{R}(\hat\varphi^1,\ldots,\hat\varphi^{k-1},\bar\varphi^k,\varphi_+^{k+1},\ldots,\varphi_+^K)\\
&\geq \sigma^{-2k}\Big(c\|\hat\varphi^k-\bar\varphi^k\|^2
+\Big\langle
\tT_k(\hat\varphi^1,\ldots,\hat\varphi^k)-\tT_k(\hat\varphi^1,\ldots,\hat\varphi^{k-1},\bar\varphi^k),
P_k(\hat\varphi^1,\ldots,\hat\varphi^{k-1}) \Big\rangle\Big)-C\sigma^{-2(k+1)}.
\end{align*}
We may bound
\[\|\tT_k(\hat\varphi^1,\ldots,\hat\varphi^k)-\tT_k(\hat\varphi^1,\ldots,\hat\varphi^{k-1},\bar\varphi^k)\|_{\HS}
\leq C\|\hat\varphi^k-\bar\varphi^k\|\]
and
\begin{align*}
\|P_k(\hat\varphi^1,\ldots,\hat\varphi^{k-1})\|_{\HS}
&=\|P_k(\hat\varphi^1,\ldots,\hat\varphi^{k-1})-P_k(\varphi_+^1,\ldots,\varphi_+^{k-1})\|_{\HS}\\
&\leq C\Big(\|\hat\varphi^1-\varphi_+^1\|
+\ldots+\|\hat\varphi^{k-1}-\varphi_+^{k-1}\|\Big) \leq C'\sigma^{-\eta_{k-1}},
\end{align*}
where the first equality holds because $\theta_+ \in \cV_{k-1}(\theta_*)$ so
$P_k(\varphi_+^1,\ldots,\varphi_+^{k-1})=P_k(\varphi_*^1,\ldots,\varphi_*^{k-1})=0$. This yields
\begin{align*}
0 &\geq \bar{R}(\hat\varphi^1,\ldots,\hat\varphi^K)
-\bar{R}(\hat\varphi^1,\ldots,\hat\varphi^{k-1},\bar\varphi^k,\varphi_+^{k+1},\ldots,\varphi_+^K)\\
&\geq \sigma^{-2k}\Big(c\|\hat\varphi^k-\bar\varphi^k\|^2-
C\sigma^{-\eta_{k-1}}\|\hat\varphi^k-\bar\varphi^k\|\Big)-C\sigma^{-2(k+1)}.
\end{align*}
Viewing the right side as a quadratic function in
$\|\hat\varphi^k-\bar\varphi^k\|$, this implies that
$\|\hat\varphi^k-\bar\varphi^k\|$ is at most the larger of the two roots of this
quadratic function, which still gives (\ref{eq:varphiclosedone}).
This completes the induction,
and the proof is concluded as in the unprojected setting of (a1).
\end{proof}

\section{Analysis of orthogonal Procrustes alignment}\label{appendix:procrustes}

We provide the details for Example \ref{ex:procrustes} on the Procrustes
alignment model, either with or without reflections. We consider
a generic point $\theta_* \in \R^{3 \times m}$ satisfying $\rank(\theta_*)=3$
(where this is a generic condition because $\rank(\theta_*)<3$ is equivalent to
$\det \theta_* \theta_*^\top=0$). Recall that the group is
either $\G=\O(3) \otimes \Id_m$ or $\G=\SO(3) \otimes \Id_m$, acting on $\R^{3
\times m} \cong \R^d$. We represent an element of this group as $g \otimes
\Id_m$, where $g \in \O(3)$ or $g \in \SO(3)$ is a $3 \times 3$ matrix. For the
Haar-uniform law on both $\O(3)$ and $\SO(3)$, we have the moment identities
\begin{equation}\label{eq:O3SO3moments}
\E_g[g_{ij}]=0, \qquad \E_g[g_{ij}g_{i'j'}]=\frac{1}{3} \cdot
\1\{(i,j)=(i',j')\}.
\end{equation}
The first identity and the second for $(i,j) \neq (i',j')$ follow from
the fact that $g$ is invariant in law under negation of any two rows or two
columns. The second identity for $(i,j)=(i',j')$ follows from $\Tr g^\top
g=\sum_{i,j=1}^3 g_{ij}^2=3$, and the equality in law of the entries
$g_{ij}$. The action of this element $g \otimes \Id_m$ on
$\theta \in \R^{3 \times m}$ is given by the matrix product
\[\theta \mapsto g\theta,\]
and the Euclidean inner-product may be written as
$\langle \theta,\theta' \rangle=\Tr \theta^\top \theta'=\Tr \theta'\theta^\top$.

Let us first compute $d_0=\max_\theta \dim(\orbit_\theta)$. Note that
$\dim(\orbit_\theta)=\dim(\G)-\dim(\G_\theta)$, where $\G_\theta=\{g \in \G:g
\theta=\theta\}$ is the stabilizer subgroup of $\theta$ \cite[Section
I.1.b]{audin2004torus}.
For both $\G=\O(3) \otimes \Id_m$ and $\G=\SO(3) \otimes
\Id_m$, we have $\dim(\G)=3$. For any $\theta \in \R^{3 \times m}$ having full
rank 3, there is a right inverse $\theta^\dagger \in \R^{m \times 3}$ for
which $\theta\theta^\dagger=\Id$. Thus $g\theta=\theta$ requires $g=\Id$, so
that $\dim(\G_\theta)=0$. Thus, we obtain
\[d_0=3, \qquad \trdeg \cR^\G=d-d_0=d-3.\]

We now verify the values of $d_1,d_2$ and the forms of
$T_1(\theta)$, $T_2(\theta)$, $\cV_1(\theta_*)$,
$\cV_2(\theta_*)$, $s_1(\theta)$, and $s_2(\theta)$ as stated in Example
\ref{ex:procrustes}. For $T_1(\theta)$, $\cV_1(\theta_*)$, and
$s_1(\theta)$, by the first identity of
(\ref{eq:O3SO3moments}), $T_1(\theta)=\E_g[g\theta]=0$. So by the definitions of
$s_1(\theta)$ and $\cV_1(\theta_*)$ and Lemma \ref{lem:trdeg},
\[s_1(\theta)=0, \qquad d_1=0, \qquad \cV_1(\theta_*)=\R^d.\]

For $T_2(\theta)$, $\cV_2(\theta_*)$, and $s_2(\theta)$,
let us write $[\theta]_i,[g\theta]_i \in \R^m$ for the
$i^\text{th}$ rows of $\theta,g\theta \in \R^{3 \times m}$,
and $[T_2(\theta)]_{ii'} \in \R^{m \times m}$
for the $(i,i')$ block of $T_2(\theta)$ in the $3 \times 3$ block
decomposition of $\R^{d \times d} \cong \R^{3m \times 3m}$, where $i,i' \in
\{1,2,3\}$. Then
\[[T_2(\theta)]_{ii'}=\E_g\big[[g\theta]_i \cdot [g\theta]_{i'}^\top\big].\]
When $i \neq i'$, the second identity of (\ref{eq:O3SO3moments})
yields $[T_2(\theta)]_{ii'}=0$. When $i=i'$, it yields
\[[T_2(\theta)]_{ii}=\sum_{j,j'=1}^3 \E_g\big[g_{ij}[\theta]_j \cdot
g_{ij'}[\theta]_{j'}^\top\big]
=\frac{1}{3}\big([\theta]_1[\theta]_1^\top
+[\theta]_2[\theta]_2^\top+[\theta]_3[\theta]_3^\top\big)
=\frac{1}{3}\theta^\top \theta.\]
Thus we get $T_2(\theta)=\frac{1}{3}\Id_{3 \times 3} \otimes
(\theta^\top \theta)$. Then also
\begin{align}
s_2(\theta)&=\frac{1}{4}\|T_2(\theta)-T_2(\theta_*)\|_{\HS}^2
=\frac{1}{4} \cdot \frac{1}{9} \|\Id_{3 \times 3} \otimes (\theta^\top \theta)
-\Id_{3 \times 3} \otimes (\theta_*^\top \theta_*)\|_{\HS}^2\nonumber\\
&=\frac{1}{12}\|\theta^\top \theta-\theta_*^\top \theta_*\|_{\HS}^2.
\label{eq:procrustess2}
\end{align}
This shows that $\cV_2(\theta_*)=\{\theta:s_2(\theta)=0\}
=\{\theta:\theta^\top\theta=\theta_*^\top \theta_*\}$. For such $\theta$,
the row span of $\theta$ coincides with that of $\theta_*$. Since
$\rank(\theta_*)=3$, this
implies $\theta=A\theta_*$ for some invertible matrix $A \in \R^{3 \times 3}$.
Then $\theta^\top \theta=\theta_*^\top \theta_*$ requires
$0=\theta_*^\top(A^\top A-\Id)\theta_*$, so $A^\top A-\Id=0$.
Then $A$ is orthogonal, and we obtain that
$\theta \in \{g\theta_*:g \in \O(3)\}$. This verifies
\[\cV_2(\theta_*)=\{g\theta_*:g \in \O(3)\},\]
which is exactly $\orbit_{\theta_*}$ if $\G=\O(3) \otimes \Id_m$, and
$\orbit_{\theta_*} \cup \orbit_{-\theta_*}$ if $\G=\SO(3) \otimes \Id_m$.

To compute $d_2$, we apply Lemma \ref{lem:trdeg}. Differentiating
the expression (\ref{eq:procrustess2}) twice at
$\theta=\theta_*$ by the chain rule,
\[\nabla^2 s_2(\theta_*)=\frac{1}{6}\der_\theta[\theta^\top
\theta]^\top \cdot \der_\theta[\theta^\top \theta]\Big|_{\theta=\theta_*}\]
where $\der_\theta[\theta^\top \theta]$ denotes the Jacobian of the
vectorization of $\theta^\top \theta$ as a function of $\theta$.
For generic $\theta_* \in \R^{3 \times m}$,
specializing Lemma \ref{lem:s2-rank} to follow with $l=1$ and $S=m \geq 3$,
we then get $\rank[\nabla^2 s_2(\theta_*)]=\rank(\der_\theta[\theta^\top
\theta]|_{\theta=\theta_*})=3(m-1)=d-3$. Thus, recalling $d_1=0$,
Lemma \ref{lem:trdeg} shows
\[d_2=\trdeg \cR_{\leq 2}^\G=d-3.\]
This coincides with $\trdeg \cR^\G$, so also $K=2$ is the
smallest integer satisfying Proposition \ref{prop:Kdef}.

Finally, we analyze the optimization landscape of $s_2(\theta)$ over
$\cV_1(\theta_*)=\R^d$, and show that its only critical points are strict
saddles or the global minimizers $\cV_2(\theta_*)$.
Computing the gradient of (\ref{eq:procrustess2}), we have
\begin{equation}\label{eq:procrustesgrads2}
\nabla s_2(\theta)=\frac{1}{3}\theta\Big(\theta^\top\theta-\theta_*^\top
\theta_*\Big) \in \R^{3 \times m}.
\end{equation}
The row span of $\theta \theta^\top \theta$
is the same as that of $\theta$ (regardless of the rank of $\theta$),
while the row span of $\theta \theta_*^\top \theta_*$ is
contained in the row span of $\theta_*$. Thus, at any critical point $\theta$
satisfying $\nabla s_2(\theta)=0$,
the row span of $\theta$ is contained in that of $\theta_*$, i.e.\ we have
\begin{equation}\label{eq:procrustestheta}
\theta=A\theta_*
\end{equation}
for some (possibly singular) matrix $A \in \R^{3 \times 3}$. Applying
this form to (\ref{eq:procrustesgrads2}), we see that $\nabla s_2(\theta)=0$
implies
\[0=A\theta_*\theta_*^\top A^\top A\theta_*-A\theta_*\theta_*^\top \theta_*
=A\theta_*\theta_*^\top(A^\top A-\Id)\theta_*.\]
When $\rank(\theta_*)=3$, the rows of $\theta_*$ are linearly independent, so
this implies
\begin{equation}\label{eq:procrustesstationary}
0=A\theta_*\theta_*^\top(A^\top A-\Id).
\end{equation}

We now consider two cases for a critical point $\theta$ of $s_2(\theta)$:\\

{\bf Case 1:} $\theta$ has full rank 3. Then by
(\ref{eq:procrustestheta}), $A$ must be nonsingular.
Multiplying (\ref{eq:procrustesstationary})
by $(\theta_*\theta_*^\top)^{-1}A^{-1}$, we get $0=A^\top A-\Id$,
so $A^\top A=\Id$. Thus, $A$ is an orthogonal matrix, so $\theta \in
\cV_2(\theta_*)$, and this is a global minimizer of $s_2(\theta)$.

{\bf Case 2:}
$\theta$ has some rank $k \leq 2$. Then by (\ref{eq:procrustestheta}),
also $\rank(A)=\rank(A^\top A)=k$.
Then $P=\Id-A^\top A$ has $3-k$ eigenvalues equal to 1, and in particular,
$\rank P \geq 3-k$. On the other hand, since
$\rank(A\theta_*\theta_*^\top)=k$ and $0=A\theta_*\theta_*^\top P$ by
(\ref{eq:procrustesstationary}),
the kernel of $P$ has dimension at least $k$. Then also
$\rank P \leq 3-k$, so $\rank P=3-k$ exactly. Then $P$
has $3-k$ eigenvalues 1 and $k$ eigenvalues 0, so it is an orthogonal
projection onto a subspace of dimension $3-k$ in $\R^3$.

Using this observation, we now exhibit a direction of negative curvature in
$\nabla^2 s_2(\theta)$: Let $\Delta=uv^\top \in \R^{3 \times m}$ be any
rank-one non-zero matrix where $A^\top u=0$ 
and $P\theta_*v \neq 0$. Such vectors $u$ and $v$ exist
because $\rank A<3$, $\rank P>0$, and $\theta_*$ has full rank 3. Consider
\[\theta_t=\theta+t\Delta=A\theta_*+t\Delta,\]
and the Hessian $\nabla^2 s_2(\theta)$ in the direction of $\Delta$, given by
$\partial_t^2 s_2(\theta_t)|_{t=0}$. Applying
$A^\top \Delta=A^\top uv^\top=0$ by the condition $A^\top u=0$, observe that
$\theta_t^\top \theta_t=\theta_*^\top A^\top A\theta_*+t^2\Delta^\top \Delta$,
so from (\ref{eq:procrustess2}),
\[s_2(\theta_t)=\frac{1}{12}\Big\|\theta_t^\top\theta_t-\theta_*^\top\theta_*\|_\HS^2=\frac{1}{12}\Big\|t^2\Delta^\top \Delta
-\theta_*^\top P\theta_*\Big\|_{\HS}^2.\]
This is a polynomial in $t$, whose quadratic term is
\[[t^2]s_2(\theta_t)=-\frac{1}{6}\Tr \theta_*^\top P
\theta_*\Delta^\top\Delta.\]
So
\[\partial_t^2 s_2(\theta_t)\Big|_{t=0}
=-\frac{1}{3}\Tr \theta_*^\top P\theta_*\Delta^\top\Delta
=-\frac{1}{3}\|P\theta_*\Delta^\top\|_\HS^2.\]
Finally, note that $P\theta_*\Delta^\top=(P\theta_*v)u^\top \neq 0$
because $P\theta_*v \neq 0$ and $u \neq 0$. Then this is strictly
negative, so $\lambda_{\min}(\nabla^2 s_2(\theta))<0$.\\

Combining these two cases, this verifies the claim in Example
\ref{ex:procrustes} that for generic $\theta_*$, the minimization of
$s_2(\theta)$ over $\cV_1(\theta_*)=\R^d$ is globally benign.
For the claims about the landscape of $R(\theta)$,
observe that for any $\theta=g\theta_*$ where $g \in \O(3)$, we have
$s_2(\theta)=0$, so $T_2(\theta)=T_2(\theta_*)$. Then
applying the chain rule to differentiate twice
$s_2(\theta)=\|T_2(\theta)-T_2(\theta_*)\|_{\HS}^2/4$,
and applying also $s_1(\theta)=0$, we obtain
$\rank \nabla^2 s_2(\theta)=\rank \der T_2(\theta)=\rank \der M_2(\theta)$.
On the other hand, for any such $\theta$, the preceding computation shows also
$\rank \nabla^2 s_2(\theta)=\rank \der_\theta[\theta^\top \theta]=d-3$, as this
rank is the same at $\theta=g\theta_*$ as at $\theta_*$.
Therefore $\rank \der M_2(\theta)=d-3$.
Thus $\der M_2(\theta)$ has constant rank on $\cV_2(\theta_*)$, which is a
manifold of dimension $d_0=3$, so
the minimizers $\cV_2(\theta_*)=\{g\theta_*:g \in \O(3)\}$ of $s_2(\theta_*)$
are non-degenerate up to orbit.
Then the claims about the landscape of $R(\theta)$ for large $\sigma$
follow from Theorems \ref{thm:benignlandscape} and \ref{thm:landscape}.

\section{Analysis of continuous multi-reference alignment}\label{appendix:SO2}

\subsection{Unprojected continuous MRA}\label{sec: proof SO2 unprojected}
In this section we prove Theorems \ref{thm:MRA} and \ref{thm:MRA-mom}.

\begin{proof}[Proof of Theorem \ref{thm:MRA-mom}]
Recall by Lemma \ref{lem:skform} that
\begin{equation}\label{eq: sk}
s_k(\theta) = \frac{1}{2 (k!)} \E_g \sbr{\inner{\theta}{g\cdot\theta}^k - 2\inner{\theta}{g\cdot\theta_*}^k + \inner{\theta_*}{g\cdot\theta_*}^k}.
\end{equation}
Let $\theta,\vartheta \in \R^d$. Computing the form of
$\E_g[\langle\theta,g\cdot\vartheta\rangle^k]$ will give the form of
$s_k(\theta)$.

{\bf Case $k=1$:} By (\ref{eq:MRAG}), $\E_g[g] = \diag(1, 0, \ldots, 0)$. It follows from \eqref{eq: sk} that
  \[s_1(\theta) 
  = \frac{1}{2} \rbr{(\theta^{(0)})^2 - 2\theta^{(0)}\theta_*^{(0)} + (\theta_*^{(0)})^2}
  = \frac{1}{2} \rbr{ \theta^{(0)} - \theta_*^{(0)}}^2.\]

{\bf Case $k=2$:} Recall $u^{(0)}(\theta)=\theta^{(0)}$ and
$u^{(l)}(\theta)=\theta_1^{(l)}+\i \theta_2^{(l)}$ for $l=1,\ldots,L$.
From (\ref{eq:MRAG}), we may check that
\begin{equation}\label{eq:complexrepr}
u^{(l)}(g \cdot \theta)=e^{-\i 2\pi l\mathfrak{g}} u^{(l)}(\theta).
\end{equation}
Then, applying the identity $\Re a=(a+\bar{a})/2$,
\begin{align}\label{eq: inner prod SO2}
  \inner{\theta}{g\cdot\vartheta}
=\sum_{l=0}^L \Re \Big[u^{(l)}(\theta) \cdot \overline{u^{(l)}(g \cdot
\vartheta)}\Big]
=\frac{1}{2} \sum_{l=0}^L \sbr{u^{(l)}(\theta) \overline{u^{(l)}(\vartheta)} e^{\i 2\pi l\frakg} + \overline{u^{(l)}(\theta)} u^{(l)}(\vartheta) e^{-\i 2\pi  l \frakg}}.
\end{align}
Taking the expected square on both sides gives
\begin{align*}
  \E_g[\inner{\theta}{g\cdot\vartheta}^2] =
  & \frac{1}{4} \E_\frakg \sbr{\sum_{l_1,l_2=0}^L u^{(l_1)}(\theta) u^{(l_2)}(\theta) \overline{u^{(l_1)}(\vartheta) u^{(l_2)}(\vartheta)} e^{\i2\pi (l_1+l_2)\frakg}}\notag\\
  &+ \frac{1}{4} \E_\frakg \sbr{\sum_{l_1,l_2=0}^L \overline{u^{(l_1)}(\theta) u^{(l_2)}(\theta)} u^{(l_1)}(\vartheta) u^{(l_2)}(\vartheta) e^{-\i 2\pi (l_1+l_2)\frakg}}\notag\\
  & + \frac{1}{4} \E_\frakg \sbr{\sum_{l_1,l_2=0}^L u^{(l_1)}(\theta) \overline{u^{(l_2)}(\theta) u^{(l_1)}(\vartheta)} u^{(l_2)}(\vartheta) e^{\i2\pi (l_1-l_2)\frakg}}\notag\\
  & + \frac{1}{4} \E_\frakg \sbr{\sum_{l_1,l_2=0}^L \overline{u^{(l_1)}(\theta)} u^{(l_2)}(\theta) u^{(l_1)}(\vartheta) \overline{u^{(l_2)}(\vartheta)} e^{-\i2\pi (l_1-l_2)\frakg}},
\end{align*}  
where $\frakg$ is uniformly distributed over $[0,1)$. Applying the property
\begin{align}\label{eq:gexp}
\E_\frakg[e^{\i 2\pi l\frakg}]=\begin{cases}
1 & ~~~{\rm for}~~~ l=0,\\
0 & ~~~{\rm for}~~~ l\not=0,
\end{cases}
\end{align}
gives
\begin{align*}
\E_g[\inner{\theta}{g\cdot\vartheta}^2] &= (u^{(0)}(\theta))^2
(u^{(0)}(\vartheta))^2+\frac{1}{4}\sum_{l=1}^L \sbr{u^{(l)}(\theta) \overline{u^{(l)}(\theta) u^{(l)}(\vartheta)} u^{(l)}(\vartheta) + \overline{u^{(l)}(\theta)} u^{(l)}(\theta) u^{(l)}(\vartheta) \overline{u^{(l)}(\vartheta)}}\\
  &= (\theta^{(0)})^2 (\vartheta^{(0)})^2+\frac{1}{2} \sum_{l=1}^L r_l(\theta)^2
r_l(\vartheta)^2.
\end{align*}
Then from \eqref{eq: sk},
  \[s_2(\theta) = \frac{1}{4} \rbr{(\theta^{(0)})^2 - (\theta_*^{(0)})^2}^2
+\frac{1}{8} \sum_{l=1}^L \Big(r_l(\theta)^2 - r_l(\theta_*)^2\Big)^2.\]

{\bf Case $k=3$:} We now take the expected cube on both sides of (\ref{eq: inner
prod SO2}). Applying (\ref{eq:gexp}),
\begin{align*}
  \E_g[\inner{\theta}{g\cdot\vartheta}^3] &=\frac{1}{4}(u^{(0)}(\theta))^3(u^{(0)}(\vartheta))^3
 + \frac{3}{8} \mathop{\sum_{l,l',l''=0}^L}_{l=l'+l''} u^{(l)}(\theta) \overline{u^{(l')}(\theta) u^{(l'')}(\theta) u^{(l)}(\vartheta)} u^{(l')}(\vartheta) u^{(l'')}(\vartheta)\\
  &\quad+ \frac{3}{8} \mathop{\sum_{l,l',l''=0}^L}_{l=l'+l''} \overline{u^{(l)}(\theta)} u^{(l')}(\theta) u^{(l'')}(\theta) u^{(l)}(\vartheta) \overline{u^{(l')}(\vartheta) u^{(l'')}(\vartheta)}.
\end{align*}
Let us write as shorthand $u^{(l)}=u^{(l)}(\theta)$,
$u_*^{(l)}=u^{(l)}(\theta_*)$, and similarly for
$r_l,\lambda_l,r_{l,l',l''},\lambda_{l,l',l''}$ and
$r_{*,l},\lambda_{*,l},r_{*,l,l',l''},\lambda_{*,l,l',l''}$.
Then from \eqref{eq: sk},
\begin{align}\label{eq: s3 SO2}
  s_3(\theta)
&= \frac{1}{12} \E_g \sbr{\inner{\theta}{g\cdot\theta}^3 -2\inner{\theta}{g\cdot\theta_*}^3 + \inner{\theta_*}{g\cdot\theta_*}^3}\notag\\
  &= \frac{1}{48}\Big((u^{(0)})^3-(u_*^{(0)})^3\Big)^2
  +\frac{1}{16} \mathop{\sum_{l,l',l''=0}^L}_{l=l'+l''} \abr{u^{(l)}u^{(l')}u^{(l'')}}^2+\abr{u_*^{(l)}u_*^{(l')}u_*^{(l'')}}^2\notag\\
  &\hspace{1in} - \frac{1}{16} \mathop{\sum_{l,l',l''=0}^L}_{l=l'+l''} u^{(l)}\overline{u^{(l')}u^{(l'')}u_*^{(l)}} u_*^{(l')} u_*^{(l'')}
+\overline{u^{(l)}} u^{(l')}u^{(l'')}u_*^{(l)} \overline{u_*^{(l')} u_*^{(l'')}}\notag\\
  &= \frac{1}{48}\Big((u^{(0)})^3-(u_*^{(0)})^3\Big)^2
  +\frac{1}{16} \mathop{\sum_{l,l',l''=0}^L}_{l=l'+l''}
\abr{u^{(l)}\overline{u^{(l')}u^{(l'')}}-u_*^{(l)}\overline{u_*^{(l')}u_*^{(l'')}}}^2.
\end{align}
This verifies the first expression for $s_3(\theta)$. For the second expression,
we split the second sum of (\ref{eq: s3 SO2}) into the cases
$l=l'=l''=0$, only $l'=0$ and $l=l'' \geq 1$, only $l''=0$ and $l=l' \geq 1$,
and all $l,l',l'' \geq 1$. The first three cases are easily rewritten in terms
of $u^{(0)},r_l,u_*^{(0)},r_{*,l}$. Each term of the
last case $l,l',l'' \geq 1$ may be written as
\[\abr{u^{(l)}\overline{u^{(l')}u^{(l'')}}-u_*^{(l)}
\overline{u_*^{(l')}u_*^{(l'')}}}^2
=r_{l,l',l''}^2+r_{*,l,l',l''}^2-2r_{l,l',l''}r_{*,l,l',l''}
\cos\big(\lambda_{*,l,l',l''}-\lambda_{l,l',l''}\big),\]
and this yields the second expression for $s_3(\theta)$.
\end{proof}

\begin{proof}[Proof of Theorem \ref{thm:MRA}]
Note that for generic $\theta \in \R^d$, for example having
$(\theta_1^{(1)},\theta_2^{(1)}) \neq (0,0)$,
its stabilizer $\G_\theta=\{g \in \G:g \cdot
\theta=\theta\}=\{\Id\}$ is trivial. Thus $\dim \orbit_\theta=\dim
\G=1$, so $\trdeg \cR^\G=d-1=2L$.

We compute $\trdeg(\cR_{\leq k}^\G)$ for $k=1,2,3$ by applying Lemma
\ref{lem:trdeg} at any generic point $\theta_* \in \R^d$ where
$r_l(\theta_*)>0$ for each $l=1,\ldots,L$.
Write as shorthand $u^{(l)}(\theta)=r_l e^{\i \lambda_l}$,
$u^{(l)}(\theta_*)=r_{*,l}e^{\i \lambda_{*,l}}$, and define
$t_l=\lambda_l-\lambda_{*,l} \in [-\pi,\pi)$ for each $l=1,\ldots,L$. Setting 
\begin{align*}
\zeta(\theta)=(\theta_0,r_1,\ldots,r_L,t_1,\ldots,t_L),
\end{align*}
this map $\zeta(\theta)$ has non-singular derivative at $\theta_*$.
Then by the inverse function theorem, the
coordinates $\zeta\in\R^{2L+1}$ provide an invertible reparametrization of
$\theta$ in a local neighborhood of $\theta_*$, with some inverse function
$\theta(\zeta)$.
Let $\zeta_*=\zeta(\theta_*)$. Note that $\theta_*$ is a global minimizer and
hence critical point of
$s_k(\theta)$ for each $k \geq 1$. Then by the chain rule,
\[\nabla_\zeta^2 s_k(\theta(\zeta))\big|_{\zeta=\zeta_*}
=\der_\zeta \theta(\zeta_*)^\top \cdot \nabla_\theta^2 s_k(\theta_*) \cdot
\der_\zeta \theta(\zeta_*).\]
Applying Lemma \ref{lem:trdeg} and the fact that
$\der_\zeta \theta(\zeta_*)$ is non-singular, this gives
\[\trdeg(\cR_{\leq k}^\G)
=\rank\Big(\nabla_\theta^2 s_1(\theta_*)+\ldots+\nabla_\theta^2 s_k(\theta_*)
\Big)=\rank\Big(\nabla_\zeta^2 s_1(\theta(\zeta))+\ldots
+\nabla_\zeta^2 s_k(\theta(\zeta))\Big|_{\zeta=\zeta_*}\Big).\]

For $k=1$ and $k=2$, by Theorem \ref{thm:MRA-mom},
\[s_1(\theta(\zeta))=\frac{1}{2}(\theta_0-\theta_{*,0})^2, \qquad
s_2(\theta(\zeta))=\frac{1}{4}(\theta_0^2-\theta_{*,0}^2)^2
+\frac{1}{8}\sum_{l=1}^L(r_l^2-r_{*,l}^2)^2.\]
Then taking the Hessians yields
\begin{align*}
\trdeg(\cR_{\leq 1}^\G) = \rank\Big(\nabla_\zeta^2 s_1(\theta(\zeta))\Big|_{\zeta=\zeta_*}\Big)=\rank\Big(\diag(1,0,\ldots,0)\Big) = 1,
\end{align*}
and 
\begin{align*}
\trdeg(\cR_{\leq 2}^\G)&=\rank\Big(\nabla_\zeta^2 s_1(\theta(\zeta))
+\nabla_\zeta^2 s_2(\theta(\zeta))\Big|_{\zeta=\zeta_*}\Big)\\
&=\rank\Big(\diag(1,0,\ldots,0)
+\diag(2\theta_{*,0}^2,r_{*,1}^2,\ldots,r_{*,L}^2)\Big)= L+1.
\end{align*}

For $k=3$, noting that $\trdeg(\cR_{\leq 3}^\G)\leq
\trdeg(\cR^\G)=2L$, it remains to show
$\trdeg(\cR_{\leq 3}^\G)\geq 2L$. Denote 
\begin{align*}
H(\zeta_*)=\nabla_\zeta^2 s_1(\theta(\zeta))
+\nabla_\zeta^2 s_2(\theta(\zeta))+\nabla_\zeta^2
s_3(\theta(\zeta))\Big|_{\zeta=\zeta_*},
\end{align*}
so that $\trdeg(\cR_{\leq 3}^\G)=\rank(H(\zeta_*))$.
Group the coordinates of $\zeta$ as $r=(\theta_0,r_1,\ldots,r_L) \in \R^{L+1}$
and $t=(t_1,\ldots,t_L) \in \R^L$, and define the corresponding block
decomposition
\begin{align*}
H(\zeta_*)=\begin{pmatrix} H_{rr}(\zeta_*) & H_{rt}(\zeta_*) \\
H_{tr}(\zeta_*) & H_{tt}(\zeta_*) \end{pmatrix}.
\end{align*}
Since $s_1(\theta(\zeta))$ and $s_2(\theta(\zeta))$ are functions only of $r$
and not of $t$, we have
\begin{align*}
H_{rr}(\zeta_*)&=\nabla_r^2 s_1(\theta(\zeta))
+\nabla_r^2 s_2(\theta(\zeta))+\nabla_r^2 s_3(\theta(\zeta))\Big|_{\zeta=\zeta_*},\\
H_{rt}(\zeta_*)&=\nabla_{rt}^2 s_3(\theta(\zeta))\Big|_{\zeta=\zeta_*},\\
H_{tr}(\zeta_*)&=\nabla_{tr}^2 s_3(\theta(\zeta))\Big|_{\zeta=\zeta_*},\\
H_{tt}(\zeta_*)&=\nabla_t^2 s_3(\theta(\zeta))\Big|_{\zeta=\zeta_*}.
\end{align*}

For the upper-left block $H_{rr}(\zeta_*)$ of size $(L+1) \times (L+1)$,
Lemma \ref{lem:trdeg} ensures that
each matrix $\nabla_\zeta^2 s_k(\theta(\zeta))$ is positive semidefinite, and
hence so is each submatrix $\nabla_r^2 s_k(\theta(\zeta))$.
Then from the analysis for $\trdeg(\cR_{\leq 2}^\G)$,
\begin{align}\label{eq:SO2H11}
\rank(H_{rr}(\zeta_*)) \geq \rank\Big(\nabla_r^2 s_1(\theta(\zeta))
+\nabla_r^2 s_2(\theta(\zeta))\Big|_{\zeta=\zeta_*}\Big)=L+1,
\end{align}
and equality must hold because $H_{rr}(\zeta_*)$ has dimension $L+1$.
For the blocks $H_{rt}(\zeta_*)$ and $H_{tr}(\zeta_*)$,
recall from the form of $s_3(\theta)$ in Theorem \ref{thm:MRA-mom} that
\[s_3(\theta(\zeta))=f(r)-\frac{1}{8} \mathop{\sum_{l,l',l''=1}^L}_{l=l'+l''}
r_{l,l',l''}r_{*,l,l',l''}\cos(t_l-t_{l'}-t_{l''}),\]
where $f(r)$ is a function depending only on $r$ and not on $t$.
Then, noting that $t_l=0$ for all $l=1,\ldots,L$ at $\zeta=\zeta_*$, we get
\begin{align*}
\nabla_{r\lambda}^2 s_3(\theta(\zeta))\Big|_{\zeta=\zeta_*}=0~~~~~~~{\rm and}~~~~~~\nabla_{\lambda r}^2 s_3(\theta(\zeta))\Big|_{\zeta=\zeta_*}=0,
\end{align*}
which further implies
\begin{align}\label{eq:SO2H12H21}
H_{rt}(\zeta_*)=0~~~~~~~{\rm and}~~~~~~H_{tr}(\zeta_*)=0.
\end{align}

Finally, the lower-right block $H_{tt}(\zeta_*)$ of size $L \times L$ is given
explicitly by
\begin{align*}
H_{tt}(\zeta_*)=-\frac{1}{8} \mathop{\sum_{l,l',l''=1}^L}_{l=l'+l''}
r_{*,l,l',l''}^2 \cdot \nabla^2_t \Big[\cos( t_l - t_{l'} -
t_{l''})\Big]\Big|_{t=0}.
\end{align*}
To show that its rank is at least $L-1$ for generic $\theta_* \in \R^d$, it
suffices to exhibit a single such point $\theta_*$. For simplicity,
we pick $\theta_*$ such that $\theta_*^{(0)}=1$ and
$r_{*,l}=1$ for all $l=1,\ldots,L$.
Let $e_l$ be the $l^\text{th}$ standard basis vector, and define the vector
\begin{align}\label{eq:wl}
w_{l,l',l''}=\nabla_t [t_{l}-t_{l'}-t_{l''}]
=e_{l}-e_{l'}-e_{l''} \in \R^L.
\end{align}
By the chain rule,
\[\nabla^2_t \Big[\cos( t_l - t_{l'} - t_{l''})\Big]\Big|_{t=0}
=-\cos(t_{l}-t_{l'}-t_{l''})\Big|_{t=0} \cdot w_{l,l',l''} w_{l,l',l''}^\top
=-w_{l,l',l''} w_{l,l',l''}^\top.\]
Define the index set
$\mathcal{L}=\{l,l',l'' \in \{1,\ldots,L\}:l=l'+l''\}$, and let
$W \in \R^{L \times |\mathcal{L}|}$ be the matrix with the vectors
$w_{l,l',l''}$ as columns. Then
\[H_{tt}(\zeta_*)=\frac{1}{8}WW^\top.\]
Note that, in particular, $W$ has a subset of $L-1$ columns
corresponding to $l'=1$, $l'' \in \{1,\ldots,L-1\}$, and $l=1+l''$. These
columns (in $\R^L$) are given by
\[\begin{pmatrix} 2 \\ -1 \\ 0 \\ 0 \\ \vdots \\ 0 \\ 0 \end{pmatrix},
\qquad \begin{pmatrix} 1 \\ 1 \\ -1 \\ 0 \\ \vdots \\ 0 \\ 0 \end{pmatrix},
\qquad \begin{pmatrix} 1 \\ 0 \\ 1 \\ -1 \\ \vdots \\ 0 \\ 0 \end{pmatrix},
\ldots,
\qquad \begin{pmatrix} 1 \\ 0 \\ 0 \\ 0 \\ \vdots \\ 1 \\ -1 \end{pmatrix}.\]
The bottom $L-1$ rows of these columns form an upper-triangular matrix
with non-zero diagonal, and hence these columns are linearly independent.
Thus $\rank(W) \geq L-1$, so also
\begin{align}\label{eq:SO2H22}
\rank(H_{tt}(\zeta_*)) \geq L-1.
\end{align}
Combining (\ref{eq:SO2H11}), (\ref{eq:SO2H12H21}), and (\ref{eq:SO2H22}), we
obtain $\trdeg \cR_{\leq 3}^\G=\rank(H(\zeta_*)) \geq 2L$. Hence
$\trdeg \cR_{\leq 3}^\G=2L$, as desired. This proves Theorem \ref{thm:MRA}.

The statement $(d_0,d_1,d_2,d_3)=(1,1,L,L-1)$ in Corollary \ref{cor:MRA-cor}
now follows from these
transcendence degrees. The smallest $K$ for which $\trdeg \cR_{\leq
K}^\G=\trdeg \cR^\G=2L$ is $K=3$, except in the case $L=1$ where it is instead
$K=2$. This proves Corollary \ref{cor:MRA-cor}, in light of
Theorem \ref{thm:FI} and \cite[Theorem 4.9]{bandeira2017estimation}
(as reviewed in Lemma \ref{lemma:genericlist}).
\end{proof}

\subsection{Spurious local minimizers for
continuous MRA}\label{appendix:MRAspurious}

In this section, we prove Theorem \ref{thm:MRAlandscape} showing that
in the continuous MRA model where $L \geq 30$, for some open subset of true
signal vectors $\theta_* \in \R^d$ and for sufficiently high noise $\sigma^2>0$,
there must exist spurious local minimizers of the negative population
log-likelihood function $R(\theta)$.

\begin{proof}[Proof of Theorem \ref{thm:MRAlandscape}]
Recall the forms of $s_1(\theta)$, $s_2(\theta)$, and $s_3(\theta)$ from Theorem
\ref{thm:MRA-mom}. By definition of $s_k(\theta)$ in (\ref{eq:sk}), each moment
variety $\cV_k(\theta_*)$ is the intersection of the global minimizers of
$s_1(\theta),\ldots,s_k(\theta)$. Then
\[\cV_1(\theta_*)=\{\theta:\theta^{(0)}=\theta_*^{(0)}\},
\quad \cV_2(\theta_*)=\{\theta:\theta^{(0)}=\theta_*^{(0)},\;
r_l(\theta)=r_l(\theta_*) \text{ for each } l=1,\ldots,L\}.\]
On $\cV_0(\theta_*)=\R^d$, we have
$\nabla s_1(\theta)|_{\cV_0(\theta_*)}=\theta^{(0)}-\theta_*^{(0)}$,
and this vanishes exactly when $\theta \in \cV_1(\theta_*)$. Differentiating 
in the coordinates $\{(\theta_1^{(l)},\theta_2^{(l)}):l=1,\ldots,L\}$ that
parametrize $\cV_1(\theta_*)$, and applying
$r_l(\theta)^2=(\theta_1^{(l)})^2+(\theta_2^{(l)})^2$, we have
\[\nabla s_2(\theta)|_{\cV_1(\theta_*)}
=\frac{1}{2}\Big((r_l(\theta)^2-r_l(\theta_*)^2) \cdot
(\theta_1^{(l)},\theta_2^{(l)}):l=1,\ldots,L\Big).\]
Suppose $\theta_*$ satisfies the generic condition $r_l(\theta_*)>0$ for each
$l=1,\ldots,L$. Then $\nabla s_2(\theta)|_{\cV_1(\theta_*)}=0$ if and only if,
for each $l=1,\ldots,L$, either $r_l(\theta)=r_l(\theta_*)$ or
$(\theta_1^{(l)},\theta_2^{(l)})=(0,0)$. If the latter holds for any
$l=1,\ldots,L$, then differentiating in $(\theta_1^{(l)},\theta_2^{(l)})$
a second time shows that the Hessian of $s_2(\theta)$ in
$(\theta_1^{(l)},\theta_2^{(l)})$ is negative-definite at 
$(\theta_1^{(l)},\theta_2^{(l)})=(0,0)$, and hence $\lambda_{\min}(\nabla^2
s_2(\theta)|_{\cV_1(\theta_*)})<0$. On the other hand, if
$r_l(\theta)=r_l(\theta_*)$ for every $l=1,\ldots,L$, then $\theta \in
\cV_2(\theta_*)$ and $\theta$ is a global minimizer of $s_2(\theta)$.
Thus, the minimizations of $s_1(\theta)$ and
$s_2(\theta)$ on $\cV_0(\theta_*)$ and $\cV_1(\theta_*)$ are globally benign.

We now take $L \geq 30$, and
construct the example of $\theta_*$ where $s_3(\theta)$ has a spurious
local minimizer in $\cV_2(\theta_*)$ that is nondegenerate up to orbit.
Consider $\theta_*$ such that
$r_{*,l}:=r_l(\theta_*)>0$ for each $l=1,\ldots,L$.
Then $\cV_2(\theta_*)$ may be smoothly parametrized by the coordinates
$t=(t_1,\ldots,t_L)$ where $t_l=\lambda_l(\theta)-\lambda_l(\theta_*)$.
The function $8s_3(\theta)$ restricted to $\cV_2(\theta_*)$ is given as a
function of $t$ by
\[s(t)=-\mathop{\sum_{l,l',l''=1}}_{l=l'+l''}^L
r_{*,l}^2r_{*,l'}^2r_{*,l''}^2\cos(t_l-t_{l'}-t_{l''})+\text{constant}.\]
The orbit $\orbit_{\theta_*} \cap \cV_2(\theta_*)$ is defined by
\begin{equation}\label{eq:orbitt}
\big\{(t_1,\ldots,t_L):
t_l \equiv \tau \cdot l \bmod 2\pi \text{ for all } l=1,\ldots,L
\text{ and some } \tau \in \R\big\},
\end{equation}
where $\tau=0$ corresponds to the point $\theta_*$ itself.
Thus, our goal is to exhibit a point $\theta_*$ for which
\begin{equation}\label{eq:goods}
\nabla s(\hat{t})=0, \qquad \nabla^2 s(\hat{t}) \succeq 0, \qquad
\text{ and } \qquad \rank(\nabla^2 s(\hat{t}))=L-1
\end{equation}
at some $\hat{t}$ \emph{not} belonging to this orbit (\ref{eq:orbitt}).
Then the corresponding point $\theta \in \R^d$ where
$\theta^{(0)}=\theta_*^{(0)}$, $r_l(\theta)=r_{*,l}$, and
$t_l(\theta)=\hat{t}_l$ for each $l=1,\ldots,L$
is our desired spurious local minimizer for $s_3(\theta)$. Note that the
condition (\ref{eq:goods}) depends on $\theta_*$ only via
$r_*=(r_{*,1},\ldots,r_{*,L})$, so equivalently, our goal is to construct an
appropriate such vector $r_*$.

We split the construction into two steps: First, we construct
$\tilde{r}_*$ for which $\hat{t}=(\pi,0,\ldots,0)$ satisfies
\[\nabla s(\hat{t})=0, \qquad \nabla^2 s(\hat{t}) \succeq 0,
\qquad \rank(\nabla^2 s(\hat{t}))=L-2.\]
This Hessian $\nabla^2 s(\hat{t})$ will have a dimension-2 null space
spanned by the vectors $e_3=(0,0,1,0,\ldots,0)$ and $e=(1,2,\ldots,L)$.
Second, we make a small perturbation of the third coordinate of $\tilde{r}_*$,
to eliminate the null vector $e_3$ while preserving $\nabla s(\hat{t})=0$
and $\nabla^2 s(\hat{t}) \succeq 0$. This yields $r_*$ satisfying
(\ref{eq:goods}).\\

{\bf Step I.} Clearly $\hat{t}=(\pi,0,\ldots,0)$ satisfies $\nabla
s(\hat{t})=0$ for any choice of $r_*$, because $\sin(k\pi)=0$ for any integer
$k$. The Hessian of $s(t)$ is given by
\begin{align*}
  \nabla^2 s(t) &= \sum_{\substack{l,l',l''=1 \\ l=l'+l''}}^L r_{*,l}^2 r_{*,l'}^2 r_{*,l''}^2 \cos(t_l - t_{l'} - t_{l''}) (e_l - e_{l'} - e_{l''}) (e_l - e_{l'} - e_{l''})^\top,
\end{align*}
where $e_l$ is the $l^\text{th}$ standard basis vector. Then for the above
choice of $\hat{t}$ and for any vectors $u,v\in\R^L$,
\begin{align}\label{eq:SO2quad}
u^\top \nabla^2 s(\hat t) v = \sum_{\substack{l,l',l''=1 \\ l=l'+l''}}^L
r_{*,l}^2r_{*,l'}^2r_{*,l''}^2 (u_l-u_{l'}-u_{l''})(v_l-v_{l'}-v_{l''}) \times
\begin{cases} -1 & \text{ if exactly one of } l',l'' \text{ equals }1,\\
1 & \text{ otherwise.} \end{cases}
\end{align}
Consider
$\tilde{r}_*=(\tilde{r}_{*,1},\tilde{r}_{*,2},\tilde{r}_{*,3},\tilde{r}_{*,4},\ldots,\tilde{r}_{*,L})=(1,L^{\kappa/2},0,1,\ldots,1)$
where $\tilde{r}_{*,\ell}=1$ for all $\ell \geq 4$, and for a constant
$\kappa>0$ to be determined later. Then from (\ref{eq:SO2quad}) applied with
$v=e_3$ and $v=e=(1,2,\ldots,L)$,
it is immediate that $u^\top \nabla^2 s(\hat{t})v=0$ for any vector $u
\in \R^L$, so these two vectors $v=e_3$ and $v=e$ belong
to the null space of $\nabla^2 s(\hat t)$.

We now check that for any other unit vector $v \in \R^L$ orthogonal to
both $e_3$ and $e$, we have $v^\top \nabla^2 s(\hat{t}) v>0$ strictly. Observe from
(\ref{eq:SO2quad}) that for $r_*=\tilde{r}_*$,
\begin{align}\label{eq:SO2test}
v^\top \nabla^2 s(\hat t) v =& \sum_{\substack{l,l',l''=4 \\ l=l'+l''}}^L (v_{l} - v_{l'} - v_{l''})^2 + L^\kappa (v_2 - 2 v_1)^2 + L^{2\kappa} (v_4 - 2 v_2)^2 \notag\\
    & + 2 L^\kappa \sum_{l=4}^{L-2} (v_{l+2} - v_l - v_2)^2 - 2 \sum_{l=4}^{L-1} (v_{l+1} - v_l - v_1)^2.
\end{align}
Note that only the last term is negative. Denote
$\epsilon=\sqrt{6}L^{\frac{1-\kappa}{2}}$. We consider two cases:\\

{\bf Case 1:} Any one of $|v_2-2v_1|$, $|v_4-2v_2|$, $\{|v_{l+2}-v_l-v_2|:l\geq
4\}$ is larger than $\epsilon$. Then let us upper bound the last term of
(\ref{eq:SO2test}) by
\begin{align*}
    2 \sum_{l=4}^{L-1} (v_{l+1} - v_l - v_1)^2 &\leq 6 \sum_{l=4}^{L-1} (v_{l+1}^2 + v_l^2 + v_1^2)\leq  6 L,
\end{align*}
where the last inequality follows from $\|v\|_2^2=1$. Then  
\begin{align*}
v^\top \nabla^2 s(\hat t) v > L^\kappa\epsilon^2-2 \sum_{l=4}^{L-1} (v_{l+1} - v_l - v_1)^2\geq 0.
\end{align*}

{\bf Case 2:} We have instead
\begin{equation}\label{eq:SO2case21}
    |v_2 - 2 v_1| \leq \epsilon, \qquad
    |v_4 - 2 v_2| \leq \epsilon, \qquad
    |v_{l+2} - v_l - v_2| \leq \epsilon~~~{\rm for}~~~ l\geq 4.
\end{equation}
In this case, we aim to show that the first term in (\ref{eq:SO2test}) is large
enough to compensate for the negative last term of (\ref{eq:SO2test}).

For all $m \geq 1$, the second and third inequalities of (\ref{eq:SO2case21})
imply $|v_{2m}-mv_2| \leq (m-1)\epsilon$. Similarly, for all
$m \geq 0$, the last inequality implies $|v_{2m+5}-v_5-mv_2| \leq m\epsilon$.
Combining with $|mv_2-2mv_1| \leq m\epsilon$ by the first inequality of
(\ref{eq:SO2case21}), we obtain
\begin{equation}\label{eq:vbounds}
|v_{2m}-2m v_1| \leq L\epsilon ~~~{\rm for}~~~ 1 \leq m \leq L/2, \quad
|v_{2m+5}-v_5-2m v_1| \leq L\epsilon ~~~{\rm for}~~~ 0 \leq m \leq
(L-5)/2.
\end{equation}

For the summands of the first term in (\ref{eq:SO2test}), if $l'=2m'+5$
and $l''=2m''+5$ are both odd where $m',m' \geq 0$,
then $l=2m+10$ for $m=m'+m''$, and we have
\[|v_l-v_{l'}-v_{l''}| \geq |10v_1-2v_5|-3L\epsilon\]
by (\ref{eq:vbounds}) and the triangle inequality
\[|v_l-v_{l'}-v_{l''}|+|{-}v_l+(2m+10)v_1|
+|v_{l'}-v_5-2m'v_1|+|v_{l''}-v_5-2m''v_1|
\geq |10v_1-2v_5|.\]
For any even $l=2m+10$ with $m\geq 0$, the number of odd pairs $l',l'' \geq
5$ where $l'+l''=2m+10$ is $m+1$. Then the total number of tuples $(l,l',l'')$
in the first term of (\ref{eq:SO2test}) where $l',l''$ are odd is
\begin{align*}
\sum_{m=0}^{\lfloor L/2\rfloor-5}(m+1)=\frac{1}{2}(\lfloor L/2\rfloor-3)(\lfloor L/2\rfloor-4)\geq \frac{1}{8}(L-8)(L-10).
\end{align*}
Hence, we may lower bound the first term in (\ref{eq:SO2test}) by
\begin{align}\label{eq:SO2term1}
\sum_{\substack{l,l',l''=4 \\ l=l'+l''}}^L (v_{l} - v_{l'} - v_{l''})^2\geq
\frac{1}{8}(L-8)(L-10)\Big(|10v_1-2v_5|-3L\epsilon\Big)_+^2.
\end{align}
Similarly, the summands of the last term in (\ref{eq:SO2test}) for $l \geq 4$
may be upper bounded as
\begin{align}\label{eq:SO2termlast}
|v_{l+1}-v_l-v_1|\leq |5v_1-v_5|+2L\epsilon
\end{align}
by applying (\ref{eq:vbounds}) to approximate both $v_{l+1}$ and $v_l$ via
linear combinations of $v_1,v_5$, and using the triangle inequality.
(This may be checked
separately in the cases where $l$ is odd and even.) The number of summands in
this last term is $L-4$. Then,
combining (\ref{eq:SO2term1}) and (\ref{eq:SO2termlast}) gives
\begin{align}
v^\top \nabla^2 s(\hat{t})v &\geq 
\frac{1}{8}(L-8)(L-10)\Big(|10v_1-2v_5|-3L\epsilon\Big)_+^2-2(L-4)\Big(|5v_1-v_5|+2L\epsilon\Big)^2\label{eq:vshatvbound}
\end{align}

We now claim that for sufficiently large $\kappa>0$, we must have
\begin{equation}\label{eq:v1v5bound}
|5v_1-v_5|>5L\epsilon.
\end{equation}
To show this claim, since $v\perp e_3$ and $v\perp e$,
\begin{align*}
0=-3v_3&=\sum_{\substack{l=1 \\ l\not=3}}^L l\,v_l
=v_1+5v_5+\sum_{m=1}^{\lfloor L/2\rfloor} 2m\cdot v_{2m}+\sum_{m=1}^{\lfloor (L-5)/2\rfloor} (2m+5)\cdot v_{2m+5}.
\end{align*}
Denote $a_{2m}=v_{2m}-2m\cdot v_1$ and $a_{2m+5}=v_{2m+5}-v_5-2m\cdot v_1$ for
$m\geq1$, where these satisfy $|a_l| \leq L\epsilon$ by (\ref{eq:vbounds}).
Let us define
\begin{align*}
M_1&=1+\sum_{m=1}^{\lfloor L/2 \rfloor} 4m^2
+\sum_{m=1}^{\lfloor (L-5)/2 \rfloor} 2m(2m+5), \qquad
M_5=5+\sum_{m=1}^{\lfloor (L-5)/2\rfloor} (2m+5).
\end{align*}
Then we may write the above as
\begin{align*}
0&=v_1+5v_5+\sum_{m=1}^{\lfloor L/2\rfloor} 2m\cdot (a_{2m}+2m\cdot
v_1)+\sum_{m=1}^{\lfloor (L-5)/2\rfloor} (2m+5)\cdot (a_{2m+5}+v_5+2m\cdot
v_1)\\
&=M_1v_1+M_5v_5+\mathop{\sum_{l=1}^L}_{l \neq 1,3,5} l\,a_l.
\end{align*}
This may be rearranged as
\begin{align*}
v_1=\frac{M_5(5v_1-v_5)-\sum_{l:l \neq 1,3,5}  l\,a_l}{M_1+5M_5}.
\end{align*}
Now suppose by contradiction that $|5v_1-v_5| \leq 5L\epsilon$. Then this implies
\begin{align*}
|v_1|
&\leq \frac{5M_5L+L^2(L+1)/2}{M_1+5M_5}\epsilon<C\epsilon,
\end{align*}
where the second inequality holds for a universal constant $C>0$ and
any $L \geq 1$.
Then $|v_5| \leq |5v_1|+5L\epsilon<5(L+C)\epsilon$, and combining with
(\ref{eq:vbounds}) gives
\[|v_l| \leq C'L\epsilon \text{ for all } l \in \{1,\ldots,L\} \setminus \{3\}\]
and a different universal constant $C'>0$. Recalling $v_3=0$ and
$\epsilon=\sqrt{6}L^{\frac{1-\kappa}{2}}$, this contradicts that $\|v\|_2=1$ for
sufficiently large $\kappa>0$. Thus, (\ref{eq:v1v5bound}) holds.

Finally, this bound (\ref{eq:v1v5bound}) implies $|10v_1-2v_5|-3L\epsilon
>|5v_1-v_5|+2L\epsilon>0$. For $L \geq 30$, we have
$(L-8)(L-10)/8 \geq 2(L-4)$. Applying these to (\ref{eq:vshatvbound})
yields $v^\top s(\hat{t})v>0$ as desired.\\

{\bf Step II.} We now show that making a small positive perturbation to
$\tilde{r}_{*,3}$ yields a point $r_*$ which satisfies (\ref{eq:goods}) at
$\hat{t}=(\pi,0,\ldots,0)$. Denote $q_*=r_{*,3}^2$ and set
$\tilde{q}_*=\tilde{r}_{*,3}^2=0$. Let
$\Pi \in \R^{L \times (L-1)}$ have orthonormal
columns spanning the orthogonal complement of $e=(1,2,\ldots,L)$, and consider
the projected Hessian
\[H(q_*)=\Pi^\top \nabla^2 s(\hat{t})\Pi \in \R^{(L-1) \times (L-1)}\]
now as a function of $q_*$. By the result of Step I, $H(\tilde{q}_*)
\succeq 0$, and $H(\tilde{q}_*)$ has a simple eigenvalue $\mu=0$ with
eigenvector $v=\Pi^\top e_3/\|\Pi^\top e_3\|$. Then this eigenvalue
$\mu=\mu(q_*)$ is differentiable in $q_*$, with derivative given by
$\partial_{q_*} \mu(q_*)=v^\top \partial_{q_*} H(q_*) v$.
Applying $\Pi \cdot \Pi^\top=\Id-ee^\top/\|e\|^2$ and the fact that $e=(1,2,\ldots,L)$
belongs to the null space of $\nabla_t^2 s(\hat{t})$ for any $q_*$, this is
\[\partial_{q_*} \mu(q_*)=\frac{e_3^\top (\Id-ee^\top/\|e\|^2) \cdot
\partial_{q_*} \nabla_t^2 s(\hat{t}) \cdot (\Id-ee^\top/\|e\|^2)e_3}
{e_3^\top(\Id-ee^\top/\|e\|^2)e_3}
=\frac{e_3^\top \cdot \partial_{q_*} \nabla_t^2 s(\hat{t}) \cdot e_3}
{e_3^\top(\Id-ee^\top/\|e\|^2)e_3}.\]
By (\ref{eq:SO2quad}) applied with $u=v=e_3$, for general $r_*$, we have
\begin{align*}
e_3^\top \nabla_t^2 s(\hat t) e_3=-2 r_{*,1}^2 r_{*,2}^2 r_{*,3}^2
-2r_{*,1}^2r_{*,3}^2r_{*,4}^2+2 \sum_{l=2}^{L-3}r_{*,3}^2 r_{*,l}^2 r_{*,l+3}^2 (1 + \1\{l=3\}).
\end{align*}
Then differentiating in $q_*=r_{*,3}^2$ and evaluating at
$\tilde{r}_*=(1,L^\kappa,0,1,\ldots,1)$,
\begin{align*}
e_3^\top \cdot \partial_{q_*}\nabla_t^2 s(\hat t) \cdot
e_3\Big|_{q_*=\tilde{q}_*}
=-2L^{2\kappa}-2+2L^{2\kappa}+2(L-6)>0.
\end{align*}
Thus, for some sufficiently small $\delta>0$, setting $q_*=\delta$ yields
$\mu(\delta)>0$, and hence $H(\delta) \succ 0$ strictly. Then at the point
$r_*=(1,L^\kappa,\delta,1,\ldots,1)$, we obtain that (\ref{eq:goods}) holds.\\

Combining Steps I and II, we have shown that $\theta_*$ given by
$(r_1(\theta_*),\ldots,r_L(\theta_*))=(1,L^\kappa,\delta,1,\ldots,1)$
and (say) $\theta_*^{(0)}=0$ and
$(\lambda_1(\theta_*),\ldots,\lambda_L(\theta_*))=0$ satisfies (\ref{eq:goods}).
Then (\ref{eq:goods}) holds also in a sufficiently small open
neighborhood $U$ of $\theta_*$, by continuity, where $e=(1,2,3,\ldots,L)$ is the
fixed vector in the null space of $\nabla^2 s(\hat{t})$ for every $\theta_* \in
U$. Then for any $\theta_* \in U$, the function
$s_3(\theta)$ has a spurious local minimizer $\theta \in \cV_2(\theta_*)$
that is non-degenerate up to orbit, where
$\theta^{(0)}=\theta_*^{(0)}$, $r_l(\theta)=r_l(\theta_*)$ for all $l \geq 1$,
$\lambda_1(\theta)=\lambda_1(\theta_*)+\pi$, and
$\lambda_l(\theta)=\lambda_l(\theta_*)$ for all $l \geq 2$. This concludes the
proof.
\end{proof}

\subsection{Projected continuous MRA}\label{appendix:projectedSO2}
In this section, we now describe a projected version of the continuous
MRA problem with a two-fold projection onto an interval. We analyze this as a
simpler example of a model with projection, before diving into the projected
cryo-EM model to follow.

Again writing $\sS^1 \cong [0,1)$ for the unit circle and $f_\frakg(t)$ for the
periodic function $f:\sS^1 \to \R$ rotated by $\frakg \in \SO(2) \cong [0,1)$,
we consider the observations
\[(\Pi \cdot f_\frakg)(t)\der t+\sigma\,\der W(t)\]
over $t \in (0,1/2)$ where
\[(\Pi \cdot f_\frakg)(t)=f_\frakg(t)+f_\frakg(1-t)\]
and $\der W(t)$ is a standard Gaussian white noise process
on the interval $(0,1/2)$.
The map $\Pi$ represents a two-fold projection of the circle $\sS^1$ onto the
interval $(0,1/2)$.

To represent this projected model in a Gaussian sequence space,
observe that for the Fourier basis (\ref{eq:Fourierbasis}), we have
$\Pi \cdot h_{l2}=0$ for all $l \geq 1$, while
$(\Pi \cdot h_0)/\sqrt{2}$ and $(\Pi \cdot h_{l1})/\sqrt{2}$
form an orthonormal basis over $(0,1/2)$. Thus, expressing
$\Pi \cdot f$ in this projected basis, $\Pi$ may be represented
as a linear map $\Pi:\R^d \to \R^{\td}$ for $\td=L+1$, where
\begin{equation}\label{eq:PiSO2}
\Pi(\theta)=\sqrt{2}(\theta^{(0)},\theta_1^{(1)},\ldots,\theta_1^{(L)}).
\end{equation}
In this projected basis, the above observation model corresponds to the
projected orbit recovery model
(\ref{eq:projectedmodel}) where $g \in \G$ is a random rotation from the same
group $\G$ as in (\ref{eq:MRAG}).

The following result shows that the decomposition of total dimension in Theorem
\ref{thm:FI} is the same as in the unprojected setting. In particular,
$\trdeg \tcR_{\leq \tK}^\G = \trdeg \cR^\G$ for $\tK=3$. This model is
a continuous analogue of the projected discrete MRA model studied
in \cite[Section 5.3.1]{bandeira2017estimation}, where an analogous conclusion
was described as \cite[Conjecture 5.3]{bandeira2017estimation}.

\begin{theorem}\label{thm:projectedMRA}
For any $L\geq 1$, we have
\[
\trdeg(\tcR_{\leq 1}^\G) = 1, \qquad \trdeg(\tcR_{\leq 2}^\G) = L + 1, \qquad
\trdeg(\tcR_{\leq 3}^\G) = \trdeg(\cR^\G) = 2L,
\]
which match the values of $\trdeg(\cR_{\leq 1}^\G)$, $\trdeg(\cR_{\leq 2}^\G)$, 
and $\trdeg(\cR_{\leq 3}^\G)$ in the unprojected setting of Theorem~\ref{thm:MRA}. 
\end{theorem}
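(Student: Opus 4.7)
The plan is to mirror the proof of Theorem~\ref{thm:MRA}: by Lemma~\ref{lem:trdeg}, $\trdeg(\tcR_{\leq k}^\G)$ equals the rank of $\nabla^2 \ts_1(\theta_*) + \cdots + \nabla^2 \ts_k(\theta_*)$ at a generic $\theta_*$ with $r_{*,l} := r_l(\theta_*) > 0$, and the analysis proceeds in the local coordinates $\zeta = (\theta^{(0)}, r_1, \ldots, r_L, t_1, \ldots, t_L)$ with $t_l = \lambda_l(\theta) - \lambda_{*,l}$. The only new feature is that each projected moment tensor $\tT_k(\theta)$ now captures the \emph{real parts} of the complex bispectral quantities that appeared in Theorem~\ref{thm:MRA-mom}.

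First, I would compute $\tT_1$ and $\tT_2$. Writing $\Pi(g \cdot \theta)_0 = \sqrt{2}\,\theta^{(0)}$ and $\Pi(g\cdot\theta)_l = \sqrt{2}\,\Re[e^{-\i 2\pi l\frakg}u^{(l)}(\theta)]$ for $l \geq 1$, the identity $\E_\frakg[e^{\i 2\pi l\frakg}] = \1\{l=0\}$ yields $\tT_1(\theta) = \sqrt{2}(\theta^{(0)}, 0, \ldots, 0)$ and $\tT_2(\theta) = \diag(2(\theta^{(0)})^2, r_1(\theta)^2, \ldots, r_L(\theta)^2)$. Thus $\ts_1$ and $\ts_2$ depend only on $(\theta^{(0)}, r_1, \ldots, r_L)$ with the same functional form (up to constants) as $s_1, s_2$ of Theorem~\ref{thm:MRA-mom}, and their combined Hessian at $\theta_*$ is a strictly positive diagonal matrix on the $(\theta^{(0)}, r)$ block. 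This immediately gives $\trdeg(\tcR_{\leq 1}^\G) = 1$ and $\trdeg(\tcR_{\leq 2}^\G) = L+1$.

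Next I would analyze $\tT_3$. Expanding each $\Re[v] = (v + \bar v)/2$ and applying character orthogonality as in the derivation of $s_3$ in Theorem~\ref{thm:MRA-mom}, the nonzero entries of $\tT_3(\theta)$ are proportional to $(\theta^{(0)})^3$, $\theta^{(0)} r_l^2$, or
\[B^\Re_{l,l',l''}(\theta) := r_l r_{l'} r_{l''}\cos(\lambda_l - \lambda_{l'} - \lambda_{l''}) \quad \text{for } l = l' + l'',\; l,l',l'' \geq 1.\]
In particular each $B^\Re_{l,l',l''}$ belongs to $\tcR_{\leq 3}^\G$, and a direct computation in $\zeta$-coordinates gives
\[\pi_t\, \nabla B^\Re_{l,l',l''}(\theta_*) = -r_{*,l}r_{*,l'}r_{*,l''}\sin(\psi_{*,l,l',l''})\,(e_l - e_{l'} - e_{l''}),\]
where $\psi_{*,l,l',l''} := \lambda_{*,l} - \lambda_{*,l'} - \lambda_{*,l''}$ and $\pi_t$ denotes projection onto the $t$-subspace. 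For generic $\theta_*$ all scalar factors are nonzero, so the $t$-projected gradient span coincides with $\linspan\{e_l - e_{l'} - e_{l''} : l = l'+l'',\; l,l',l'' \geq 1\}$, which has dimension exactly $L-1$ by the computation in Step~3 of the proof of Theorem~\ref{thm:MRA}.

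Combining via the short exact sequence $0 \to V \cap \linspan(\theta^{(0)}, r) \to V \to \pi_t V \to 0$ with $V := \linspan\{\nabla p(\theta_*) : p \in \tcR_{\leq 3}^\G\}$, and noting that the $(\theta^{(0)}, r)$-block contribution inherited from $\tcR_{\leq 2}^\G \subseteq \tcR_{\leq 3}^\G$ already gives $L+1$ dimensions, I obtain $\dim V \geq (L+1) + (L-1) = 2L = \trdeg(\cR^\G)$, matched by the automatic upper bound. The one place this argument genuinely departs from the unprojected case is in needing $\sin(\psi_{*,l,l',l''}) \neq 0$ in place of the complex modulus $r_{*,l,l',l''} \neq 0$; this will be the main obstacle to confirm, but it follows since $\psi_{*,l,l',l''}$ is a nontrivial linear functional of the $L$ independent phase variables $\lambda_{*,1}, \ldots, \lambda_{*,L}$, so its vanishing (modulo $\pi$) cuts out a proper analytic subvariety and the condition is indeed generic.
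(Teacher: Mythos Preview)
Your argument is correct and reaches the same conclusion as the paper, but via a somewhat different route.  The paper computes the full quadratic form $\ts_3(\theta)$ (Theorem~\ref{thm:projectedMRA-mom}), applies Lemma~\ref{lem:trdeg}(b) to reduce to the rank of $\nabla_\zeta^2 \ts_1+\nabla_\zeta^2 \ts_2+\nabla_\zeta^2 \ts_3$ at $\zeta_*$, and then invokes an auxiliary positive-semidefinite rank inequality (Lemma~\ref{lem:rank}) to split off the $(L+1)$-dimensional $r$-block and isolate $\rank(\nabla_t^2 \ts_3)$.  Explicitly differentiating $\ts_3$ yields $\nabla_t^2 \ts_3(\zeta_*)=\tfrac18 W D W^\top$ with $D=\diag(1-\cos 2\lambda_{*,l,l',l''})$, and for generic phases $D$ is nonsingular so this has rank $\geq L-1$.

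You instead bypass $\ts_3$ entirely: you observe that each $B^\Re_{l,l',l''}=r_lr_{l'}r_{l''}\cos\lambda_{l,l',l''}$ is (proportional to) an entry of $\tT_3$, hence a degree-$3$ generator of $\tcR_{\leq 3}^\G$, and then apply the Jacobian criterion (Lemma~\ref{lemma:jaccrit}) directly to these generators together with the degree-$\leq 2$ ones.  Your rank--nullity decomposition $\dim V = \dim(V\cap\R^{L+1}_r) + \dim(\pi_t V)$ plays exactly the role of Lemma~\ref{lem:rank} in the paper.  Both routes land on the same genericity condition $\sin\lambda_{*,l,l',l''}\neq 0$ (equivalently $1-\cos 2\lambda_{*,l,l',l''}>0$) and the same span computation $\dim\linspan\{e_l-e_{l'}-e_{l''}\}=L-1$ from the proof of Theorem~\ref{thm:MRA}.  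Your approach is a bit more economical here since it avoids writing down $\ts_3$ in full; the paper's approach has the advantage of being uniform with the other proofs, all of which proceed through the Hessians $\nabla^2 s_k$ or $\nabla^2\ts_k$.
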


The following result describes the forms of $\ts_k(\theta)$ for $k=1,2,3$,
which are similar to those in the unprojected setting. The
minimizations of $\ts_1(\theta)$, $\ts_2(\theta)$, and $\ts_3(\theta)$ 
are also optimization problems over the signal
mean, Fourier power spectrum, and Fourier bispectrum respectively, although the
specific forms are different from the unprojected counterparts.

\begin{theorem}\label{thm:projectedMRA-mom}
For any $L \geq 1$,
\begin{align*}
\ts_1(\theta)&=\Big(\theta^{(0)}-\theta_*^{(0)}\Big)^2\\
\ts_2(\theta)&=\Big((\theta^{(0)})^2-(\theta_*^{(0)})^2\Big)^2+\frac{1}{4}\sum_{l=1}^L
\Big(r_l(\theta)^2-r_l(\theta_*)^2\Big)^2\\
\ts_3(\theta)
&=\frac{2}{3}\Big((\theta^{(0)})^3-(\theta_*^{(0)})^3\Big)^2
+\frac{1}{2}\sum_{l=1}^L\Big(\theta^{(0)}r_l(\theta)^2-\theta_*^{(0)}r_l(\theta_*)^2\Big)^2\\
&\hspace{0.2in}+\frac{1}{16} \mathop{\sum_{l,l',l''=1}^L}_{l=l'+l''}\bigg(
r_{l,l',l''}(\theta)^2+r_{l,l',l''}(\theta_*)^2
-2r_{l,l',l''}(\theta)r_{l,l',l''}(\theta_*)\cos\big(\lambda_{l,l',l''}(\theta_*)-\lambda_{l,l',l''}(\theta)\big)\\
&\hspace{1in}+r_{l,l',l''}(\theta)^2\cos\big(2\lambda_{l,l',l''}(\theta)\big)
+r_{l,l',l''}(\theta_*)^2\cos\big(2\lambda_{l,l',l''}(\theta_*)\big)\\
&\hspace{1in}-2r_{l,l',l''}(\theta)r_{l,l',l''}(\theta_*)\cos\big(\lambda_{l,l',l''}(\theta_*)+\lambda_{l,l',l''}(\theta_*)\big)\bigg).
\end{align*}
\end{theorem}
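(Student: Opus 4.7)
The plan is to follow the same template as the proof of Theorem~\ref{thm:MRA-mom}, computing $\E_{g,h}[\langle \Pi g\cdot\theta, \Pi h\cdot\vartheta\rangle^k]$ for $k=1,2,3$ via Lemma~\ref{lem:skform}, and then specializing to $\vartheta \in \{\theta,\theta_*\}$. The new technical feature compared with the unprojected setting is that the projection $\Pi$ extracts only the real parts $\Re u^{(l)}(\theta)$, so using $\Re a \cdot \Re b = \tfrac{1}{4}(a\bar b + \bar a b + ab + \bar a \bar b)$ together with the rotation identity $u^{(l)}(g\cdot\theta) = e^{-\i 2\pi l\frakg}u^{(l)}(\theta)$ yields
\[
\langle \Pi g\cdot\theta,\Pi h\cdot\vartheta\rangle
= 2\theta^{(0)}\vartheta^{(0)} + \tfrac{1}{2}\sum_{l=1}^L \sum_{\epsilon,\eta\in\{+,-\}} T^{\epsilon,\eta}_l,
\]
where $T^{\epsilon,\eta}_l = e^{-\i 2\pi l(\epsilon\frakg + \eta\frakh)}\cdot u^{(l)}(\theta)^{[\epsilon]}\cdot u^{(l)}(\vartheta)^{[\eta]}$ with $u^{[+]} = u$ and $u^{[-]} = \bar u$. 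Unlike in the unprojected case, both the sum $\frakg+\frakh$ and difference $\frakg-\frakh$ of Haar variables appear, so there are \emph{four} oscillatory types rather than two.

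With this expansion in hand, the $k=1$ case is immediate since $\E[e^{\pm \i 2\pi l\frakg}] = \E[e^{\pm \i 2\pi l\frakh}] = 0$ for $l \neq 0$, leaving only the mean term $2\theta^{(0)}\vartheta^{(0)}$. For $k=2$, I square the expression and use the independent Haar condition $\E_\frakg[e^{\i 2\pi k\frakg}] = \1\{k=0\}$ to retain only those pairs $(l_1,\epsilon_1,\eta_1),(l_2,\epsilon_2,\eta_2)$ with $\epsilon_1 l_1 + \epsilon_2 l_2 = 0$ and $\eta_1 l_1 + \eta_2 l_2 = 0$ simultaneously. This forces $l_1 = l_2$ with $\epsilon_2=-\epsilon_1$ and $\eta_2=-\eta_1$, giving exactly the pairings $(++)(--)$ and $(+-)(-+)$, each yielding $|u^{(l)}(\theta)|^2|u^{(l)}(\vartheta)|^2 = r_l(\theta)^2 r_l(\vartheta)^2$. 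Plugging into Lemma~\ref{lem:skform} produces the claimed formula for $\tilde s_2$.

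The $k=3$ case is the main computational obstacle. Expanding $(A+B)^3 = A^3 + 3AB^2 + B^3$ (where $A = 2\theta^{(0)}\vartheta^{(0)}$ and $B$ is the oscillatory sum, and $3A^2B$ vanishes under expectation), the $A^3$ and $3AB^2$ pieces feed directly into the $\frac{2}{3}((\theta^{(0)})^3 - \cdots)^2$ and $\tfrac{1}{2}\sum_l(\theta^{(0)}r_l(\theta)^2 - \cdots)^2$ terms respectively, using the $k=2$ computation of $\E[B^2]$. For $\E[B^3]$ one expands into a triple sum over $(l_i,\epsilon_i,\eta_i)_{i=1}^3$, and the combined conditions $\sum_i \epsilon_i l_i = 0$ and $\sum_i \eta_i l_i = 0$ (with all $l_i \geq 1$) force exactly one position to have the opposite sign in the $\epsilon$'s and the \emph{same} position to have the opposite sign in the $\eta$'s; otherwise the two constraints produce an inconsistent relation among the $l_i$. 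This leaves $3 \times 2 \times 2 = 12$ sign patterns per ordered triple, grouping into four cases $(\pm,\pm)$ on the distinguished position which contribute $2r_{l,l',l''}(\theta)r_{l,l',l''}(\vartheta)[\cos(\lambda(\theta)+\lambda(\vartheta)) + \cos(\lambda(\theta)-\lambda(\vartheta))]$ independently of which position was chosen.

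The last step is bookkeeping: the triple sum over ordered $(l_1,l_2,l_3)$ counts each unordered triple $\{l,l',l''\}$ with $l=l'+l''$ six times when $l'\neq l''$ and three times when $l'=l''$, whereas the theorem's sum $\sum_{l,l',l''=1,\,l=l'+l''}^L$ counts these two and one times respectively; the ratio is uniformly $3$, which combines with the prefactor $\tfrac{1}{8}$ from $(\tfrac{1}{2})^3$ in $B^3$ and the $\tfrac{1}{12}$ from Lemma~\ref{lem:skform} to produce the $\tfrac{1}{16}$ overall coefficient. Specializing $\vartheta \in\{\theta,\theta_*\}$ and using $\cos(\lambda(\theta) - \lambda(\theta)) = 1$ on the diagonal, the two $\cos$-terms split cleanly into the ``difference-bispectrum'' contribution (the $\cos(\lambda_{l,l',l''}(\theta_*)-\lambda_{l,l',l''}(\theta))$ line, matching the form seen in the unprojected Theorem~\ref{thm:MRA-mom}) and the extra ``sum-bispectrum'' contribution (the $\cos(2\lambda)$ and $\cos(\lambda+\lambda)$ line), exactly as stated. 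The main obstacle is really just the careful enumeration of sign patterns and ordering multiplicities; once the non-vanishing constraint is correctly identified, the rest is algebra.
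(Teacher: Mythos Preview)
Your proposal is correct and follows essentially the same approach as the paper: compute $\E_{g,h}[\langle \Pi g\theta,\Pi h\vartheta\rangle^k]$ by expanding in complex Fourier modes, apply the Haar orthogonality $\E_\frakg[e^{\i 2\pi k\frakg}]=\1\{k=0\}$ independently in $\frakg$ and $\frakh$, and then specialize via Lemma~\ref{lem:skform}. The only organizational difference is that the paper keeps $l=0$ inside the sums and groups the four oscillatory types into $M_1$ (carrying $\frakg+\frakh$) and $M_2$ (carrying $\frakg-\frakh$), then computes the cross-moments $\E[M_1^aM_2^b]$ separately, whereas you pull out the mean $A=2\theta^{(0)}\vartheta^{(0)}$ and run a single $(\epsilon,\eta)$ sign enumeration on $B$; these are equivalent bookkeepings. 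One phrasing to tighten: for a \emph{fixed} ordered triple $(l_1,l_2,l_3)$ the distinguished position is forced (only one $l_j$ can equal the sum of the other two), so there are $2\times 2=4$ valid sign patterns per ordered triple rather than $12$; the factor of $3$ you correctly identify later is the ordered-triple multiplicity relative to the theorem's sum, not a sign-pattern count.
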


In the remainder of this section, we
prove Theorems \ref{thm:projectedMRA-mom} and
\ref{thm:projectedMRA}.

\begin{proof}[Proof of Theorem \ref{thm:projectedMRA-mom}]
Recall by Lemma \ref{lem:skform} that
\begin{equation}\label{eq: sk SO2 projected}
\ts_k(\theta) = \frac{1}{2 (k!)} \E_{g, h}[\langle \Pi\cdot g \cdot \theta, \Pi\cdot h\cdot \theta\rangle^k
- 2 \langle \Pi\cdot g\cdot \theta, \Pi\cdot h\cdot \theta_* \rangle^k + \langle \Pi\cdot g\cdot \theta_*, \Pi\cdot h\cdot \theta_*\rangle^k].
\end{equation}
We compute
$\E_{g,h}[\langle \Pi\cdot g \cdot\theta, \Pi\cdot h\cdot\vartheta\rangle^k]$
for $\theta,\vartheta \in \R^d$.

{\bf Case $k=1$:} Recall $u^{(0)}(\theta)=\theta^{(0)}$ and $u^{(l)}(\theta)=\theta_1^{(l)}+\i \theta_2^{(l)}$ for $l=1,\ldots,L$. Write $g,h \in \G$
corresponding to the rotations $\frakg,\frakh\in [0,1)$. Then, applying
(\ref{eq:PiSO2}), (\ref{eq:complexrepr}), and the identity
$\Re a \cdot \Re b=(ab+\bar{a}\bar{b}+a\bar{b}+\bar{a}b)/4$, we have
\begin{align}\label{eq: inner prod S02 projected}
  \langle \Pi\cdot g\cdot \theta, \Pi\cdot h\cdot\vartheta\rangle
=2\sum_{l=0}^L \Re u^{(l)}(g \cdot \theta) \cdot \Re u^{(l)}(h \cdot
\vartheta)=\frac{1}{2}M_1+\frac{1}{2}M_2
\end{align}
where
\begin{align*}
M_1:=&\sum_{l=0}^L u^{(l)}(\theta) u^{(l)}(\vartheta) e^{-2\i\pi l(\frakg+\frakh)} + \overline{u^{(l)}(\theta) u^{(l)}(\vartheta)} e^{2\i\pi l(\frakg + \frakh)},\\
M_2:=&\sum_{l=0}^L u^{(l)}(\theta) \overline{u^{(l)}(\vartheta)} e^{-2\i\pi l(\frakg - \frakh)} + \overline{u^{(l)}(\theta)} u^{(l)}(\vartheta) e^{2\i\pi l(\frakg - \frakh)}.
\end{align*}
For independent and uniformly random $\frakg,\frakh \in [0,1)$,
taking the expected value on both sides using (\ref{eq:gexp}) gives
$\E_{g,h}[\langle \Pi\cdot g\cdot \theta, \Pi\cdot
h\cdot\vartheta\rangle]=2u^{(0)}(\theta)u^{(0)}(\vartheta)$.
Then from (\ref{eq: sk SO2 projected}), we obtain
\begin{align}\label{eq: s1 SO2 projected}
\ts_1(\theta)=\rbr{u^{(0)}(\theta) - u^{(0)}(\theta_*)}^2
=\rbr{\theta^{(0)}-\theta_*^{(0)}}^2.
\end{align}

{\bf Case $k=2$:} Taking the expected square on both sides of (\ref{eq: inner prod S02 projected}), we have
\begin{align*}
\E_{g,h}[\langle \Pi\cdot g\cdot \theta, \Pi\cdot
h\cdot\vartheta\rangle^2]=\frac{1}{4}\Big\{\E_{\frakg,\frakh}[M_1^2]+2\E_{\frakg,\frakh}[M_1M_2]+\E_{\frakg,\frakh}[M_2^2]\Big\}.
\end{align*}
Applying (\ref{eq:gexp}) and an argument similar to the $k=2$ computation in 
the proof of Theorem \ref{thm:MRA-mom},
\begin{align*}
\E_{\frakg,\frakh}[M_1^2]&=2(u^{(0)}(\theta))^2(u^{(0)}(\vartheta))^2+2\sum_{l=0}^L |u^{(l)}(\theta)|^2|u^{(l)}(\vartheta)|^2,\\
\E_{\frakg,\frakh}[M_1M_2]&=4(u^{(0)}(\theta))^2(u^{(0)}(\vartheta))^2,\\
\E_{\frakg,\frakh}[M_2^2]&=2(u^{(0)}(\theta))^2(u^{(0)}(\vartheta))^2+2\sum_{l=0}^L |u^{(l)}(\theta)|^2|u^{(l)}(\vartheta)|^2.
\end{align*}
Then, separating the $l=0$ terms from these sums,
$\E_{g,h}[\langle \Pi\cdot g\cdot \theta, \Pi\cdot
h\cdot\vartheta\rangle^2]=4(\theta^{(0)})^2(\vartheta^{(0)})^2+\sum_{l=1}^L
r_l(\theta)^2r_l(\vartheta)^2$, so
by (\ref{eq: sk SO2 projected}),
\begin{align}\label{eq: s2 SO2 projected}
\ts_2(\theta)=\Big((\theta^{(0)})^2-(\theta_*^{(0)})^2\Big)^2
+\frac{1}{4}\sum_{l=1}^L \Big(r_l(\theta)^2-r_l(\theta_*)^2\Big)^2.
\end{align}

{\bf Case $k=3$:} Taking the expected cube on both sides of (\ref{eq: inner prod S02 projected}), we have
\begin{align*}
\E_{g,h}[\langle \Pi\cdot g\cdot \theta, \Pi\cdot h\cdot\vartheta\rangle^3]=\frac{1}{8}\Big\{\E_{\frakg,\frakh}[M_1^3]+3\E_{\frakg,\frakh}[M_1^2M_2]+3\E_{\frakg,\frakh}[M_1M_2^2]+\E_{\frakg,\frakh}[M_2^3]\Big\}.
\end{align*}
Applying (\ref{eq:gexp}) and an argument similar to the $k=3$ computation in
the proof of Theorem \ref{thm:MRA-mom},
\begin{align*}
\E_{\frakg,\frakh}[M_1^3]&=2(u^{(0)}(\theta))^3(u^{(0)}(\vartheta))^3
 + 3 \mathop{\sum_{l,l',l''=0}^L}_{l=l'+l''} u^{(l)}(\theta) \overline{u^{(l')}(\theta) u^{(l'')}(\theta)}u^{(l)}(\vartheta) \overline{u^{(l')}(\vartheta) u^{(l'')}(\vartheta)}\\
  &\hspace{0.2in}+ 3 \mathop{\sum_{l,l',l''=0}^L}_{l=l'+l''} \overline{u^{(l)}(\theta)} u^{(l')}(\theta) u^{(l'')}(\theta) \overline{u^{(l)}(\vartheta)} u^{(l')}(\vartheta) u^{(l'')}(\vartheta),\\
\E_{\frakg,\frakh}[M_1^2M_2]&=
\E_{\frakg,\frakh}[M_1M_2^2]=
8(u^{(0)}(\theta))^3(u^{(0)}(\vartheta))^3+4u^{(0)}(\theta)u^{(0)}(\vartheta)\cdot\sum_{l=1}^L |u^{(l)}(\theta)|^2|u^{(l)}(\vartheta)|^2,\\
\E_{\frakg,\frakh}[M_2^3]&=2(u^{(0)}(\theta))^3(u^{(0)}(\vartheta))^3
  + 3 \mathop{\sum_{l,l',l''=0}^L}_{l=l'+l''} u^{(l)}(\theta) \overline{u^{(l')}(\theta) u^{(l'')}(\theta) u^{(l)}(\vartheta)} u^{(l')}(\vartheta) u^{(l'')}(\vartheta)\\
  &\hspace{0.2in}+ 3 \mathop{\sum_{l,l',l''=0}^L}_{l=l'+l''} \overline{u^{(l)}(\theta)} u^{(l')}(\theta) u^{(l'')}(\theta) u^{(l)}(\vartheta) \overline{u^{(l')}(\vartheta) u^{(l'')}(\vartheta)}.
\end{align*}
In these expressions for $\E_{\frakg,\frakh}[M_1^3]$
and $\E_{\frakg,\frakh}[M_2^3]$,
separating out the three cases of $l=l'=l''=0$, only $l'=0$ and $l=l'' \geq 1$,
and only $l''=0$ and $l=l' \geq 1$ , this gives
\begin{align*}
&\E_{g,h}[\langle \Pi\cdot g\cdot \theta, \Pi\cdot h\cdot\vartheta\rangle^3]\\
&=8(u^{(0)}(\theta))^3(u^{(0)}(\vartheta))^3
+6u^{(0)}(\theta)u^{(0)}(\vartheta)\cdot\sum_{l=1}^L |u^{(l)}(\theta)|^2|u^{(l)}(\vartheta)|^2\\
&\quad+\frac{3}{8}\mathop{\sum_{l,l',l''=1}^L}_{l=l'+l''} u^{(l)}(\theta)
\overline{u^{(l')}(\theta) u^{(l'')}(\theta)}u^{(l)}(\vartheta)
\overline{u^{(l')}(\vartheta) u^{(l'')}(\vartheta)}+
\overline{u^{(l)}(\theta)} u^{(l')}(\theta) u^{(l'')}(\theta) \overline{u^{(l)}(\vartheta)} u^{(l')}(\vartheta) u^{(l'')}(\vartheta)\\
&\quad+\frac{3}{8}\mathop{\sum_{l,l',l''=1}^L}_{l=l'+l''} 
u^{(l)}(\theta) \overline{u^{(l')}(\theta) u^{(l'')}(\theta) u^{(l)}(\vartheta)} u^{(l')}(\vartheta) u^{(l'')}(\vartheta)
  +\overline{u^{(l)}(\theta)} u^{(l')}(\theta) u^{(l'')}(\theta) u^{(l)}(\vartheta) \overline{u^{(l')}(\vartheta) u^{(l'')}(\vartheta)}.
\end{align*}
Applying this to (\ref{eq: sk SO2 projected}) using
$(x^2+\bar{x}^2)-2(xx_*+\bar{x}\bar{x}_*)+(x_*^2+\bar{x}_*^2)
=(x-x_*)^2+(\bar{x}-\bar{x}_*)^2=2\Re [(x-x_*)^2]$, and writing as shorthand
$u^{(l)}=u^{(l)}(\theta)$, $u_*^{(l)}=u^{(l)}(\theta_*)$
and similarly for $r_l,\lambda_l,r_{l,l',l''},\lambda_{l,l',l''}$,
\begin{align*}
\ts_3(\theta)&=\frac{2}{3}\Big((u^{(0)})^3-(u_*^{(0)})^3\Big)^2
+\frac{1}{2}\sum_{l=1}^L\Big(u^{(0)}|u^{(l)}|^2-u_*^{(0)}|u_*^{(l)}|^2\Big)^2\\
&\hspace{0.2in}+\frac{1}{16} \mathop{\sum_{l,l',l''=1}^L}_{l=l'+l''}
\Big|u^{(l)}\overline{u^{(l')}u^{(l'')}}-u_*^{(l)}\overline{u_*^{(l')}u_*^{(l'')}}\Big|^2
+\Re\Big[\Big(u^{(l)}\overline{u^{(l')}u^{(l'')}}-u_*^{(l)}\overline{u_*^{(l')}u_*^{(l'')}}\Big)^2\Big]\\
&=\frac{2}{3}\Big((\theta^{(0)})^3-(\theta_*^{(0)})^3\Big)^2
+\frac{1}{2}\sum_{l=1}^L\Big(\theta^{(0)}r_l^2-\theta_*^{(0)}r_{*,l}^2\Big)^2\\
&\hspace{0.2in}+\frac{1}{16} \mathop{\sum_{l,l',l''=1}^L}_{l=l'+l''}
r_{l,l',l''}^2+r_{*,l,l',l''}^2-2r_{l,l',l''}r_{*,l,l',l''}\cos(\lambda_{*,l,l',l''}-\lambda_{l,l',l''})\\
&\hspace{0.5in}+r_{l,l',l''}^2\cos(2\lambda_{l,l',l''})+r_{*,l,l',l''}^2
\cos(2\lambda_{*,l,l',l''})
-2r_{l,l',l''}r_{*,l,l',l''}\cos(\lambda_{*,l,l',l''}+\lambda_{l,l',l''}).
\end{align*}
\end{proof}

\begin{lemma}\label{lem:rank}
Let $H\in\R^{n\times n}$ and $n=n_1+n_2$ with $n_1,n_2\geq 1$. Suppose $H$ can be decomposed as the sum of two positive semidefinite matrices $A$ and $B$ with
\begin{align*}
A=\begin{pmatrix} A_{11} & 0 \\
0 & 0 \end{pmatrix}~~~{\rm and}~~~
B=\begin{pmatrix} B_{11} & B_{12} \\
B_{21} & B_{22} \end{pmatrix},
\end{align*}
where $A_{11},B_{11}\in\R^{n_1\times n_1}, B_{12}\in\R^{n_1\times n_2}, B_{21}\in\R^{n_2\times n_1}$, and $B_{22}\in\R^{n_2\times n_2}$. Then
\begin{align*}
\rank(H)\geq \rank(A_{11})+\rank(B_{22}).
\end{align*}
\end{lemma}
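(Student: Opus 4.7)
The plan is to exhibit a subspace $W \subseteq \R^n$ of dimension $\rank(A_{11}) + \rank(B_{22})$ on which the quadratic form $v \mapsto v^\top H v$ is strictly positive away from the origin. Since $H = A + B$ is PSD as a sum of PSD matrices, we have the equivalence $v^\top H v = 0 \iff H v = 0$, so strict positivity on $W$ forces $W \cap \ker(H) = \{0\}$, whence $\dim \ker(H) \leq n - \dim W$ and $\rank(H) \geq \dim W = \rank(A_{11}) + \rank(B_{22})$ as required.

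To construct $W$, I would take $W = U_1 \oplus U_2 \subseteq \R^{n_1} \oplus \R^{n_2} = \R^n$, where $U_1 \subseteq \R^{n_1}$ is any linear complement of $\ker(A_{11})$ and $U_2 \subseteq \R^{n_2}$ is any linear complement of $\ker(B_{22})$. These complements have dimensions $\rank(A_{11})$ and $\rank(B_{22})$ respectively; the second choice is legitimate because $B_{22}$ is itself PSD as a principal submatrix of $B \succeq 0$.

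For any nonzero $v = (v_1, v_2) \in W$, the block structure of $A$ collapses $v^\top A v$ to $v_1^\top A_{11} v_1$, so $v^\top H v = v_1^\top A_{11} v_1 + v^\top B v$. If $v_1 \neq 0$ then $v_1 \in U_1 \setminus \{0\}$ lies outside $\ker(A_{11})$, so the PSD property of $A_{11}$ gives $v_1^\top A_{11} v_1 > 0$, while $v^\top B v \geq 0$; if instead $v_1 = 0$ then $v_2 \neq 0$ lies in $U_2 \setminus \{0\}$, and the first row and column of $B$ contribute nothing, yielding $v^\top H v = v_2^\top B_{22} v_2 > 0$ by the same argument applied to $B_{22}$. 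Either way $v^\top H v > 0$, completing the verification.

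The argument is essentially bookkeeping and I do not anticipate a serious obstacle. The one point worth emphasizing is that both PSD hypotheses enter critically: $A \succeq 0$ upgrades ``$v_1 \notin \ker(A_{11})$'' to the strict inequality $v_1^\top A_{11} v_1 > 0$, $B \succeq 0$ handles the case $v_1 = 0$ via its principal submatrix $B_{22}$, and the combination $H \succeq 0$ is what converts the quadratic-form positivity on $W$ into a genuine rank lower bound. Without PSD-ness, one could at best conclude $W \cap \ker(H)$ has small dimension via a dimension count, but not the clean bound obtained here.
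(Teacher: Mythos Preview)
Your proof is correct and follows essentially the same approach as the paper: exhibit a subspace $W=U_1\oplus U_2$ of dimension $\rank(A_{11})+\rank(B_{22})$ on which the PSD quadratic form $v\mapsto v^\top Hv$ is strictly positive, forcing $W\cap\ker(H)=\{0\}$. You are in fact slightly more careful than the paper, since you verify positivity for every nonzero $v\in W$ (handling the mixed case $v_1\neq 0$, $v_2\neq 0$ via $v^\top Av>0$ and $v^\top Bv\geq 0$), whereas the paper checks only the basis vectors $(v,0)$ and $(0,w)$.
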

\begin{proof}
Since $A$ and $B$ are positive semidefinite, so are
$A_{11},B_{11},B_{22}$. There are $\rank(A_{11})$ linearly independent
vectors $v \in \R^{n_1}$ where $v^\top A_{11} v>0$ strictly. Then $(v,0)^\top
H(v,0)>0$ strictly for each such vector $v$. There are also $\rank(B_{22})$
linearly independent vectors $w \in \R^{n_2}$ where $w^\top B_{22} w>0$
strictly. Then $(0,w)^\top H(0,w)>0$ strictly for each such vector $w$.
Thus $u^\top Hu>0$ for $\rank(A_{11})+\rank(B_{22})$ linearly independent
vectors $u \in \R^n$, so $\rank(H) \geq \rank(A_{11})+\rank(B_{22})$.
\end{proof}

\begin{proof}[Proof of Theorem \ref{thm:projectedMRA}]
As in Theorem \ref{thm:MRA}, we have $\trdeg \cR^\G=2L$.
We compute $\trdeg(\tcR_{\leq k}^\G)$ for $k=1,2,3$ by applying Lemma
\ref{lem:trdeg} at a generic point $\theta_* \in \R^d$ with $r_l(\theta_*)>0$
for each $l=1,\ldots,L$. Recall from the proof of Theorem \ref{thm:MRA} the map
\begin{align*}
\zeta(\theta)=(\theta_0,r_1,\ldots,r_L,t_1,\ldots,t_L), \qquad
\zeta_*=\zeta(\theta_*),
\end{align*}
with inverse function $\theta(\zeta)$ in a local neighborhood of $\theta_*$.
The forms of $\ts_1(\theta(\zeta))$ and $\ts_2(\theta(\zeta))$ are similar to
those of $s_1(\theta(\zeta))$ and $s_2(\theta(\zeta))$ in Theorem \ref{thm:MRA},
and the same arguments as in the proof of Theorem \ref{thm:MRA} show
\begin{align*}
\trdeg(\tcR_{\leq 1}^\G)&= \rank\Big(\nabla_\zeta^2
\ts_1(\theta(\zeta))\Big|_{\zeta=\zeta_*}\Big)=1,\\
\trdeg(\tcR_{\leq 2}^\G)&=\rank\Big(\nabla_\zeta^2 \ts_1(\theta(\zeta))
+\nabla_\zeta^2 \ts_2(\theta(\zeta))\Big|_{\zeta=\zeta_*}\Big)
=L+1,
\end{align*}
and $\trdeg(\tcR_{\leq 3}^\G)=\rank(\tH(\zeta_*))$ for the Hessian
\begin{align*}
\tH(\zeta_*)=\nabla_\zeta^2 \ts_1(\theta(\zeta))
+\nabla_\zeta^2 \ts_2(\theta(\zeta))+\nabla_\zeta^2
\ts_3(\theta(\zeta))\Big|_{\zeta=\zeta_*}.
\end{align*}
Writing this Hessian in the block decomposition according to
$r=(\theta_0,r_1,\ldots,r_L)$ and $t=(t_1,\ldots,t_L)$,
and noting that $\ts_1,\ts_2$ depend only on $r$ and not on $t$,
we have the decomposition
\begin{align*}
\tH(\zeta_*)&=\begin{pmatrix}
\nabla_r^2 \ts_1(\theta(\zeta))
+\nabla_r^2 \ts_2(\theta(\zeta))\big|_{\zeta=\zeta_*} & 0\\
0 & 0
\end{pmatrix}+\begin{pmatrix} \nabla_r^2 \ts_3(\theta(\zeta))
\big|_{\zeta=\zeta_*}
& \nabla_{rt}^2 \ts_3(\theta(\zeta)) \big|_{\zeta=\zeta_*} \\
\nabla_{tr}^2 \ts_3(\theta(\zeta)) \big|_{\zeta=\zeta_*}
& \nabla_{t}^2 \ts_3(\theta(\zeta)) \big|_{\zeta=\zeta_*}
\end{pmatrix}
\end{align*}
The second matrix is positive semidefinite by Lemma \ref{lem:trdeg},
and the first matrix has
$\nabla_r^2 \ts_1(\theta(\zeta))+\nabla_r^2
\ts_2(\theta(\zeta))|_{\zeta=\zeta_*} \succ 0$ strictly by the analysis of
$\trdeg(\tcR_{\leq 2}^\G)$, with rank exactly $L+1$.
Then by Lemma \ref{lem:rank},
\begin{align}\label{eq:ts3rank}
\rank(\tH(\zeta_*)) \geq L+1+\rank(\nabla_t^2
\ts_3(\theta(\zeta))|_{\zeta=\zeta_*}).
\end{align}
As in the proof of Theorem \ref{thm:MRA},
let us show $\rank(\nabla_t^2 \ts_3(\theta(\zeta))|_{\zeta=\zeta_*}) \geq L-1$
for generic $\theta_* \in \R^d$ by exhibiting a single point $\theta_*$ where
this holds.

We may write the expression for $\ts_3(\theta)$ in
Theorem \ref{thm:projectedMRA-mom} as
\begin{align*}
\ts_3(\theta(\zeta))&=f(r)
+\frac{1}{16} \mathop{\sum_{l,l',l''=1}^L}_{l=l'+l''}
{-}2r_{l,l',l''}r_{*,l,l',l''} \cos(\lambda_{l,l',l''}-\lambda_{*,l,l',l''})
+r_{l,l',l''}^2 \cos( 2\lambda_{l,l',l''})\\
&\hspace{2in}-2r_{l,l',l''}r_{*,l,l',l''}\cos(\lambda_{l,l',l''}+\lambda_{*,l,l',l''}),
\end{align*}
for a function $f(r)$ depending only on $r$ and not $t$.
We pick $\theta_*$ such that $\theta_*^{(0)}=1$ and $r_{*,l}=1$ for
all $l=1,\ldots,L$. Then, recalling $t_l=\lambda_l-\lambda_{*,l}$ and
$\lambda_{l,l',l''}=\lambda_l-\lambda_{l'}-\lambda_{l''}$, and
differentiating twice in $t$ at $(r,t)=(r_*,t_*)=(r_*,0)$,
\begin{align*}
\nabla_t^2 \ts_3(\theta(\zeta))\Big|_{\zeta=\zeta_*}&=\frac{1}{16}\nabla_t^2
\Bigg(\mathop{\sum_{l,l',l''=1}^L}_{l=l'+l''} {-}2\cos(t_l-t_{l'}-t_{l''})
+\cos(2t_l-2t_{l'}-2t_{l''}+2\lambda_{*,l}-2\lambda_{*,l'}-2\lambda_{*,l''})\\
&\hspace{1in}
-2\cos(t_l-t_{l'}-t_{l''}+2\lambda_{*,l}-2\lambda_{*,l'}-2\lambda_{*,l''})
\Bigg)\Bigg|_{t=0}\\
&=\frac{1}{16}\mathop{\sum_{l,l',l''=1}^L}_{l=l'+l''}
\Big(2-4\cos(2\lambda_{*,l,l',l''})+2\cos(2\lambda_{*,l,l',l''})\Big)
\cdot w_{l,l',l''}w_{l,l',l''}^\top\\
&=\frac{1}{8}\mathop{\sum_{l,l',l''=1}^L}_{l=l'+l''}
\Big(1-\cos(2\lambda_{*,l,l',l''})\Big) \cdot w_{l,l',l''}w_{l,l',l''}^\top,
\end{align*}
where $w_{l,l',l''}$ is defined as (\ref{eq:wl}).
Stacking $w_{l,l',l''}$ as the columns of $W \in \R^{L \times |\mathcal{L}|}$
as in the proof of Theorem \ref{thm:MRA}, and defining the diagonal matrix
$D=\diag(1-\cos(2\lambda_{*,l,l',l''})) \in \R^{|\mathcal{L}| \times
|\mathcal{L}|}$, this shows
\[\nabla_t^2 \ts_3(\theta(\zeta))\Big|_{\zeta=\zeta_*}
=\frac{1}{8}WDW^\top.\]
Note that, for generic $\lambda_*=(\lambda_{*,1},\ldots,\lambda_{*,L})$, we have
\begin{align*}
2-2\cos(2\lambda_{*,l}-2\lambda_{*,l'}-2\lambda_{*,l''})>0
\end{align*}
for each fixed tuple $(l,l',l'')\in\cL$. Hence we may pick $\lambda_*$ so that
this holds simultaneously for all tuples $(l,l',l'') \in \cL$. Then
$\rank(WDW^\top)=\rank(WW^\top) \geq L-1$ as shown in Theorem \ref{thm:MRA}.
Applying this back to (\ref{eq:ts3rank}), we have shown $\trdeg \tcR_{\leq 3}^\G
=\rank(\tH(\zeta_*)) \geq 2L$.
Since also $\trdeg \tcR_{\leq 3}^\G \leq \trdeg \cR^\G=2L$,
this shows $\trdeg \tcR_{\leq 3}^\G=2L$.
\end{proof}

\section{Analyses of function estimation under an $\SO(3)$ rotation}\label{appendix:SO3}

This appendix contains further details on the setups of the models and
the proofs of the main results in Section \ref{sec:func-est-so3} on estimating
a function in 2 or 3 dimensions under $\SO(3)$-rotations.

Appendix \ref{sec:rc-harmonic} first reviews the complex spherical harmonics
basis and the associated calculus of Wigner
D-matrices and Clebsch-Gordan coefficients. Appendix \ref{sec:sr} contains
further details and proofs for Section \ref{subsec:sphericalregistration} on spherical registration.
Appendix \ref{sec:unprojCEM} contains further details and proofs for Section
\ref{subsec:cryoEM} on the
unprojected cryo-EM model. Finally, Appendix \ref{sec:unprojCEM} contains
further details and proofs for Section \ref{subsec:projectedcryoEM}
on the projected cryo-EM model.

\subsection{Calculus of spherical harmonics} \label{sec:rc-harmonic}

We first fix notations for some special functions related to the action of $\SO(3)$ and
present some identities between them which will appear in the proofs.

\subsubsection{Complex spherical harmonics}

Let $P_{lm}(x)$ denote the associated Legendre polynomials (without
Cordon-Shortley phase)
\begin{equation}\label{eq:legendre}
P_{lm}(x)=\frac{1}{2^l l!}(1-x^2)^{m/2} \frac{d^{l+m}}{dx^{l+m}}(x^2-1)^l 
\quad \text{ for } m=-l,-l+1,\ldots,l-1,l.
\end{equation}
Let $\sS^2 \subset \R^3$ be the unit sphere, parametrized by the 
latitude $\phi_1 \in [0,\pi]$ and longitude $\phi_2 \in [0,2\pi)$.
The complex spherical harmonics basis on $\sS^2$ is given by (see \cite[Eq.\
(III.20)]{rose1995elementary})
\begin{equation}\label{eq:complexharmonics}
y_{lm}(\phi_1,\phi_2)
=(-1)^m\sqrt{\frac{2l+1}{4\pi} \cdot \frac{(l-m)!}{(l+m)!}} \cdot
P_{lm}(\cos \phi_1)e^{\i m\phi_2}
\text{ for } l \geq 0 \text{ and } m=-l,\ldots,l.
\end{equation}
(We will use interchangeably notations such as $y_{lm}$ and $y_{l,m}$
when the meaning is clear.)
The index $l$ is the frequency, and there are $2l+1$ basis functions at each
frequency $l$. These functions are orthonormal in $L_2(\sS^2,\C)$ with respect
to the surface area measure $\sin \phi_1\,\der \phi_1\,\der \phi_2$,
and satisfy the conjugation symmetry (see \cite[Eq.\
(III.23)]{rose1995elementary})
\begin{equation}\label{eq:conjugationsymmetry}
\overline{y_{l,m}(\phi_1,\phi_2)}=(-1)^m y_{l,-m}(\phi_1,\phi_2).
\end{equation}

\begin{lemma} \label{lem:legendre-value}
For all $m$, the associated Legendre
polynomials in (\ref{eq:legendre}) satisfy
\[P_{lm}(0)=\1\{l+m \text{ is even}\}
\cdot \frac{(-1)^{(l-m)/2}}{2^l l!}\binom{l}{(l+m)/2}(l+m)!\]
\end{lemma}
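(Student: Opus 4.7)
The plan is to evaluate the expression (\ref{eq:legendre}) directly at $x = 0$. Since $(1-x^2)^{m/2}|_{x=0} = 1$, the whole computation reduces to evaluating
\[\frac{d^{l+m}}{dx^{l+m}}(x^2-1)^l\bigg|_{x=0},\]
and then dividing by $2^l l!$. Note that this formula holds uniformly for $m \in \{-l,\ldots,l\}$ once one interprets $(1-x^2)^{m/2}$ formally; at $x=0$ there is no issue of fractional powers.

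Next, I would expand $(x^2-1)^l$ by the binomial theorem as
\[(x^2-1)^l = \sum_{k=0}^l \binom{l}{k}(-1)^{l-k}x^{2k},\]
and differentiate termwise $l+m$ times. Each monomial $x^{2k}$ contributes $\frac{(2k)!}{(2k-l-m)!}x^{2k-l-m}$, which vanishes at $x=0$ unless $2k - l - m = 0$. This forces $k = (l+m)/2$, which is only an integer in the allowed range $\{0,\ldots,l\}$ when $l+m$ is even (and $|m| \leq l$, which is our standing assumption). Hence $P_{lm}(0) = 0$ whenever $l+m$ is odd, giving the indicator factor in the claimed formula.

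When $l+m$ is even, only the single term $k = (l+m)/2$ survives, contributing
\[\binom{l}{(l+m)/2}(-1)^{l - (l+m)/2}(l+m)! = \binom{l}{(l+m)/2}(-1)^{(l-m)/2}(l+m)!,\]
using that $l - (l+m)/2 = (l-m)/2$. Dividing by $2^l l!$ yields exactly the asserted expression. There is no real obstacle here: the computation is a one-line binomial expansion followed by picking out the unique surviving power of $x$, and the only bookkeeping required is the parity condition $l+m \equiv 0 \pmod 2$ and the sign $(-1)^{(l-m)/2}$.
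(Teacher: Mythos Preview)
Your proposal is correct and follows exactly the approach the paper indicates: expand $(x^2-1)^l$ via the binomial theorem, differentiate $l+m$ times term-by-term, and observe that at $x=0$ only the term $k=(l+m)/2$ survives, which also yields the parity condition. The paper's own proof is just a one-line pointer to this same computation.
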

\begin{proof}
This follows from applying a binomial expansion of $(x^2-1)^l$, and then
differentiating in $x$---see also \cite[Eq.\ (14)]{bandeira2017estimation}.
\end{proof}

\subsubsection{Wigner D-matrices}\label{appendix:wignerd}

Let $f \in L_2(\sS^2,\C)$.
Then $f$ may be decomposed in the complex spherical harmonics basis
(\ref{eq:complexharmonics}) as
\[f=\sum_{l=0}^\infty \sum_{m=-l}^l u_m^{(l)}y_{lm}.\]
Writing $u^{(l)}=\{u_m^{(l)}:-l \leq m \leq l\}$,
the rotation $f \mapsto f_\frakg$ given by
$f_\frakg(\phi_1,\phi_2)=f(\frakg^{-1} \cdot (\phi_1,\phi_2))$
for $\frakg \in \SO(3)$ is described by the map of spherical harmonic coefficients
(see \cite[Eq.\ (4.28a)]{rose1995elementary})
\[u^{(l)}\mapsto D^{(l)}(\frakg) u^{(l)} \text{ for each } l=0,1,2,\ldots,\]
where $D^{(l)}(\frakg)\in\C^{(2l+1)\times (2l+1)}$ is the \emph{complex Wigner
D-matrix} at frequency $l$ corresponding to $\frakg$. We index the rows
and columns of $D^{(l)}(\frakg)$ by $-l,\ldots,l$.

Our computations will not require the explicit forms of $D^{(l)}(\frakg)$, but
only the following moment identities when $\frakg \in \SO(3)$ is 
a Haar-uniform random rotation (see \cite[Section 16]{rose1995elementary} and
\cite[Appendix A.3]{bandeira2017estimation}):
\begin{enumerate}[(1)]
\item Mean identity:
\begin{equation}\label{eq:ED}
D^{(0)}(\frakg)=1, \qquad \E_{\frakg}[D^{(l)}(\frakg)]=0 \text{ for all } l \geq 1.
\end{equation}

\item Orthogonality: for any $l,l'\geq 0$ and $-l\leq q,m\leq l$
and $-l'\leq q',m'\leq l'$,
\begin{equation}\label{eq:WignerDorthog}
\E_\frakg\left[D^{(l)}_{qm}(\frakg)D^{(l')}_{q'm'}(\frakg)\right]
=\frac{(-1)^{m+q}}{2l+1}\1\{l=l',q=-q',m=-m'\}.
\end{equation}

\item Third order identity: for any $l,l',l''\geq 0$ and
$-l\leq q,m\leq l$ and $-l'\leq q',m'\leq l'$ and $-l''\leq q'',m''\leq l''$,
\begin{align}\label{eq:WignerDtriple}
\E_\frakg\left[D_{qm}^{(l)}(\frakg)D_{q'm'}^{(l')}(\frakg)D_{q''m''}^{(l'')}(\frakg)\right]
&=\1\{q+q'=-q''\} \cdot \1\{m+m'=-m''\} \cdot \1\{|l-l'| \leq l'' \leq l+l'\}
\nonumber\\
&\hspace{0.3in}\cdot \frac{(-1)^{m''+q''}}
{2l''+1}\langle l,q;l',q'|l'',-q'' \rangle \langle l,m;l',m'|l'',-m'' \rangle,
\end{align}
where $\langle l,m;l',m'|l'',m'' \rangle$ is a Clebsch-Gordan coefficient,
defined in the following section.
\end{enumerate}

\subsubsection{Clebsch-Gordan coefficients}
The Clebsch-Gordan coefficients $\langle l,m;l',m'|l'',m'' \rangle$
are defined for integer arguments $l,l',l'',m,m',m''$ where
\begin{equation}\label{eq:CGdomain}
|m| \leq l,\quad |m'| \leq l', \quad |m''| \leq l'' \quad \text{ and } \quad
|l-l'| \leq l'' \leq l+l'.
\end{equation}
The latter condition $|l-l'| \leq l'' \leq l+l'$ is equivalent to the three
symmetric triangle inequality conditions $l+l' \geq l''$, $l+l'' \geq l'$, and
$l'+l'' \geq l$.
For such arguments, $\langle l,m;l',m'|l'',m'' \rangle$ is given explicitly by
(see \cite[Eq.\ (2.41)]{bohm2013quantum} and \cite[Appendix
A.2]{bandeira2017estimation})
\begin{align}\label{eq:cg-def}
&\langle l,m;l',m'|l'',m'' \rangle\notag\\
 &=\1\{m''=m+m'\} \times\sqrt{\frac{(2l'' + 1)(l+l'-l'')!(l+l''-l')!(l'+l''-l)!}{(l + l' + l'' + 1)!}}\notag\\
  &\qquad\times\sqrt{(l - m)!(l + m)!(l' - m')!(l' + m')! (l''-m'')! (l''+m'')!
}\notag\\
  &\qquad\times\sum_k \frac{(-1)^k}{k! (l + l' - l'' - k)! (l - m - k)! (l' + m' - k)! (l'' - l' + m + k)! (l'' - l - m' + k)!}
\end{align} 
where the summation is over all integers $k$ for which the argument of every
factorial is nonnegative. We extend the definition to all integer arguments by
\begin{equation}\label{eq:CGconvention}
\langle l,m;l',m'|l'',m' \rangle=0 \quad \text{ if (\ref{eq:CGdomain}) does not
hold}.
\end{equation}
We will use the notational shorthand
\[C_{m,m',m''}^{l,l',l''}=\langle l,m;l',m'|l'',m'' \rangle.\]

These coefficients satisfy the sign symmetry (see
\cite[Eq.\ (3.16a)]{rose1995elementary} and {\cite[Eq.\
(2.47)]{bohm2013quantum})
\begin{equation}\label{eq:CGsymmetry2}
C^{l,l',l''}_{m, m', m''}=(-1)^{l+l'+l''}
C^{l,l',l''}_{-m, -m', -m''}.
\end{equation}
Note that we may have $C_{m,m',m''}^{l,l',l''}=0$ even if
(\ref{eq:CGdomain}) holds and $m''=m+m'$. For example, $C_{2,-1,1}^{3,2,2}=0$.
In our later proofs, we will require that certain Clebsch-Gordan
coefficients are \emph{non-zero}, and the following lemma provides a 
sufficient condition for this to hold.

\begin{lemma}\label{lemma:CGnonvanishing}
Let $l,l',l'',m,m',m''$ satisfy (\ref{eq:CGdomain}), where $m''=m+m'$. In
addition, suppose the following conditions all hold:
\begin{itemize}
\item $l \geq l'$ and $l \geq l''+1$.
\item $|m'| \in \{l'-1,l'\}$ and $|m''| \in \{l''-1,l''\}$.
\item We do not have simultaneously $|m|=l-1$, $|m'|=l'-1$, $|m''|=l''-1$, and $l'=l''$.
\end{itemize}
Then
\[C^{l,l',l''}_{m, m', m''} \neq 0.\]
\end{lemma}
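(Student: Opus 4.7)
The plan is to apply the Racah formula (\ref{eq:cg-def}) directly, using the hypotheses $|m'|\in\{l'-1,l'\}$ and $|m''|\in\{l''-1,l''\}$ to sharply restrict the summation index $k$. By the sign symmetry (\ref{eq:CGsymmetry2}), which negates all of $m,m',m''$ simultaneously, we may assume without loss of generality that $m'\geq 0$, so $m'\in\{l'-1,l'\}$. The admissible range of $k$ in (\ref{eq:cg-def}) is $k_{\min}\leq k\leq k_{\max}$ with
\begin{align*}
k_{\min} &= \max\bigl(0,\;l'-l''-m,\;l+m'-l''\bigr),\\
k_{\max} &= \min\bigl(l+l'-l'',\;l-m,\;l'+m'\bigr).
\end{align*}
Under the hypotheses (and the triangle inequalities implicit in (\ref{eq:CGdomain})), a direct check gives $k_{\max}-k_{\min}\leq 1$, so the sum always has at most two nonzero terms.

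I would first dispose of the ``one-term'' cases. If $m'=l'$, the lower bound $k\geq l+m'-l''=l+l'-l''$ matches the upper bound $k\leq l+l'-l''$, forcing $k=l+l'-l''$ uniquely; the single surviving term has all factorial arguments nonnegative (using $l\geq l'$, $l\geq l''+1$, and the triangle inequalities), so the coefficient is nonzero. If $|m''|=l''$, analogous forcing via $k\leq l-m$ or $k\geq l'-l''-m$ reduces the sum to a single nonzero term. The substantive case is therefore $m'=l'-1$ together with $|m''|=l''-1$, which splits by the sign of $m''$. If $m''=l''-1$, so $m=l''-l'$, then outside the triangle boundary $l=l'+l''$ the sum has two terms at $k=A-1$ and $k=A$ with $A:=l+l'-l''$, with ratio
\begin{align*}
\frac{T_A}{T_{A-1}} = -\frac{l'+l''-l}{(l+l'-l'')(l+l''-l')}.
\end{align*}
Setting this equal to $-1$ yields the Diophantine equation $(l+l'-l'')(l+l''-l')=l'+l''-l$, which one checks has no integer solutions compatible with $l\geq l''+1$ and $l\geq l'$ (using the reparametrization $a=l+l'-l''$, $b=l+l''-l'$, $c=l'+l''-l$ and noting $ab=c$ forces $c\geq\max(a,b)$, contradicting $l\geq l''+1$), so the sum is nonzero.

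Finally, if $m''=-(l''-1)$, so $m=2-l'-l''$, the constraint $|m|\leq l$ forces $l\in\{l'+l''-2,\,l'+l''-1,\,l'+l''\}$; the endpoint values $l=l'+l''-2$ and $l=l'+l''$ yield single nonzero terms, while at $l=l'+l''-1$ a direct factorial computation reduces the two terms to
\begin{align*}
S = \pm\frac{2(l'-l'')}{(2l'-1)!(2l''-1)!},
\end{align*}
which vanishes exactly when $l'=l''$. Noting that in this scenario $|m|=l'+l''-2=l-1$, the vanishing occurs precisely in the excluded configuration of the lemma, so the coefficient is nonzero in all remaining cases. The main technical obstacle is the factorial bookkeeping in the case analysis---identifying $k_{\min},k_{\max}$, verifying all factorial arguments are nonnegative, and reducing the two-term sum in the last sub-case---after which the non-cancellation criteria follow from elementary algebra using only the triangle inequalities.
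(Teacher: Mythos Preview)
Your proof is correct and follows essentially the same approach as the paper: both apply the explicit Racah formula and use the sign symmetry (\ref{eq:CGsymmetry2}) to fix the sign of $m'$, with the paper choosing $m'\in\{-l',-l'+1\}$ (so $k\in\{0,1\}$) rather than your $m'\in\{l'-1,l'\}$ (so $k\in\{A-1,A\}$ with $A=l+l'-l''$). The resulting one- and two-term case analysis and the identification of the unique cancellation scenario $|m|=l-1$, $|m'|=l'-1$, $|m''|=l''-1$, $l'=l''$ are the same up to this relabeling; your Diophantine reparametrization $a=l+l'-l''$, $b=l+l''-l'$, $c=l'+l''-l$ with $ab=c$ is exactly the paper's inequality ``$l-m\geq 1$ and $l+l'-l''>l'+l''-l$'' in different notation.
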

\begin{proof}
Applying the sign symmetry (\ref{eq:CGsymmetry2}), we may assume 
$m' \in \{-l',-l'+1\}$. We consider separately these cases.

{\bf Case I: $m'=-l'$.} Then $m=m''-m'=m''+l'$. The condition that $k$ and
$l'+m'-k$ are both nonnegative in (\ref{eq:cg-def}) requires $k=0$, so the sum
in (\ref{eq:cg-def}) consists of just this single term.
The remaining factorials in (\ref{eq:cg-def}) are also nonnegative, because
$l+l'-l'' \geq 0$, $l-m \geq 0$, $l''-l'+m=l''+m'' \geq 0$, and $l''-l-m'
=l''+l'-l \geq 0$. Thus (\ref{eq:cg-def}) is non-zero.

{\bf Case II: $m'=-l'+1$.} Then $m=m''+l'-1$.
The condition that $k$ and $l'+m'-k$ are both
nonnegative then requires $k \in \{0,1\}$. Substituting $m'=-l'+1$ and
$m=m''+l'-1$, these two terms in (\ref{eq:cg-def}) for $k \in \{0,1\}$ are
\[\tfrac{1}{(l+l'-l'')!(l-m)!(l''+m''-1)!(l'+l''-l-1)!}
-\tfrac{1}{(l+l'-l''-1)!(l-m-1)!(l''+m'')!(l'+l''-l)!}\]
where each term is understood as 0 if an argument to one of its factorials is
negative. Here $l+l'-l'' \geq l+l'-l''-1 \geq 0$ always, because $l \geq l''+1$.
\begin{itemize}
\item If $m=l$, then the second term is 0. Also
$l''+m''-1=l''+(m+m')-1=l''+l-l' \geq 0$ and $l'+l''-l-1=(m-m''+1)+l''-l-1
=l''-m'' \geq 0$, so the first term is non-zero.
\item If $m<l$ but $m''=-l''$ or $l=l'+l''$, then the first term is 0, but the
second term is non-zero.
\end{itemize}
It remains to consider $m<l$, $m'' \in \{-l''+1,l''-1,l''\}$,
and $l<l'+l''$. Then both terms are non-zero, and their sum is
\[\tfrac{1}{(l+l'-l'')!(l-m)!(l''+m'')!(l'+l''-l)!}
\left((l''+m'')(l'+l''-l)-(l+l'-l'')(l-m)\right).\]
We now consider the three cases of $m''$:
\begin{itemize}
\item $m''=l''$ is not possible, because this would imply $m=l'+l''-1
\geq l$, contradicting $m<l$.
\item If $m''=l''-1$, then $m=l'+l''-2 \geq l-1$. Hence we must have the
equalities $m=l-1$ and $l'+l''-1=l$. Then
\[(l''+m'')(l'+l''-l)-(l+l'-l'')(l-m)
=(2l''-1)-(2l'-1),\]
which is non-zero because our third given condition implies $l' \neq l''$ when
$|m|=l-1$, $|m'|=l'-1$, and $|m''|=l''-1$.
\item If $m''=-l''+1$, then
\[(l''+m'')(l'+l''-l)-(l+l'-l'')(l-m)
=(l'+l''-l)-(l+l'-l'')(l-m).\]
This is non-zero because $l-m \geq 1$ and $l+l'-l''>l'+l''-l$ strictly.
\end{itemize}
Thus we obtain that (\ref{eq:cg-def}) is non-zero in all cases.
\end{proof}

\subsection{Spherical registration} \label{sec:sr}

\subsubsection{Function basis}
We review the real spherical harmonics basis that we use for this
example, and the action of $\SO(3)$ on the coefficients in this basis that is
induced by rotation of the function domain $\cS^2$. The setup is the same as
that of the spherical registration model 
discussed in \cite[Sections 5.4 and A.1]{bandeira2017estimation}.

We define the real spherical harmonics basis
$\{h_{lm}:l \geq 0,\,m \in \{-l,\ldots,l\}\}$
from the complex spherical harmonics basis (\ref{eq:complexharmonics}) by
\[h_{l,m}=\begin{cases}
\frac{1}{\sqrt{2}}\big(y_{l,-m}+(-1)^m y_{l,m}\big) & \text{ if } m>0 \\
y_{l,0} & \text{ if } m=0 \\
\frac{\i}{\sqrt{2}}\big(y_{l,m}-(-1)^m y_{l,-m}\big) & \text{ if } m<0.
\end{cases}\]
It may be checked from (\ref{eq:conjugationsymmetry})
that these functions $\{h_{lm}\}$ are real-valued and form an orthonormal basis
for $L_2(\cS^2,\R)$.
For any function $f \in L_2(\sS^2,\C)$, writing its orthogonal
decompositions in the bases $\{y_{lm}\}$ and $\{h_{lm}\}$ as
\[f=\sum_{l=0}^\infty \sum_{m=-l}^l u_m^{(l)} y_{lm}
=\sum_{l=0}^\infty \sum_{m=-l}^l \theta_m^{(l)} h_{lm},\]
its real and complex spherical harmonic coefficients
$\{u_m^{(l)}\}$ and $\{\theta_m^{(l)}\}$ are then related by
\begin{equation} \label{eq:u-theta}
u_m^{(l)}=\begin{cases} \frac{(-1)^m}{\sqrt{2}}(\theta_{|m|}^{(l)}-\i
\theta_{-|m|}^{(l)}) & \text{ if } m>0 \\
\theta_0^{(l)} & \text{ if } m=0 \\
\frac{1}{\sqrt{2}}(\theta_{|m|}^{(l)}+\i\theta_{-|m|}^{(l)}) & \text{ if } m<0.
\end{cases}
\end{equation}
Up to the finite bandlimit $L \geq 1$,
this relation (\ref{eq:u-theta}) is a linear map $u=V^*\theta$ between
$u \in \C^d$ and $\theta \in \C^d$,
where $V \in \C^{d \times d}$ is a unitary matrix.
If $f$ is real-valued, then $\{\theta_m^{(l)}\}$ are real, and hence
$\{u_m^{(l)}\}$ satisfy the sign symmetry
\begin{equation}\label{eq:usymmetryS2}
u_m^{(l)}=(-1)^m\overline{u_{-m}^{(l)}}.
\end{equation}

The space of bandlimited functions (\ref{eq:bandlimitedspherical})
is closed under the action of $\SO(3)$, and the rotation $f \mapsto f_\frakg$
is represented by the following subgroup $\G \subset \O(d)$ acting on
$\theta \in \R^d$.

\begin{lemma} \label{lem:so3-sphere-act}
The action of $\SO(3)$ on the real spherical harmonic coefficients $\theta \in \R^d$ 
admits the representation
\[\G=\Big\{V \cdot D(\frakg) \cdot V^*:\frakg \in \SO(3)\Big\} \subset \O(d),\]
where $V \in \C^{d \times d}$ is the unitary transform describing the map
$u=V^*\theta$ in (\ref{eq:u-theta}), and $D(\frakg)$ is the block-diagonal
matrix
\begin{equation}\label{eq:Dsphericalreg}
D(\frakg)=\bigoplus_{l=0}^L D^{(l)}(\frakg) \in \C^{d \times d}
\end{equation}
with diagonal blocks $D^{(l)}(\frakg) \in \C^{(2l + 1) \times (2l + 1)}$ given
by the complex Wigner D-matrices at frequencies $l=0,\ldots,L$
(defined in Appendix \ref{sec:rc-harmonic}).
\end{lemma}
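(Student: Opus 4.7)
The statement is a translation between two orthonormal bases, so the plan is straightforward linear algebra; the substance is simply to assemble the pieces already developed in Appendix \ref{sec:rc-harmonic}.

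First, I would recall that the rotated function $f_\frakg$ again lies in the bandlimited subspace (\ref{eq:bandlimitedspherical}), because $\SO(3)$ acts on $L_2(\cS^2,\C)$ by permuting the frequency subspaces $\mathrm{span}\{y_{lm}:-l\le m\le l\}$ among themselves. In the complex basis, this action is given frequency-by-frequency by the Wigner D-matrices: writing $u=\{u^{(l)}\}$ for the complex spherical harmonic coefficients of $f$, the coefficients of $f_\frakg$ are $\{D^{(l)}(\frakg)u^{(l)}\}$, as recalled in Appendix \ref{appendix:wignerd}. Stacking over $l=0,\ldots,L$ and using the block-diagonal matrix $D(\frakg)$ from (\ref{eq:Dsphericalreg}), the map $f\mapsto f_\frakg$ corresponds to $u\mapsto D(\frakg)u$ on $\C^d$.

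Next, I would translate this to the real basis. By (\ref{eq:u-theta}), the change of basis between real and complex coefficients is given by a unitary $V\in\C^{d\times d}$ via $u=V^*\theta$ (equivalently $\theta=Vu$). Composing,
\[
\theta\;\longmapsto\;Vu\;\longmapsto\;VD(\frakg)V^*\theta,
\]
so the action of $\frakg\in\SO(3)$ on $\theta$ is multiplication by $g(\frakg):=VD(\frakg)V^*\in\C^{d\times d}$. This immediately gives the set equality $\G=\{VD(\frakg)V^*:\frakg\in\SO(3)\}$.

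The only thing to check is that $g(\frakg)\in\O(d)$, i.e.\ that the matrix is real and orthogonal. Orthogonality as a complex matrix, i.e.\ unitarity, is immediate: $V$ is unitary and each $D^{(l)}(\frakg)$ is unitary (the Wigner D-matrices represent $\SO(3)$ unitarily on $\C^{2l+1}$), so $g(\frakg)g(\frakg)^*=\Id$. To see that $g(\frakg)$ is real, observe that if $\theta\in\R^d$ then $f=\sum\theta_m^{(l)}h_{lm}$ is a real-valued function and hence so is $f_\frakg$; the real-valuedness of $f_\frakg$ means that its expansion coefficients in the real basis $\{h_{lm}\}$ are real, i.e.\ $g(\frakg)\theta\in\R^d$. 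Since this holds for every $\theta\in\R^d$, the matrix $g(\frakg)$ must itself be real. Combining realness with unitarity gives $g(\frakg)\in\O(d)$, completing the proof.

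There is no substantive obstacle here; the only thing to be careful about is matching conventions between (\ref{eq:u-theta}) and (\ref{eq:complexharmonics}) so that $V$ is genuinely unitary and $u^{(l)}\mapsto D^{(l)}(\frakg)u^{(l)}$ is the correct transformation law in our sign convention for the complex harmonics, which follows from the standard references cited in Appendix \ref{sec:rc-harmonic}.
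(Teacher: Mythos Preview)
Your proof is correct and follows essentially the same approach as the paper: recall that rotation acts on the complex coefficients by $u\mapsto D(\frakg)u$, then conjugate by the unitary change of basis $u=V^*\theta$ to obtain $\theta\mapsto VD(\frakg)V^*\theta$. You additionally spell out why $VD(\frakg)V^*\in\O(d)$ (unitarity plus realness via the real-valuedness of $f_\frakg$), which the paper leaves implicit.
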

\begin{proof}
As described in Appendix \ref{appendix:wignerd}, the rotation by
$\frakg \in \SO(3)$ acts on the complex spherical harmonic coefficients
$u \in \C^d$ by
\[u \mapsto D(\frakg)u,\]
where $D(\frakg)$ is the block-diagonal matrix defined in
(\ref{eq:Dsphericalreg}).
Since $u$ and $\theta$ are related by the unitary transformations
$u=V^*\theta$ and $\theta=Vu$, the action on $\theta$ is then given by $\theta
\mapsto V \cdot D(\frakg) \cdot V^*\theta$.
\end{proof}

\subsubsection{Terms of the high-noise series expansion}

We prove Theorem \ref{thm:S2registration-mom} on the forms of $s_1(\theta)$,
$s_2(\theta)$, and $s_3(\theta)$.

\begin{proof}[Proof of Theorem \ref{thm:S2registration-mom}]
Recall by Lemma \ref{lem:skform} that
\begin{equation}\label{eq:Skgeneral}
s_k(\theta) = \frac{1}{2(k!)}\E_g\left[\langle \theta,g\cdot\theta \rangle^k
-2\langle \theta,g\cdot\theta_*\rangle^k+\langle \theta_*,g\cdot\theta_*\rangle^k\right].
\end{equation} 
Consider two different real spherical harmonic coefficient vectors
$\theta,\vartheta \in \R^d$, and the corresponding complex coefficients
$u=V^*\theta \in \C^d$ and $v=V^*\vartheta \in \C^d$.
We compute $\E_g[\langle \theta,g\cdot\vartheta \rangle^k]$ for $k=1,2,3$.\\

\noindent {\bf Case $k=1$:}
By Lemma \ref{lem:so3-sphere-act}, for any $g\in\G$,
\[\langle \theta,g\cdot\vartheta \rangle
=\langle \theta,V D(\frakg)  V^*\vartheta \rangle=\langle u,D(\frakg)v \rangle.\]
From the block-diagonal form for $D(\frakg)$ in (\ref{eq:Dsphericalreg}), we
obtain
\begin{equation}\label{eq:S2ugv}
\langle \theta,g\cdot\vartheta \rangle
=\sum_{l=0}^L  \langle u^{(l)},D^{(l)}(\frakg)v^{(l)} \rangle
=\sum_{l=0}^L 
\sum_{q,m=-l}^l \overline{u_q^{(l)}} D^{(l)}_{qm}(\frakg)v_m^{(l)}.
\end{equation}
Applying the identities (\ref{eq:ED}) yields
\[\E_g[\langle \theta,g\cdot\vartheta \rangle]
=\overline{u_0^{(0)}}v_0^{(0)}.\]
Write the shorthands $u^{(0)}=u_0^{(0)}$ and
$v^{(0)}=v_0^{(0)}$, and recall from (\ref{eq:usymmetryS2}) that
$u^{(0)},v^{(0)}$ are real. Then applying this to (\ref{eq:Skgeneral}),
\begin{align*}
s_1(\theta)&=\frac{1}{2}\E_g[\langle \theta,g\cdot\theta \rangle]
-\E_g[\langle \theta_*,g\cdot\theta \rangle]+\frac{1}{2}\E_g[\langle \theta_*,g\cdot\theta_* \rangle]
=\frac{1}{2}\Big(u^{(0)}(\theta)-u^{(0)}(\theta_*)\Big)^2.
\end{align*}

\noindent {\bf Case $k=2$:}
We take the expected square on both sides of (\ref{eq:S2ugv}),
applying (\ref{eq:usymmetryS2}) and the relation (\ref{eq:WignerDorthog}).
Then
\begin{align*}
\E_g[\langle \theta,g\cdot\vartheta \rangle^2]
&=\sum_{l=0}^L  \sum_{q,m=-l}^l \frac{(-1)^{m+q}}{2l+1}
\overline{u_q^{(l)}u_{-q}^{(l)}}v_m^{(l)}v_{-m}^{(l)}\\
&=\sum_{l=0}^L  \sum_{q,m=-l}^l \frac{1}{2l+1}
\overline{u_q^{(l)}}u_q^{(l)}v_m^{(l)}\overline{v_m^{(l)}}
=\sum_{l=0}^L \frac{1}{2l+1} \|u^{(l)}\|^2 \cdot \|v^{(l)}\|^2.
\end{align*}
Applying this to (\ref{eq:Skgeneral}),
\begin{align*}
s_2(\theta)
&=\frac{1}{4}\E_g[\langle \theta,g\cdot\theta \rangle^2]
-\frac{1}{2}\E_g[\langle \theta_*,g\cdot\theta \rangle^2]
+\frac{1}{4}\E_g[\langle \theta_*,g\cdot\theta_* \rangle^2]\\
&=\sum_{l=0}^L \frac{1}{4(2l+1)}
\left(\|u^{(l)}(\theta)\|^2-\|u^{(l)}(\theta_*)\|^2
\right)^2.
\end{align*}

\noindent {\bf Case $k=3$:} We now take the expected cube on both sides of (\ref{eq:S2ugv}), applying the relation (\ref{eq:WignerDtriple}).
Recall the convention (\ref{eq:CGconvention}). Then
\begin{align*}
&\E_g[\langle \theta,g\cdot\vartheta\rangle^3]\\
&=\mathop{\sum_{l,l',l''=0}^L}_{|l-l'| \leq l'' \leq l+l'}
\sum_{q,m=-l}^l\sum_{q',m'=-l'}^{l'}
\frac{(-1)^{m+m'+q+q'}}{2l''+1}
\cdot C_{q,q',q+q'}^{l,l',l''}
C_{m,m',m+m'}^{l,l',l''} \overline{u_q^{(l)}u_{q'}^{(l')}
u_{-q-q'}^{(l'')}}v_m^{(l)}v_{m'}^{(l')}v_{-m-m'}^{(l'')}\\
&=\mathop{\sum_{l,l',l''=0}^L}_{|l-l'| \leq l'' \leq l+l'}
\sum_{q,m=-l}^l \sum_{q',m'=-l'}^{l'} \frac{1}{2l''+1}
\cdot C_{q,q',q+q'}^{l,l',l''}
C_{m,m',m+m'}^{l,l',l''} \overline{u_q^{(l)}u_{q'}^{(l')}}
u_{q+q'}^{(l'')} v_m^{(l)}v_{m'}^{(l')}
\overline{v_{m+m'}^{(l'')}}.
\end{align*}
Recall that
\[B_{l,l',l''}(\theta)=\sum_{m=-l}^l \sum_{m'=-l'}^{l'}
C_{m,m',m+m'}^{l,l',l''} \overline{u_m^{(l)}u_{m'}^{(l')}}
u_{m+m'}^{(l'')}, \qquad u=V^*\theta.\]
Then the above may be written as
\[\E_g[\langle \theta,g\cdot\vartheta \rangle^3]
=\mathop{\sum_{l,l',l''=0}^L}_{|l-l'| \leq l'' \leq l+l'}\frac{1}{2l''+1}
B_{l,l',l''}(\theta)\overline{B_{l,l',l''}(\vartheta)}.\]
Changing indices $(m,m') \mapsto
(-m,-m')$ and applying the symmetries (\ref{eq:CGsymmetry2}) and
(\ref{eq:usymmetryS2}), we have
\begin{align*}
B_{l,l',l''}(\theta)&=\sum_{m=-l}^l \sum_{m'=-l'}^{l'}
C_{-m,-m',-m-m'}^{l,l',l''} \overline{u_{-m}^{(l)}u_{-m'}^{(l')}}
u_{-m-m'}^{(l'')}\\
&=\sum_{m=-l}^l \sum_{m'=-l'}^{l'}
(-1)^{l+l'+l''}C_{m,m',m+m'}^{l,l',l''}
 \cdot (-1)^{m+m'+(m+m')} u_m^{(l)}u_{m'}^{(l')}
\overline{u_{m+m'}^{(l'')}}\\
&=(-1)^{l+l'+l''}\overline{B_{l,l',l''}(\theta)}.
\end{align*}
Thus $B_{l,l',l''}(\theta)$ is real-valued if $l+l'+l''$ is even and pure
imaginary if $l+l'+l''$ is odd. Applying this to (\ref{eq:Skgeneral}),
\begin{align*}
s_3(\theta)&=\frac{1}{12}\E_g[\langle \theta,g\cdot\theta \rangle^3]
-\frac{1}{6}\E_g[\langle \theta_*,g\cdot\theta \rangle^3]+\frac{1}{12}\E_g[\langle \theta_*,g\cdot\theta_* \rangle^3]\\
&=\frac{1}{12}\mathop{\sum_{l,l',l''=0}^L}_{|l-l'| \leq l'' \leq l+l'}
\frac{1}{2l''+1}\left(B_{l,l',l''}(\theta)-B_{l,l',l''}(\theta_*)\right)\left(\overline{B_{l,l',l''}(\theta)}-\overline{B_{l,l',l''}(\theta_*)}\right)\nonumber\\
&=\frac{1}{12}\mathop{\sum_{l,l',l''=0}^L}_{|l-l'| \leq l'' \leq l+l'}
\frac{1}{2l''+1}\Big|B_{l,l',l''}(\theta)-B_{l,l',l''}(\theta_*)\Big|^2. \qedhere
\end{align*}
\end{proof}

\subsubsection{Transcendence degrees} We now prove Theorem
\ref{thm:S2registration} on the sequences of transcendence degrees.

\begin{proof}[Proof of Theorem \ref{thm:S2registration}]

Recall the form of $\G$ in Lemma \ref{lem:so3-sphere-act}.
Denote the diagonal blocks of $g \in \G$ by
$g^{(l)}(\frakg)=V^{(l)} \cdot D^{(l)}(\frakg) \cdot {V^{(l)}}^*$, where
each $V^{(l)} \in \C^{(2l+1) \times (2l+1)}$ represents the unitary map
$u^{(l)} \mapsto \theta^{(l)}$, and $g^{(l)}$ is
the irreducible representation of $\SO(3)$ acting on the subvector
$\theta^{(l)} \in \R^{2l+1}$. For $l=3$, it is known that any generic
point $\theta^{(3)} \in \R^7$ has a trivial stabilizer subgroup $\{\Id\}$
and a 3-dimensional orbit under this action \cite[Proposition 1]{bryant2016second}.
Defining the pre-image $\mathsf{H}:=\{\frakg \in \SO(3):g^{(3)}(\frakg)=\Id\}$,
for $L \geq 3$ and any extension of $\theta^{(3)}$ to $\theta \in \R^d$,
the stabilizer of $\theta$ in $\G$ must then satisfy $\G_\theta \subseteq
\{V \cdot D(\frakg) \cdot V^*:\frakg \in \mathsf{H}\}$. Here $\mathsf{H}$ is a
discrete subgroup of $\SO(3)$, so $\G_\theta$ is a discrete subgroup of
$\G$, and hence $\dim(\G_\theta)=0$ and $d_0=\max_\theta \dim(\orbit_\theta)
=\dim(\G)-\min_\theta \dim(\G_\theta)=3$.

We compute $\trdeg(\cR_{\leq k}^\G)$ for $k=1,2,3$ using Lemma \ref{lem:trdeg}.
Recall the forms of $s_1(\theta)$ and $s_2(\theta)$ in
Theorem \ref{thm:S2registration-mom}, where
$u^{(0)}(\theta)=\theta_0^{(0)}$ and $\|u^{(l)}(\theta)\|^2
=\|\theta^{(l)}\|^2$ for $\theta^{(l)}=(\theta_m^{(l)}:m=-l,\ldots,l)$.
Then we obtain directly that for generic $\theta_* \in \R^d$,
\begin{align*}
\trdeg(\cR_{\leq 1}^\G)&=\rank\big(\nabla^2 s_1(\theta_*)\big)=1\\
\trdeg(\cR_{\leq 2}^\G)&=\rank\big(\nabla^2 s_1(\theta_*)
+\nabla^2 s_2(\theta_*)\big)=L+1.
\end{align*}
It remains to show $\trdeg(\cR_{\leq 3}^\G)=d-3$. Note that $\trdeg(\cR_{\leq
3}^\G)\leq \trdeg(\cR^\G)=d-3$, so it suffices to show the lower bound $\trdeg(\cR_{\leq 3}^\G)\geq d-3$.

By Lemma \ref{lem:trdeg} and the fact that each Hessian
$\nabla^2 s_k(\theta_*)$ is positive semidefinite,
\begin{align*}
\trdeg(\cR_{\leq 3}^\G)\geq \rank\big(\nabla^2 s_3(\theta_*)\big).
\end{align*}
Writing the index set 
\begin{align}\label{eq:indexJ}
\cJ=\Big\{(l,l',l''):0 \leq l,l',l'' \leq L,\;
|l-l'| \leq l'' \leq l+l'\Big\},
\end{align}
we have by Theorem \ref{thm:S2registration-mom}
\begin{align*}
s_3(\theta)=\frac{1}{12} \sum_{(l,l',l'')\in\cJ} 
\frac{1}{2l''+1}\big(B_{l,l',l''}(\theta)-B_{l,l',l''}(\theta_*)\big)\big(\overline{B_{l,l',l''}(\theta)}-\overline{B_{l,l',l''}(\theta_*)}\big).
\end{align*}
Let us denote 
\begin{align*}
B(\theta)=\Big(B_{l,l',l''}(\theta):
\;(l,l',l'') \in \cJ\Big), \qquad B:\R^d \to \C^{|\cJ|},
\end{align*}
and write $\der B(\theta) \in \C^{|\cJ| \times d}$
for its the derivative in $\theta$. Then, applying the
chain rule to differentiate $s_3(\theta)$ twice at $\theta=\theta_*$,
we obtain
\[\nabla^2 s_3(\theta_*)=\der B(\theta_*)^\top \cdot
\diag\left(\frac{1}{6(2l''+1)}:(l,l',l'')
\in \cJ\right) \cdot \overline{\der B(\theta_*)}.\]
The diagonal matrix in the middle has full rank, so
\begin{equation}\label{eq:s3B1}
\rank\Big(\nabla^2 s_3(\theta_*)\Big)
=\rank\Big(\der B(\theta_*)\Big).
\end{equation}

To analyze this rank, recall the complex parametrization
$u=V^* \theta \in \C^d$ from (\ref{eq:u-theta}), satisfying the
symmetry (\ref{eq:usymmetryS2}). Let us write the real and imaginary parts of
$u$ as
\[u_m^{(l)}=v_m^{(l)}+\i w_m^{(l)}\]
so that this symmetry (\ref{eq:usymmetryS2}) is equivalent to
\begin{equation}\label{eq:vwsymmetryS2}
v_{-m}^{(l)}=(-1)^{m}v_m^{(l)}, \qquad
w_{-m}^{(l)}=(-1)^{m+1}w_m^{(l)}.
\end{equation}
Note that for $m=0$, this implies $w_0^{(l)}=0$. Then, setting
\[\eta^{(l)}(\theta)=(v_0^{(l)},v_1^{(l)},w_1^{(l)},v_2^{(l)},w_2^{(l)},
\ldots,v_l^{(l)},w_l^{(l)}),\]
these coordinates $\eta^{(l)} \in \R^{2l+1}$ provide a (linear)
invertible reparametrization of $\theta^{(l)}$. This defines a
reparametrization
\[\eta(\theta)=
\Big(\eta^{(l)}(\theta):0 \leq l \leq L\Big) \in \R^d\]
with inverse function $\theta(\eta)$. Writing as shorthand
$\eta_*=\eta(\theta_*)$ and $B(\eta) \equiv B(\theta(\eta))$,
and denoting by $\der_\eta B(\eta)$ the derivative of $B$
in the new variables $\eta$, \eqref{eq:s3B1} is equivalent to
\[\rank\Big(\nabla^2 s_3(\theta_*)\Big)=\rank\Big(\der_\eta B(\eta_*)\Big).\]

Denote
\begin{align*}
\tilde{B}(\eta)=\Big(B_{l,l',l''}(\eta):(l,l',l'')\in \cJ,\;\max(l,l',l'') \leq
10 \Big).
\end{align*}
Let us group the columns and rows of $\der_\eta B$ into blocks indexed by 
$\{\sim,11,12,\ldots,L\}$ as follows: The column block $\sim$ corresponds to
$\der_{\eta^{(0)},\ldots,\eta^{(10)}}$. The row block $\sim$ corresponds
to $\tilde{B}(\eta)$. For $l \geq 11$, the column block $l$ corresponds to
$\der_{\eta^{(l)}}$, and the row block $l$ corresponds to
$B^{(l)}$ as defined below in Lemma \ref{lem:S2rankincrease}. (These blocks 
$\tilde{B}$ and $B^{(l)}$ for $l\geq 11$ are disjoint by definition, and we may
discard the remaining rows of $\der_\eta B$ not corresponding to any such block
to produce a lower bound for its rank.)
Ordering the blocks by $\sim,11,12,\ldots,L$,
the resulting matrix $\der_\eta B$ is block lower-triangular,
because each $B^{(l)}$ does not depend on the variables
$\eta^{(l+1)},\ldots,\eta^{(L)}$. Thus $\rank(\der_\eta B)$
is lower-bounded by the sum of ranks of all diagonal blocks, i.e.
\[\rank(\der_\eta B(\eta_*)) \geq
\rank\left(\der_{\eta^{(0)},\ldots,\eta^{(10)}}\tilde{B}(\eta_*)\right)
+\sum_{l=11}^L \rank(\der_{\eta^{(l)}} B^{(l)}(\eta_*)).\]

A direct numerical evaluation of the matrix
$\der_{\eta^{(0)},\ldots,\eta^{(10)}} \tilde{B}(\eta_*)$
verifies that for $\eta_* \in \R^d$ with all entries
of $\eta_*^{(0)},\ldots,\eta_*^{(10)}$ equal to 1, we have\footnote{An
equivalent statement was verified in \cite[Theorem 5.5]{bandeira2017estimation}
corresponding to the case $F=10$, using exact-precision numerical arithmetic.}
\[\rank\left(\der_{\eta^{(0)},\ldots,\eta^{(10)}} \tilde{B}(\eta_*)\right)
=\sum_{l=0}^{10}(2l+1)-3=118.\]
Then also for generic $\eta_* \in \R^d$, by Fact \ref{fact:fullrank},
\[\rank\left(\der_{\eta^{(0)},\ldots,\eta^{(10)}}\tilde{B}(\eta_*)\right)
\geq \sum_{l=0}^{10}(2l+1)-3=118.\]
In particular, this establishes the desired result that
$\rank(\nabla^2 s_3(\theta_*))=\rank(\der_\eta B(\eta_*)) \geq d-3$ for
$L=10$. If $L \geq 11$, then by Lemma \ref{lem:S2rankincrease} below,
we also have for generic $\eta_*\in\R^d$,
\[\sum_{l=11}^L \rank(\der_{\eta^{(l)}} B^{(l)}(\eta_*))=\sum_{l=11}^L(2l+1).\]
Combining the above,
\[\rank(\nabla^2 s_3(\theta_*))=\rank(\der_\eta B(\eta_*)) \geq \sum_{l=0}^{10}(2l+1)-3+\sum_{l=11}^L(2l+1)=d-3,\]
which completes the proof that $\trdeg(\cR_{\leq 3}^\G)=d-3$.
\end{proof}

\begin{lemma}\label{lem:S2rankincrease}
Suppose $L\geq 11$. For each $l\in\{11,\ldots,L\}$, let $\cJ^{(l)}$ be the set of tuples $(l,l',l'')\in\cJ$ where $l$ takes this fixed value, and where $l'\leq l$ and $l''\leq l$. Denote
\begin{align*}
B^{(l)}(\eta)=\Big(B_{l,l',l''}(\eta):(l,l',l'')\in\cJ^{(l)}\Big) \in
\C^{|\cJ^{(l)}|}
\end{align*}
and let $\der_{\eta^{(l)}}B^{(l)}\in \C^{|\cJ^{(l)}|\times (2l+1)}$ be the
submatrix of $\der_\eta B$ corresponding to the derivative of $B^{(l)}$ in
$\eta^{(l)}$. Then for all generic $\eta_*\in\R^d$,
\begin{align*}
\rank\Big(\der_{\eta^{(l)}}B^{(l)}(\eta_*)\Big)=2l+1.
\end{align*}
\end{lemma}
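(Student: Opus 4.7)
The plan is to apply Fact \ref{fact:fullrank}: since all entries of $\der_{\eta^{(l)}} B^{(l)}$ are polynomial in $\eta$, it suffices to exhibit a single $\eta_* \in \R^d$ at which this Jacobian has rank $2l+1$. Since the column count is exactly $2l+1$, this is a full column rank condition.

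First I would set up the differential calculus. Each polynomial $B_{l,l',l''}(\theta)$ depends anti-linearly on $u^{(l)}$ through the factor $\overline{u_m^{(l)}}$ in the definition (\ref{eq:Bl}), so the partial derivative $\partial B_{l,l',l''}/\partial \overline{u_m^{(l)}}$ evaluated at $\eta_*$ equals $\sum_{m'} C^{l,l',l''}_{m, m', m+m'}\, u_{m'}^{(l')}(\eta_*)\, u_{m+m'}^{(l'')}(\eta_*)$, which is linear in the values of the lower-frequency coefficients. Passing from the Wirtinger derivatives $\partial/\partial \overline{u_m^{(l)}}$ to the real derivatives $\partial/\partial v_m^{(l)}, \partial/\partial w_m^{(l)}$ (subject to the reality constraints (\ref{eq:vwsymmetryS2})) is an invertible linear transformation, so it suffices to analyze the complex Jacobian of $(B_{l,l',l''})_{(l,l',l'') \in \cJ^{(l)}}$ with respect to $\{\overline{u_m^{(l)}}\}_{m=-l}^l$.

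The core construction is to choose $\eta_*$ so that $u^{(k)}(\eta_*) = 0$ for most $k < l$, keeping nonzero only a few extremal components of $u^{(l')}(\eta_*), u^{(l'')}(\eta_*)$ for a carefully chosen set of small $(l',l'')$ pairs. Specifically, I would take $u^{(k)}_{m'}(\eta_*)$ to be supported on the extreme indices $|m'| \in \{k-1, k\}$ for the relevant $k$, and similarly for $u^{(l-k)}$. With this choice, the derivative $\partial B_{l, l', l''}/\partial \overline{u_m^{(l)}}$ at $\eta_*$ is supported on the few values of $m$ satisfying $m = -m' + m''$ for the prescribed $(m', m'')$, producing a sparse row of the Jacobian. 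By enumerating triples $(l, l', l'') \in \cJ^{(l)}$ paired with extremal choices of $(m', m'')$ so as to target each $m \in \{0, 1, \ldots, l\}$, I would assemble a $(2l+1) \times (2l+1)$ submatrix of the Jacobian that is block-diagonal: $2 \times 2$ blocks indexed by $m \geq 1$ (coupling $v_m^{(l)}$ and $w_m^{(l)}$ via the reality constraint at $\pm m$), together with a $1 \times 1$ block for $m = 0$.

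The main obstacle is verifying non-singularity of each diagonal block. Each such block is a matrix whose entries are products of prescribed nonzero values of $u^{(l')}(\eta_*), u^{(l'')}(\eta_*)$ with Clebsch-Gordan coefficients of the form $C^{l,l',l''}_{m, m', m''}$, where the indices $m',m''$ fall in the extremal ranges $|m'| \in \{l'-1, l'\}$ and $|m''| \in \{l''-1, l''\}$. Non-vanishing of these coefficients is exactly what Lemma \ref{lemma:CGnonvanishing} provides, using the inequalities $l \geq l'$, $l \geq l''+1$, and the exceptional case exclusion. The hypothesis $l \geq 11$ guarantees enough flexibility to pick distinct small $(l', l'')$ pairs for each target $m$ and to avoid the single forbidden configuration $|m|=l-1,|m'|=l'-1,|m''|=l''-1,l'=l''$ of Lemma \ref{lemma:CGnonvanishing}. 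Once non-vanishing of individual Clebsch-Gordan coefficients is established, each $2 \times 2$ block determinant is a specific nonzero polynomial in the free parameters of $u^{(l')}(\eta_*), u^{(l'')}(\eta_*)$, and can be made nonzero by a final generic choice of those parameters, completing the construction of a point where $\rank(\der_{\eta^{(l)}} B^{(l)}(\eta_*)) = 2l+1$.
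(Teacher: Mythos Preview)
Your strategy matches the paper's: reduce via Fact~\ref{fact:fullrank} to a single sparse $\eta_*$, select $2l+1$ rows of $\der_{\eta^{(l)}}B^{(l)}(\eta_*)$ so the resulting square submatrix has $2\times2$ (for each $m\geq1$) and $1\times1$ (for $m=0$) diagonal blocks, and certify each block nonsingular via Lemma~\ref{lemma:CGnonvanishing} together with a generic choice of the surviving coordinates of $\eta_*$.

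Two points need tightening. First, the submatrix one actually obtains is block \emph{lower-triangular}, not block-diagonal: for the rows targeting the extreme values $m\in\{l-4,\ldots,l\}$, both $|(l'-\Type(l'))-(l''-\Type(l''))|$ and $(l'-\Type(l'))+(l''-\Type(l''))$ can land in $\{0,\ldots,l\}$, so those rows carry secondary support at earlier $m$. The paper handles this by ordering the columns appropriately and verifying only the diagonal blocks. Second, your derivative formula (which should read $\overline{u_{m'}^{(l')}}$, not $u_{m'}^{(l')}$) is only valid when $l',l''<l$; the paper's row selection includes many rows with $l'=l$, and there $B_{l,l,l''}$ depends on $u^{(l)}$ through \emph{two} factors, so the derivative formula picks up an extra factor of $2$. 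With these two adjustments your sketch becomes the paper's proof.
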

\begin{proof}
By Fact \ref{fact:fullrank},
it suffices to show $\rank(\der_{\eta^{(l)}} B^{(l)}(\eta_*))=2l+1$ for a
single point $\eta_* \in \R^d$. Our strategy is to choose $\eta_*$ with many
coordinates equal to 0, such that $\der_{\eta^{(l)}} B^{(l)}(\eta_*)$ has a 
sparse structure and its rank may be explicitly analyzed.
Specifically, we choose $\eta_*$ so that
\begin{align}
&\text{For } l' \in \{l-1,l\}:v_{*,m'}^{(l')},w_{*,m'}^{(l')}=0
\text{ unless } m'=l-1\notag\\
&\text{For } l' \in \{0,1,4,5,\ldots,l-2\}:
v_{*,m'}^{(l')},w_{*,m'}^{(l')}=0 \text{ unless } m'=l'\label{eq:specialeta}\\
&\text{For } l' \in \{2,3\}:
v_{*,m'}^{(l')},w_{*,m'}^{(l')}=0 \text{ unless } m' \in \{l',l'-1\}.\notag
\end{align}
We choose the values of the non-zero coordinates of $\eta_*$ to be generic.
The rest of this proof checks that 
$\rank(\der_{\eta^{(l)}} B^{(l)}(\eta_*))=2l+1$ holds under this choice.

Recall the form of $B_{l,l',l''}$ from (\ref{eq:Bl}).
We first compute $\der_{\eta^{(l)}} B_{l,l',l''}$ in the two cases:
(i) $l',l''<l$ and (ii) $l'=l$ and $l''<l$.\\

{\bf Case I:} $l',l'' < l$. For each $k=0,\ldots,l$,
the derivatives $\partial_{v_k^{(l)}},\partial_{w_k^{(l)}}$ apply only to the 
terms $\overline{u_m^{(l)}}$ in (\ref{eq:Bl}) for $m \in \{+k,-k\}$.
We have $u_m^{(l)}=v_m^{(l)}+\i w_m^{(l)}=(-1)^m v_{-m}^{(l)}-\i \cdot
(-1)^m w_{-m}^{(l)}$ where the second equality applies the sign
symmetry (\ref{eq:vwsymmetry}). Thus (relabeling $k$ by $m$), for $m>0$
strictly,
\begin{align*}
\partial_{v_m^{(l)}} B_{l,l',l''}
&=\sum_{m'=-l'}^{l'} C_{m,m',m+m'}^{l,l',l''}
\overline{u_{m'}^{(l')}}u_{m+m'}^{(l'')}+(-1)^{m}
C_{-m,m',-m+m'}^{l,l',l''}
\overline{u_{m'}^{(l')}}u_{-m+m'}^{(l'')}\\
\partial_{w_m^{(l)}} B_{l,l',l''}
&=\sum_{m'=-l'}^{l'} -\i \cdot C_{m,m',m+m'}^{l,l',l''}
\overline{u_{m'}^{(l')}}u_{m+m'}^{(l'')}+(-1)^{m}\cdot\i
\cdot C_{-m,m',-m+m'}^{l,l',l''}\overline{u_{m'}^{(l')}}u_{-m+m'}^{(l'')}.
\end{align*}
Re-indexing $m' \mapsto -m'$ for the summations of the second terms, and
applying the symmetries (\ref{eq:CGsymmetry2}) and (\ref{eq:usymmetryS2}), we
obtain
\begin{align}
\partial_{v_m^{(l)}} B_{l,l',l''}&=\sum_{m'=-l'}^{l'}
2C_{m,m',m+m'}^{l,l',l''} \times \begin{cases} \Re \overline{u_{m'}^{(l')}}u_{m+m'}^{(l'')} & \text{ if } l+l'+l'' \text{ is even}\\
\i\cdot \Im \overline{u_{m'}^{(l')}}u_{m+m'}^{(l'')} & \text{ if } l+l'+l''
\text{ is odd,} \end{cases}\label{eq:dervmlI}\\
\partial_{w_m^{(l)}} B_{l,l',l''}&=\sum_{m'=-l'}^{l'}
2C_{m,m',m+m'}^{l,l',l''} \times \begin{cases} \Im \overline{u_{m'}^{(l')}}u_{m+m'}^{(l'')} & \text{ if } l+l'+l'' \text{ is even}\\
(-\i)\cdot \Re \overline{u_{m'}^{(l')}}u_{m+m'}^{(l'')} & \text{ if } l+l'+l''
\text{ is odd.} \end{cases}\label{eq:derwmlI}
\end{align}
For $m=0$, we have similarly
\begin{equation}\label{eq:derv0lI}
\partial_{v_0^{(l)}} B_{l,l',l''}=\sum_{m'=-l'}^{l'} C_{0,m',m'}^{l,l',l''}
\overline{u_{m'}^{(l')}}u_{m'}^{(l'')}.
\end{equation}

{\bf Case II:} $l'=l$ and $l''<l$. An additional contribution to
each derivative $\partial_{v_k^{(l)}},\partial_{w_k^{(l)}}$ arises from
differentiating $u_{m'}^{(l')}$ in (\ref{eq:Bl}). By symmetry of (\ref{eq:Bl})
with respect to interchanging $l$ and $l'$, this has the
effect of doubling each of the above expressions. So for $m>0$, we have
\begin{align}
\partial_{v_m^{(l)}} B_{l,l,l''}&=\sum_{m'=-l}^{l}
4C_{m,m',m+m'}^{l,l,l''} \times \begin{cases} \Re \overline{u_{m'}^{(l)}}u_{m+m'}^{(l'')} & \text{ if } l'' \text{ is even}\\
\i\cdot \Im \overline{u_{m'}^{(l)}}u_{m+m'}^{(l'')} & \text{ if } l''
\text{ is odd,} \end{cases}\label{eq:dervmlII}\\
\partial_{w_m^{(l)}} B_{l,l,l''}&=\sum_{m'=-l}^{l}
4C_{m,m',m+m'}^{l,l,l''} \times \begin{cases} \Im \overline{u_{m'}^{(l)}}u_{m+m'}^{(l'')} & \text{ if } l'' \text{ is even}\\
(-\i)\cdot \Re \overline{u_{m'}^{(l)}}u_{m+m'}^{(l'')} & \text{ if } l''
\text{ is odd.} \end{cases}\label{eq:derwmlII}
\end{align}
For $m=0$ we have
\begin{equation}\label{eq:derv0lII}
\partial_{v_0^{(l)}} B_{l,l,l''}=\sum_{m'=-l}^{l} 2C_{0,m',m'}^{l,l,l''}
\overline{u_{m'}^{(l)}}u_{m'}^{(l'')}.
\end{equation}

Now specializing these derivatives to $\eta_*$ of the form
(\ref{eq:specialeta}), we observe for example the following: If
$l' \in \{l-1,l\}$ and $l'' \in \{4,\ldots,l-2\}$,
then $\partial_{v_m^{(l)}} B_{l,l',l''},\partial_{w_m^{(l)}} B_{l,l',l''}$ are 0
unless $|m+m'|=l''$ for either $m'=l-1$ or $m'=-(l-1)$. Since $0 \leq m \leq l$,
this occurs for only the single index $m=(l-1)-l''$. Thus, only two entries
in the row $\der_{\eta^{(l)}} B_{l,l',l''}$ are non-zero, corresponding to
$\partial_{v_m^{(l)}},\partial_{w_m^{(l)}}$ for this $m$.
More generally, let us write the condition (\ref{eq:specialeta}) succinctly as
\[\Type(0),\Type(1),\Type(4),\ldots,\Type(l-2),\Type(l-1)=0,\quad
\Type(l)=1,\quad \Type(2,3)=\{0,1\}\]
where $\Type(l')=i$ means that $v^{(l')}_{*,m'},w^{(l')}_{*,m'}=0$ except for
$m' \in l'-i$. Then for $\partial_{v_m^{(l)}}B_{l,l',l''},\partial_{w_m^{(l)}}
B_{l,l',l''}$ to be non-zero, we require
$|m+m'| \in l''-\Type(l'')$ for some $m'$ satisfying $|m'| \in l'-\Type(l')$.
This occurs for the indices
\begin{equation}\label{eq:spherregmcondition}
m \in \Big\{\big|(l'-\Type(l'))-(l''-\Type(l''))\big|,\;
(l'-\Type(l'))+(l''-\Type(l''))\Big\} \cap \{0,\ldots,l\}
\end{equation}
where we use the set notations $|A|=\{|a|:a \in A\}$,
$A-B=\{a-b:a \in A,b \in B\}$, and $A+B=\{a+b:a \in A,b \in B\}$.

For the given value of $l$, note that
\[\cJ^{(l)}=\Big\{(l,l',l''):0 \leq l' \leq l,\;0 \leq l'' \leq l,\;
l'+l'' \geq l\Big\}.\]
We label rows of $\der_{\eta^{(l)}} B^{(l)}(\eta_*)$ by the pairs
$(l',l'')$ where $(l,l',l'') \in \cJ^{(l)}$.
We now choose $2l+1$ such rows $(l',l'')$
and check that the corresponding square $(2l+1) \times (2l+1)$ submatrix is 
non-singular. These rows are indicated in the left column of the
below table. The right column displays all values of $m$ satisfying
(\ref{eq:spherregmcondition}), i.e.\ for which
$\partial_{v_m^{(l)}},\partial_{w_m^{(l)}}$ are non-zero in that row.

\begin{table}[h]
\caption{ }
\begin{longtable}{c|c}
$(l',l'')$ & Values of $m$ \\
\hline
$(l-1,l-1)$ if $l$ is even or $(l,l-1)$ if $l$ is odd & 0 \\
\hline
$(l,l-2)$ & 1 \\
$(l-1,l-2)$ & 1 \\
$(l,l-3)$ & 2 \\
$(l-1,l-3)$ & 2 \\
$\vdots$ & $\vdots$ \\
$(l,4)$ & $l-5$ \\
$(l-1,4)$ & $l-5$ \\
\hline
$(l-5,5)$ & $l-10$ and $l$ \\
$(l-4,4)$ & $l-8$ and $l$ \\
$(l,1)$ & $l$ and $l-2$ \\
$(l-1,1)$ & $l$ and $l-2$ \\
$(l,2)$ & $l$, $l-2$, and $l-3$ \\
$(l-1,2)$ & $l$, $l-2$, and $l-3$ \\
$(l,3)$ & $l-3$ and $l-4$ \\
$(l-1,3)$ & $l-3$ and $l-4$ \\
$(l-3,3)$ & $l-6$, $l-5$, $l$, and $l-1$ \\
$(l-2,2)$ & $l-4$, $l-3$, $l$, and $l-1$
\end{longtable}
\end{table}

To verify that this selected $(2l+1)\times (2l+1)$ submatrix of
$\der_{\eta^{(l)}} B^{(l)}(\eta_*)$ is non-singular, let us order its
rows in the order of the above table, and its
columns according to the ordering of variables 
\[v_0^{(l)},v_1^{(l)},w_1^{(l)},\ldots,v_{l-5}^{(l)},w_{l-5}^{(l)},v_l^{(l)},w_l^{(l)},v_{l-2}^{(l)},w_{l-2}^{(l)},v_{l-3}^{(l)},w_{l-3}^{(l)},
v_{l-4}^{(l)},w_{l-4}^{(l)},v_{l-1}^{(l)},w_{l-1}^{(l)}\]
as they appear in the right column above. Then the table implies that this
$(2l+1) \times (2l+1)$ submatrix has a block lower-triangular structure
with respect to $2l+1=1+2+\ldots+2$. So it suffices
to check that each $1\times1$ and $2 \times 2$ diagonal block is non-singular.
It is tedious but straightforward to verify this explicitly, by computing their
forms:

{\bf Block corresponding to $v_0^{(l)}$:} For even $l$, we have that
$l+(l-1)+(l-1)$ is even. Then applying
(\ref{eq:derv0lI}) and the symmetries
(\ref{eq:CGsymmetry2}) and (\ref{eq:usymmetryS2}), this $1\times 1$ matrix is
\begin{align*}
\partial_{v_0^{(l)}} B_{l,l-1,l-1}(\eta_*)=
\Big(C_{0,l-1,l-1}^{l,l-1,l-1}
+C_{0,-(l-1),-(l-1)}^{l,l-1,l-1}\Big) |u_{*,l-1}^{(l-1)}|^2
=2C_{0,l-1,l-1}^{l,l-1,l-1}|u_{*,l-1}^{(l-1)}|^2.
\end{align*}
For odd $l$, we have that $l+l+(l-1)$ is even. Then
applying instead (\ref{eq:derv0lII}), this $1 \times 1$ matrix is
\begin{align*}
\partial_{v_0^{(l)}} B_{l,l,l-1}(\eta_*)=
2C_{0,l-1,l-1}^{l,l,l-1}\overline{u^{(l)}_{*,l-1}}u^{(l-1)}_{*,l-1}
+2C_{0,-(l-1),-(l-1)}^{l,l,l-1}u^{(l)}_{*,l-1}\overline{u^{(l-1)}_{*,l-1}}
=4C_{0,l-1,l-1}^{l,l,l-1} \Re\overline{u^{(l)}_{*,l-1}}u^{(l-1)}_{*,l-1}.
\end{align*}
These two coefficients $C_{0,l-1,l-1}^{l,l-1,l-1}$ and
$C_{0,l-1,l-1}^{l,l,l-1}$ are non-zero by Lemma \ref{lemma:CGnonvanishing},
so this block is non-zero for generic values of the non-zero coordinates
$v_{*,l-1}^{(l-1)},w_{*,l-1}^{(l-1)},v_{*,l-1}^{(l)},w_{*,l-1}^{(l)}$ of
$\eta_*$.

{\bf Blocks corresponding to
$(v_1^{(l)},w_1^{(l)}),\ldots,(v_{l-2}^{(l)},w_{l-2}^{(l)})$:} The calculations
for all these blocks are similar. We demonstrate the case
$v_{l-3}^{(l)},w_{l-3}^{(l)}$: Applying
(\ref{eq:dervmlI}--\ref{eq:derwmlI}) and 
(\ref{eq:dervmlII}--\ref{eq:derwmlII}), this $2 \times 2$ block is
\begin{align*}
&\partial_{v_{l-3}^{(l)},w_{l-3}^{(l)}}
\Big(B_{l,l,2},B_{l,l-1,2}\Big)(\eta_*)\\
&=\begin{pmatrix}
4C_{l-3,-(l-1),-2}^{l,l,2} \cdot
\Re \overline{u_{*,-(l-1)}^{(l)}}u_{*,-2}^{(2)} &
4C_{l-3,-(l-1),-2}^{l,l,2} \cdot
\Im \overline{u_{*,-(l-1)}^{(l)}}u_{*,-2}^{(2)} \\
2\i C_{l-3,-(l-1),-2}^{l,l-1,2} \cdot
\Im\overline{u_{*,-(l-1)}^{(l-1)}}u_{*,-2}^{(2)} &
-2\i C_{l-3,-(l-1),-2}^{l,l-1,2} \cdot
\Re\overline{u_{*,-(l-1)}^{(l-1)}}u_{*,-2}^{(2)}
\end{pmatrix}\\
&=\begin{pmatrix}
4C_{l-3,-(l-1),-2}^{l,l,2} & 0 \\ 0 &
2\i C_{l-3,-(l-1),-2}^{l,l-1,2} \end{pmatrix}
\begin{pmatrix}
\Re \overline{u_{*,-(l-1)}^{(l)}}u_{*,-2}^{(2)} &
\Im \overline{u_{*,-(l-1)}^{(l)}}u_{*,-2}^{(2)} \\
\Im\overline{u_{*,-(l-1)}^{(l-1)}}u_{*,-2}^{(2)} &
-\Re\overline{u_{*,-(l-1)}^{(l-1)}}u_{*,-2}^{(2)}
\end{pmatrix}
\end{align*}
The coefficients $C_{l-3,-(l-1),-2}^{l,l,2}$ and
$C_{l-3,-(l-1),-2}^{l,l-1,2}$ are both non-zero by Lemma
\ref{lemma:CGnonvanishing}, so the first matrix of this product is non-singular.
It is direct to check that the determinant
of the second matrix is a non-zero polynomial of the six non-zero coordinates
$v_{*,l-1}^{(l)},w_{*,l-1}^{(l)},v_{*,l-1}^{(l-1)},w_{*,l-1}^{(l-1)},v_{*,2}^{(2)},w_{*,2}^{(2)}$
of $\eta_*$. Then for generic values of these six coordinates,
the determinant is non-zero, and this matrix is also non-singular.

{\bf Blocks corresponding to
$(v_{l-1}^{(l)},w_{l-1}^{(l)}),(v_l^{(l)},w_l^{(l)})$:}
Applying (\ref{eq:dervmlI}--\ref{eq:derwmlI}), these $2\times 2$ matrices are
\begin{align*}
&\partial_{v_{l-1}^{(l)},w_{l-1}^{(l)}} \Big(B_{l,l-3,3},B_{l,l-2,2}\Big)(\eta_*)\\
&=\begin{pmatrix}
2C_{l-1,-(l-3),2}^{l,l-3,3} \cdot
\Re \overline{u_{*,-(l-3)}^{(l-3)}}u_{*,2}^{(3)} &
2C_{l-1,-(l-3),2}^{l,l-3,3} \cdot
\Im \overline{u_{*,-(l-3)}^{(l-3)}}u_{*,2}^{(3)} \\
2C_{l-1,-(l-2),1}^{l,l-2,2} \cdot
\Re \overline{u_{*,-(l-2)}^{(l-2)}}u_{*,1}^{(2)} &
2C_{l-1,-(l-2),1}^{l,l-2,2} \cdot
\Im \overline{u_{*,-(l-2)}^{(l-2)}}u_{*,1}^{(2)} \end{pmatrix},\\
&\partial_{v_{l}^{(l)},w_{l}^{(l)}} \Big(B_{l,l-5,5},B_{l,l-4,4}\Big)(\eta_*)\\
&=\begin{pmatrix}
2C_{l,-(l-5),5}^{l,l-5,5} \cdot
\Re \overline{u_{*,-(l-5)}^{(l-5)}}u_{*,5}^{(5)} &
2C_{l,-(l-5),5}^{l,l-5,5} \cdot
\Im \overline{u_{*,-(l-5)}^{(l-5)}}u_{*,5}^{(5)} \\
2C_{l,-(l-4),4}^{l,l-4,4} \cdot
\Re \overline{u_{*,-(l-4)}^{(l-4)}}u_{*,4}^{(4)} &
2C_{l,-(l-4),4}^{l,l-4,4} \cdot
\Im \overline{u_{*,-(l-4)}^{(l-4)}}u_{*,4}^{(4)} \end{pmatrix}.
\end{align*}
The coefficients $C_{l-1,-(l-3),2}^{l,l-3,3},C_{l-1,-(l-2),1}^{l,l-2,2},
C_{l,-(l-5),5}^{l,l-5,5},C_{l,-(l-4),4}^{l,l-4,4}$ are non-zero by
Lemma \ref{lemma:CGnonvanishing}. Then the determinant of the first matrix
is a non-zero polynomial of the eight coordinates
\[v_{*,l-3}^{(l-3)},w_{*,l-3}^{(l-3)},
v_{*,l-2}^{(l-2)},w_{*,l-2}^{(l-2)},
v_{*,1}^{(2)},w_{*,1}^{(2)},v_{*,2}^{(3)},w_{*,2}^{(3)},\]
and that of the second matrix is a non-zero polynomial of the eight coordinates
\[v_{*,l-5}^{(l-5)},w_{*,l-5}^{(l-5)},
v_{*,l-4}^{(l-4)},w_{*,l-4}^{(l-4)},
v_{*,5}^{(5)},w_{*,5}^{(5)},v_{*,4}^{(4)},w_{*,4}^{(4)}.\]
(Note that these eight coordinates are distinct when $l \geq 11$.) Thus for
generic values of these coordinates, these matrices are non-singular.

Combining these cases, we have shown that each $1 \times 1$ and $2 \times 2$
diagonal block of this $(2l+1) \times (2l+1)$ submatrix is nonsingular
for generic choices of the non-zero coordinates of $\eta_*$. Then they are also
simultaneously nonsingular for generic choices of these coordinates, so in
particular there exists $\eta_* \in \R^d$ where 
$\der_{\eta^{(l)}} B^{(l)}(\eta_*)$ has full column rank $2l+1$.
Then $\der_{\eta^{(l)}} B^{(l)}(\eta_*)$ must also have full column rank $2l+1$
for all generic $\eta_* \in \R^d$, concluding the proof.
\end{proof}

\subsection{Unprojected cryo-EM} \label{sec:unprojCEM}

\subsubsection{Function basis}
We first describe in further detail the function basis and
rotational action of $\SO(3)$ on basis coefficients in the unprojected 
cryo-EM example of Section \ref{subsec:cryoEM}.
This is similar to the setup of the model with
$S$ spherical shells in \cite[Section 5.5]{bandeira2017estimation}.

For $f \in L_2(\R^3,\C)$, denote its Fourier transform
\begin{equation}\label{eq:fouriertransform}
\hat{f}(k_1,k_2,k_3)=\int_{\R^3} e^{-2\pi \i(k_1x_1+k_2x_2+k_3x_3)}
f(x_1,x_2,x_3)\der x_1\,\der x_2\,\der x_3.
\end{equation}
We reparametrize $k=(k_1,k_2,k_3) \in \R^3$ in the Fourier domain by spherical
coordinates $(\rho,\phi_1,\phi_2)$, and write with a slight abuse
of notation $\hat{f}(\rho,\phi_1,\phi_2)$ for this parametrization.

Let $\hat{j}_{lsm}$ be as defined in (\ref{eq:C3basis}), where $y_{lm}$ are the
complex spherical harmonics in (\ref{eq:complexharmonics}) and $\{z_s:s \geq
1\}$ are any functions $z_s:[0,\infty) \to \R$ satisfying the orthogonality
relation (\ref{eq:zorthogonality}). By the spherical change-of-coordinates
$\der k_1\,\der k_2\,\der k_3 =\rho^2 \sin \phi_1\,\der \rho\,\der \phi_1\,\der
\phi_2$, these functions $\{\hat{j}_{lsm}\}$ are orthonormal in $L_2(\R^3,\C)$.
Then so are their inverse Fourier transforms $\{j_{lsm}\}$, by the Parseval
relation.

Recall the space of $(L,S_0,\ldots,S_L)$-bandlimited functions
(\ref{eq:bandlimitedS2}). By linearity of the Fourier transform,
the basis representation (\ref{eq:bandlimitedS2}) is equivalent to
\begin{equation}\label{eq:bandlimitedS2Fourier}
\hat{f}=\sum_{(l,s,m) \in \cI} u_m^{(ls)} \hat{j}_{lsm}
\end{equation}
in the Fourier domain. A function $f \in L_2(\R^3,\C)$ is real-valued if and
only if
\[\hat{f}(\rho,\phi_1,\phi_2)=\overline{\hat{f}(\rho,\pi-\phi_1,\pi+\phi_2)},\]
where $(\rho,\pi-\phi_1,\pi+\phi_2)$ are the coordinates for the reflection of
$(\rho,\phi_1,\phi_2)$ about the origin. Applying $P_{lm}(x)=(-1)^{l+m}P_{lm}(-x)$ by
(\ref{eq:legendre}) and hence $y_{l,m}(\pi-\phi_1,\pi+\phi_2)
=(-1)^ly_{l,m}(\phi_1,\phi_2)=(-1)^{l+m}\overline{y_{l,-m}(\phi_1,\phi_2)}$
by (\ref{eq:complexharmonics}) and (\ref{eq:conjugationsymmetry}),
it may be checked that this condition is equivalent to the sign symmetry
\begin{equation}\label{eq:usymmetry}
u_m^{(ls)}=(-1)^{l+m}\overline{u_{-m}^{(ls)}}
\end{equation}
in the basis representations (\ref{eq:bandlimitedS2}) and
(\ref{eq:bandlimitedS2Fourier}).
We may then define a real basis $\{h_{lsm}\}$ by
\begin{equation}\label{eq:hlsm}
h_{l,s,m}=\begin{cases}
\frac{1}{\sqrt{2}}\big(j_{l,s,-m}+(-1)^{l+m}j_{l,s,m}\big) & \text{ if } m>0 \\
\i^l \cdot j_{l,s,0} & \text{ if } m=0 \\
\frac{\i}{\sqrt{2}}\big(j_{l,s,m}-(-1)^{l+m}j_{l,s,-m}\big) & \text{ if } m<0.
\end{cases}
\end{equation}
Note that by this definition, $h_{lsm}$ satisfies (\ref{eq:usymmetry}) for its
coefficients $u_m^{(ls)}$ in the basis $\{j_{lsm}\}$, and hence is real-valued.
Thus $\{h_{lsm}\}$ forms an orthonormal basis for $L_2(\R^3,\R)$.
For any $(L,S_0,\ldots,S_L)$-bandlimited
function $f \in L_2(\R^3,\R)$, writing its orthogonal decompositions
\[f=\sum_{(l,s,m) \in \cI} u_m^{(ls)} \cdot j_{lsm}=\sum_{(l,s,m) \in \cI}
\theta^{(ls)}_m \cdot h_{lsm},\]
the coefficients $\{u_m^{(ls)}\}$ and $\{\theta_m^{(ls)}\}$ are then
related by a unitary transform $u=\hat{V}^*\theta$ defined as
\begin{equation} \label{eq:hatv-trans}
u_m^{(ls)}=\begin{cases}
\frac{(-1)^{l+m}}{\sqrt{2}}(\theta_{|m|}^{(ls)}-\i\theta_{-|m|}^{(ls)}) 
& \text{ if } m>0\\
\i^l \cdot \theta_0^{(ls)} & \text{ if } m=0\\
\frac{1}{\sqrt{2}}(\theta_{|m|}^{(ls)}+\i\theta_{-|m|}^{(ls)})
& \text{ if } m<0.\end{cases}
\end{equation}
Here, the sign symmetry (\ref{eq:usymmetry}) and transform $\hat{V}$ 
are different from (\ref{eq:usymmetryS2}) and the transform $V$ defined by
(\ref{eq:u-theta}) in the example of spherical registration, because we are
modeling the Fourier transform $\hat{f}$ rather than $f$ in the spherical
harmonics basis, but we assume that $f$ rather than $\hat{f}$ is real-valued.

The rotation of $\R^3$ induces the following rotational action on the
coefficient vector $\theta \in \R^d$.

\begin{lemma} \label{lem:so3-cryo-unproj-act}
  The action of $\SO(3)$ on the space of real-valued $(L, S_0, \ldots, S_L)$-bandlimited
  functions is given by
\begin{equation}\label{eq:cryoEMG}
\G=\Big\{\hat{V} \cdot D(\frakg) \cdot \hat{V}^*:\frakg \in \SO(3)\Big\} \subset \O(d)
\end{equation}
where $\hat{V} \in \C^{d \times d}$ is the unitary transform defined in
(\ref{eq:hatv-trans}) for which $u=\hat{V}^*\theta$, and
$D(\frakg)$ is the block-diagonal matrix
\begin{equation}\label{eq:DcryoEM}
D(\frakg)=\bigoplus_{l=0}^L \bigoplus_{s=1}^{S_l} D^{(l)}(\frakg)
\end{equation}
with diagonal blocks $D^{(l)}(\frakg) \in \C^{(2l+1) \times (2l+1)}$
given by the complex Wigner D-matrices.
\end{lemma}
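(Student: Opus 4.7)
The plan is to leverage two observations: first, that rotation of $\R^3$ about the origin commutes with the Fourier transform (up to the same rotation acting in the Fourier domain), and second, that the radial factor $z_s(\rho)$ is invariant under such rotations. Together these reduce the problem to the action of $\SO(3)$ on spherical harmonics at each fixed frequency $l$, which is exactly the Wigner D-matrix action reviewed in Appendix \ref{appendix:wignerd}.

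First, I would check the Fourier-rotation compatibility: for $\frakg \in \SO(3)$ and $f_\frakg(x) = f(\frakg^{-1} \cdot x)$, a direct change of variables in (\ref{eq:fouriertransform}), using that $\frakg$ is orthogonal, gives $\widehat{f_\frakg}(k) = \hat{f}(\frakg^{-1} \cdot k)$. Re-expressing in spherical coordinates $(\rho, \phi_1, \phi_2)$, this means the rotated Fourier transform is evaluated at $\frakg^{-1} \cdot (\phi_1, \phi_2)$, with $\rho$ unchanged. Applied to the basis expansion (\ref{eq:bandlimitedS2Fourier}), this gives
\[\widehat{f_\frakg} = \sum_{(l,s,m) \in \cI} u_m^{(ls)} \cdot z_s(\rho) \cdot y_{lm}(\frakg^{-1} \cdot (\phi_1, \phi_2)).\]

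Next, I would invoke the definition of the complex Wigner D-matrices as the representation of $\SO(3)$ on the complex spherical harmonics, reviewed in Appendix \ref{appendix:wignerd}: namely, the vector $(y_{lm}(\frakg^{-1} \cdot (\phi_1,\phi_2)))_{m=-l}^l$ is obtained from $(y_{lm}(\phi_1,\phi_2))_{m=-l}^l$ by the action of $D^{(l)}(\frakg)$ in the sense that the coefficient vector $u^{(ls)} \in \C^{2l+1}$ transforms as $u^{(ls)} \mapsto D^{(l)}(\frakg) u^{(ls)}$ for each pair $(l,s)$. Since the radial index $s$ is untouched and the frequencies $l$ do not mix, the transformation of the full coefficient vector $u = (u_m^{(ls)}: (l,s,m) \in \cI) \in \C^d$ is given by multiplication by the block-diagonal matrix $D(\frakg)$ defined in (\ref{eq:DcryoEM}), with one copy of $D^{(l)}(\frakg)$ for each radial index $s = 1, \ldots, S_l$.

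Finally, since the real coefficient vector $\theta \in \R^d$ and the complex coefficient vector $u \in \C^d$ are related by the unitary transform $u = \hat{V}^* \theta$ of (\ref{eq:hatv-trans}), the action on $\theta$ is the conjugation $\theta \mapsto \hat{V} D(\frakg) \hat{V}^* \theta$, which yields the claimed form of $\G$. The only thing that requires a brief check is that each element $\hat{V} D(\frakg) \hat{V}^*$ indeed lies in $\O(d)$ rather than merely $\mathsf{U}(d)$: this follows because $D(\frakg)$ is unitary, so $\hat{V} D(\frakg) \hat{V}^*$ is unitary, and because the subspace of $\C^d$ cut out by the sign symmetry (\ref{eq:usymmetry}) is preserved by the rotation action (since $f_\frakg$ is real whenever $f$ is), so the conjugated matrix preserves $\R^d$ and is therefore a real orthogonal matrix. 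I expect this last verification---that $\hat{V} D(\frakg) \hat{V}^*$ is real-valued and orthogonal, equivalently that the $D(\frakg)$-action preserves the sign symmetry (\ref{eq:usymmetry})---to be the only nontrivial bookkeeping step; it can be read off from the conjugation symmetry (\ref{eq:conjugationsymmetry}) of the spherical harmonics together with the fact that rotation preserves real-valuedness of $f$.
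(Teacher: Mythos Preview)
Your proposal is correct and follows essentially the same approach as the paper's proof: both use the Fourier-rotation compatibility to reduce to the Wigner D-matrix action on each $u^{(ls)}$, then conjugate by $\hat{V}$ to pass to the real coefficients $\theta$. Your additional verification that $\hat{V} D(\frakg) \hat{V}^*$ lies in $\O(d)$ rather than merely $\mathsf{U}(d)$ is a nice touch that the paper leaves implicit.
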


\begin{proof}
Note that if $f_\frakg(x)=f(\frakg^{-1} \cdot x)$, then its Fourier transform
undergoes the same rotation $\hat{f}_\frakg(k)=\hat{f}(\frakg^{-1} \cdot k)$,
by (\ref{eq:fouriertransform}). Writing
\[\hat{f}(\rho,\phi_1,\phi_2)=\sum_{l=0}^L \sum_{s=1}^{S_l} z_s(\rho) \cdot
\hat{f}_{ls}(\phi_1,\phi_2),
\qquad \hat{f}_{ls}(\phi_1,\phi_2)=\sum_{m=-l}^l u_m^{(ls)}
y_{lm}(\phi_1,\phi_2),\]
each function $\hat{f}_{ls}$ is defined on the unit sphere, and the rotation
by $\frakg$ acts separately on each such function $\hat{f}_{ls}$ via the
map $u^{(ls)} \mapsto D^{(l)}(\frakg) u^{(ls)}$ described in
Appendix \ref{appendix:wignerd}. Thus rotation by $\frakg$ induces the
transformation $u \mapsto D(\frakg)u$ on the complex coefficient vector $u$,
for $D(\frakg)$ as defined in (\ref{eq:DcryoEM}).
Applying the unitary relations $u=\hat{V}^*\theta$ and $\theta=\hat{V}u$, this
rotation then induces the transformation $\theta \mapsto \hat{V} \cdot
D(\frakg)\cdot \hat{V}^*\theta$ on the real coefficients $\theta$.
\end{proof}

\subsubsection{Terms of the high-noise series expansion}

We prove Theorem \ref{thm:cryoEM-mom} on the forms of $s_1(\theta)$,
$s_2(\theta)$, and $s_3(\theta)$.

\begin{proof}[Proof of Theorem \ref{thm:cryoEM-mom}]
Similar to the proof of Theorem \ref{thm:S2registration-mom}, consider two
different real coefficient vectors $\theta,\vartheta \in \R^d$, with
corresponding complex coefficients $u=\hat{V}^*\theta$ and
$v=\hat{V}^*\vartheta$. We compute
$\E_g[\langle \theta,g\cdot\vartheta \rangle^k]$ for $k=1,2,3$.\\

\noindent {\bf Case $k=1$:} By Lemma \ref{lem:so3-cryo-unproj-act}, for any $g\in\G$,
\[\langle \theta,g\cdot\vartheta \rangle
=\langle \theta,\hat{V}D(\frakg)\hat{V}^*\vartheta \rangle=\langle u,D(\frakg)v
\rangle.\]
From the block-diagonal form of $D(\frakg)$ in (\ref{eq:DcryoEM}), we obtain
\begin{equation}\label{eq:ugv}
\langle \theta,g\cdot\vartheta \rangle
=\sum_{l=0}^L \sum_{s=1}^{S_l} \langle u^{(ls)},D^{(l)}(\frakg)v^{(ls)} \rangle
=\sum_{l=0}^L \sum_{s=1}^{S_l}
\sum_{q,m=-l}^l \overline{u_q^{(ls)}} D^{(l)}_{qm}(\frakg)v_m^{(ls)}.
\end{equation}
Applying the identities (\ref{eq:ED}) yields
\[\E_g[\langle \theta,g\cdot\vartheta \rangle]
=\sum_{s=1}^{S_0} \overline{u_0^{(0s)}}v^{(0s)}.\]
Write as shorthand $u^{(0s)}=u_0^{(0s)}$, $v^{(0s)}=v_0^{(0s)}$,
and observe from (\ref{eq:usymmetry}) that these are real-valued.
Then applying this to (\ref{eq:Skgeneral}), we have
\begin{align*}
s_1(\theta)&=\frac{1}{2}\E_g[\langle \theta,g\cdot\theta \rangle]
-\E_g[\langle \theta_*,g\cdot\theta \rangle]+\frac{1}{2}\E_g[\langle \theta_*,g\cdot\theta_* \rangle]
=\frac{1}{2}\sum_{s=1}^{S_0} \Big(u^{(0s)}(\theta)-u^{(0s)}(\theta_*)\Big)^2.
\end{align*}

\noindent {\bf Case $k=2$:}
We take the expected square on both sides of (\ref{eq:ugv}),
applying (\ref{eq:usymmetry}) and the relation (\ref{eq:WignerDorthog}).
Then
\begin{align*}
\E_g[\langle \theta,g\cdot\vartheta \rangle^2]
&=\sum_{l=0}^L \sum_{s,s'=1}^{S_l} \sum_{q,m=-l}^l \frac{(-1)^{m+q}}{2l+1}
\overline{u_q^{(ls)}u_{-q}^{(ls')}}v_m^{(ls)}v_{-m}^{(ls')}\\
&=\sum_{l=0}^L \sum_{s,s'=1}^{S_l} \sum_{q,m=-l}^l \frac{1}{2l+1}
\overline{u_q^{(ls)}}u_q^{(ls')}v_m^{(ls)}\overline{v_m^{(ls')}}\\
&=\sum_{l=0}^L \frac{1}{2l+1} \sum_{s,s'=1}^{S_l}
\langle u^{(ls)},u^{(ls')} \rangle \cdot \overline{\langle v^{(ls)},v^{(ls')}
\rangle}.
\end{align*}
Note that from the isometry $\langle u^{(ls)},u^{(ls')} \rangle
=\langle \theta^{(ls)},\theta^{(ls')} \rangle$, the
inner-products on the last line are real. Then applying this to
(\ref{eq:Skgeneral}),
\begin{align*}
s_2(\theta)&=\frac{1}{4}\E_g[\langle \theta,g\cdot\theta \rangle^2]
-\frac{1}{2}\E_g[\langle \theta_*,g\cdot\theta \rangle^2]
+\frac{1}{4}\E_g[\langle \theta_*,g\cdot\theta_* \rangle^2]\\
&=\sum_{l=0}^L \frac{1}{4(2l+1)}\sum_{s,s'=1}^{S_l}
\left(\langle u^{(ls)}(\theta),u^{(ls')}(\theta) \rangle
-\langle u^{(ls)}(\theta_*),u^{(ls')}(\theta_*) \rangle\right)^2.
\end{align*}

\noindent {\bf Case $k=3$:}
We now take the expected cube on both sides of (\ref{eq:ugv}) and apply the
relations (\ref{eq:WignerDtriple}) and (\ref{eq:usymmetry}). Then
\begin{align*}
\E_g[\langle \theta,g\cdot\vartheta\rangle^3]
&=\mathop{\sum_{l,l',l''=0}^L}_{|l-l'| \leq l'' \leq l+l'} \sum_{s=1}^{S_l}
\sum_{s'=1}^{S_{l'}} \sum_{s''=1}^{S_{l''}}
\sum_{q,m=-l}^l \sum_{q',m'=-l'}^{l'} \frac{1}{2l''+1}\\
&\hspace{0.5in} \cdot  C_{q,q',q+q'}^{l,l',l''}
C_{m,m',m+m'}^{l,l',l''} \overline{u_q^{(ls)}u_{q'}^{(l's')}}
u_{q+q'}^{(l''s'')} v_m^{(ls)}v_{m'}^{(l's')}
\overline{v_{m+m'}^{(l''s'')}}.
\end{align*}
Recall that
\[B_{(l,s),(l',s'),(l'',s'')}(\theta)=\sum_{m=-l}^l \sum_{m'=-l'}^{l'}
C_{m,m',m+m'}^{l,l',l''} \overline{u_m^{(ls)}u_{m'}^{(l's')}}
u_{m+m'}^{(l''s'')}, \qquad u=\hat{V}^*\theta\]
with the convention (\ref{eq:CGconvention}). Changing indices $(m,m') \mapsto
(-m,-m')$ and applying the symmetries (\ref{eq:CGsymmetry2}) and
(\ref{eq:usymmetry}), we have
\begin{align*}
B_{(l,s),(l',s'),(l'',s'')}(\theta)
&=\sum_{m=-l}^l \sum_{m'=-l'}^{l'}
C_{-m,-m',-m-m'}^{l,l',l''}  \overline{u_{-m}^{(ls)}u_{-m'}^{(l's')}}
u_{-m-m'}^{(l''s'')}\\
&=\sum_{m=-l}^l \sum_{m'=-l'}^{l'}
(-1)^{l+l'+l''}C_{m,m',m+m'}^{l,l',l''} 
 \cdot (-1)^{l+l'+l''+m+m'+(m+m')} u_m^{(ls)}u_{m'}^{(l's')}
\overline{u_{m+m'}^{(l''s'')}}\\
&=\overline{B_{(l,s),(l',s'),(l'',s'')}(\theta)}.
\end{align*}
Thus (in contrast to spherical registration in Appendix \ref{sec:sr})
$B_{(l,s),(l',s'),(l'',s'')}(\theta)$ is always real-valued.
Then the above may be written as
\[\E_g[\langle \theta,g\cdot\vartheta \rangle^3]
=\mathop{\sum_{l,l',l''=0}^L}_{|l-l'| \leq l'' \leq l+l'}\frac{1}{2l''+1}
 \sum_{s=1}^{S_l}\sum_{s'=1}^{S_{l'}}\sum_{s''=1}^{S_{l''}}
B_{(l,s),(l',s'),(l'',s'')}(\theta)B_{(l,s),(l',s'),(l'',s'')}(\vartheta).\]
Then applying this to (\ref{eq:Skgeneral}),
\begin{align}
s_3(\theta)&=\frac{1}{12}\E_g[\langle \theta,g\cdot\theta \rangle^3]
-\frac{1}{6}\E_g[\langle \theta_*,g\cdot\theta \rangle^3]+\frac{1}{12}\E_g[\langle \theta_*,g\cdot\theta_* \rangle^3]\nonumber\\
&=\frac{1}{12} \mathop{\sum_{l,l',l''=0}^L}_{|l-l'| \leq l'' \leq l+l'}
\frac{1}{2l''+1}\sum_{s=1}^{S_l}\sum_{s'=1}^{S_{l'}}\sum_{s''=1}^{S_{l''}}
\left(B_{(l,s),(l',s'),(l'',s'')}(\theta)
-B_{(l,s),(l',s'),(l'',s'')}(\theta_*)\right)^2.\label{eq:S3cryoEM}
\end{align}
\end{proof}

\subsubsection{Transcendence degrees}
We now prove Theorem \ref{thm:cryoEM} on the sequences of transcendence degrees.

\begin{proof}[Proof of Theorem \ref{thm:cryoEM}]
Recall the form of $\G$ in Lemma \ref{lem:so3-cryo-unproj-act}, let
$\hat{V}^{(l)} \in \C^{(2l+1) \times (2l+1)}$
represent the unitary map $u^{(ls)} \mapsto \theta^{(ls)}$ (which has the same
form for each $s=1,\ldots,S_l$), and denote by
$g^{(l)}(\frakg)=\hat{V}^{(l)} \cdot D^{(l)}(\frakg) \cdot (\hat{V}^{(l)})^*$ 
the irreducible representation of $\SO(3)$ acting on each subvector
$\theta^{(ls)} \in \R^{2l+1}$. For ${l=1}$, note that this representation
$g^{(1)}$ is isomorphic to $\SO(3)$ acting on $\R^3$ by rotations,
so its action on the pair of subvectors
$(\theta^{(1,1)},\theta^{(1,2)}) \in \R^3
\oplus \R^3$ has the trivial stabilizer subgroup $\{\Id\}$
for generic $(\theta^{(1,1)},\theta^{(1,2)})$. This implies for any $L \geq 1$
and $S_1 \geq 2$ that, as in the
proof of Theorem \ref{thm:S2registration}, the stabilizer subgroup
$\G_\theta$ of any extension of $(\theta^{(1,1)},\theta^{(1,2)})$ to
$\theta \in \R^d$ satisfies $\dim(\G_\theta)=0$, and hence
$d_0=\dim(\G)-\min_\theta \dim(\G_\theta)=3$.

We now compute $\trdeg(\cR_{\leq k}^\G)$ for $k=1,2,3$ using Lemma
\ref{lem:trdeg}. Recall the forms of $s_1(\theta)$ and $s_2(\theta)$
in Theorem \ref{thm:cryoEM-mom}.
For $k=1$, differentiating twice at $\theta=\theta_*$ yields
\[\nabla^2 s_1(\theta_*)=\sum_{s=1}^{S_0}
\nabla u^{(0s)}(\theta_*) \nabla u^{(0s)}(\theta_*)^\top.\]
Recalling $u^{(0s)}(\theta)=\theta_0^{(0s)}$, these vectors
$\{\nabla u^{(0s)}(\theta_*)\}_{s=1}^{S_0}$ are $S_0$ different standard
basis vectors, so
\begin{align*}
\trdeg(\cR_{\leq 1}^\G)&=\rank\Big(\nabla^2 s_1(\theta_*)\Big)=S_0.
\end{align*}
For $k=2$, differentiating twice at $\theta=\theta_*$ yields
\begin{align*}
\nabla^2 s_1(\theta_*)+\nabla^2 s_2(\theta_*)&=\sum_{s=1}^{S_0}\nabla
u^{(0s)}(\theta_*) \nabla u^{(0s)}(\theta_*)^\top\\
&\hspace{0.5in}+\sum_{l=0}^L \frac{1}{2(2l + 1)}
\sum_{s, s' = 1}^{S_l}  \nabla[\langle
u^{(ls)}(\theta),u^{(ls')}(\theta)\rangle]
\nabla [\langle u^{(ls)}(\theta),u^{(ls')}(\theta)\rangle]^\top \Big|_{\theta =
\theta_*}.
\end{align*}
Defining matrices $G^0$ and $G$ with the columns
\begin{align*}
G_s^0&:=\nabla u^{(0s)}(\theta_*) \qquad \text{ for } 1 \leq s \leq S_0\\
G_{lss'}&:=\frac{1}{\sqrt{2(2l+1)}}
\nabla [\langle u^{(ls)}(\theta),u^{(ls')}(\theta)\rangle]
\Big|_{\theta=\theta_*} \qquad \text{ for } 0 \leq l \leq L,\;
1 \leq s,s' \leq S_l,
\end{align*}
this may be written as
\[\nabla^2 s_1(\theta_*)+\nabla^2 s_2(\theta_*)
=G^0(G^0)^\top+GG^\top=[G \mid G^0][G \mid G^0]^\top.\]
For generic $\theta_*$ where $\theta^{(0s)}_0 \neq 0$,
the column span of $G^0$ coincides with the span of
columns $\{G_{0ss}:s=1,\ldots,S_0\}$ of $G$. Thus
\begin{align*}
\trdeg(\cR_{\leq 2}^\G)&=\rank(\nabla^2 s_1(\theta_*)+\nabla^2 s_2(\theta_*))
=\rank([G \mid G^0])=\rank(G).
\end{align*}
Applying the isometry $\langle u^{(ls)}(\theta),u^{(ls')}(\theta) \rangle
=\langle \theta^{(ls)},\theta^{(ls')} \rangle$, Lemma \ref{lem:s2-rank} below
shows that
\begin{align*}
\rank(G)&=\sum_{l=0}^L
\begin{cases} \frac{S_l(S_l+1)}{2} & \text{ if } S_l<2l+1,\\
(2l+1)(S_l-l) & \text{ if } S_l \geq 2l+1,\end{cases}
\end{align*}
establishing the desired form for $k=2$.

For $k=3$, we have $\trdeg(\cR_{\leq 3}^\G) \leq \trdeg(\cR^\G)=d-d_0=d-3$, so it
suffices to show $\rank(\nabla^2 s_3(\theta_*))\geq d-3$.
Writing the index set
\[\cK=\Big\{((l,s),(l',s'),(l'',s'')):0 \leq l,l',l'' \leq L,\;
|l-l'| \leq l'' \leq l+l',\; 1 \leq s \leq S_l,\; 1 \leq s' \leq S_{l'},\;
1 \leq s'' \leq S_{l''}\Big\},\]
by Theorem \ref{thm:cryoEM-mom} we have
\[s_3(\theta)=\sum_{((l,s),(l',s'),(l'',s'')) \in \cK}
\frac{1}{12} \cdot\frac{1}{2l''+1}\cdot 
\big(B_{(l,s),(l',s'),(l'',s'')}(\theta)-B_{(l,s),(l',s'),(l'',s'')}(\theta_*)\big)^2\]
for the function $B_{(l,s),(l',s'),(l'',s'')}(\theta)$ defined in
(\ref{eq:Bls}). Recall from the proof of Theorem \ref{thm:cryoEM-mom} that
$B_{(l,s),(l',s'),(l'',s'')}(\theta)$ is real-valued.
Let us denote
\[B(\theta)=\Big(B_{(l,s),(l',s'),(l'',s'')}(\theta):
\;((l,s),(l',s'),(l'',s'')) \in \cK\Big), \qquad B:\R^d \to \R^{|\cK|},\]
and write
$\der B(\theta) \in \R^{|\cK| \times d}$
for its the derivative in $\theta$. Then, applying the
chain rule to differentiate $s_3(\theta)$ twice at $\theta=\theta_*$,
we obtain
\[\nabla^2 s_3(\theta_*)=\der B(\theta_*)^\top \cdot
\diag\left(\frac{1}{6(2l''+1)}:((l,s),(l',s'),(l'',s''))
\in \cK\right) \cdot \der B(\theta_*).\]
The diagonal matrix in the middle has full rank, so
\begin{equation}\label{eq:S3cryoEMranktmp}
\rank\left(\nabla^2 s_3(\theta_*)\right)
=\rank\left(\der B(\theta_*)\right).
\end{equation}

To analyze this rank, recall the complex parametrization
$u=\hat{V}^* \theta \in \C^d$ from (\ref{eq:hatv-trans}),
where $u$ satisfies the symmetry (\ref{eq:usymmetry}). Let us write the real
and imaginary parts of $u$ as
\[u_m^{(ls)}=v_m^{(ls)}+\i w_m^{(ls)}\]
so that this symmetry (\ref{eq:usymmetry}) is equivalent to
\begin{equation}\label{eq:vwsymmetry}
v_{-m}^{(ls)}=(-1)^{l+m}v_m^{(ls)}, \qquad
w_{-m}^{(ls)}=(-1)^{l+m+1}w_m^{(ls)}.
\end{equation}
Note that for $m=0$, this implies $v_0^{(ls)}=0$ when $l$ is odd and
$w_0^{(ls)}=0$ when $l$ is even. Then, setting
\begin{equation}\label{eq:etals}
\eta^{(ls)}(\theta)=\begin{cases} (v_0^{(ls)},v_1^{(ls)},w_1^{(ls)},
\ldots,v_l^{(ls)},w_l^{(ls)}) & \text { if } l \text{ is even}\\
(w_0^{(ls)},v_1^{(ls)},w_1^{(ls)},
\ldots,v_l^{(ls)},w_l^{(ls)}) & \text { if } l \text{ is odd,} \end{cases}
\end{equation}
these coordinates $\eta^{(ls)} \in \R^{2l+1}$ provide a (linear)
invertible reparametrization of $\theta^{(ls)}$. This defines a
reparametrization
\begin{equation}\label{eq:eta}
\eta(\theta)=
\Big(\eta^{(ls)}(\theta):0 \leq l \leq L,1 \leq s \leq S_l\Big) \in \R^d
\end{equation}
with inverse function $\theta(\eta)$. Writing as shorthand
$\eta_*=\eta(\theta_*)$ and $B(\eta) \equiv B(\theta(\eta))$,
and denoting by $\der_\eta B(\eta)$ the derivative of $B$
in the new variables $\eta$, (\ref{eq:S3cryoEMranktmp}) is equivalent to
\begin{equation}\label{eq:S3cryoEMrank}
\rank\Big(\nabla^2 s_3(\theta_*)\Big)=\rank(\der_\eta B(\eta_*)).
\end{equation}

Let us group the columns and rows of $\der_\eta B$ into blocks indexed by
$(l,s)$, where the $(l,s)$ column block corresponds to $\der_{\eta^{(ls)}}$
and the $(l,s)$ row block corresponds to $B^{(ls)}$ as defined below in
Lemma \ref{lemma:S3cryoEMrankincrease}. (These blocks $B^{(ls)}$ are disjoint by
definition, and we may discard the remaining rows of
$\der_\eta B$ not belonging to any such block to produce a lower bound for
its rank.)
Ordering the pairs $(l,s)$ as in 
Lemma \ref{lemma:S3cryoEMrankincrease}, the resulting matrix $\der_\eta B$ is
block lower-triangular. Thus its rank is lower-bounded by the total rank of all
blocks along the diagonal, i.e.
\[\rank(\der_\eta B(\eta_*)) \geq \sum_{l=0}^L \sum_{s=1}^{S_l}
\rank(\der_{\eta^{(ls)}} B^{(ls)}(\eta_*)).\]
Lemma \ref{lemma:S3cryoEMrankincrease} shows that for generic $\eta_* \in\R^d$, 
\begin{align*}
\sum_{l=2}^L \sum_{s=1}^{S_l}
\rank(\der_{\eta^{(ls)}} B^{(ls)}(\eta_*))
&=\sum_{l=2}^L \sum_{s=1}^{S_l} (2l+1)
=\sum_{l=2}^L (2l+1)S_l,\\
\sum_{s=1}^{S_1} \rank(\der_{\eta^{(1s)}} B^{(1s)}(\eta_*))
& \geq 1+2+\sum_{s=3}^{S_1} 3=3S_1-3\\
\sum_{s=1}^{S_0} \rank(\der_{\eta^{(0s)}} B^{(0s)}(\eta_*))&=S_0.
\end{align*}
Combining these,
\[\trdeg \cR_{\leq 3}^\G \geq \rank(\nabla^2 s_3(\theta_*))
=\rank(\der_\eta B(\eta_*)) \geq \left(\sum_{l=0}^L (2l+1)S_l\right)-3=d-3.\]
Thus $\trdeg \cR_{\leq 3}^\G=d-3$.
\end{proof}

\begin{lemma} \label{lem:s2-rank}
For any $l \geq 0$ and $S \geq 2$, consider
$\theta^{(1)},\ldots,\theta^{(S)} \in \R^{2l+1}$ and the Jacobian matrix of all
their pairwise inner-products with
respect to $\theta=(\theta^{(1)},\ldots,\theta^{(S)}) \in \R^{(2l+1)S}$,
\[\der_\theta [\langle \theta^{(s)}, \theta^{(s')}\rangle:1\leq s \leq s'\leq
S]
\in \R^{\frac{S(S+1)}{2} \times (2l+1)S}.\]
At generic $\theta_* \in \R^{(2l+1)S}$, this matrix has rank
\[
\rank\Big(\der_\theta [\langle \theta^{(s)}, \theta^{(s')}\rangle:1\leq s
\leq s'\leq S]
\Big|_{\theta=\theta_*}\Big)
= \begin{cases} \frac{S(S + 1)}{2} & S<2l + 1 \\  
(2l + 1) (S - l) & S \geq 2l + 1. \end{cases}
\]
\end{lemma}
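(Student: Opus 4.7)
The plan is to recognize the Jacobian
\[
J(\Theta) \;:=\; \der_\theta [\langle \theta^{(s)}, \theta^{(s')}\rangle:\,1\leq s \leq s'\leq S]
\]
as the matrix (up to a harmless reordering of rows) of the linear map
\[
\der \Phi(\Theta):B \;\longmapsto\; B^\top\Theta + \Theta^\top B,
\qquad
B \in \R^{(2l+1)\times S},
\]
where $\Phi(\Theta) = \Theta^\top\Theta$ sends $(2l+1)\times S$ real matrices to symmetric $S\times S$ matrices. Writing $\Theta$ for the matrix with columns $\theta^{(1)},\ldots,\theta^{(S)}$, I will compute $\rank J(\Theta_*)$ for generic $\Theta_*$ by analyzing the kernel of $\der\Phi(\Theta_*)$ and applying rank-nullity.

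For $S \leq 2l+1$, I would fix a generic $\Theta_*$ whose $S$ columns are linearly independent, so that $G := \Theta_*^\top \Theta_*$ is positive definite. Writing $B = \Theta_* M + B'$ with the columns of $B'$ in the orthogonal complement of $\mathrm{col}(\Theta_*)$, the equation $B^\top\Theta_* + \Theta_*^\top B = 0$ reduces to $M^\top G + GM = 0$, which cuts out an $\binom{S}{2}$-dimensional subspace of the $M$'s (parametrized by antisymmetric $A$ via $M = G^{-1}A$). The piece $B'$ is unconstrained and contributes $S(2l+1-S)$ further dimensions, so the kernel has dimension $\binom{S}{2} + S(2l+1-S)$ and hence $\rank J(\Theta_*) = (2l+1)S - \binom{S}{2} - S(2l+1-S) = \tfrac{S(S+1)}{2}$.

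For $S \geq 2l+1$, I would fix a generic $\Theta_*$ whose $2l+1$ rows are linearly independent, i.e.\ $\Theta_*$ is surjective as a linear map $\R^S \to \R^{2l+1}$. The key claim is that the kernel of $\der\Phi(\Theta_*)$ equals $\{A\Theta_* : A \in \R^{(2l+1)\times(2l+1)},\, A^\top = -A\}$. The inclusion $\supseteq$ is immediate. For $\subseteq$, after permuting columns I may assume the first $2l+1$ columns form an invertible block $\Theta_1$, with the remaining columns grouped as $\Theta_2$, and write $B = [B_1 \mid B_2]$ accordingly. Expanding $B^\top\Theta_* + \Theta_*^\top B = 0$ yields three block equations: the $(1,1)$-block forces $B_1 = A\Theta_1$ with $A := B_1\Theta_1^{-1}$ antisymmetric, the $(1,2)$-block then forces $B_2 = A\Theta_2$, and the $(2,2)$-block is automatic. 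Since $\Theta_*$ has full row rank, the map $A \mapsto A\Theta_*$ is injective, so the kernel has dimension $\binom{2l+1}{2} = (2l+1)l$ and $\rank J(\Theta_*) = (2l+1)(S-l)$.

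Both computations yield the claimed rank exactly at every $\Theta_*$ satisfying the relevant open rank condition on $\Theta_*$, so they hold throughout a generic subset of $\R^{(2l+1)S}$. The only slightly delicate step is the kernel characterization in the second case, but the block decomposition reduces it to a short algebraic manipulation, so I do not anticipate a substantive obstacle.
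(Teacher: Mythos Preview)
Your argument is correct. Viewing the Jacobian as the differential of $\Phi(\Theta)=\Theta^\top\Theta$ and computing $\ker \der\Phi(\Theta_*)$ directly at a full-rank $\Theta_*$ is a clean way to get both the upper and lower bounds at once via rank--nullity; the block manipulation in the $S\geq 2l+1$ case and the $B=\Theta_* M+B'$ decomposition in the $S\leq 2l+1$ case both check out.

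This is a genuinely different route from the paper. The paper treats the two bounds separately: for the lower bound it exhibits a single explicit point $\theta_*$ (standard basis vectors in the first $q=\min(2l+1,S)$ slots) and reads off a full-rank submatrix of the Jacobian, then invokes the general principle that the generic rank of an analytic matrix is its maximal rank. For the upper bound in the case $S\geq 2l+1$ it observes that the inner products are invariant under the simultaneous $\SO(2l+1)$ action, so the tangent space to the $\SO(2l+1)$-orbit---which has dimension $(2l+1)l$ when the stabilizer is trivial---lies in the kernel of the Jacobian. Your kernel description $\{A\Theta_*:A^\top=-A\}$ is of course exactly this tangent space made explicit, but you go further and show it is the \emph{entire} kernel, which is what lets you bypass a separate lower-bound construction. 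The paper's approach is consistent with its broader strategy of verifying rank conditions at carefully chosen special points; yours is more intrinsic and arguably more informative, since it identifies the kernel exactly at every full-rank $\Theta_*$ rather than only bounding the rank generically.
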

\begin{proof}
Let $q=\min(2l+1,S)$ and consider the rows of the Jacobian for pairs $(s,s')$
given by
\[(1, 1), (1, 2), \ldots, (1, S), (2, 2), (2, 3), \ldots, (2, S), \ldots
(q, q), \ldots, (q, S).\]
It may be checked that the number of such rows is exactly the desired formula
for the rank. Consider $\theta_*$ 
where $\theta_{*,1}^{(1)} = \theta_{*,2}^{(2)} = \cdots
= \theta^{(q)}_{*,q} = 1$, and all other coordinates are $0$. For this
$\theta_*$, the entries of the Jacobian are given by
\begin{align*}
\partial_{\theta^{(p)}_{m}}[ \langle \theta^{(s)}, \theta^{(s')}\rangle]\Big|_{\theta = \theta_*}
=\1\{p = s\} \cdot\theta^{(s')}_{*,m} + \1\{p = s'\}\cdot \theta^{(s)}_{*,m}
=\1\{p = s, m=s'\} + \1\{p = s', m=s\}.
\end{align*}
Thus for each row $(s,s')$ above where $s \leq q \leq 2l+1$ and $s' \leq S$,
there are either 1 or 2 non-zero entries,
in the column $\partial_{\theta_s^{(s')}}$ and also in the column
$\partial_{\theta_{s'}^{(s)}}$ if $s' \leq 2l+1$. These columns
are distinct for different rows $(s,s')$, so the submatrix of these
columns has full row rank. This shows
\[
\rank\Big(\der_\theta [\langle \theta^{(s)}, \theta^{(s')}\rangle:1\leq s
\leq s'\leq S]
\Big|_{\theta=\theta_*}\Big)
\geq \begin{cases} \frac{S(S + 1)}{2} & S<2l + 1 \\  
(2l + 1) (S - l) & S \geq 2l + 1\end{cases}
\]
for this choice of $\theta_*$, and hence also at any generic $\theta_*$
by Fact \ref{fact:fullrank}.

For the corresponding upper bound, for $S<2l + 1$ this follows because the
Jacobian has only $S(S + 1)/2$ rows.
For $S \geq 2l + 1$, consider the action of $\SO(2l+1)$ on $\theta$ by
simultaneous rotation of the vectors $\theta^{(1)},\ldots,\theta^{(S)}$.
For $S \geq 2l+1$ and at generic $\theta_*$ where
$\theta_*^{(1)},\ldots,\theta_*^{(S)}$ span all of $\R^{2l+1}$, this action
has trivial stabilizer, so
$\dim(\orbit_{\theta_*})=\dim(\SO(2l+1))=(2l+1)l$ where $\orbit_{\theta_*}$ is
the orbit of $\theta_*$ under this action. Since
$\langle\theta^{(s)},\theta^{(s')} \rangle$ is constant on $\orbit_{\theta_*}$,
for each vector $v$ in the dimension-$(2l+1)l$ tangent space to
$\orbit_{\theta_*}$, we have
\[\der_\theta [\langle \theta^{(s)}, \theta^{(s')}\rangle:1\leq s
\leq s'\leq S]\big|_{\theta=\theta_*} \cdot v=0.\]
Thus the dimension of the row span of the Jacobian is at most $(2l+1)S-(2l+1)l=(2l+1)(S-l)$, as
desired.
\end{proof}

\begin{lemma}\label{lemma:S3cryoEMrankincrease}
Suppose $L \geq 1$ and $S_0,\ldots,S_L \geq 2$.
Order the pairs $(l,s)$ by $(l,s)<(l',s')$ if $l<l'$ or if
$l=l'$ and $s<s'$. Fix any $l \in \{1,\ldots,L\}$ and
$s \in \{1,\ldots,S_l\}$, and let $\cK^{(ls)}$ be the set of
tuples $((l,s),(l',s'),(l'',s'')) \in \cK$ where $(l,s)$ take
these fixed values, and $(l',s') \leq (l,s)$ and $(l'',s'') \leq (l,s)$.
Denote
\[B^{(ls)}(\eta)=\Big(B_{(l,s),(l',s'),(l'',s'')}(\eta):
\;((l,s),(l',s'),(l'',s'')) \in \cK^{(ls)}\Big) \in \R^{|\cK^{(ls)}|}\]
and let $\der_{\eta^{(ls)}} B^{(ls)} \in \R^{|\cK^{(ls)}| \times (2l+1)}$
be its Jacobian in $\eta^{(ls)}$. Then for any generic $\eta_* \in \R^d$:
\begin{enumerate}[(a)]
\item If $l \geq 2$, then
$\rank(\der_{\eta^{(ls)}} B^{(ls)}(\eta_*))=2l+1$.
\item If $l \geq 1$ and $s \geq 3$, then also
$\rank(\der_{\eta^{(ls)}} B^{(ls)}(\eta_*))=2l+1=3$. Furthermore for $s=2$,
removing $w_0^{(12)}$ from $\eta^{(12)}$, we have
$\rank(\der_{v_1^{(12)},w_1^{(12)}} B^{(12)}(\eta_*))=2$. For $s=1$,
removing $w_0^{(11)},w_1^{(11)}$ from $\eta^{(11)}$, we have
$\rank(\der_{v_1^{(11)}} B^{(11)}(\eta_*))=1$.
\item If $l=0$, then $\rank(\der_{\eta^{(0s)}} B^{(0s)}(\eta_*))=1$.
\end{enumerate}
\end{lemma}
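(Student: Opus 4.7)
The plan is to apply Fact \ref{fact:fullrank} separately to each of the rank bounds: since every entry of the relevant Jacobian is polynomial in $\eta$, it suffices to exhibit, for each $(l,s)$, a single $\eta_* \in \R^d$ at which the claimed rank is achieved. In each case I choose $\eta_*$ with most coordinates zero, so that the sparsity of the Clebsch-Gordan coefficients $C^{l,l',l''}_{m,m',m+m'}$ collapses each $B_{(l,s),(l',s'),(l'',s'')}$ from (\ref{eq:Bls}) to a short sum, and the relevant submatrix of $\der_{\eta^{(ls)}} B^{(ls)}$ admits an explicit block-triangular form whose determinant is a non-trivial polynomial in the non-zero coordinates.

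Part (c) is immediate: $B_{(0,s),(0,s),(0,s)} = (v_0^{(0s)})^3$ has nonzero derivative in $v_0^{(0s)}$ at any $\eta_*$ with $v_0^{(0s)} \neq 0$. For part (a) with $l \geq 2$ I would adapt the construction of Lemma \ref{lem:S2rankincrease}: to activate each coordinate of $\eta^{(ls)}$ I use a triple $((l,s),(l',s'),(l'',s''))$ with $|l-l'| \leq l'' \leq l+l'$, $l', l'' < l$, and $(l',s'),(l'',s'') \leq (l,s)$, choosing $u_*^{(l',s')},u_*^{(l'',s'')}$ supported only on the indices needed to survive differentiation in $\eta^{(ls)}$. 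The radial flexibility $S_{l'} \geq 2$ for all $l' < l$ is what allows the base case to drop from $L=10$ (as in spherical registration) to $l = 2$. The $(2l+1) \times (2l+1)$ submatrix assembled in this way then reduces, after row and column reordering, to a block-triangular matrix with $1 \times 1$ and $2 \times 2$ diagonal blocks whose nonsingularity follows from Lemma \ref{lemma:CGnonvanishing} together with generic values of the non-zero coordinates of $\eta_*$.

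Part (b) is the principal obstacle. The triangle condition with $l = 1$ and $l', l'' \leq 1$ forces $l', l'' \in \{0, 1\}$, and the $\SO(3)$-representation theory of $\R^3$ eliminates most of the resulting triples: any cubic $\SO(3)$-invariant in one or two vectors of $\R^3$ is a polynomial in $\|A\|^2, \|B\|^2, A \cdot B$, and a degree count rules out the required bidegrees; equivalently, the antisymmetry $C^{1,1,1}_{m,m',m+m'} = -C^{1,1,1}_{m',m,m+m'}$ combined with the symmetry of $\overline{u^{(1,s_0)}_m u^{(1,s_0)}_{m'}}$ kills $B_{(1,s_0),(1,s_0),(1,s_1)}$ and related terms. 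What remains at $s = 1$ is only $\theta_0^{(0,s')} \|\theta^{(1,1)}\|^2$, coming from the triple $((1,1),(0,s'),(1,1))$ with $C^{1,0,1}_{m,0,m}=1$; this gives $\partial_{v_1^{(11)}} B_{(1,1),(0,s'),(1,1)} = 4\,\theta_0^{(0,s')}\,v_1^{(11)}$, nonzero at generic $\eta_*$ and establishing rank $1$. At $s = 2$ one additionally has $\theta_0^{(0,s')} \langle \theta^{(1,1)}, \theta^{(1,2)} \rangle$, and the $2 \times 2$ minor of $\der_{(v_1^{(12)},w_1^{(12)})} B^{(12)}$ is proportional to $v_1^{(12)} w_1^{(11)} - w_1^{(12)} v_1^{(11)}$, which is generically nonzero. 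At $s \geq 3$ the triple $((1,s),(1,1),(1,2))$ involves three distinct radial shells and is the only configuration supporting a nonzero $\SO(3)$-invariant trilinear form, namely $\det(\theta^{(1,s)}, \theta^{(1,1)}, \theta^{(1,2)})$; its gradient in $\eta^{(1s)}$ is generically independent of the two directions already produced at $s = 1, 2$, yielding a non-singular $3 \times 3$ minor.

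The main difficulty is thus the $l = 1$ case, where the rank drop by $3$ (distributed as $2$ lost at $s=1$ and $1$ lost at $s=2$) must be matched exactly to the orbit dimension $d_0 = 3$; this matching is forced by the representation-theoretic vanishing of the above cubic invariants on a small number of $l=1$ shells. Summing the contributions then yields $\rank(\nabla^2 s_3(\theta_*)) \geq S_0 + (3 S_1 - 3) + \sum_{l \geq 2} (2l+1) S_l = d - 3$, as required in the proof of Theorem \ref{thm:cryoEM}.
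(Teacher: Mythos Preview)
Your strategy matches the paper for parts (a) and (c): reduce via Fact~\ref{fact:fullrank} to a single well-chosen $\eta_*$, exploit sparsity of the Clebsch--Gordan sums, and verify a block-triangular minor using Lemma~\ref{lemma:CGnonvanishing}. For part (b), however, you take a genuinely different and cleaner route. The paper proceeds by explicit computation: for each of $s=1$, $s=2$, and $s\geq 3$ it fixes $(A,B)=(1,2)$, specializes $\eta_*$ via ``Type'' conditions, and writes out $1\times 1$, $2\times 2$, and $3\times 3$ submatrices entry by entry using (\ref{eq:dervmlsI})--(\ref{eq:dereta0lsII}). You instead use that the $l=1$ representation is the standard $\SO(3)$-action on $\R^3$, so the only cubic invariants in copies of $\R^3$ are built from inner products and the determinant. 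This immediately explains why $B_{(1,s_0),(1,s_0),(1,s_1)}\equiv 0$ (the determinant is alternating) and identifies the surviving $B$-functions as scalar multiples of $\theta_0^{(0,s')}\|\theta^{(1,s)}\|^2$, $\theta_0^{(0,s')}\langle\theta^{(1,s)},\theta^{(1,s'')}\rangle$, and $\det(\theta^{(1,s)},\theta^{(1,1)},\theta^{(1,2)})$, whose gradients in $\eta^{(1s)}$ are proportional to $\theta^{(1,s)}$, $\theta^{(1,s'')}$, and $\theta^{(1,1)}\times\theta^{(1,2)}$ respectively. The rank loss of exactly $3$ across $s\in\{1,2\}$ then falls out of the fact that no determinant is available until three distinct shells exist.

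Two points to fix. First, your sentence for $s\geq 3$ is garbled: ``the two directions already produced at $s=1,2$'' live in $\eta^{(11)},\eta^{(12)}$, not in $\eta^{(1s)}$, so they cannot be compared to the gradient of the determinant in $\eta^{(1s)}$. What you need to say is that three functions available in $\cK^{(1s)}$ (e.g.\ the norm, one inner product, and the determinant, or simply two inner products and the determinant) have linearly independent gradients in $\eta^{(1s)}$ for generic $\eta_*$. Second, part (a) is only a plan. The paper carries out separate constructions for $l\geq 4$, $l=3$, and $l=2$, each with an explicit table of selected rows and a block decomposition; the small cases $l\in\{2,3\}$ need care because few triples with $l',l''<l$ satisfy the triangle inequality, and the precise row choices are not an obvious specialization of Lemma~\ref{lem:S2rankincrease}. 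You should at least indicate the row selections for $l=2$ and $l=3$.
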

\begin{proof}
The strategy is similar to the proof of Lemma \ref{lem:S2rankincrease}.
For each statement, by Fact \ref{fact:fullrank},
it suffices to exhibit a single point $\eta_* \in \R^d$
where the rank equality holds. We choose $\eta_*$ having most coordinates 0,
to allow an explicit computation of the rank.

Recall $B_{(l,s),(l',s'),(l'',s'')}$ from
(\ref{eq:Bls}). We first compute $\der_{\eta^{(ls)}}
B_{(l,s),(l',s'),(l'',s'')}$: If $(l',s'),(l'',s'')<(l,s)$ strictly, then the
derivative $\der_{\eta^{(ls)}}$ applies to only the term 
$\overline{u_m^{(ls)}}$ in (\ref{eq:Bls}). A computation analogous
to (\ref{eq:dervmlI}--\ref{eq:derwmlI}) using the sign
symmetry (\ref{eq:usymmetry}) shows, for $m>0$ strictly,
\begin{align}
\partial_{v_m^{(ls)}} B_{(l,s),(l',s'),(l'',s'')}&=\sum_{m'=-l'}^{l'}
2C_{m,m',m+m'}^{l,l',l''} \cdot
\Re \overline{u_{m'}^{(l's')}}u_{m+m'}^{(l''s'')}
\label{eq:dervmlsI}\\
\partial_{w_m^{(ls)}} B_{(l,s),(l',s'),(l'',s'')}&=\sum_{m'=-l'}^{l'}
2C_{m,m',m+m'}^{l,l',l''} \cdot
\Im \overline{u_{m'}^{(l's')}}u_{m+m'}^{(l''s'')}.
\label{eq:derwmlsI}
\end{align}
For $m=0$, recalling that $\eta_0^{(ls)}=v_0^{(ls)}$ if $l$ is even and
$\eta_0^{(ls)}=w_0^{(ls)}$ if $l$ is odd, we also have
\begin{align}
\partial_{\eta_0^{(ls)}} B_{(l,s),(l',s'),(l'',s'')}
&=\sum_{m'=-l'}^{l'} C_{0,m',m'}^{l,l',l''}
\overline{u_{m'}^{(l's')}}u_{m'}^{(l''s'')}
\times \begin{cases} 1 & \text{ if } l \text{ is even}\\
-\i & \text{ if } l \text{ is odd.} \end{cases}
\label{eq:dereta0lsI}
\end{align}

If $(l',s')=(l,s)$ and $(l'',s'')<(l,s)$ strictly, an additional
contribution to the derivatives arise from differentiating $u_{m'}^{(l's')}$.
This doubles the above expressions, and we have
\begin{align}
\partial_{v_m^{(ls)}} B_{(l,s),(l,s),(l'',s'')}&=\sum_{m'=-l}^l
4C_{m,m',m+m'}^{l,l,l''} \cdot
\Re \overline{u_{m'}^{(ls)}}u_{m+m'}^{(l''s'')}
\label{eq:dervmlsII}\\
\partial_{w_m^{(ls)}} B_{(l,s),(l,s),(l'',s'')}&=\sum_{m'=-l}^l
4C_{m,m',m+m'}^{l,l,l''} \cdot
\Im \overline{u_{m'}^{(ls)}}u_{m+m'}^{(l''s'')}
\label{eq:derwmlsII}\\
\partial_{\eta_0^{(ls)}} B_{(l,s),(l,s),(l'',s'')}
&=\sum_{m'=-l}^l 2C_{0,m',m'}^{l,l,l''}
\overline{u_{m'}^{(ls)}}u_{m'}^{(l''s'')}
\times \begin{cases} 1 & \text{ if } l \text{ is even}\\
-\i & \text{ if } l \text{ is odd.} \end{cases}
\label{eq:dereta0lsII}
\end{align}

We now use a different construction of $\eta_*$ for different values of
$(l,s)$:\\

{\bf Part (a), $l \geq 4$:} Let us fix two radial frequencies $(A,B)=(1,2)$.
(We use here the condition $S_0,\ldots,S_L \geq 2$, so that these frequencies
exist for each $l=0,\ldots,L$.) We choose $\eta_*$ such that for all $l' \in
\{1,\ldots,l-1\}$ and $m' \in \{0,\ldots,l'\}$,
\[v_{*,m'}^{(l'A)}=w_{*,m'}^{(l'A)}=0 \text{ unless } m'=l', \qquad
v_{*,m'}^{(l'B)}=w_{*,m'}^{(l'B)}=0 \text{ unless } m'=l'-1.\]
We choose the non-zero coordinates of $\eta_*$ to be generic. Using
similar notation as in Lemma \ref{lem:S2rankincrease}, we write this as
\[\Type(l', A)=0, \qquad \Type(l', B)=1 \qquad \text{ for all } \qquad
l'=1,\ldots,l-1,\]
where $\Type(l',s')=i$ indicates that $v^{(l's')}_{*,m},w^{(l's')}_{*,m}=0$
unless $m=l'-i$. (In contrast to Lemma \ref{lem:S2rankincrease}, here
$\Type(\cdot)$ is a single integer rather than a set.)
For $\partial_{v_m^{(ls)}} B_{(l,s),(l',s'),(l'',s'')},
\partial_{w_m^{(ls)}} B_{(l,s),(l',s'),(l'',s'')}$ to be non-zero,
we require $|m+m'|=l''-\Type(l'',s'')$ and $|m'|=l'-\Type(l',s')$.
This requires analogously to (\ref{eq:spherregmcondition})
\begin{equation}\label{eq:cryoEMmcondition}
m \in \Big\{\big|(l'-\Type(l',s'))-(l''-\Type(l'',s''))\big|,
(l'-\Type(l',s'))+(l''-\Type(l'',s''))\Big\}.
\end{equation}

Rows of $\der_{\eta^{(ls)}} B^{(ls)}(\eta_*)$ may be indexed by
$(l',s'),(l'',s'')$ for which $((l,s),(l',s'),(l'',s'')) \in \cK^{(ls)}$.
We select $2l+1$ such rows, given by the left column of the
following Table \ref{tab:unprojcryo}. Note that when $l \geq 4$, these rows satisfy the requirement
$l'+l'' \geq l$ in the definition of $\cK$.
For each row, the right column indicates the values of $m$
satisfying (\ref{eq:cryoEMmcondition}), for which
$\partial_{v_m^{(ls)}},\partial_{w_m^{(ls)}}$ are non-zero.

\begin{table}[h]
\caption{ }\label{tab:unprojcryo}
\begin{tabular}{c|c}
$(l',s')$ and $(l'',s'')$ & Values of $m$ \\
\hline
$(l-1,A)$ and $(l-1,A)$ if $l$ is even;
$(l-2,A)$ and $(l-1,B)$ if $l$ is odd & 0 \\
\hline
$(l-1,B)$ and $(l-1,A)$ & 1 \\
$(l-2,A)$ and $(l-1,A)$ & 1 \\
$(l-2,B)$ and $(l-1,A)$ & 2 \\
$(l-3,A)$ and $(l-1,A)$ & 2 \\
$\vdots$ & $\vdots$ \\
$(3,B)$ and $(l-1,A)$ & $l-3$ \\
$(2,A)$ and $(l-1,A)$ & $l-3$ \\
\hline
$(3,B)$ and $(l-1,B)$ & $l-4$ and $l$ \\
$(2,A)$ and $(l-1,B)$ & $l-4$ and $l$ \\
$(2,B)$ and $(l-1,B)$ & $l-3$ and $l-1$ \\
$(1,A)$ and $(l-1,B)$ & $l-3$ and $l-1$ \\
$(2,B)$ and $(l-1,A)$ & $l$ and $l-2$ \\
$(1,A)$ and $(l-1,A)$ & $l$ and $l-2$
\end{tabular}
\end{table}

To verify that this selected $(2l+1) \times (2l+1)$ submatrix of
$\der_{\eta^{(ls)}} B^{(ls)}(\eta_*)$ is non-singular, we order its
rows as in Table \ref{tab:unprojcryo}, and its columns by the ordering of variables
\[\eta_0^{(ls)},v_1^{(ls)},w_1^{(ls)},\ldots,
v_{l-3}^{(ls)},w_{l-3}^{(ls)},v_l^{(ls)},w_l^{(ls)},
v_{l-1}^{(ls)},w_{l-1}^{(ls)},v_{l-2}^{(ls)},w_{l-2}^{(ls)}\]
as they appear in the right column above. Then this
$(2l+1) \times (2l+1)$ submatrix is block lower-triangular
in the decomposition $2l+1=1+2+2+\ldots+2$. It suffices
to check that each $1 \times 1$ and $2 \times 2$ diagonal block is
non-singular. 

{\bf Block corresponding to $\eta_0^{(l)}$:} 
Applying (\ref{eq:dereta0lsI}) and the symmetries
(\ref{eq:CGsymmetry2}) and (\ref{eq:usymmetry}), the
first $1 \times 1$ diagonal block is
\begin{align*}
\partial_{\eta_0}^{(ls)} B_{(l,s),(l-1,A),(l-1,A)}(\eta_*)
&=2C_{0,l-1,l-1}^{l,l-1,l-1} \cdot \big|u_{*,l-1}^{(l-1,A)}\big|^2
\text{ for even } l\\
\partial_{\eta_0}^{(ls)} B_{(l,s),(l-2,A),(l-1,B)}(\eta_*)
&=2C_{0,l-2,l-2}^{l,l-2,l-1} \cdot
\Im \overline{u_{*,l-2}^{(l-2,A)}}u_{*,l-2}^{(l-1,B)}
\text{ for odd } l.
\end{align*}
By Lemma \ref{lemma:CGnonvanishing}, these Clebsch-Gordan coefficients are
non-zero, so this block
is generically non-zero.

{\bf Blocks corresponding to $(v_1^{(ls)},w_1^{(ls)}),\ldots,
(v_{l-3}^{(ls)},w_{l-3}^{(ls)})$:} The arguments for these blocks are similar,
so we consider only $v_{l-3}^{(ls)},w_{l-3}^{(ls)}$. Applying
(\ref{eq:dervmlsI}--\ref{eq:derwmlsI}), this $2 \times 2$ block is
\begin{align*}
&\partial_{v_{l-3}^{(ls)},w_{l-3}^{(ls)}} \Big(B_{(l,s),(3,B),(l-1,A)},
B_{(l,s),(2,A),(l-1,A)}\Big)(\eta_*)\\
&=\begin{pmatrix}
2C_{l-3,2,l-1}^{l,3,l-1} \cdot
\Re \overline{u_{*,2}^{(3,B)}}u_{*,l-1}^{(l-1,A)} &
2C_{l-3,2,l-1}^{l,3,l-1} \cdot
\Im \overline{u_{*,2}^{(3,B)}}u_{*,l-1}^{(l-1,A)} \\
2C_{l-3,2,l-1}^{l,2,l-1} \cdot
\Re \overline{u_{*,2}^{(2,A)}}u_{*,l-1}^{(l-1,A)} &
2C_{l-3,2,l-1}^{l,2,l-1} \cdot
\Im \overline{u_{*,2}^{(2,A)}}u_{*,l-1}^{(l-1,A)}
\end{pmatrix}
\end{align*}
These Clebsch-Gordan coefficients
are again non-zero by Lemma \ref{lemma:CGnonvanishing}, so the determinant of
this matrix is a non-zero polynomial in the six coefficients of $\eta_*$
\[v_{*,2}^{(3,B)},w_{*,2}^{(3,B)},v_{*,2}^{(2,A)},w_{*,2}^{(2,A)},
v_{*,l-1}^{(l-1,A)},w_{*,l-1}^{(l-1,A)}.\]
(These coefficients are distinct for $l \geq 4$.) Hence this determinant is
generically non-zero.

{\bf Blocks corresponding to $(v_l^{(ls)},w_l^{(ls)}),
(v_{l-1}^{(ls)},w_{l-1}^{(ls)}),(v_{l-2}^{(ls)},w_{l-2}^{(ls)})$:} 
Applying (\ref{eq:dervmlsI}--\ref{eq:derwmlsI}), these $2 \times 2$ blocks are
\begin{align*}
&\partial_{v_l^{(ls)},w_l^{(ls)}}
\Big(B_{(l,s),(3,B),(l-1,B)},B_{(l,s),(2,A),(l-1,B)}\Big)(\eta_*)\\
&=\begin{pmatrix}
2C_{l,-2,l-2}^{l,3,l-1} \cdot
\Re \overline{u_{*,-2}^{(3,B)}}u_{*,l-2}^{(l-1,B)} &
2C_{l,-2,l-2}^{l,3,l-1} \cdot
\Im \overline{u_{*,-2}^{(3,B)}}u_{*,l-2}^{(l-1,B)} \\
2C_{l,-2,l-2}^{l,2,l-1} \cdot
\Re \overline{u_{*,-2}^{(2,A)}}u_{*,l-2}^{(l-1,B)} &
2C_{l,-2,l-2}^{l,2,l-1} \cdot
\Im\overline{u_{*,-2}^{(2,A)}}u_{*,l-2}^{(l-1,B)}
\end{pmatrix}\\
&\partial_{v_{l-1}^{(ls)},w_{l-1}^{(ls)}}
\Big(B_{(l,s),(2,B),(l-1,B)},B_{(l,s),(1,A),(l-1,B)}\Big)(\eta_*)\\
&=\begin{pmatrix}
2C_{l-1,-1,l-2}^{l,2,l-1} \cdot
\Re\overline{u_{*,-1}^{(2,B)}}u_{*,l-2}^{(l-1,B)} &
2C_{l-1,-1,l-2}^{l,2,l-1} \cdot
\Im\overline{u_{*,-1}^{(2,B)}}u_{*,l-2}^{(l-1,B)} \\
2C_{l-1,-1,l-2}^{l,1,l-1} \cdot
\Re\overline{u_{*,-1}^{(1,A)}}u_{*,l-2}^{(l-1,B)} &
2C_{l-1,-1,l-2}^{l,1,l-1} \cdot
\Im\overline{u_{*,-1}^{(1,A)}}u_{*,l-2}^{(l-1,B)}
\end{pmatrix}\\
&\partial_{v_{l-2}^{(ls)},w_{l-2}^{(ls)}}
\Big(B_{(l,s),(2,B),(l-1,A)},B_{(l,s),(1,A),(l-1,A)}\Big)(\eta_*)\\
&=\begin{pmatrix}
2C_{l-2,1,l-1}^{l,2,l-1} \cdot
\Re\overline{u_{*,1}^{(2,B)}}u_{*,l-1}^{(l-1,A)} &
2C_{l-2,1,l-1}^{l,2,l-1} \cdot
\Im\overline{u_{*,1}^{(2,B)}}u_{*,l-1}^{(l-1,A)} \\
2C_{l-2,1,l-1}^{l,1,l-1} \cdot
\Re\overline{u_{*,1}^{(1,A)}}u_{*,l-1}^{(l-1,A)} &
2C_{l-2,1,l-1}^{l,1,l-1} \cdot
\Im\overline{u_{*,1}^{(1,A)}}u_{*,l-1}^{(l-1,A)}
\end{pmatrix}
\end{align*}
The Clebsch-Gordan coefficients here are non-zero by Lemma
\ref{lemma:CGnonvanishing} (for the term $C_{l-1,-1,l-2}^{l,2,l-1}$ of the second
matrix, this uses the condition $2 \neq l-1$ when $l \geq 4$). Then the
determinants of all three matrices are non-zero polynomials of six distinct
coordinates of $\eta_*$, except in the case of the first matrix for $l=4$. In
this case, $u^{(3,B)}_2$ and $u^{(l-1,B)}_{l-2}$ coincide, and the determinant
may be checked to be a non-zero polynomial of the four distinct coordinates
$v_{*,2}^{(3,B)},w_{*,2}^{(3,B)},v_{*,2}^{(2,A)},w_{*,2}^{(2,A)}$.

Combining these cases shows that $\der_{\eta^{(ls)}} B^{(ls)}(\eta_*)$ has full
column rank $2l+1$ as desired.\\

{\bf Part (a), $l=3$:}
We again fix $(A,B)=(1,2)$, and specialize to a point $\eta_*$ such that
\[\Type(3,A),\Type(2,A),\Type(1,A)=0, \qquad \Type(2,B),\Type(1,B)=1.\]
We then pick 7 rows of $\der_{\eta^{(3s)}} B^{(3s)}(\eta_*)$, indicated by the
left column of the below table. Applying (\ref{eq:cryoEMmcondition}),
the derivatives in $(v_m^{(3s)},w_m^{(3s)})$ are non-zero for only the values of
$m$ in the right column.

\pagebreak
\begin{table}[h]
\caption{ }
\begin{tabular}{c|c}
$(l',s')$ and $(l'',s'')$ & Values of $m$ \\
\hline
$(1,B)$ and $(2,B)$ & 1 \\
$(3,A)$ and $(2,A)$ & 1 \\
$(1,B)$ and $(2,A)$ & 2 \\
$(3,A)$ and $(1,A)$ & 2 \\
$(1,A)$ and $(2,A)$ & 1 and 3 \\
$(2,B)$ and $(2,A)$ & 1 and 3 \\
$(1,A)$ and $(2,B)$ & 2 and 0
\end{tabular}
\end{table}

Ordering the columns by
$v_1^{(3s)},w_1^{(3s)},v_2^{(3s)},w_2^{(3s)},v_3^{(3s)},w_3^{(3s)},w_0^{(3s)}$,
this $7 \times 7$ submatrix has a block lower-triangular structure
in the decomposition $7=2+2+2+1$. If $s \neq A$, then applying
(\ref{eq:dervmlsI}--\ref{eq:dereta0lsI}) and
(\ref{eq:dervmlsII}--\ref{eq:derwmlsII}), its diagonal blocks
are given explicitly by
\begin{align*}
\partial_{v_1^{(3s)},w_1^{(3s)}}
\Big(B_{(3,s),(1,B),(2,B)},B_{(3,s),(3,A),(2,A)}\Big)(\eta_*)
&=\begin{psmallmatrix}
2C_{1,0,1}^{3,1,2} \cdot \Re \overline{u_{*,0}^{(1B)}} u_{*,1}^{(2B)} &
2C_{1,0,1}^{3,1,2} \cdot \Im \overline{u_{*,0}^{(1B)}} u_{*,1}^{(2B)} \\
2C_{1,-3,-2}^{3,3,2} \cdot \Re \overline{u_{*,-3}^{(3A)}} u_{*,-2}^{(2A)} &
2C_{1,-3,-2}^{3,3,2} \cdot \Im \overline{u_{*,-3}^{(3A)}} u_{*,-2}^{(2A)}
\end{psmallmatrix}\\
\partial_{v_2^{(3s)},w_2^{(3s)}}
\Big(B_{(3,s),(1,B),(2,A)},B_{(3,s),(3,A),(1,A)}\Big)(\eta_*)
&=\begin{psmallmatrix}
2C_{2,0,2}^{3,1,2} \cdot \Re \overline{u_{*,0}^{(1B)}} u_{*,2}^{(2A)} &
2C_{2,0,2}^{3,1,2} \cdot \Im \overline{u_{*,0}^{(1B)}} u_{*,2}^{(2A)} \\
2C_{2,-3,-1}^{3,3,1} \cdot \Re \overline{u_{*,-3}^{(3A)}} u_{*,-1}^{(1A)} &
2C_{2,-3,-1}^{3,3,1} \cdot \Im \overline{u_{*,-3}^{(3A)}} u_{*,-1}^{(1A)}
\end{psmallmatrix}\\
\partial_{v_3^{(3s)},w_3^{(3s)}}
\Big(B_{(3,s),(1,A),(2,A)},B_{(3,s),(2,B),(2,A)}\Big)(\eta_*)
&=\begin{psmallmatrix}
2C_{3,-1,2}^{3,1,2} \cdot \Re \overline{u_{*,-1}^{(1A)}} u_{*,2}^{(2A)} &
2C_{3,-1,2}^{3,1,2} \cdot \Im \overline{u_{*,-1}^{(1A)}} u_{*,2}^{(2A)} \\
2C_{3,-1,2}^{3,2,2} \cdot \Re \overline{u_{*,-1}^{(2B)}} u_{*,2}^{(2A)} &
2C_{3,-1,2}^{3,2,2} \cdot \Im \overline{u_{*,-1}^{(2B)}} u_{*,2}^{(2A)}
\end{psmallmatrix}\\
\partial_{w_0^{(3s)}} B_{(3,s),(1,A),(2,B)}(\eta_*)
&=2C_{0,1,1}^{3,1,2} \cdot \Im \overline{u_{*,1}^{1A}} u_{*,1}^{2B}
\end{align*}
Here $u_{*,0}^{(1B)}=\i w_{*,0}^{(1B)}$ depends on only one rather than two
non-zero coordinate of $\eta_*$; nonetheless, one may still check that
the determinants of the above three matrices are generically non-zero.
If $s=A$, then the second rows of the first two matrices above have coefficients
4 instead of 2, from applying (\ref{eq:dervmlsII}--\ref{eq:derwmlsII}) in place 
of (\ref{eq:dervmlsI}--\ref{eq:derwmlsI}), but this does not affect their
ranks. Thus these blocks are generically non-singular,
so $\der_{\eta^{(3s)}} B^{(3s)}(\eta_*)$ has full column rank 7.\\

{\bf Part (a), $l=2$:} We specialize to a point $\eta_*$ such that
\[\Type(2,A),\Type(1,A),\Type(0,A)=0, \qquad \Type(2,B),\Type(1,B)=1.\]
(Here $\Type(0,A)=0$ means simply that $v_{*,0}^{(0A)}$ is non-zero.)
We pick the following 5 rows of $\der_{\eta^{(2s)}} B^{(2s)}(\eta_*)$, for
which the derivatives in $(v_m^{(2s)},w_m^{(2s)})$ are non-zero for only the
following corresponding values of $m$.

\begin{table}[h]
\caption{ }
\begin{tabular}{c|c}
$(l',s')$ and $(l'',s'')$ & Values of $m$ \\
\hline
$(1,B)$ and $(1,B)$ & 0 \\
$(1,B)$ and $(1,A)$ & 1 \\
$(2,B)$ and $(0,A)$ & 1 \\
$(2,A)$ and $(0,A)$ & 2 \\
$(1,A)$ and $(1,A)$ & 0 and 2
\end{tabular}
\end{table}

Ordering the columns by
$v_0^{(2s)},v_1^{(2s)},w_1^{(2s)},v_2^{(2s)},w_2^{(2s)}$,
this $5 \times 5$ submatrix has a block lower-triangular structure
in the decomposition $5=1+2+2$. If $s \notin \{A,B\}$, these blocks are
\begin{align*}
\partial_{v_0^{(2s)}} B_{(2,s),(1,B),(1,B)}(\eta_*)&=
C_{0,0,0}^{2,1,1} \cdot \left|u_{*,0}^{(1B)}\right|^2\\
\partial_{v_1^{(2s)},w_1^{(2s)}}
\Big(B_{(2,s),(1,B),(1,A)},B_{(2,s),(2,B),(0,A)}\Big)(\eta_*)&=
\begin{psmallmatrix}
2C_{1,0,1}^{2,1,1} \cdot \Re \overline{u_{*,0}^{(1B)}} u_{*,1}^{(1A)} &
2C_{1,0,1}^{2,1,1} \cdot \Im \overline{u_{*,0}^{(1B)}} u_{*,1}^{(1A)} \\
2C_{1,-1,0}^{2,2,0} \cdot \Re \overline{u_{*,-1}^{(2B)}} u_{*,0}^{(0A)} &
2C_{1,-1,0}^{2,2,0} \cdot \Im \overline{u_{*,-1}^{(2B)}} u_{*,0}^{(0A)}
\end{psmallmatrix}\\
\partial_{v_2^{(2s)},w_2^{(2s)}}
\Big(B_{(2,s),(2,A),(0,A)},B_{(2,s),(1,A),(1,A)}\Big)(\eta_*)&=
\begin{psmallmatrix}
2C_{2,-2,0}^{2,2,0} \cdot \Re \overline{u_{*,-2}^{(2A)}} u_{*,0}^{(0A)} &
2C_{2,-2,0}^{2,2,0} \cdot \Im \overline{u_{*,-2}^{(2A)}} u_{*,0}^{(0A)}\\
2C_{2,-1,1}^{2,1,1} \cdot \Re \overline{u_{*,-1}^{(1A)}} u_{*,1}^{(1A)} &
2C_{2,-1,1}^{2,1,1} \cdot \Im \overline{u_{*,-1}^{(1A)}} u_{*,1}^{(1A)}
\end{psmallmatrix}
\end{align*}
If $s=A$ or $s=B$, then the first row of the third matrix or second row of the
second matrix should have coefficient 4 in place of 2, but this does not
affect their ranks. These blocks are generically non-singular, so
$\der_{\eta^{(2s)}} B^{(2s)}(\eta_*)$ has full column rank 5.\\

{\bf Part (b), $l=1$, $s \geq 3$:}
Note that $(A,B,s)=(1,2,s)$ are distinct indices because
$s \geq 3$. We specialize to a point $\eta_*$ such that 
\[\Type(1,s),\Type(1,A),\Type(0,A),\Type(0,B)=0, \qquad \Type(1,B)=1.\]
We pick the following 3 rows of $\der_{\eta^{(1s)}} B^{(1s)}(\eta_*)$, for
which the derivatives in $(v_m^{(1s)},w_m^{(1s)})$ are non-zero for only the
following corresponding values of $m$.

\begin{table}[h]
\caption{ }
\begin{tabular}{c|c}
$(l',s')$ and $(l'',s'')$ & Values of $m$ \\
\hline
$(1,s)$ and $(0,A)$ & 1 \\
$(1,A)$ and $(0,B)$ & 1 \\
$(1,B)$ and $(0,B)$ & 0
\end{tabular}
\end{table}

Ordering the columns by $v_1^{(1s)},w_1^{(1s)},w_0^{(1s)}$, this $3 \times 3$
submatrix has a block lower-triangular structure
in the decomposition $3=2+1$, with diagonal blocks
\begin{align*}
\partial_{v_1^{(1s)},w_1^{(1s)}}
\Big(B_{(1,s),(1,s),(0,A)},B_{(1,s),(1,A),(0,B)}\Big)(\eta_*)&=
\begin{psmallmatrix}
4C_{1,-1,0}^{1,1,0} \cdot \Re \overline{u_{*,-1}^{(1s)}} u_{*,0}^{(0A)} &
4C_{1,-1,0}^{1,1,0} \cdot \Im \overline{u_{*,-1}^{(1s)}} u_{*,0}^{(0A)}\\
2C_{1,-1,0}^{1,1,0} \cdot \Re \overline{u_{*,-1}^{(1A)}} u_{*,0}^{(0B)} &
2C_{1,-1,0}^{1,1,0} \cdot \Im \overline{u_{*,-1}^{(1A)}} u_{*,0}^{(0B)}
\end{psmallmatrix}\\
\partial_{w_0^{(1s)}} B_{(1,s),(1,B),(0,B)}(\eta_*)&=
-\i C_{0,0,0}^{1,1,0} \cdot \overline{u_{*,0}^{(1B)}} u_{*,0}^{(0A)}
\end{align*}
These blocks are generically non-singular (where we use
that $s$ and $A$ are distinct for the first block),
so $\der_{\eta^{(1s)}} B^{(1s)}(\eta_*)$ has full column rank 3.\\

{\bf Part (b), $l=1$, $s=2$:}
Consider the two columns of $\der_{\eta^{(12)}}
B^{(12)}$ corresponding to $\partial_{v_1^{(12)},w_1^{(12)}}$ and the two
rows corresponding to $B_{(1,2),(1,1),(0,1)},B_{(1,2),(1,2),(0,1)}$.
This $2 \times 2$ submatrix is (for any $\eta_*$)
\begin{align*}
&\partial_{v_1^{(12)},w_1^{(12)}}
\Big(B_{(1,2),(1,1),(0,1)},B_{(1,2),(1,2),(0,1)}\Big)(\eta_*)
=\begin{psmallmatrix} 2C_{1,-1,0}^{1,1,0} \cdot
\Re \overline{u_{*,-1}^{(11)}}u_{*,0}^{(01)} &
2C_{1,-1,0}^{1,1,0} \cdot \Im \overline{u_{*,-1}^{(11)}}u_{*,0}^{(01)} \\
4C_{1,-1,0}^{1,1,0} \cdot \Re \overline{u_{*,-1}^{(12)}}u_{*,0}^{(01)} &
4C_{1,-1,0}^{1,1,0} \cdot \Im \overline{u_{*,-1}^{(12)}}u_{*,0}^{(01)} \end{psmallmatrix}
\end{align*}
This is generically non-singular, so 
$\der_{v_1^{(12)},w_1^{(12)}} B^{(12)}(\eta_*)$ has full column rank 2.\\

{\bf Part (b), $l=1$, $s=1$:}
Consider the column and row of $\der_{\eta^{(11)}} B^{(11)}$ corresponding to
\[\partial_{v_1^{(11)}} B_{(1,1),(1,1),(0,1)}(\eta_*)
=4C_{1,-1,0}^{1,1,0} \Re \overline{u_{*,-1}^{(11)}}u_{*,0}^{(01)}.\]
This is generically non-zero, so $\der_{v_1^{(11)}} B^{(11)}(\eta_*)$ has
full column rank 1.\\

{\bf Part (c), $l=0$:} For any
$s \in \{1,\ldots,S_0\}$, $\eta^{(0s)}=v_0^{(0s)}$
is a single real variable. Applying $\langle 0,0;0,0|0,0 \rangle=1$,
we have from (\ref{eq:Bls}) that
\[\partial_{\eta^{(0s)}} B_{(0,s),(0,s),(0,s)}(\eta_*)
=\partial_{v_0^{(0s)}} (v_{*,0}^{(0s)})^3=3(v_{*,0}^{(0s)})^2,\]
which is generically non-zero. Thus
$\der_{\eta^{(0s)}} B^{(0s)}(\eta_*)$ has full column rank 1.
\end{proof}

\subsection{Projected cryo-EM} \label{sec:projCEM}

\subsubsection{Function bases}
We describe in further detail the function bases and the forms of the
projection map and rotational action on the basis coefficients in
the projected cryo-EM model of
Section \ref{subsec:projectedcryoEM}, following a setup of the model that is
similar to that of \cite[Section 5.5 and Appendix A.4]{bandeira2017estimation}.

Parametrizing $\R^3$ by spherical coordinates
$(\rho,\phi_1,\phi_2)$, define a product basis
\begin{equation}\label{eq:projectedbasis3D}
\hat{j}_{lsm}(\rho,\phi_1,\phi_2)=\tilde{z}_s(\rho)y_{lm}(\phi_1,\phi_2)
\quad \text{ for } s \geq 1, \quad l \geq 0, \quad m \in \{-l,\ldots,l\}
\end{equation}
where $\{y_{lm}\}$ are the complex spherical harmonics
(\ref{eq:complexharmonics}), and $\{\tilde{z}_s:s \geq 1\}$
is a system of radial basis functions $\tilde{z}_s:[0,\infty) \to \R$.
Similarly, parametrizing $\R^2$ by polar coordinates $(\rho,\phi_2)$
where $\rho \geq 0$ is the radius and $\phi_2 \in [0,2\pi)$ is the angle, let
\[b_m(\phi_2)=(2\pi)^{-1/2}e^{\i m\phi_2}\]
and define a corresponding product basis
\begin{equation}\label{eq:projectedbasis}
\hat{j}_{sm}(\rho,\phi_2)=\tilde{z}_s(\rho)b_m(\phi_2)
\quad \text{ for } s \geq 1, \quad m \in \mathbb{Z}.
\end{equation}
We choose the radial basis functions $\{\tz_s:s \geq 1\}$ to satisfy a modified
orthogonality relation
\begin{equation}\label{eq:2Dradial}
\int_0^\infty \rho\,\tz_s(\rho)\tz_{s'}(\rho)\der\rho=\1\{s=s'\}
\end{equation}
with weight $\rho$ instead of $\rho^2$, which ensures that $\{\hat{j}_{sm}\}$
are orthonormal in $L_2(\R^2,\C)$.
We write $\{j_{lsm}\}$ for the 3-D inverse Fourier transform of
$\{\hat{j}_{lsm}\}$, and $\{j_{sm}\}$ for the 2-D inverse Fourier transform of
$\{\hat{j}_{sm}\}$.

For any function $f \in L_2(\R^3)$, let $\hat{f}$ be its Fourier transform
as defined in (\ref{eq:fouriertransform}), and let
\[\widehat{\Pi \cdot f}(k_1,k_2)=
\int_{\R^2} e^{-2\pi \i(k_1x_1+k_2x_2)}(\Pi \cdot f)(x_1,x_2)\der x_1 \der x_2\]
be the 2-D Fourier transform of its tomographic projection.
By the Fourier-slice relation,
\begin{equation}\label{eq:fourierslice}
\widehat{\Pi \cdot f}(k_1,k_2)=\hat{f}(k_1,k_2,0).
\end{equation}
In spherical coordinates $(\rho,\phi_1,\phi_2)$ for $\R^3$
and polar coordinates $(\rho,\phi_2)$ for $\R^2$, this
corresponds to
\[\widehat{\Pi \cdot f}(\rho,\phi_2)=\hat{f}(\rho,\pi/2,\phi_2)\]
where we restrict $\phi_1=\pi/2$. This restriction of
each complex spherical harmonic $y_{lm}$
in (\ref{eq:complexharmonics}) is given by
\[y_{lm}(\pi/2,\phi_2)=p_{lm} \cdot b_m(\phi_2)\]
where $b_m$ is the function defined above, and
\begin{align}
p_{lm}&=(-1)^m \sqrt{\frac{(2l+1)}{2}\frac{(l-m)!}{(l+m)!}} \cdot
P_{lm}(0)\notag\\
&=\1\{l+m \text{ is even}\} \times \frac{(-1)^{(l+m)/2}\sqrt{(2l+1)/2}}{2^l l!}
\binom{l}{(l+m)/2}\sqrt{(l-m)!(l+m)!}\label{eq:plm}
\end{align}
the second equality applying Lemma \ref{lem:legendre-value}. 
Note that these coefficients satisfy a sign symmetry
\begin{equation}\label{eq:psymmetry}
p_{lm}=(-1)^m p_{l,-m}.
\end{equation}
Specializing (\ref{eq:fourierslice}) to $f=j_{lsm}$, we then have
\[\widehat{\Pi \cdot j_{lsm}}(\rho,\phi_2)
=p_{lm} \cdot \tilde{z}_s(\rho) b_m(\phi_2)=p_{lm} \cdot
\hat{j}_{sm}(\rho,\phi_2).\]
Then taking inverse Fourier transforms,
\begin{equation}\label{eq:Pijaction}
\Pi \cdot j_{lsm}=p_{lm} \cdot j_{sm}.
\end{equation}

We consider the space of functions $f:\R^3 \to \R$ that are
$(L,S_0,\ldots,S_L)$-bandlimited in the basis $\{j_{lsm}\}$,
admitting the first representation in (\ref{eq:projcryoEMbandlimited}),
\[f=\sum_{(l,s,m) \in \cI} u_m^{(ls)} \cdot j_{lsm}\]
where the index set $\cI$ is defined in (\ref{eq:indices}).
Then (\ref{eq:Pijaction}) shows that $\Pi \cdot f$ is also
bandlimited in the basis $\{j_{sm}\}$, admitting the first representation in
(\ref{eq:projbandlimited}),
\[\Pi \cdot f=\sum_{(s,m) \in \tcI} \tu_m^{(s)} \cdot j_{sm}\]
where the index set $\tcI$ is defined in (\ref{eq:indicesproj}).
Define a real basis $\{h_{lsm}\}$ from $\{j_{lsm}\}$ by (\ref{eq:hlsm}),
so that the coefficients $u \in \C^d$ for the former and $\theta \in \R^d$ for 
the latter are related by the unitary transform
$u=\hat{V}^*\theta$ in (\ref{eq:hatv-trans}). 
Similarly, define a real basis $\{h_{sm}\}$ from $\{j_{sm}\}$ by
\[h_{sm}=\begin{cases}
\frac{1}{\sqrt{2}}\big(j_{s,-m}+(-1)^m j_{sm}\big) & \text{ if } m>0 \\
j_{s0} & \text{ if } m=0 \\
\frac{\i}{\sqrt{2}}\big(j_{sm}-(-1)^m j_{s,-m}\big) & \text{ if } m<0,
\end{cases}\]
where the coefficients $u \in \C^{\td}$ for the former and $\theta \in \R^{\td}$
for the latter are also related by a unitary transform $\tu=\tV^*\ttheta$,
defined as
\begin{equation}\label{eq:tu-ttheta}
\tu_m^{(s)}=\begin{cases}
\frac{(-1)^m}{\sqrt{2}}(\ttheta_{|m|}^{(s)}-\i\ttheta_{-|m|}^{(s)}) &
\text{ if } m>0\\
\ttheta_0^{(s)} & \text{ if } m=0\\
\frac{1}{\sqrt{2}}(\ttheta_{|m|}^{(s)}+\i\ttheta_{|m|}^{(s)}) & \text{ if } m<0.
\end{cases}
\end{equation}
It may be checked from the orthonormality of $\{\hat{j}_{sm}\}$ and $\{j_{sm}\}$ in $L_2(\R^2,\C)$ that these functions $\{h_{sm}\}$ are also
orthonormal in $L_2(\R^2,\R)$.
This yields the latter two real representations in
(\ref{eq:projcryoEMbandlimited}) and (\ref{eq:projbandlimited}).

Recall that since $f$ is real-valued, the coefficients $u_m^{(ls)}$
satisfy the sign symmetry (\ref{eq:usymmetry}). Similarly,
since $\Pi \cdot f$ is real-valued, its Fourier transform satisfies
\[\widehat{\Pi \cdot f}(\rho,\phi_2)=\overline{\widehat{\Pi \cdot f}
(\rho,\pi+\phi_2)}\]
where $(\rho,\pi+\phi_2)$ is the reflection of $(\rho,\phi_2)$ about the origin.
Then the coefficients $\tu_m^{(s)}$ of $\widehat{\Pi \cdot f}$ in the basis
$\{\hat{j}_{sm}\}$ must satisfy the analogous sign symmetry
\begin{equation}\label{eq:tusymmetry}
\tu_m^{(s)}=(-1)^m\overline{\tu_{-m}^{(s)}}.
\end{equation}

The identity (\ref{eq:Pijaction}) and the relations $u=\hat{V}^*\theta$ and
$\tu=\tV^*\ttheta$ show that the tomographic projection $\Pi$ is a linear map
from $\theta \in \R^d$ to $\ttheta \in \R^{\td}$, defined as
\begin{equation}\label{eq:cryoEMPi}
\Pi=\tV \cdot \Pi^\C \cdot \hV^* \in \R^{\td \times d},
\qquad \Pi_{(s',m'),(l,s,m)}^\C=\1\{s=s'\} \cdot \1\{m=m'\} \cdot p_{lm}.
\end{equation}
Here, $\Pi^\C$ is the corresponding linear map from $u \in \C^d$ to $\tu \in
\C^{\td}$, and $p_{lm}$ are the values defined in (\ref{eq:plm}).
The action of the rotation $f \mapsto f_\frakg$ on $\theta \in \R^d$ is as
previously
described in Lemma \ref{lem:so3-cryo-unproj-act}, and this expresses the model
in the general form of (\ref{eq:projectedmodel}) for projected orbit recovery.

\begin{remark}
Note that if $\{\tz_s:s=1,\ldots,S\}$ has the same
linear span as $\{z_s:s=1,\ldots,S\}$ used in the unprojected cryo-EM model of Section \ref{subsec:cryoEM},
then the two spaces of bandlimited functions (\ref{eq:projcryoEMbandlimited})
and (\ref{eq:cryoEMbandlimited}) coincide. However, we caution that
here under the orthogonality relation (\ref{eq:2Dradial}),
the \emph{unprojected} basis $\{h_{lsm}\}$ is not
orthonormal for this function space, so our parametrization $f \mapsto \theta$
here is not an isometric parametrization of $f \in L_2(\R^3)$.
\end{remark}

\subsubsection{Terms of the high noise series expansion}

We describe the explicit forms of $\ts_1(\theta)$,
$\ts_2(\theta)$, and $\ts_3(\theta)$. Recalling the entries
$p_{lm}$ in (\ref{eq:plm}), define
\begin{align}
Q_{kl}&=\frac{(-1)^{k+l}}{(2k+1)(2l+1)}
\sum_{q=-(k \wedge l)}^{k \wedge l} p_{kq}^2p_{lq}^2,\label{eq:Qkl}\\
M_{k,k',k'',l,l',l''}
&=\frac{(-1)^{k''+l''}}{(2k'' + 1)(2l'' + 1)}
\mathop{\sum_{q=-(k \wedge l)}^{k \wedge l}
\sum_{q'=-(k' \wedge l')}^{k' \wedge l'}
\sum_{q''=-(k'' \wedge l'')}^{k'' \wedge l''}}_{q''=q+q'}\notag\\
&\hspace{1in}\langle k,q;k',q'|k'',q'' \rangle \langle l,q;l',q'|l'',q''
\rangle p_{kq}p_{k'q'}p_{k''q''}p_{lq}p_{l'q'}p_{l''q''}.\label{eq:Mkl}
\end{align}
Recall also $u^{(ls)}(\theta)$ and
$B_{(l,s),(l',s'),(l'',s'')}(\theta)$ from (\ref{eq:uls}) and (\ref{eq:Bls}).

\begin{theorem}\label{thm:projectedcryoEM-mom}
For any $L \geq 1$ and $S_0,\ldots,S_L \geq 1$,
\begin{align*}
\ts_1(\theta)&=\frac{p_{00}^2}{2}\sum_{s=1}^{S_0} \Big(u^{(0s)}(\theta)
-u^{(0s)}(\theta_*)\Big)^2\\
\ts_2(\theta)&=\frac{1}{4}\sum_{k,l=0}^L Q_{kl}
\sum_{s,s'=1}^{S_k \wedge S_l}
\Big(\langle u^{(ks)}(\theta),u^{(ks')}(\theta) \rangle
-\langle u^{(ks)}(\theta_*),u^{(ks')}(\theta_*) \rangle\Big)\\
&\hspace{2in}\times\Big(\langle u^{(ls)}(\theta),u^{(ls')}(\theta) \rangle
-\langle u^{(ls)}(\theta_*),u^{(ls')}(\theta_*) \rangle\Big)\\
\ts_3(\theta)&=\frac{1}{12}
\mathop{\sum_{k,k',k'',l,l',l''=0}^L}_{|k-k'| \leq k'' \leq k+k' \text{ and }
|l-l'| \leq l'' \leq l+l'} M_{k,k',k'',l,l',l''}
\sum_{s=1}^{S_k \wedge S_l} \sum_{s'=1}^{S_{k'} \wedge S_{l'}}
\sum_{s''=1}^{S_{k''} \wedge S_{l''}}\\
&\phantom{===} \Big(B_{(k,s),(k',s'),(k'',s'')}(\theta) - B_{(k, s), (k', s'), (k'', s'')}(\theta_*)\Big)
\Big(B_{(l,s),(l',s'),(l'',s'')}(\theta) - B_{(l,s),(l',s'),(l'',s'')}(\theta_*)\Big).
\end{align*}
\end{theorem}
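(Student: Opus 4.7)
My plan is to apply Lemma~\ref{lem:skform} and reduce the problem to computing $\E_{g,h}[\langle \Pi g\theta, \Pi h\vartheta\rangle^k]$ for arbitrary $\theta,\vartheta \in \R^d$ and $k=1,2,3$. The three stated formulas for $\ts_k$ then follow by the telescoping structure $\ts_k(\theta) = \frac{1}{2(k!)}\E_{g,h}[\langle \Pi g\theta, \Pi h\theta\rangle^k - 2\langle \Pi g\theta, \Pi h\theta_*\rangle^k + \langle \Pi g\theta_*, \Pi h\theta_*\rangle^k]$, exactly as in the proofs of Theorems~\ref{thm:S2registration-mom} and \ref{thm:cryoEM-mom}.

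First I would rewrite the inner product in complex harmonic coordinates. Using Lemma~\ref{lem:so3-cryo-unproj-act} and the representation $\Pi = \tV \cdot \Pi^\C \cdot \hV^*$ from (\ref{eq:cryoEMPi}), with $u=\hV^*\theta$, $v=\hV^*\vartheta$, unitarity of $\tV$ together with the realness of $\Pi g\theta,\Pi h\vartheta \in \R^{\td}$ gives
\[\langle \Pi g\theta, \Pi h\vartheta\rangle = \sum_{s=1}^S \sum_{m=-L}^L \sum_{l,l'} \sum_{q,q'} p_{lm}\,p_{l'm}\,\overline{D^{(l)}_{mq}(\frakg)\,u_q^{(ls)}}\,D^{(l')}_{mq'}(\frakh)\,v_{q'}^{(l's)},\]
with the convention $u_q^{(ls)}=v_q^{(ls)}=0$ when $s>S_l$ or $|q|>l$. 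This is the master formula on which all three cases are built.

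For $k=1$, the mean identity (\ref{eq:ED}) collapses the sum to $l=l'=m=q=q'=0$ and yields $p_{00}^2 \sum_{s=1}^{S_0} u^{(0s)}(\theta) v^{(0s)}(\vartheta)$, which telescopes to the stated $\ts_1$. For $k=2$, I would square the master formula, factor the integrand into independent $\frakg$- and $\frakh$-pieces, and apply the orthogonality identity (\ref{eq:WignerDorthog}) to each. The resulting index collapse leaves a double sum over frequencies $(k,l)$, with the accumulated $p$-coefficients grouping into precisely $Q_{kl}$ of (\ref{eq:Qkl}) after applying the sign symmetries (\ref{eq:psymmetry}) and (\ref{eq:usymmetry}). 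The remaining $q,q'$-sums combine into the inner products $\langle u^{(ks)},u^{(ks')}\rangle$ and $\langle v^{(ls)},v^{(ls')}\rangle$, which are real by (\ref{eq:usymmetry}), and telescoping gives the claimed $\ts_2$. For $k=3$, I would cube the master formula and apply the third-order identity (\ref{eq:WignerDtriple}) independently to the product of three $\frakg$-Wigner factors and the product of three $\frakh$-Wigner factors. Each application contributes a pair of Clebsch--Gordan coefficients and imposes $q''=q+q'$, $m''=m+m'$, together with the triangle conditions $|l-l'|\leq l''\leq l+l'$. Reindexing $q\mapsto -q$ via (\ref{eq:CGsymmetry2}), (\ref{eq:psymmetry}), and (\ref{eq:usymmetry}) puts the expression into the standard sign convention of (\ref{eq:Bls}); the Clebsch--Gordan factors and $p$-factors then assemble exactly into $M_{k,k',k'',l,l',l''}$ of (\ref{eq:Mkl}) multiplying $B_{(k,s),(k',s'),(k'',s'')}(\theta)\,B_{(l,s),(l',s'),(l'',s'')}(\vartheta)$, and telescoping yields $\ts_3$.

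The hard part will be the combinatorial bookkeeping in the $k=3$ step: twelve indices $(k,k',k'',l,l',l'',q,q',q'',m,m',m'')$ are coupled through four Clebsch--Gordan coefficients and six $p$-factors, and one must verify that the various sign symmetries conspire to reproduce exactly the prefactor $(-1)^{k''+l''}/((2k''+1)(2l''+1))$ of (\ref{eq:Mkl}) while simultaneously placing each $B$-factor in the sign convention of (\ref{eq:Bls}). In particular, the Clebsch--Gordan symmetry (\ref{eq:CGsymmetry2}) produces $(-1)^{k+k'+k''}$ and $(-1)^{l+l'+l''}$ factors under $q\mapsto -q$ and $m\mapsto -m$, and these must cancel against the $(-1)^{l+m}$ phases from (\ref{eq:usymmetry}) and the parities of $p_{lm}$ from (\ref{eq:psymmetry}) so that no residual sign remains. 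Once this rearrangement is verified, realness of the resulting $\ts_3$ is automatic from realness of each $B_{(l,s),(l',s'),(l'',s'')}$ established in the proof of Theorem~\ref{thm:cryoEM-mom}.
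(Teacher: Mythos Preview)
Your proposal is correct and follows essentially the same approach as the paper: start from Lemma~\ref{lem:skform}, expand $\langle \Pi g\theta,\Pi h\vartheta\rangle$ via $\Pi^\C$ and the block-diagonal Wigner representation, then apply the moment identities (\ref{eq:ED}), (\ref{eq:WignerDorthog}), and (\ref{eq:WignerDtriple}) together with the sign symmetries (\ref{eq:usymmetry}) and (\ref{eq:psymmetry}) to collapse into $Q_{kl}$ and $M_{k,k',k'',l,l',l''}$. The only cosmetic difference is your choice of index labels in the master formula (you use $m$ for the shared slice index where the paper uses $q=r$), and the paper handles the $k=3$ signs by directly converting $\overline{u_{-m''}^{(k''s'')}}$ and $p_{k'',-q''}$ rather than invoking (\ref{eq:CGsymmetry2}); both routes give the same cancellation.
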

\begin{proof}
Recall from Lemma \ref{lem:skform} that
\begin{equation} \label{eq:tsk-exp}
\ts_k(\theta) = \frac{1}{2(k!)} \E_{g, h}\left[ \langle \Pi  g  \theta, \Pi  h  \theta \rangle^k
- 2 \langle \Pi  g  \theta, \Pi  h  \theta_*\rangle^k
+ \langle \Pi  g  \theta_*, \Pi  h  \theta_*\rangle^k\right].
\end{equation}
Consider two different real coefficient vectors $\theta,\vartheta \in \R^d$,
with corresponding complex coefficients $u=\hat{V}^*\theta$ and
$v=\hat{V}^*\vartheta$.\\

\noindent \textbf{Case $k = 1$:} Notice that
\begin{align*}
\langle \Pi g\theta,\Pi h\vartheta \rangle
=\Big\langle (\tV^*\Pi \hV)(\hV^*g \hV)u,(\tV^*\Pi \hV)(\hV^*h \hV)v
\Big\rangle
=\langle D(\frakg)u,(\Pi^\C)^*\Pi^\C D(\frakh)v\rangle,
\end{align*}
where $D(\frakg),D(\frakh)$ are the block-diagonal matrices in (\ref{eq:DcryoEM}). The form of $\Pi^\C$ from (\ref{eq:cryoEMPi}) yields
\[
({\Pi^\C}^*\Pi^\C)_{lsm,l's'm'}=\1\{s=s'\} \cdot \1\{m=m'\}p_{lm}p_{l'm'}
\]
so that 
\begin{align}
\langle \Pi g\theta,\Pi h\vartheta \rangle
&=\sum_{k,l=0}^L \sum_{s=1}^{S_k \wedge S_l}
\sum_{m,q=-k}^k \sum_{n,r=-l}^l
\overline{D^{(k)}_{qm}(\frakg)u_m^{(ks)}} \cdot
\1\{q=r\}p_{kq}p_{lr} \cdot D^{(l)}_{rn}(\frakh)v_n^{(ls)}.
\label{eq:PiguPihv}
\end{align}
Applying (\ref{eq:ED}) to take the expectation, we preserve only the terms for
$k=l=m=q=n=r=0$, yielding
\[
\E_{g,h}[\langle \Pi g\theta,\Pi h\vartheta \rangle]
=\sum_{s=1}^{S_0} p_{00}^2 \overline{u_0^{(0s)}}v_0^{(0s)}.
\]
Recalling that $u^{(0s)}=u_0^{(0s)}$ and $v^{(0s)}=v_0^{(0s)}$
are real-valued by (\ref{eq:hatv-trans}), and
substituting into (\ref{eq:tsk-exp}), we obtain
\[
\ts_1(\theta)
= \frac{p_{00}^2}{2} \sum_{s = 1}^{S_0} \Big(u^{(0s)}(\theta) - u^{(0s)}(\theta_*)\Big)^2.
\]

\noindent \textbf{Case $k = 2$:} 
We square both sides of (\ref{eq:PiguPihv}) and apply the relations
(\ref{eq:WignerDorthog}), (\ref{eq:usymmetry}), and (\ref{eq:psymmetry})
to get
\begin{align*}
&\E_{g,h}[\langle \Pi g\theta,\Pi h\vartheta \rangle^2]\\
&=\sum_{k,l=0}^L \sum_{s,s'=1}^{S_k \wedge S_l}
\sum_{m,q=-k}^k \sum_{n,r=-l}^l
\frac{(-1)^{m+q+n+r}}{(2k+1)(2l+1)}
\overline{u_m^{(ks)}u_{-m}^{(ks')}}
\cdot \1\{q=r\}p_{k,q}p_{k,-q}p_{l,r}p_{l,-r}
\cdot v_n^{(ls)}v_{-n}^{(ls')}\\
&=\sum_{k,l=0}^L \sum_{s,s'=1}^{S_k \wedge S_l}
\sum_{m,q=-k}^k \sum_{n,r=-l}^l \frac{(-1)^{k+l}}{(2k+1)(2l+1)}
\overline{u_m^{(ks)}}u_m^{(ks')} \cdot \1\{q=r\}p_{kq}^2p_{lr}^2
\cdot v_n^{(ls)}\overline{v_n^{(ls')}}\\
&=\sum_{k,l=0}^L \frac{(-1)^{k+l}}{(2k+1)(2l+1)}
\sum_{s,s'=1}^{S_k \wedge S_l} \langle u^{(ks)},u^{(ks')} \rangle \cdot
\overline{\langle v^{(ls)},v^{(ls')} \rangle}
\sum_{q=-(k \wedge l)}^{k \wedge l} p_{kq}^2p_{lq}^2\\
&= \sum_{k,l=0}^L Q_{kl} \sum_{s,s'=1}^{S_k \wedge S_l}
\langle u^{(ks)},u^{(ks')} \rangle \cdot \overline{\langle
v^{(ls)},v^{(ls')}\rangle}
\end{align*}
where we have substituted $Q_{kl}$ from (\ref{eq:Qkl}) in the last equality.
By the isometry $\langle u^{(ks)},u^{(ks')} \rangle
=\langle \theta^{(ks)},\theta^{(ks')} \rangle$, both inner products on the last
line are real. Then applying this to (\ref{eq:tsk-exp}),
\begin{align*}
\ts_2(\theta) 
&=\frac{1}{4}\sum_{k,l=0}^L Q_{kl}
\sum_{s,s'=1}^{S_k \wedge S_l}
\left(\langle u^{(ks)}(\theta),u^{(ks')}(\theta) \rangle 
- \langle u^{(ks)}(\theta_*),u^{(ks')}(\theta_*) \rangle \right)\\
&\hspace{2in}\times \left(\langle u^{(ls)}(\theta),u^{(ls')}(\theta) \rangle
- \langle u^{(ls)}(\theta_*),u^{(ls')}(\theta_*) \rangle \right).
\end{align*}

\noindent \textbf{Case $k = 3$:} 
Let us introduce the abbreviations
\[\sum_{k,k',k'',l,l',l''}=\mathop{\sum_{k,k',k'',l,l',l''=0}^L}_{|k-k'| \leq k'' \leq k+k',\;|l-l'| \leq
l'' \leq l+l'} \qquad \text{ and } \qquad
\sum_{s,s',s''}=\sum_{s=1}^{S_k \wedge S_l}
\sum_{s'=1}^{S_{k'} \wedge S_{l'}}\sum_{s''=1}^{S_{k''} \wedge S_{l''}}.\]
For given indices $m,m',q,q',n,n',r,r'$, let us write as shorthand
$m''=m+m'$, $n''=n+n'$, $q''=q+q'$, and $r''=r+r'$.
We cube both sides of (\ref{eq:PiguPihv}) and apply the relation
(\ref{eq:WignerDtriple}) to obtain
\begin{align*}
\E_{g,h}[\langle \Pi g\theta,\Pi h\vartheta\rangle^3]
&=\sum_{k,k',k'',l,l',l''} \sum_{s,s',s''}
\sum_{m,q=-k}^k \sum_{m',q'=-k'}^{k'}
\sum_{n,r=-l}^l \sum_{n',r'=-l'}^{l'}
\frac{(-1)^{m''+q''+n''+r''}}{(2k''+1)(2l''+1)}\\
&\hspace{0.5in}
\times C^{k,k',k''}_{q,q',q''}C^{k,k',k''}_{m,m',m''}
C^{l,l',l''}_{r,r',r''}C^{l,l',l''}_{n,n',n''}
\1\{q=r\}\1\{q'=r'\}\\
&\hspace{0.5in} \times
\overline{u_m^{(ks)}u_{m'}^{(k's')}u_{-m''}^{(k''s'')}}
v_n^{(ls)}v_{n'}^{(l's')}v_{-n''}^{(l''s'')}
p_{k,q}p_{k',q'}p_{k'',-q''}p_{l,r}p_{l',r'}p_{l'',-r''}.
\end{align*}
Let us apply, by (\ref{eq:usymmetry}) and (\ref{eq:psymmetry}),
\[\overline{u^{(k''s'')}_{-m''}}=(-1)^{m''+k''}u_{m''}^{(k''s'')},
\qquad \overline{v^{(l''s'')}_{-n''}}=(-1)^{n''+l''}v_{n''}^{(l''s'')},\]
\[p_{k'',-q''}=(-1)^{q''}p_{k'',q''},
\qquad p_{l'',-r''}=(-1)^{r''}p_{l'',r''}.\]
Recalling $B_{(l,s),(l',s'),(l'',s'')}(\theta)$ from (\ref{eq:Bls}), which is
real-valued, and substituting the form of $M_{k,k',k'',l,l',l''}$ in
(\ref{eq:Mkl}), the above may be written succinctly as
\[\E_{g,h}[\langle \Pi g\theta,\Pi h\vartheta\rangle^3]
=\sum_{k,k',k'',l,l',l''} M_{k,k',k'',l,l',l''}\\
\sum_{s,s',s''}
B_{(k,s),(k',s'),(k'',s'')}(\theta)B_{(l,s),(l',s'),(l'',s'')}(\vartheta).\]
Then by (\ref{eq:tsk-exp}), we find
\begin{align*}
\ts_3(\theta) &=\frac{1}{12}
\sum_{k,k',k'',l,l',l''} M_{k,k',k'',l,l',l''}
\sum_{s,s',s''}\\
&\phantom{==}
\Big(B_{(k,s),(k',s'),(k'',s'')}(\theta) - B_{(k, s), (k', s'), (k'', s'')}(\theta_*)\Big)
\Big(B_{(l,s),(l',s'),(l'',s'')}(\theta) -
B_{(l,s),(l',s'),(l'',s'')}(\theta_*)\Big).
\end{align*}
\end{proof}

\subsubsection{Transcendence degrees}

We now prove Theorem \ref{thm:projectedcryoEM} on the sequences of transcendence
degrees.

\begin{proof}[Proof of Theorem \ref{thm:projectedcryoEM}]
Theorem \ref{thm:cryoEM} shows $\trdeg(\cR^\G)=d-3$.
We compute $\trdeg(\tcR^\G_{\leq m})$ for $m = 1, 2, 3$ using
Lemma \ref{lem:trdeg}. For $m=1$,
\[
\nabla^2 \ts_1(\theta_*) = p_{00}^2 \sum_{s = 1}^{S_0} \nabla u^{(0s)}(\theta_*)
\nabla u^{(0s)}(\theta_*)^\top.
\]
Since $p_{00} \neq 0$, this shows
$\trdeg(\tcR^\G_{\leq 1}) = \rank(\nabla^2 \ts_1(\theta_*)) = S_0$ as in
Theorem \ref{thm:cryoEM}.

For $m = 2$,
\begin{multline*}
\nabla^2 \ts_1(\theta_*) + \nabla^2 \ts_2(\theta_*) = p_{00}^2 \sum_{s =
1}^{S_0} \nabla u^{(0s)}(\theta_*) \nabla u^{(0s)}(\theta_*)^\top\\
+ \frac{1}{2} \sum_{k, l = 0}^L Q_{kl} \sum_{s, s' = 1}^{S_k \wedge S_l}
\nabla[\langle u^{(ks)}(\theta), u^{(ks')}(\theta)\rangle] \nabla[\langle
u^{(ls)}(\theta), u^{(ls')}(\theta)\rangle]^\top\Big|_{\theta = \theta_*}.
\end{multline*}
Recall the form of $Q_{kl}$ from (\ref{eq:Qkl}).
Define the index sets
\[\cJ=\{(k,r,r'):0 \leq k \leq L,\,1 \leq r,r' \leq S_k\}, \qquad
\cL=\{(q,t,t'):-L \leq q \leq L,\,1 \leq t,t' \leq S\}\]
where $S=\max_{l=0}^L S_l$, and define a matrix
$D \in \R^{|\cJ| \times |\cL|}$ with the entries
\begin{align*}
D_{krr',qtt'} &:= \frac{(-1)^k}{2k + 1} \cdot \frac{p^2_{kq}}{\sqrt{2}} \cdot 
\1_{t = r} \1_{t' = r'} \1_{-k \leq q \leq k}.
\end{align*}
This definition satisfies, for any $(k,r,r'),(l,s,s') \in \cJ$,
\[\frac{1}{2}Q_{kl}\1_{r=s}\1_{r'=s'}
=\sum_{(q,t,t') \in \cL}
D_{krr',qtt'}D_{lss',qtt'}=(DD^\top)_{krr',lss'}.\]
Then, defining the matrices $G^0$ and $G$ with columns
\begin{align*}
G^0_s &:= p_{00} \nabla u^{(0s)}(\theta_*) \qquad \text{ for $1 \leq s \leq S_0$}\\
G_{krr'} &:= \nabla[\langle u^{(k r)}(\theta),
u^{(kr')}{(\theta)}\rangle] \Big|_{\theta = \theta_*} \qquad \text{ for }
(k,r,r') \in \cJ,
\end{align*}
we have
\begin{align*}
\nabla^2 \ts_1(\theta_*) + \nabla^2 \ts_2(\theta_*)
&=G^0(G^0)^\top+\sum_{(k,r,r') \in \cJ} \sum_{(l,s,s') \in \cJ}
\frac{1}{2}Q_{kl} \1_{r=s}\1_{r'=s'}G_{krr'}G_{lss'}^\top\\
&=G^0(G^0)^\top+GDD^\top G^\top=[GD \mid G^0][GD \mid G^0]^\top
\end{align*}
Here, the square submatrix of $D$ consisting of the columns
$(q,t,t') \in \cJ \subset \cL$ is lower triangular with non-zero diagonal,
because $p_{kk} \neq 0$ for any $k=0,\ldots,L$. Then the column span of
$GD$ coincides with that of $G$. Since the column span of
$G^0$ is also contained in that of $G$ for generic $\theta_*$, we conclude that
\[\trdeg(\tcR^\G_{\leq 2}) = \rank(\nabla^2 \ts_1(\theta_*) + \nabla^2
\ts_2(\theta_*)) = \rank\Big([GD \mid G^0]\Big)=\rank(G).\]
Then $\trdeg(\tcR^\G_{\leq 2})=\trdeg(\cR^\G_{\leq 2})$ as in Theorem
\ref{thm:cryoEM}.

For $m = 3$, we have $\trdeg(\cR^\G_{\leq 3}) \leq \trdeg(\cR^\G) = d - 3$, so
it suffices to show
$\rank(\nabla^2 \ts_3(\theta_*)) \geq d - 3$.  For this, we first write a
more convenient form for $\ts_3(\theta)$ and its Hessian at
$\theta = \theta_*$.  Recall $S = \max_{l=0}^L S_l$ and define the index sets
\begin{align*}
\cQ &= \Big\{((q,r),(q',r'),(q'',r'')):\;-L \leq q,q',q'' \leq L,\;\; q+q'+q''=0,\;\;1 \leq r,r',r'' \leq S\Big\}\\
\cH &= \Big\{((l,s),(l',s'),(l'',s'')):\; 0\leq l,l',l'' \leq L,\;\;|l - l'|
\leq l'' \leq l + l',\;\;1 \leq s \leq S_l,\;1 \leq s' \leq S_{l'},\;
1 \leq s'' \leq S_{l''}\Big\}.
\end{align*}
Define a matrix $N \in \R^{|\cQ| \times |\cH|}$ entrywise by
\begin{align}
N_{(q,r),(q',r'),(q'',r'')}^{(l,s),(l',s'),(l'',s'')}
&=\1\{r=s,r'=s',r''=s''\} \cdot \1\{|q| \leq l,|q'| \leq l',|q''| \leq
l''\}\nonumber\\
&\hspace{1in} \cdot \frac{(-1)^{l''+q''}}{2l''+1}
\cdot \langle l,q;l',q'|l'',-q'' \rangle
p_{lq}p_{l'q'}p_{l''q''},\label{eq:Ndef}
\end{align}
where the subscript is the row index in $\cQ$ and the superscript is the
column index in $\cH$. In the expression (\ref{eq:Mkl})
for $M_{k,k',k'',l,l',l''}$, let us flip the sign of $q''$ and apply
(\ref{eq:psymmetry}) to write this as
\begin{align*}
M_{k,k',k'',l,l',l''}&=\frac{(-1)^{k''+q''}}{2k''+1}\frac{(-1)^{l''+q''}}{2l''+1}
\mathop{\sum_{|q| \leq k \wedge l} \sum_{|q'| \leq k' \wedge l'}
\sum_{|q''| \leq k'' \wedge l''}}_{q+q'+q''=0}\\
&\hspace{1in}\langle k,q;k',q'|k'',-q'' \rangle
\langle l,q;l',q'|l'',-q'' \rangle p_{kq}p_{k'q'}p_{k''q''}
p_{lq}p_{l'q'}p_{l''q''}.
\end{align*}
Then
\begin{align*}
\ts_3(\theta)&=\frac{1}{12}\big(B(\theta)-B(\theta_*)\big)^\top N^\top N
\big(B(\theta)-B(\theta_*)\big)
\end{align*}
Applying the chain rule to differentiate this twice at $\theta=\theta_*$, we
obtain
\[\nabla^2 \ts_3(\theta_*)
=\frac{1}{6}\der B(\theta_*)^\top N^\top N\der B(\theta_*),\]
so $\rank\big(\nabla^2 \ts_3(\theta_*)\big) =\rank\big(N \cdot \der
B(\theta_*)\big)$. We remark that, here, we cannot 
reduce this directly to $\rank(\der B(\theta_*))$ using the the above argument
that showed $\rank(G DD^\top G^\top)=\rank(G)$ for $m=2$.
This is because for large $L$, the matrix $N$ may have $O(L^3)$ columns but only
$O(L^2)$ rows. Then $N^\top N$ is of low-rank, in contrast to $DD^\top$ above
which is nonsingular.

To analyze $\rank(N \cdot \der B(\theta_*))$,
recall the linear reparametrization by the coordinates $\eta(\theta)$ in
(\ref{eq:etals}) and (\ref{eq:eta}).
Then equivalently
\[\rank\big(\nabla^2 \ts_3(\theta_*)\big)
=\rank\big(N \cdot \der_\eta B(\eta_*)\big)\]
The proof of Theorem \ref{thm:cryoEM} verified that
$\rank(\der_\eta B(\eta_*))=d-3$ for generic $\eta_* \in \R^d$. In fact,
let
\[D(\eta_*)=\text{submatrix of } \der_\eta B(\eta_*)
\text{ with columns }
\partial_{w_0^{(12)}},\partial_{w_1^{(11)}},\partial_{w_0^{(11)}} \text{
removed.}\]
Then Lemma \ref{lemma:S3cryoEMrankincrease} shows that
$D(\eta_*)$ has full column rank $d-3$ for generic $\eta_* \in \R^d$. Applying
\[\rank(N \cdot \der_\eta B(\eta_*)) \geq \rank(N \cdot D(\eta_*)),\]
it then suffices to show that $N \cdot D(\eta_*)$ also has full column rank
$d-3$ for generic $\eta_* \in \R^d$.

For this, we define the following submatrices of $N$ and $D(\eta_*)$.
For each $k \in \{0,1,\ldots,L\}$, define the index sets
\begin{align*}
\cH^{(k)}&=\Big\{((l,s),(l',s'),(l'',s'')) \in \cH:\;\max(l,l',l'')=k \Big\},\\
\cQ^{(k)}&=\Big\{((q,r),(q',r'),(q'',r'')) \in \cQ:\;\max(|q|,|q'|,|q''|)=k
\Big\},\\
\cV^{(k)}&=\Big\{\text{coordinates } v^{(ls)}_m,w^{(ls)}_m \text{ of } \eta:
\;l=k\Big\} 
\qquad \text{ if } k \neq 1,\\
\cV^{(1)}&=\Big\{\text{coordinates } v^{(1s)}_m,w^{(1s)}_m \text{ of } \eta
\Big\} \Big\backslash \Big\{w_0^{(12)},w_1^{(11)},w_0^{(11)}\Big\}.
\end{align*}
Let $N_k \in \R^{|\cQ^{(k)}| \times |\cH^{(k)}|}$
be the submatrix of $N$ containing the rows in $\cQ^{(k)}$ and columns in
$\cH^{(k)}$, and let $D_k(\eta_*) \in \R^{|\cH^{(k)}| \times |\cV^{(k)}|}$
be the submatrix of $D(\eta_*)$ containing the rows in
$\cH^{(k)}$ and columns in $\cV^{(k)}$. Similarly, define $N_{\leq k}$ and
$D_{\leq k}(\eta_*)$ to contain rows and columns of
$\cQ^{(l)},\cH^{(l)},\cV^{(l)}$ for $l \leq k$.
Note that $D_k(\eta_*)$ and $D_{\leq k}(\eta_*)$
depend only on the coordinates of $v^{(ls)}_m$ and $w^{(ls)}_m$ where $l \leq k$, by
the definition of $\cH^{(k)}$ and the form of each function
$B_{(l,s),(l',s'),(l'',s'')}$.

We prove by induction on $L$ the claim that $N \cdot D(\eta_*)$ has full
column rank for generic $\eta_* \in \R^d$.
Lemma \ref{lemma:SO3projinduction}(a) below shows that for $L=1$, there exists
some $\eta_*$ where $N \cdot D(\eta_*)$ has full column rank. Then
$N \cdot D(\eta_*)$ has full column rank also for generic $\eta_*$
by Fact \ref{fact:fullrank}, establishing the base case $L=1$.

For the inductive step, we establish a block structure on
$N$ and $D(\eta_*)$. Block the rows and columns of $N$ by
$(\cQ \setminus \cQ^{(L)},\cQ^{(L)})$ and $(\cH \setminus \cH^{(L)},\cH^{(L)})$,
and those of $D(\eta_*)$ by
$(\cH \setminus \cH^{(L)},\cH^{(L)})$ and $(\eta \setminus
\cV^{(L)},\cV^{(L)})$. Note that
$N^{(l,s),(l',s'),(l'',s'')}_{(q,r),(q',r'),(q'',r'')}=0$ unless
$\max(|q|,|q'|,|q''|) \leq \max(l,l',l'')$, and also
$B_{(l,s),(l',s'),(l'',s'')}$ does not depend on any variable $v_m^{(ks)}$ or
$w_m^{(ks)}$ where $k>\max(l,l',l'')$. Thus $N$ and $D(\eta_*)$
have the block structures
\[N=\begin{pmatrix} A & B \\
0 & N_L \end{pmatrix}, \qquad D(\eta_*)=\begin{pmatrix} X(\eta_*) & 0 \\
Y(\eta_*) & D_L(\eta_*) \end{pmatrix}\]
for some matrices $A,B,X(\eta_*),Y(\eta_*)$.

Let us now specialize to $\eta_* \in \R^d$ where
\begin{equation}\label{eq:projspecialeta}
v_{*,m}^{(Ls)}=w_{*,m}^{(Ls)}=0 \text{ for all } s=1,\ldots,S_L
\text{ and } m=-L,\ldots,L.
\end{equation}
The above matrix $Y(\eta_*)$ contains the
derivatives in variables $\{v_m^{(ks)},w_m^{(ks)}:k<L\}$ of the
functions $B_{(l,s),(l',s'),(l'',s'')}$ where $\max(l,l',l'')=L$. By the form of
$B_{(l,s),(l',s'),(l'',s'')}$, any such
derivative vanishes for $\eta_*$ satisfying (\ref{eq:projspecialeta}),
so $Y(\eta_*)=0$ and
\[
N \cdot D(\eta_*)=\begin{pmatrix} A \cdot X(\eta_*) & B \cdot
D_L(\eta_*) \\ 0 & N_L \cdot D_L(\eta_*) \end{pmatrix}.
\]
The induction hypothesis for $L-1$ is exactly the statement that
the upper-left block $A \cdot X(\eta_*)$ has full column rank for all
generic values of the coordinates $\{v_{*,m}^{(ks)},w_{*,m}^{(ks)}:k \leq L-1\}$.
Applying Fact \ref{fact:fullrank} and Lemma
\ref{lemma:SO3projinduction}(b) below with $k=L$, restricting to $\eta_*$
satisfying (\ref{eq:projspecialeta}) and for generic values of the remaining
coordinates $\{v_{*,m}^{(ks)},w_{*,m}^{(ks)}:k \leq L-1\}$, the lower-right block
$N_L \cdot D_L(\eta_*)$ also has full column rank.
Then there exists a point $\eta_* \in \R^d$ satisfying (\ref{eq:projspecialeta})
where
$N \cdot D(\eta_*)$ has full column rank.
Then $N \cdot D(\eta_*)$ has full column rank also
for generic $\eta_* \in \R^d$, completing the induction and the proof.
\end{proof}

\begin{lemma}\label{lemma:SO3projinduction}
If $S_l \geq 4$ for $0 \leq l \leq L$, then we have the following.
\begin{enumerate}[(a)]
\item There exists a point $\eta_* \in \R^d$ such that
$N_{\leq 1} \cdot D_{\leq 1}(\eta_*)$ has full column rank.
\item For each $k \geq 2$, there exists a point $\eta_* \in \R^d$ such that
$v_{*,m}^{(ks)}=w_{*,m}^{(ks)}=0$ for all $s \in \{1,\ldots,S_k\}$ and $m \in
\{-k,\ldots,k\}$, and $N_k \cdot D_k(\eta_*)$ has full column rank.
\end{enumerate}
\end{lemma}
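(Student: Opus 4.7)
The plan for both parts is to exhibit a single point $\eta_* \in \R^d$ at which the claimed full column rank holds, and then invoke Fact \ref{fact:fullrank} implicitly (since these matrices have polynomial entries in $\eta$, this is only used to justify the construction). In each case, I would choose $\eta_*$ with many coordinates set to zero so that $N_{\leq 1} \cdot D_{\leq 1}(\eta_*)$, respectively $N_k \cdot D_k(\eta_*)$, acquires a sparse structure in which a suitable square submatrix of maximal size can be analyzed directly, and I would verify that the determinant of this submatrix is a non-zero polynomial in the remaining non-zero coordinates of $\eta_*$. In contrast to the argument for the unprojected setting in Lemma \ref{lemma:S3cryoEMrankincrease}, the proof here must contend with the Legendre factors $p_{lq}$ in the definition of $N$; in particular, $p_{lq}=0$ whenever $l+q$ is odd, so many Clebsch--Gordan-weighted terms that survive in the unprojected analysis will now vanish, and the sparsity pattern of $\eta_*$ must be chosen compatibly with this parity constraint.

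For part (a), with $L=1$ the relevant index sets $\cH^{(0)},\cH^{(1)},\cQ^{(0)},\cQ^{(1)},\cV^{(0)},\cV^{(1)}$ are all small and their sizes depend only on $S_0,S_1 \geq 4$. I would write out $N_{\leq 1}$ directly in terms of the finitely many Clebsch--Gordan values $\langle l,q;l',q'|l'',-q'' \rangle$ and Legendre values $p_{lq}$ for $l,l',l'' \in \{0,1\}$ (noting $p_{00},p_{11},p_{1,-1}\neq 0$ while $p_{10}=0$), and write $D_{\leq 1}(\eta_*)$ from Theorem \ref{thm:projectedcryoEM-mom} using the explicit forms (\ref{eq:dervmlsI})--(\ref{eq:dereta0lsII}) specialized to frequencies $0$ and $1$. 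I would then take $\eta_*$ supported on only a handful of coordinates (e.g.\ $v^{(0s)}_{*,0}$ for two values of $s$, and $v^{(1s)}_{*,m}, w^{(1s)}_{*,m}$ for $m\in\{0,1\}$ and a few values of $s$), and verify by direct computation that a full-column-rank square submatrix of $N_{\leq 1} \cdot D_{\leq 1}(\eta_*)$ has determinant a non-vanishing polynomial in these non-zero coordinates. The condition $S_0,\ldots,S_L \geq 4$ enters here through the need for enough distinct radial indices to realize sufficiently many independent columns of $D_{\leq 1}$.

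For part (b) with $k\geq 2$, set $v_{*,m}^{(ks)}=w_{*,m}^{(ks)}=0$ for every $s,m$ at frequency $k$. Under this choice, $\partial_{v_m^{(ks)}} B_{(l,s_1),(l',s_1'),(l'',s_1'')}$ and $\partial_{w_m^{(ks)}} B_{(l,s_1),(l',s_1'),(l'',s_1'')}$ with $\max(l,l',l'')=k$ are non-zero only when exactly one of $l,l',l''$ equals $k$ (otherwise the undifferentiated frequency-$k$ factor vanishes) and the corresponding radial index $s_1,s_1',s_1''$ equals $s$; in such cases the derivative reduces, via (\ref{eq:dervmlsI})--(\ref{eq:dereta0lsII}), to a Clebsch--Gordan-weighted bilinear form in the lower-frequency complex coordinates $u^{(l',s')}_{m'}$ and $u^{(l'',s'')}_{m+m'}$ with $l',l''<k$. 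Thus $D_k(\eta_*)$ splits into three blocks according to which of the three slots of $B$ carries the frequency-$k$ index, and each block factors as a product of a matrix depending only on Clebsch--Gordan coefficients with a matrix of bilinear monomials in the lower-frequency coordinates. Computing $N_k \cdot D_k(\eta_*)$ then yields a matrix whose entries are explicit polynomials in the lower-frequency coordinates, with coefficients built from triple sums $\langle l,q;l',q'|l'',-q''\rangle\, p_{lq}p_{l'q'}p_{l''q''}\cdot \langle k,m;l',m'|l'',m+m'\rangle$ over the contraction indices.

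To finish I would carefully select, for each $s\in\{1,\ldots,S_k\}$ and each coordinate $v_m^{(ks)},w_m^{(ks)}$ of $\eta^{(ks)}$, a row in $\cQ^{(k)}$ (i.e.\ a triple $(q,r),(q',r'),(q'',r'')$ with $\max(|q|,|q'|,|q''|)=k$, $q+q'+q''=0$, and $r=s$ or $r'=s$ or $r''=s$) that picks out that column uniquely, and I would simultaneously restrict the lower-frequency coordinates of $\eta_*$ to a sparse, ``type-indexed'' pattern in the spirit of Lemmas \ref{lem:S2rankincrease} and \ref{lemma:S3cryoEMrankincrease} so that the resulting $(2k+1)S_k \times (2k+1)S_k$ square submatrix of $N_k \cdot D_k(\eta_*)$ becomes block lower-triangular with small (e.g.\ $1\times1$ or $2\times2$) diagonal blocks. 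The main obstacle is the parity coupling: I must choose the indices $q,q',q''$ and the non-zero lower-frequency $(l',m')$ pairs so that (i) all of $p_{lq}, p_{l'q'}, p_{l''q''}$ are non-zero (requiring $l+q,l'+q',l''+q''$ all even) for the surviving rows, (ii) the relevant Clebsch--Gordan coefficients are non-vanishing (which I would certify using Lemma \ref{lemma:CGnonvanishing}), and (iii) the resulting diagonal block determinants evaluate to non-zero polynomials in the chosen non-zero coordinates of $\eta_*$. The condition $S_l \geq 4$ enters precisely to provide enough distinct radial slots to decouple the three blocks of $D_k(\eta_*)$ and to realize the sparsity pattern; once such $\eta_*$ is constructed, block lower-triangularity reduces the rank computation to checking that each small diagonal block is non-singular, which I would do by explicit calculation analogous to the case analyses in the proof of Lemma \ref{lemma:S3cryoEMrankincrease}.
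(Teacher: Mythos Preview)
Your plan for part (a) and for part (b) with $k\in\{2,3\}$ is essentially sound and close to the paper's argument, though the paper is more economical: for $\max(l,l',l'')\leq 1$ (and similarly for $k=2,3$) the parity constraint $p_{lq}=0$ unless $l+q$ is even forces $l=|q|$, $l'=|q'|$, $l''=|q''|$ in every nonzero entry of $N$, so each selected row of $N_{\leq 1}$ (resp.\ $N_k'$) has exactly one nonzero entry. This reduces the problem to checking full column rank of a submatrix of $D_{\leq 1}(\eta_*)$ (resp.\ $D_k(\eta_*)$), which was already done in Lemma \ref{lemma:S3cryoEMrankincrease} for part (a), and is handled by a short block-triangular check for $k=2,3$. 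Your direct-computation plan would also work here.

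The genuine gap is in part (b) for $k\geq 4$. Your proposal hinges on choosing rows of $\cQ^{(k)}$ and a sparse $\eta_*$ so that the selected $(2k+1)\times(2k+1)$ submatrix of $N_k\cdot D_k^{(s)}(\eta_*)$ is block lower-triangular with $1\times 1$ or $2\times 2$ diagonal blocks. This does not work: once $k\geq 4$, any useful row of $N_k$ has $q=-k$, $q'+q''=k$ with $q',q''\in\{1,\ldots,k-1\}$, and its nonzero columns in $\cH^{(k)}$ are indexed by all $(l',l'')$ with $l'\geq q'$, $l''\geq q''$, $l'-q'$ and $l''-q''$ even, $l',l''<k$. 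For small $q'$ (e.g.\ $q'=1$) this gives many admissible $l'\in\{1,3,5,\ldots\}$, and each contributes nonzero entries at column indices $m$ spread across multiple blocks. Zeroing out the offending lower-frequency variables to kill these off-block entries would simultaneously kill the diagonal contributions needed for other rows (those with larger $q'$), so no sparsity pattern on $\eta_*$ yields a block-triangular matrix. The paper explicitly notes that the argument here ``is different from our preceding approaches'' and instead tracks a specific monomial $M$ in the lower-frequency variables: one shows that the entries of $N_k'\cdot D_k^{(s)}(\eta_*)$ depending on frequency-$j$ variables are confined to the $j$th diagonal block (for $j=1,\ldots,\lfloor k/2\rfloor$), even though the off-diagonal entries are nonzero (they just depend on higher-frequency variables). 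This localization forces the terms of $\det(N_k'\cdot D_k^{(s)}(\eta_*))$ divisible by $M$ to factor as a product $(P_1/M_1)\cdots(P_{\lfloor k/2\rfloor}/M_{\lfloor k/2\rfloor})$ of diagonal-block determinants, each of which can then be checked to be nonzero by a short computation. This monomial-factorization idea is the missing ingredient in your plan.
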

\begin{proof}[Proof of Lemma \ref{lemma:SO3projinduction}]
{\bf Part (a):} Recall from the form of $p_{lm}$ in (\ref{eq:plm})
that $p_{lm}=0$ if $l+m$ is odd and $p_{lm} \neq 0$ if $l+m$ is even.
Then, for $\max\{l, l', l''\} \leq 1$, 
the non-vanishing of Clebsch-Gordon coefficients in
Lemma \ref{lemma:CGnonvanishing} and the definition of $N$ in (\ref{eq:Ndef})
imply that
\begin{equation}\label{eq:Nstructurebasecase}
N^{(l, s), (l', s'), (l'', s'')}_{(q, r), (q', r'), (q'', r'')} \neq 0
\text{ if and only if }l = |q|, l' = |q'|, l'' = |q''|, r = s, r' = s', r'' =
s''.
\end{equation}
For each $((l,s),(l',s'),(l'',s'')) \in \cH^{(0)} \cup \cH^{(1)}$ where
$l=l'+l''$, take the row
$((-l,s),(l',s'),(l'',s'')) \in \cQ^{(0)} \cup \cQ^{(1)}$ of $N_{\leq 1}$.
It suffices to exhibit $\eta_*$ such that the submatrix of corresponding rows of
$N_{\leq 1} \cdot D_{\leq 1}(\eta_*)$ has full column rank. 
Observation (\ref{eq:Nstructurebasecase}) implies that each such row of $N_{\leq 1}$
has exactly one non-zero entry, which is given by
$N^{(l, s), (l', s'), (l'', s'')}_{(-l, s), (l', s'), (l'', s'')}$.
Then it suffices to show that the submatrix of
$D_{\leq 1}(\eta_*)$ consisting of the rows
$((l,s),(l',s'),(l'',s'')) \in \cH^{(0)} \cup \cH^{(1)}$ where
$l=l'+l''$ has full column rank. But this has been exhibited already in
Lemma \ref{lemma:S3cryoEMrankincrease}, because the proof of
Lemma \ref{lemma:S3cryoEMrankincrease}(b--c) in fact only used rows of
$\der_{\eta^{(0s)}} B^{(0s)}$ and $\der_{\eta^{(1s)}} B^{(1s)}$
for which $(l,l',l'')=(0,0,0)$ or $(1,1,0)$, both
satisfying $l=l'+l''$. This completes the proof of (a).\\

{\bf Part (b), $k=2$ and $k=3$:} The argument is similar to part (a). Observe
first that when $\eta_*$ satisfies $v_{*,m}^{(ks)}=w_{*,m}^{(ks)}=0$ for all
$s$ and $m$, the rows of $D_k(\eta_*)$ indexed by
$((l,s),(l',s'),(l'',s'')) \in \cH^{(k)}$ having more than one index
$l,l',l''$ equal to $k$ are identically 0. Let $D_k(\eta_*)'$ be the submatrix
of $D_k(\eta_*)$ with
these rows removed, and let $N_k'$ be the submatrix of $N_k$ with the
corresponding columns removed. Then $N_k \cdot D_k(\eta_*)=N_k' \cdot
D_k(\eta_*)'$.

For each remaining tuple $((l,s),(l',s'),(l'',s'')) \in \cH^{(k)}$ where
$l=l'+l''$, consider
the row
\[((-l,s),(l',s'),(l'',s'')) \in \cQ^{(k)}\] of $N_k'$.
Note that we must have $(l,l',l'')=(2,1,1)$ if $k=2$, and
$(l,l',l'')=(3,1,2)$ or $(3,2,1)$ if $k=3$. Each such row has the non-zero entry
$N_{(-l,s),(l',s'),(l'',s'')}^{(l,s),(l',s'),(l'',s'')}$ as above, and this
is the only non-zero entry in the row: Indeed, if
$((j,r),(j',r'),(j'',r''))$ is a column of $N_k'$ where
$N_{(-l,s),(l',s'),(l'',s'')}^{(j,r),(j',r'),(j'',r'')} \neq 0$, then by
definition of $N$ in (\ref{eq:Ndef})
we must have $(s,s',s'')=(r,r',r'')$, $j \geq l$,
$j' \geq l'$, $j'' \geq l''$, and each of $j-l$, $j'-l'$, $j''-l''$ is even.
Columns of $N_k'$ must satisfy $(j,j',j'') \in \{(2,1,1),(3,1,2),(3,2,1)\}$,
and this forces $(j,j',j'')=(l,l',l'')$. So the non-zero entry in this row of
$N_k'$ is unique, as claimed.

Then it suffices to check that the
submatrix of rows of $D_k(\eta_*)$ indexed by $((l,s),(l',s'),(l'',s'')) \in
\cH^{(k)}$ where $l',l''<k$ and $k=l=l'+l''$ has full column rank.
This was not exhibited in the proof of Lemma \ref{lemma:S3cryoEMrankincrease}
(which used rows where $l'+l''>l$ strictly)
but we may show this here by a similar argument, assuming now the 
availability of 4 different spherical frequencies: Fix spherical
frequencies $(A,B,C,D)=(1,2,3,4)$, and consider $\eta_*$ satisfying
\[\Type(l',A),\Type(l',B)=0, \quad \Type(l',C),\Type(l',D)=1 \quad
\text{ for all } \quad l' \in \{1,\ldots,k-1\}.\]
Recall that this means
$v_{*,m}^{(l'A)},w_{*,m}^{(l'A)},v_{*,m}^{(l'B)},w_{*,m}^{(l'B)}=0$ unless
$m=l'$, and $v_{*,m}^{(l'C)},w_{*,m}^{(l'C)},v_{*,m}^{(l'D)},w_{*,m}^{(l'D)}=0$
unless $m=l'-1$. Then, for $\partial_{v_m^{(ls)}} B_{(l,s),(l',s'),(l'',s'')},
\partial_{w_m^{(ls)}} B_{(l,s),(l',s'),(l'',s'')}$ to be non-zero, this
requires as in (\ref{eq:cryoEMmcondition})
\begin{equation}\label{eq:projcryoEMmcondition2}
m \in \Big\{\big|(l'-\Type(l',s'))-(l''-\Type(l'',s''))\big|,
(l'-\Type(l',s'))+(l''-\Type(l'',s''))\Big\}.
\end{equation}

For $k=2$, we choose the following 5 rows of $D_k(\eta_*)$, with the following
corresponding values of $m$ satisfying (\ref{eq:projcryoEMmcondition2}):
\pagebreak

\begin{table}[h]
\caption{ }
\begin{tabular}{l|l}
$(l', s')$ and $(l'', s'')$ & Values of $m$ \\ \hline
$(1, C)$ and $(1, C)$ & $0$ \\ 
$(1, A)$ and $(1, A)$ & $0, 2$ \\ 
$(1, B)$ and $(1, B)$ & $0, 2$ \\ 
$(1, A)$ and $(1, C)$ & $1$ \\ 
$(1, B)$ and $(1, D)$ & $1$
\end{tabular}
\end{table}
Ordering the columns by
$v_0^{(2s)},v_2^{(2s)},w_2^{(2s)},v_1^{(2s)},w_1^{(2s)}$, the resulting
$5 \times 5$ submatrix is block lower-triangular with diagonal blocks
\begin{align*}
\partial_{v_0^{(2s)}} B_{(2,s),(1,C),(1,C)}(\eta_*)&=
C_{0,0,0}^{2,1,1} \cdot \left|u_{*,0}^{(1C)}\right|^2\\
\partial_{v_2^{(2s)},w_2^{(2s)}}
\Big(B_{(2,s),(1,A),(1,A)},B_{(2,s),(1,B),(1,B)}\Big)(\eta_*)&=
\begin{psmallmatrix}
2C_{2,-1,1}^{2,1,1} \cdot \Re \overline{u_{*,-1}^{(1A)}} u_{*,1}^{(1A)} &
2C_{2,-1,1}^{2,1,1} \cdot \Im \overline{u_{*,-1}^{(1A)}} u_{*,1}^{(1A)}\\
2C_{2,-1,1}^{2,1,1} \cdot \Re \overline{u_{*,-1}^{(1B)}} u_{*,1}^{(1B)} &
2C_{2,-1,1}^{2,1,1} \cdot \Im \overline{u_{*,-1}^{(1B)}} u_{*,1}^{(1B)}
\end{psmallmatrix}\\
\partial_{v_1^{(2s)},w_1^{(2s)}}
\Big(B_{(2,s),(1,A),(1,C)},B_{(2,s),(1,B),(1,D)}\Big)(\eta_*)&=
\begin{psmallmatrix}
2C_{1,-1,0}^{2,1,1} \cdot \Re \overline{u_{*,-1}^{(1A)}} u_{*,0}^{(1C)} &
2C_{1,-1,0}^{2,1,1} \cdot \Im \overline{u_{*,-1}^{(1A)}} u_{*,0}^{(1C)}\\
2C_{1,-1,0}^{2,1,1} \cdot \Re \overline{u_{*,-1}^{(1B)}} u_{*,0}^{(1D)} &
2C_{1,-1,0}^{2,1,1} \cdot \Im \overline{u_{*,-1}^{(1B)}} u_{*,0}^{(1D)}
\end{psmallmatrix}
\end{align*}
These blocks are generically non-singular, so this submatrix of $D_2(\eta_*)$
has full column rank.

For $k=3$, we choose the following 7 rows of $D_k(\eta_*)$, with the following
corresponding values of $m$ satisfying (\ref{eq:projcryoEMmcondition2}):

\begin{table}[h]
\caption{ }
\begin{tabular}{l|l}
$(l', s')$ and $(l'', s'')$ & values of $m$ \\ \hline
$(1, D)$ and $(2, B)$ & $2$ \\ 
$(1, C)$ and $(2, A)$ & $2$ \\ 
$(1, C)$ and $(2, C)$ & $1$ \\ 
$(1, D)$ and $(2, D)$ & $1$ \\
$(1, A)$ and $(2, A)$ & $1, 3$ \\ 
$(1, B)$ and $(2, B)$ & $1, 3$ \\
$(1, A)$ and $(2, C)$ & $2, 0$ \\ 
\end{tabular}
\end{table}

Ordering the columns by
$v_2^{(3s)},w_2^{(3s)},v_1^{(3s)},w_1^{(3s)},v_3^{(3s)},w_3^{(3s)},w_0^{(3s)}$,
the resulting $7 \times 7$ submatrix is block lower-triangular
with diagonal blocks
\begin{align*}
\partial_{v_2^{(3s)},w_2^{(3s)}}
\Big(B_{(3,s),(1,D),(2,B)},B_{(3,s),(1,C),(2,A)}\Big)(\eta_*)&=
\begin{psmallmatrix}
2C_{2,0,2}^{3,1,2} \cdot \Re \overline{u_{*,0}^{(1D)}} u_{*,2}^{(2B)} &
2C_{2,0,2}^{3,1,2} \cdot \Im \overline{u_{*,0}^{(1D)}} u_{*,2}^{(2B)}\\
2C_{2,0,2}^{3,1,2} \cdot \Re \overline{u_{*,0}^{(1C)}} u_{*,2}^{(2A)} &
2C_{2,0,2}^{3,1,2} \cdot \Im \overline{u_{*,0}^{(1C)}} u_{*,2}^{(2A)}
\end{psmallmatrix}\\
\partial_{v_1^{(3s)},w_1^{(3s)}}
\Big(B_{(3,s),(1,C),(2,C)},B_{(3,s),(1,D),(2,D)}\Big)(\eta_*)&=
\begin{psmallmatrix}
2C_{1,0,1}^{3,1,2} \cdot \Re \overline{u_{*,0}^{(1C)}} u_{*,1}^{(2C)} &
2C_{1,0,1}^{3,1,2} \cdot \Im \overline{u_{*,0}^{(1C)}} u_{*,1}^{(2C)}\\
2C_{1,0,1}^{3,1,2} \cdot \Re \overline{u_{*,0}^{(1D)}} u_{*,1}^{(2D)} &
2C_{1,0,1}^{3,1,2} \cdot \Im \overline{u_{*,0}^{(1D)}} u_{*,1}^{(2D)}
\end{psmallmatrix}\\
\partial_{v_3^{(3s)},w_3^{(3s)}}
\Big(B_{(3,s),(1,A),(2,A)},B_{(3,s),(1,B),(2,B)}\Big)(\eta_*)&=
\begin{psmallmatrix}
2C_{3,-1,2}^{3,1,2} \cdot \Re \overline{u_{*,-1}^{(1A)}} u_{*,2}^{(2A)} &
2C_{3,-1,2}^{3,1,2} \cdot \Im \overline{u_{*,-1}^{(1A)}} u_{*,2}^{(2A)}\\
2C_{3,-1,2}^{3,1,2} \cdot \Re \overline{u_{*,-1}^{(1B)}} u_{*,2}^{(2B)} &
2C_{3,-1,2}^{3,1,2} \cdot \Im \overline{u_{*,-1}^{(1B)}} u_{*,2}^{(2B)}
\end{psmallmatrix}\\
\partial_{w_0^{(3s)}} B_{(3,s),(1,A),(2,C)}(\eta_*)&=
2C_{0,1,1}^{3,1,2} \cdot \Im \overline{u_{*,1}^{(1A)}}u_{*,1}^{2C}
\end{align*}
These blocks are again generically non-singular, so this submatrix of
$D_3(\eta_*)$ has full column rank.

This verifies that $N_k \cdot D_k(\eta_*)$ has full column rank for $k=2,3$.\\

{\bf Part (b), $k \geq 4$:}
As above, we fix $(A,B,C,D)=(1,2,3,4)$ and consider $\eta_*$ satisfying both
$v_{*,m}^{(ks)},w_{*,m}^{(ks)}=0$ for all $m,s$ and
\begin{equation}\label{eq:specialetaproj}
\Type(l',A),\Type(l',B)=0, \quad \Type(l',C),\Type(l',D)=1 \quad
\text{ for all } \quad l' \in \{1,\ldots,k-1\}.
\end{equation}
Columns of $D_k(\eta_*)$
correspond to derivatives in the coordinates $\cV^{(k)}$. We partition
these coordinates into blocks $s=1,\ldots,S_k$ and write
\[D_k(\eta_*)=[D_k^{(s)}(\eta_*):s=1,\ldots,S_k],\]
where columns of each $D_k^{(s)}(\eta_*)$ are indexed by
$\eta_0^{(ks)},v_1^{(ks)},w_1^{(ks)},\ldots,v_k^{(ks)},w_k^{(ks)}$.
It suffices to show that $N_k \cdot D_k^{(s)}(\eta_*)$ has full column rank
$2k+1$ for generic $\eta_*$ satisfying (\ref{eq:specialetaproj}), for
each fixed $s$. We do this by choosing $2k+1$ rows of $N_k$---call this
submatrix $N_k'$---and verifying that the corresponding $(2k+1) \times (2k+1)$
submatrix $N_k' \cdot D_k^{(s)}(\eta_*)$ is non-singular.

The argument for verifying non-singularity is different
from our preceding approaches in Lemmas \ref{lem:S2rankincrease} and
\ref{lemma:S3cryoEMrankincrease}. Let us first explain the high-level idea:
Rather than exhibiting a sparse structure for $N_k' \cdot D_k^{(s)}(\eta_*)$
where the rank may be explicitly checked, we study the determinant
\begin{equation}\label{eq:Pdet}
P(\eta_*)=\det [N_k' \cdot D_k^{(s)}(\eta_*)]
\end{equation}
and show that this is not identically 0 as a polynomial of the non-zero
coordinates of $\eta_*$. We introduce a special degree-$(2k+1)$ monomial
\begin{equation}\label{eq:Mpoly}
M = \Big(w^{(1C)}_0 w^{(1D)}_0 v^{(1A)}_1 v^{(1B)}_1\Big)^2\left(
\prod_{j = 2}^{\lfloor k/2 \rfloor - 1} v^{(jA)}_j v^{(jB)}_j v^{(jC)}_{j - 1}
v^{(jD)}_{j - 1}\right)
v^{(\lfloor k/2 \rfloor C)}_{\lfloor k/2 \rfloor - 1} \Big(v^{(\lfloor k/2
\rfloor A)}_{\lfloor k/2 \rfloor} v^{(\lfloor k/2 \rfloor D)}_{\lfloor k/2 \rfloor - 1}\Big)^{\mathbf{1}\{\text{$k$ odd}\}},
\end{equation}
where all variables appearing in $M$ are coordinates of $\eta_*$ which
are not fixed to be zero. We then write
\begin{equation}\label{eq:PoverM}
P=(P/M) \cdot M+Q
\end{equation}
where $Q$ are the terms of $P$ not divisible by $M$, $(P/M) \cdot
M$ are the terms which \emph{are} divisible by $M$, and $P/M$ denotes
their quotient by $M$. It suffices to show that $P/M$ is not identically 0.

We now describe the choice of $2k+1$ rows of $N_k'$
that allows us to verify this claim $P/M \neq 0$. We restrict to rows
$((-k,s),(q',s'),(q'',s'')) \in \cQ^{(k)}$ of $N_k$ where the first pair is
fixed to be $(-k,s)$, and where $q' \in \{1,\ldots,\lfloor k/2 \rfloor\}$.
This requires $-k+q'+q''=0$, so $q''=k-q' \in \{k-1,\ldots,\lceil k/2 \rceil\}$.
We index such rows by $(q',s'),(q'',s'')$. For any such row $(q',s'),(q'',s'')$,
we apply the following two observations:
\begin{itemize}
\item By definition of $N$ in (\ref{eq:Ndef}),
each non-zero entry in this row of $N_k$ belongs to a column
$((k,s),(l',s'),(l'',s'')) \in \cH^{(k)}$
where $l',l'' \in \{1,\ldots,k-1\}$ and
\begin{equation}\label{eq:lparity}
l' \geq q', \qquad l'' \geq q'', \qquad l'-q',l''-q'' \text{ are even}.
\end{equation}
\item As in (\ref{eq:cryoEMmcondition}), for this row
$((k,s),(l',s'),(l'',s'')) \in \cH^{(k)}$ of $D_k^{(s)}(\eta_*)$,
the entries in the
columns $\partial_{v_m^{(ks)}},\partial_{w_m^{(ks)}}$ can be non-zero only when
\begin{equation}\label{eq:projcryoEMmcondition}
m \in \Big\{\big|(l'-\Type(l',s'))-(l''-\Type(l'',s''))\big|,
(l'-\Type(l',s'))+(l''-\Type(l'',s''))\Big\}.
\end{equation}
\end{itemize}
Combined, these yield the important observation that,
fixing a row $(q',s'),(q'',s'')$ of $N_k \cdot D_k^{(s)}(\eta_*)$ and a pair of
columns $\partial_{v_m^{(ks)}},\partial_{w_m^{(ks)}}$ (or a single column in the
case $m=0$) for a specific index $m \in
\{0,\ldots,k\}$, these two entries (or one entry) of $N_k \cdot D_k^{(s)}(\eta_*)$ are
homogenous degree-2 polynomials, whose degree-2 monomials are each a product of
some variable $v_{\cdot}^{l'\cdot},w_{\cdot}^{l'\cdot}$ and some variable
$v_\cdot^{l''\cdot},w_\cdot^{l''\cdot}$ where
$l',l''$ satisfy both conditions (\ref{eq:lparity}) and
(\ref{eq:projcryoEMmcondition}).

Table \ref{tab:rowchoice} now specifies an explicit choice of
$2k+1$ rows $(q',s'),(q'',s'')$ of
$N_k$ to form $N_k'$, and indicates which columns
$\partial_{v_m^{(ks)}},\partial_{w_m^{(ks)}}$ of each corresponding
row of $N_k \cdot D_k^{(s)}(\eta_*)$
can depend on some variable $v_\cdot^{q'\cdot},w_\cdot^{q'\cdot}$ with the same
spherical frequency as the first row index $q'$.
For example: If $(s',s'',q',q'')=(A,A,1,k-1)$, then (\ref{eq:lparity}) forces
$l''=k-1$. In order for a term of this row to depend on
$v_{\cdot}^{1\cdot},w_{\cdot}^{1\cdot}$, we must then have $l'=1$.
Then $\Type(l',s')=1$ and $\Type(l'',s'')=k-1$, so the condition
(\ref{eq:projcryoEMmcondition}) implies that only columns corresponding to 
$m \in \{k,k-2\}$ can depend on such variables
$v_{\cdot}^{1\cdot},w_{\cdot}^{1\cdot}$. This yields the first row of the
table. If $(s',s'',q',q'')=(C,C,3,k-3)$, then (\ref{eq:lparity}) forces $l'' \in
\{k-3,k-1\}$. For this row to depend on $v_{\cdot}^{3\cdot},w_{\cdot}^{3\cdot}$,
we must have $l'=3$. Then $\Type(l',s')=2$ and $\Type(l'',s'') \in \{k-4,k-2\}$,
so $m \in \{k-6,k-4,k-2,k\}$, and this yields the $13^\text{th}$ row of the
table. The remaining rows are deduced by the same type of reasoning. (The
sequences $(2,4,\ldots)$, $(3,5,\ldots)$, etc.\ in Table \ref{tab:rowchoice}
denote some sequences of
consecutive even and odd integers, whose exact last elements will not be
important for our later arguments.)

\begin{table}
\caption{ }\label{tab:rowchoice}
\begin{tabular}{l|llll|l}
&$s'$ & $s''$ & $q'$ & $q''$ & $m$ s.t.\ 
$\partial_{v_m^{(ks)}},\partial_{w_m^{(ks)}}$ can depend on
$v_\cdot^{q'\cdot},w_\cdot^{q'\cdot}$ \\ \hline
&$A$ & $A$ & $1$ & $k - 1$ & $k, k - 2$ \\
&$B$ & $B$ & $1$ & $k - 1$ & $k, k - 2$ \\
&$C$ & $C$ & $1$ & $k - 1$ & $k - 2$ \\
&$D$ & $D$ & $1$ & $k - 1$ & $k - 2$ \\
&$A$ & $C$ & $1$ & $k - 1$ & $k - 1, k - 3$ \\
&$B$ & $D$ & $1$ & $k - 1$ & $k - 1, k - 3$ \\ 
&$C$ & $A$ & $1$ & $k - 1$ & $k - 1$ \\
&$D$ & $B$ & $1$ & $k - 1$ & $k - 1$ \\ \hline
&$C$ & $C$ & $2$ & $k - 2$ & $k - 4$ and $k-2$\\
&$D$ & $D$ & $2$ & $k - 2$ & $k - 4$ and $k-2$\\
&$A$ & $C$ & $2$ & $k - 2$ & $k - 5$ and $k-1$ \\
&$B$ & $D$ & $2$ & $k - 2$ & $k - 5$ and $k-1$ \\ \hline
&$C$ & $C$ & $3$ & $k - 3$ & $k - 6, k - 4, k - 2$ and $k$ \\
&$D$ & $D$ & $3$ & $k - 3$ & $k - 6, k - 4, k - 2$ and $k$ \\
&$A$ & $C$ & $3$ & $k - 3$ & $k - 7, k - 5$ and $k-1$ \\
&$B$ & $D$ & $3$ & $k - 3$ & $k - 7, k - 5$ and $k-1$ \\ \hline
& & & \vdots & \vdots & \vdots \\ \hline
& $C$ & $C$ & $\lfloor k/2 \rfloor - 1$ & $\lceil k/2 \rceil + 1$ &
$2,4,\ldots$ and $k-2,k$ if $k$ even; $3,5,\ldots$ and $k-2,k$ if $k$ odd \\
& $D$ & $D$ & $\lfloor k/2 \rfloor - 1$ & $\lceil k/2 \rceil + 1$ &
$2,4,\ldots$ and $k-2,k$ if $k$ even; $3,5,\ldots$ and $k-2,k$ if $k$ odd \\
& $A$ & $C$ & $\lfloor k/2 \rfloor - 1$ & $\lceil k/2 \rceil + 1$ &
$1,3,\ldots$ and $k-1$ if $k$ even; $2,4,\ldots$ and $k-1$ if $k$ odd \\
& $B$ & $D$ & $\lfloor k/2 \rfloor - 1$ & $\lceil k/2 \rceil + 1$ &
$1,3,\ldots$ and $k-1$ if $k$ even; $2,4,\ldots$ and $k-1$ if $k$ odd \\ \hline
& $C$ & $C$ & $\lfloor k/2 \rfloor$ & $\lceil k/2 \rceil$ &
$0,2,\ldots$ and $k-2,k$ if $k$ even; $1,3,\ldots$ and $k-2,k$ if $k$ odd \\
(if $k$ odd) &$D$ & $D$ & $\lfloor k/2 \rfloor$ & $\lceil k/2 \rceil$ &
$1,3,\ldots$ and $k-2,k$ \\
(if $k$ odd) &$A$ & $C$ & $\lfloor k/2 \rfloor$ & $\lceil k/2 \rceil$ &
$0,2,\ldots$ and $k-1$
\end{tabular}
\end{table}

Order the rows of $N_k' \cdot D_k^{(s)}(\eta_*)$ in the order listed in Table
\ref{tab:rowchoice}, and the columns in
the ordering of \emph{decreasing $m$}:
\[v_k^{(ks)},w_k^{(ks)},v_{k-1}^{(ks)},w_{k-1}^{(ks)},v_{k-2}^{(ks)},w_{k-2}^{(ks)}\ldots,
v_1^{(ks)},w_1^{(ks)},\eta_0^{(ks)}.\]
Consider the block decomposition for both rows and columns of $N_k' \cdot D_k^{(s)}(\eta_*)$ with respect to
\[2k+1=8+4+4+\ldots+4+\begin{cases} 1 & \text{ if } k \text{ even } \\
3 & \text{ if } k \text{ odd. } \end{cases}\]
Let $P=\det(N_k' \cdot D_k^{(s)}(\eta_*))$ as defined in (\ref{eq:Pdet}),
and let $P_1,P_2,\ldots,P_{\lfloor k/2\rfloor}$ be the determinants
of the diagonal blocks in this decomposition.
Recall that we wish to show $P/M \neq 0$. Let us factor $M$
in (\ref{eq:Mpoly}) correspondingly as
$M=M_1M_2\ldots M_{\lfloor k/2 \rfloor}$ where
\begin{align*}
M_1&=(w^{(1C)}_0 w^{(1D)}_0 v^{(1A)}_1 v^{(1B)}_1)^2\\
M_j&=v_j^{(jA)}v_j^{(jB)}v_{j-1}^{(jC)}v_{j-1}^{(jD)} \quad \text{ for } j=2,\ldots,
\lfloor k/2 \rfloor-1\\
M_{\lfloor k/2 \rfloor}&=v^{(\lfloor k/2 \rfloor C)}_{\lfloor k/2 \rfloor - 1}
\Big(v^{(\lfloor k/2 \rfloor A)}_{\lfloor k/2 \rfloor} v^{(\lfloor k/2 \rfloor D)}_{\lfloor k/2 \rfloor
- 1}\Big)^{\mathbf{1}\{\text{$k$ odd}\}}.
\end{align*}
The degrees of $M_1,M_2,M_3,\ldots$ coincide with the above block sizes
$8,4,4,\ldots$. Furthermore, each $M_j$ depends on only variables
$v_\cdot^{j\cdot},w_\cdot^{j\cdot}$ having spherical frequency $j$.

Observe now that:
\begin{itemize}
\item Only the $8 \times 8$ upper-left diagonal block of $N_k' \cdot
D_k^{(s)}(\eta_*)$ has entries depending on variables
$v_\cdot^{1\cdot},w_\cdot^{1\cdot}$. This is
because all monomials in rows 9 onwards are a product of some
$v_\cdot^{l'\cdot},w_\cdot^{l'\cdot}$ with some
$v_\cdot^{l''\cdot},w_\cdot^{l''\cdot}$ where
$l' \geq q' \geq 2$ and $l'' \geq q'' \geq 2$. In the first 8 rows,
the table indicates that only the first 8 columns (corresponding to
$m=k,k-1,k-2,k-3$) can depend on $v_\cdot^{1\cdot},w_\cdot^{1\cdot}$.
\item Furthermore, any degree-2 monomial in this 
$8 \times 8$ block that depends on
$v_\cdot^{1\cdot},w_\cdot^{1\cdot}$ must
have as its second variable $v_\cdot^{l''\cdot},w_\cdot^{l''\cdot}$ for some
$l'' \geq k-1>\lfloor k/2 \rfloor$ strictly.
\item Removing this first row block and column block of size 8, only the
$4 \times 4$ upper-left diagonal block of the remaining matrix has entries
depending on $v_\cdot^{2\cdot},w_\cdot^{2\cdot}$, by the same reasoning.
Furthermore, any degree-2 monomial in this $4 \times 4$ block that depends on
$v_\cdot^{2\cdot},w_\cdot^{2\cdot}$ must have as its second variable
$v_\cdot^{l''\cdot},w_\cdot^{l''\cdot}$ for some $l'' \geq k-2>
\lfloor k/2 \rfloor$ strictly.
\item Removing also this second row block and column block of size 4, 
only the $4 \times 4$ upper-left remaining diagonal block has entries
depending on $v_\cdot^{3\cdot},w_\cdot^{3\cdot}$, etc. This argument can be
continued inductively until the last block.
\end{itemize}
These observations imply that the terms of $P$ divisible by $M$ must have
the factorization
\begin{equation}\label{eq:PoverMfactorization}
P/M=(P_1/M_1)(P_2/M_2)\ldots (P_{\lfloor k/2 \rfloor}/M_{\lfloor k/2 \rfloor}),
\end{equation}
where, analogously to (\ref{eq:PoverM}), each factor $P_j/M_j$ is the
polynomial that is the quotient by $M_j$ of those terms of
$P_j$ which are exactly divisible by $M_j$.

To complete the proof, we check by direct computation that each polynomial
$P_j/M_j$ on the right side of (\ref{eq:PoverMfactorization}) is non-zero.

{\bf Verification that $P_1/M_1 \neq 0$:}
Consider, as an example, the entry in the first row $(A,A,1,k-1)$ and
first column $\partial_{v_k^{(ks)}}$ of $N_k' \cdot D_k^{(s)}(\eta_*)$.
This entry is the inner product
\[\big(N_{(-k,s),(1,A),(k-1,A)}\big)^\top \big(\partial_{v_k^{(ks)}}
B(\eta_*)\big)
=\!\!\!\!\!\!\!\!\sum_{((k,s),(l',s'),(l'',s'')) \in \cH^{(k)}}\!\!\!
N_{(-k,s),(1,A),(k-1,A)}^{(k,s),(l',s'),(l'',s'')} \cdot
\partial_{v_k^{(ks)}} B_{(k,s),(l',s'),(l'',s'')}(\eta_*).\]
Importantly, only the single term indexed by $((k,s),(l',s'),(l'',s''))
=((k,s),(1,A),(k-1,A))$ of this sum depends on any variable appearing in the
monomial $M_1$. This is because, from the definition of $N$ in (\ref{eq:Ndef}),
all other non-zero entries
$N_{(-k,s),(1,A),(k-1,A)}^{(k,s),(l',s'),(l'',s'')}$ of this row of $N$
have $l' \geq 3$ and $l''=k-1$. We introduce the shorthand
\[Y_q=N_{(-k,s),(q,s'),(k-q,s'')}^{(k,s),(q,s'),(k-q,s'')},\]
where this notation fixes $k$ and uses that this value does not actually
depend on $(s,s',s'')$. Note that by (\ref{eq:Ndef}) and the non-vanishing of
Clebsch-Gordan coefficients in Lemma \ref{lemma:CGnonvanishing}, $Y_q \neq
0$ for every $q=0,1,\ldots,k$. Then, applying (\ref{eq:dervmlsI}),
(\ref{eq:specialetaproj}), and the sign symmetry (\ref{eq:usymmetry}),
the above single term is
\begin{align*}
N_{(-k,s),(1,A),(k-1,A)}^{(k,s),(1,A),(k-1,A)}
\cdot \partial_{v_k^{(ks)}} B_{(k,s),(1,A),(k-1,A)}(\eta_*)
&=Y_1 \cdot 2C_{k,-1,k-1}^{k,1,k-1} \Re \overline{u_{-1}^{(1A)}}
u_{k-1}^{(k-1,A)}\\
&=2Y_1C_{k,-1,k-1}^{k,1,k-1}\Big(v_1^{(1A)}v_{k-1}^{(k-1,A)}
-w_1^{(1A)}w_{k-1}^{(k-1,A)}\Big).
\end{align*}
Only the first of these two summands depends on a variable in
$M_1$, namely $v_1^{(1A)}$. We write its quotient by this variable
$v_1^{(1A)}$ in the upper-left entry of the first table below.

As a second example, consider the entry in row $(C,C,1,k-1)$ and column
$\partial_{v_{k-2}^{(ks)}}$ of $N_k' \cdot D_k^{(s)}(\eta_*)$. By the same
reasoning as above, the only term of
this entry which depends on a variable in $M_1$ is
\begin{align*}
N_{(-k,s),(1,C),(k-1,C)}^{(k,s),(1,C),(k-1,C)}
\cdot \partial_{v_{k-2}}^{(ks)} B_{(k,s),(1,C),(k-1,C)}
&=Y_1 \cdot 2C^{k,1,k-1}_{k-2,0,k-2} \cdot \Re \overline{u^{(1C)}_0}
u^{(k-1,C)}_{k-2}\\
&=2Y_1C^{k,1,k-1}_{k-2,0,k-2} \cdot w_0^{(1C)}w_{k-2}^{(k-1,C)}.
\end{align*}
Its quotient by the variable $w_0^{(1C)}$ appearing in $M_1$ is the $(3,3)$
entry of the first table below.

The entries of the $8 \times 8$ block for $P_1$ which
depend on some variable in $M_1$ are contained within two $4 \times 4$
submatrices, corresponding to the below two tables. Similar to the above
computations, each entry of each submatrix has at most 1 term depending on some
variable in $M_1$. We indicate the quotient of this term by the
corresponding variable of $M_1$ in the two tables below. For entries that
have no dependence on variables of $M_1$, we write this quotient as 0.

{\small\setlength\tabcolsep{1pt}
\begin{center}
\begin{tabular}{l|cccccc}
& $v^{ks}_k$ & $w^{ks}_k$ & $v^{k, s}_{k-2}$ & $w^{k, s}_{k-2}$ \\ \hline
$A,A,1,k-1$ & $2Y_1 C^{k, 1, k - 1}_{k, -1, k - 1} v^{k-1, A}_{k-1}$ & $2Y_1 C^{k, 1, k - 1}_{k, -1, k - 1} w^{k-1, A}_{k-1}$&
$2Y_1 C^{k, 1, k - 1}_{k - 2, 1, k - 1} v^{k-1, A}_{k-1}$ & $2Y_1 C^{k, 1, k -
1}_{k - 2, 1, k - 1} w^{k-1, A}_{k-1}$ \\
$B,B,1,k-1$ & $2Y_1 C^{k, 1, k - 1}_{k, -1, k - 1} v^{k-1, B}_{k-1}$ & $2Y_1 C^{k, 1, k - 1}_{k, -1, k - 1} w^{k-1, B}_{k-1}$&
$2Y_1 C^{k, 1, k - 1}_{k - 2, 1, k - 1} v^{k-1, B}_{k-1}$ & $2Y_1 C^{k, 1, k -
1}_{k - 2, 1, k - 1} w^{k-1, B}_{k-1}$ \\
$C,C,1,k-1$ & 0 & 0 & $2Y_1 C^{k, 1, k - 1}_{k-2, 0, k - 2} w^{k-1,
C}_{k-2}$ & $-2Y_1 C^{k, 1, k - 1}_{k-2, 0, k - 2} v^{k-1, C}_{k-2}$ \\
$D,D,1,k-1$ & 0 & 0 & $2Y_1 C^{k, 1, k - 1}_{k-2, 0, k - 2} w^{k-1,
D}_{k-2}$ & $-2Y_1
C^{k, 1, k - 1}_{k-2, 0, k - 2} v^{k-1, D}_{k-2}$
\end{tabular}
\begin{tabular}{l|cccccc}
& $v^{k,s}_{k-1}$ & $w^{k, s}_{k-1}$ &  $v^{k, s}_{k-3}$ & $w^{k, s}_{k-3}$ & \\ \hline
$A,C,1,k-1$ & $2Y_1 C^{k, 1, k - 1}_{k - 1, -1, k - 2} v^{k-1, C}_{k-2}$ & $2Y_1 C^{k, 1, k - 1}_{k - 1, -1, k - 2} w^{k-1, C}_{k-2}$
& $2Y_1 C^{k, 1, k - 1}_{k - 3, 1, k - 2} v^{k-1, C}_{k-2}$ & $2Y_1 C^{k, 1, k -
1}_{k - 3, 1, k - 2} w^{k-1, C}_{k-2}$ \\
$B,D,1,k-1$ & $2Y_1 C^{k, 1, k - 1}_{k - 1, -1, k - 2} v^{k-1, D}_{k-2}$ & $2Y_1 C^{k, 1, k - 1}_{k - 1, -1, k - 2} w^{k-1, D}_{k-2}$
& $2Y_1 C^{k, 1, k - 1}_{k - 3, 1, k - 2} v^{k-1, D}_{k-2}$ & $2Y_1 C^{k, 1, k -
1}_{k - 3, 1, k - 2} w^{k-1, D}_{k-2}$ \\
$C,A,1,k-1$ & $2Y_1C^{k, 1, k - 1}_{k - 1, 0, k - 1} w^{k-1,
A}_{k-1}$ & $-2Y_1C^{k, 1, k - 1}_{k - 1, 0, k - 1} v^{k-1, A}_{k-1}$ & 0 & 0 \\
$D,B,1,k-1$ & $2Y_1C^{k, 1, k - 1}_{k - 1, 0, k - 1} w^{k-1,
B}_{k-1}$ & $-2Y_1C^{k, 1, k - 1}_{k - 1, 0, k - 1} v^{k-1, B}_{k-1}$ & 0 & 0
\end{tabular}
\end{center}}

Now $P_1/M_1$ is the product of determinants of the above
two $4 \times 4$ matrices. By Lemma \ref{lemma:CGnonvanishing}, each
Clebsch-Gordan coefficient here is non-zero. Then the determinant of each
$4 \times 4$ matrix is the product of two
$2 \times 2$ determinants, each of which is a non-vanishing quadratic, so
$P_1/M_1 \neq 0$.

{\bf Verification that $P_j/M_j \neq 0$ for $j=2,\ldots,\lfloor k/2 \rfloor-1$:}
Consider, as an example, the entry of row $(C,C,3,k-3)$ and column
$\partial_{w_{k-6}}^{(ks)}$. This entry is the inner product
\[\big(N_{(-k,s),(3,C),(k-3,C)}\big)^\top\big(\partial_{w_{k-6}^{(ks)}}
B(\eta_*)\big).\]
The non-zero entries $N_{(-k,s),(3,C),(k-3,C)}^{(k,s),(l',C),(l'',C)}$ of this
row of $N$ must have $l' \geq 3$ odd and $l'' \in \{k-1,k-3\}$. We must then
have $l'=3$ in order for the corresponding entry $\partial_{w_{k-6}^{(ks)}}
B_{(k,s),(l',C),(l'',C)}(\eta_*)$ to depend on some variable of $M_3$, which
would be $v_2^{(3C)}$. Recalling the form of this
derivative in (\ref{eq:derwmlsI}), this forces $|m'|=2$ for the summation index
of (\ref{eq:derwmlsI}) corresponding to any term depends
on $v_2^{(3C)}$. Then $m+m' \in \{k-8,k-4\}$, since $m=k-6$.
Applying our specialization
(\ref{eq:projcryoEMmcondition}) to the derivative (\ref{eq:derwmlsI}),
this requires $l'' \in \{k-7,k-3\}$ in order for this term
to be non-zero. Combining with the above condition $l'' \in \{k-1,k-3\}$,
we must have $l''=k-3$. Thus, to
summarize, again only a single term of the sum constituting the above
inner-product depends on the variable $v_2^{(3C)}$
of $M_3$. This term is
\begin{align*}
N_{(-k,s),(3,C),(k-3,C)}^{(k,s),(3,C),(k-3,C)}
\cdot \partial_{w_{k-6}^{(ks)}} B_{(k,s),(3,C),(k-3,C)}(\eta_*)
&=Y_3 \cdot 2C_{k-6,2,k-4}^{k,3,k-3} \Im\overline{u_2^{(3C)}}u_{k-4}^{(k-3,C)}\\
&=2Y_3 C_{k-6,2,k-4}^{k,3,k-3}\Big(v_2^{(3C)}w_{k-4}^{(k-3,C)}
-w_2^{(3C)}v_{k-4}^{(k-3,C)}\Big).
\end{align*}
Only the first summand depends on $v_2^{(3C)}$, and its quotient by $v_2^{(3C)}$
is recorded in row 1 and column 2 of the table below, corresponding to $j=3$.

By the same reasoning, a similar simplification occurs for every
$j=2,\ldots,\lfloor k/2 \rfloor-1$ and every entry
of $P_j/M_j$. For general $j$, we may compute each entry of this $4 \times 4$ block that
depends on a variable of $M_j$, and the table records the quotient of this
entry by the corresponding variable of $M_j$.
\begin{center}
{\scriptsize\setlength\tabcolsep{1pt}
\begin{tabular}{l|cccc}
& $v^{k, s}_{k - 2j}$ & $w^{k, s}_{k - 2j}$ & $v^{k, s}_{k - 2j - 1}$ & $w^{k, s}_{k - 2j - 1}$ \\ \hline
$C, C, j, k-j$ & $2Y_j C^{k, j, k - j}_{k - 2j, j-1, k - j -1} 
v^{k - j, C}_{k - j-1}$ & $2Y_j C^{k, j, k - j}_{k - 2j, j-1, k - j -1}
w^{k - j, C}_{k - j-1}$ & 0 & 0 \\
$D, D, j, k-j$ & $2Y_j C^{k, j, k - j}_{k - 2j, j-1, k - j -1} 
v^{k - j, D}_{k - j-1}$ & $2Y_j C^{k, j, k - j}_{k - 2j, j-1, k - j -1}
w^{k - j, D}_{k - j-1}$ & 0 & 0 \\
$A, C, j, k-j$ & 0 & 0 & $2Y_j C^{k, j, k - j}_{k - 2j - 1, j, k - j - 1} 
v^{k - j, C}_{k - j - 1}$ & $2Y_j C^{k, j, k - j}_{k - 2j - 1, j, k - j
- 1}  w^{k - j, C}_{k - j - 1}$ \\
$B, D, j, k-j$ & 0 & 0 & $2Y_j C^{k, j, k - j}_{k - 2j - 1, j, k - j - 1} 
v^{k - j, D}_{k - j - 1}$ & $2Y_j C^{k, j, k - j}_{k - 2j - 1, j, k - j
- 1}  w^{k - j, D}_{k - j - 1}$
\end{tabular}}
\end{center}
Then $P_j/M_j$ is the determinant of this $4 \times 4$ matrix, which is a
product of two $2 \times 2$ determinants.
By Lemma \ref{lemma:CGnonvanishing}, these Clebsch-Gordan coefficients are
non-zero, so each $2 \times 2$ determinant is a non-vanishing quadratic, and
$P_j/M_j \neq 0$.

{\bf Verification that $P_{\lfloor k/2 \rfloor}/M_{\lfloor k/2 \rfloor} \neq 0$:}
If $k$ is even, we have $\eta_0^{(ks)}=v_0^{(ks)}$, and
\[P_{k/2}(\eta_*)=\big(N_{(-k,s),(k/2,C),(k/2,C)}\big)^\top
\big(\partial_{v_0^{(ks)}} B(\eta_*)\big).\]
The non-zero
elements $N_{(-k,s),(k/2,C),(k/2,C)}^{(k,s),(l',C),(l'',C)}$ have $l' \geq k/2$ 
and $l'' \geq k/2$ with $l'-k/2$ and $l'-k/2$ both even. From
(\ref{eq:dereta0lsI}) and the specialization (\ref{eq:projcryoEMmcondition}),
the only term of this inner-product depending on 
$M_{k/2}=v_{(k/2)-1}^{(k/2)C}$ arises from $l'=l''=k/2$, and this term is
\begin{align*}
&N_{(-k,s),(k/2,C),(k/2,C)}^{(k,s),(k/2,C),(k/2,C)}
\cdot \partial_{v_0^{(ks)}} B_{(k,s),(k/2,C),(k/2,C)}(\eta_*)\\
&=Y_{k/2}C_{0,k/2-1,k/2-1}^{k,k/2,k/2}\left|u_{k/2-1}^{(k/2)C}\right|^2
=Y_{k/2}C_{0,k/2-1,k/2-1}^{k,k/2,k/2}\left((v_{k/2-1}^{(k/2)C})^2
+(w_{k/2-1}^{(k/2)C})^2\right).
\end{align*}
So $P_{k/2}/M_{k/2}=Y_{k/2}C_{0,k/2-1,k/2-1}^{k,k/2,k/2} \cdot
v_{k/2-1}^{(k/2)C}$, which is non-zero.

If $k$ is odd, then $\eta_0^{(ks)}=w_0^{(ks)}$. In the $3 \times 3$ submatrix
corresponding to $P_{\lfloor k/2 \rfloor}$, again each entry has at most 1 term
depending on some variable of $M_{\lfloor k/2 \rfloor}$. The below table
records the quotient of this term by the corresponding variable.
\begin{center}
{\scriptsize\setlength\tabcolsep{1pt}
\begin{tabular}{l|cccc}
& $v^{k, s}_1$ & $w^{k, s}_1$ & $w^{k, s}_0$ \\ \hline
$C, C, \lfloor k/2 \rfloor,\lceil k/2 \rceil$ & $2Y_{\lfloor k/2 \rfloor} C^{k, \lfloor k/2 \rfloor, \lceil k/2 \rceil}_{1, \lfloor k/2
\rfloor-1, \lceil k/2 \rceil-1} v^{\lceil k/2 \rceil, C}_{\lceil k/2 \rceil -
1}$ &
$2Y_{\lfloor k/2 \rfloor} C^{k, \lfloor k/2 \rfloor, \lceil k/2
\rceil}_{1, \lfloor k/2 \rfloor-1, \lceil k/2 \rceil-1} w^{\lceil k/2 \rceil,
C}_{\lceil k/2 \rceil - 1}$ & 0 \\
$D, D, \lfloor k/2 \rfloor, \lceil k/2 \rceil$ & $2Y_{\lfloor k/2
\rfloor} C^{k, \lfloor k/2 \rfloor, \lceil k/2 \rceil}_{1, \lfloor k/2
\rfloor-1, \lceil k/2 \rceil-1} v^{\lceil k/2 \rceil, D}_{\lceil k/2 \rceil -
1}$ &
$2Y_{\lfloor k/2 \rfloor} C^{k, \lfloor k/2 \rfloor, \lceil k/2
\rceil}_{1, \lfloor k/2 \rfloor-1, \lceil k/2 \rceil-1} w^{\lceil k/2 \rceil,
D}_{\lceil k/2 \rceil - 1}$ & 0 \\
$A, C, \lfloor k/2 \rfloor, \lceil k/2 \rceil$ & 0 & 0 & $2Y_{\lfloor
k/2 \rfloor}C^{k, \lfloor k/2 \rfloor, \lceil k/2 \rceil}_{0, \lfloor k/2
\rfloor, \lfloor k/2 \rfloor} w^{\lceil k/2 \rceil, C}_{\lceil k/2 \rceil-1}$
\end{tabular}}
\end{center}
Then $P_{\lfloor k/2 \rfloor}/M_{\lfloor k/2 \rfloor}$ is the determinant of
this $3 \times 3$ matrix, which is non-zero.

Combining the above, we have shown $P/M \neq 0$ as desired. This
completes the proof of part (b) also for $k \geq 4$.
\end{proof}

\section{Details of the numerical simulations}\label{appendix:simulations}

\begin{figure}
\begin{picture}(420,130)
\put(0,0){\includegraphics[width=0.3\textwidth]{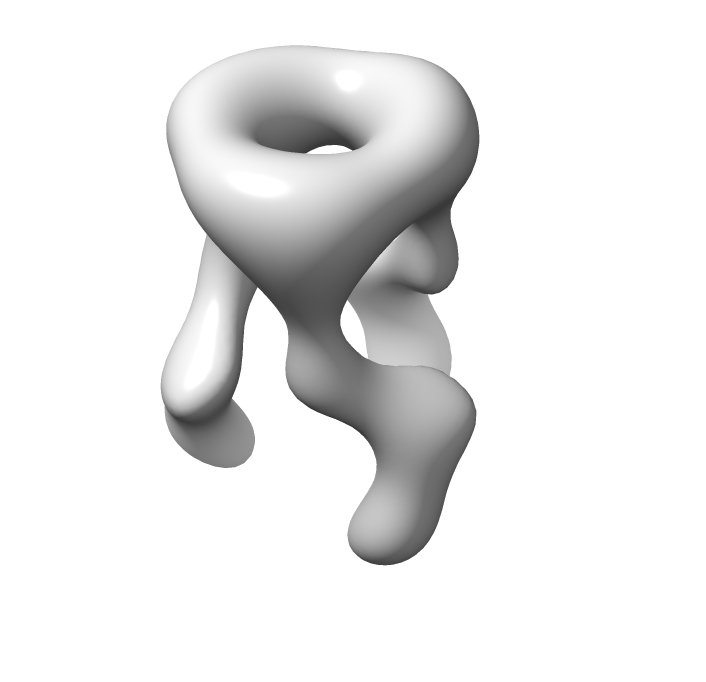}}
\put(140,0){\includegraphics[width=0.3\textwidth]{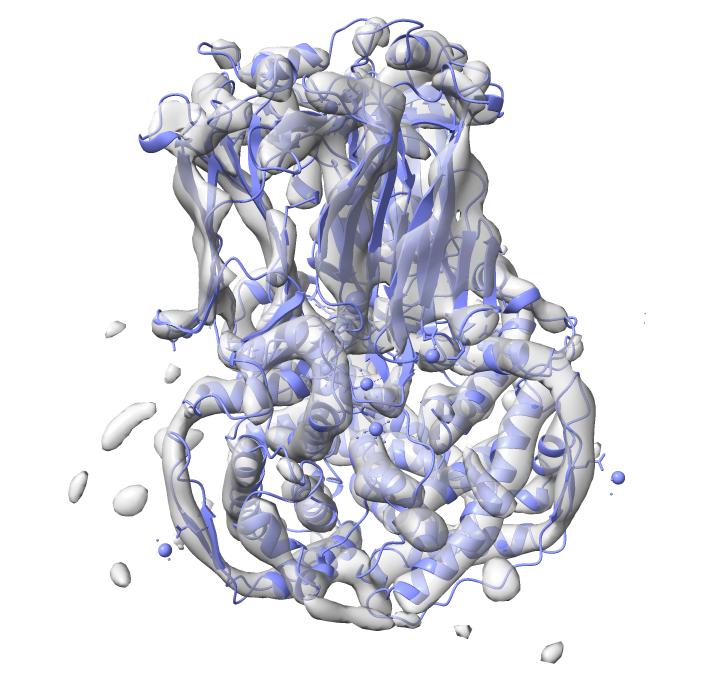}}
\put(280,0){\includegraphics[width=0.3\textwidth]{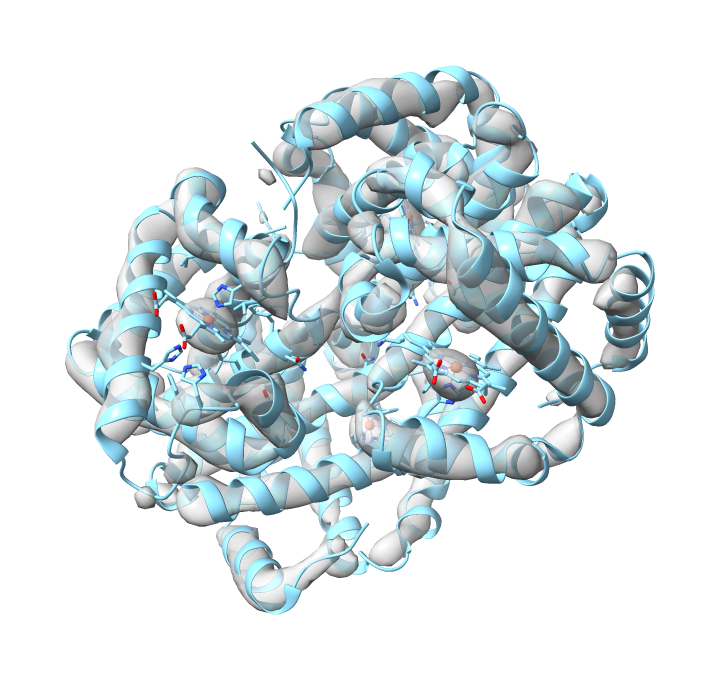}}
\put(0,120){{\small(a)}}
\put(140,120){{\small(b)}}
\put(280,120){{\small(c)}}
\end{picture}
\caption{(a) 24.6\AA-resolution and (b) 8.2\AA-resolution low-pass filtered maps
for the rotavirus VP6 trimer, prior to performing basis approximation as
depicted in Figure \ref{fig:rotavirus}. (c) 7.0\AA-resolution
low-pass filtered map for hemoglobin, prior to performing basis approximation as
depicted in Figure \ref{fig:hemoglobin}.}\label{fig:lowpass}
\end{figure}

\begin{figure}
\begin{picture}(420,100)
\put(0,0){\includegraphics[width=0.3\textwidth]{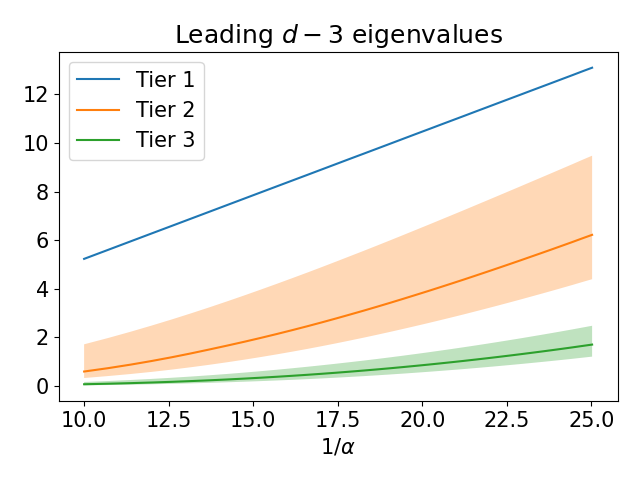}}
\put(140,0){\includegraphics[width=0.3\textwidth]{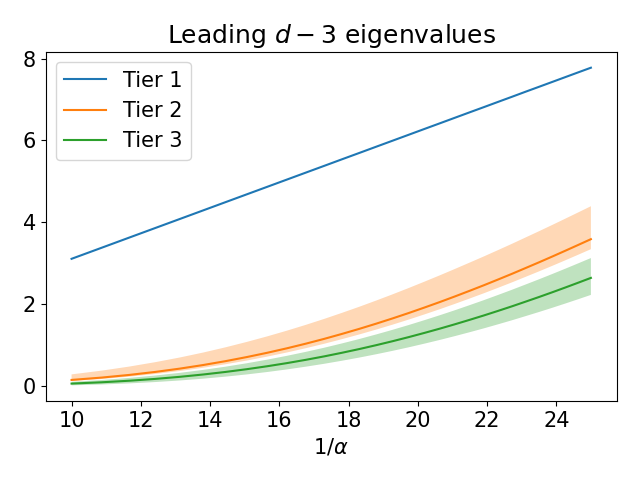}}
\put(280,0){\includegraphics[width=0.3\textwidth]{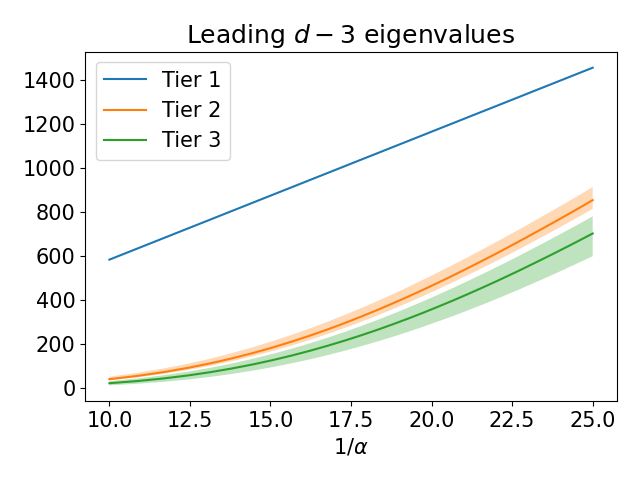}}
\put(0,100){{\small(a)}}
\put(140,100){{\small(b)}}
\put(280,100){{\small(c)}}
\end{picture}
\caption{Leading $d-3$ eigenvalues of the observed Fisher information matrices
depicted in Figures \ref{fig:rotavirus} and \ref{fig:hemoglobin}, plotted
against a common scaling $1/\alpha \propto \sigma^{-2}$, for (a) the
405-dimensional approximation for rotavirus VP6, (b) the 4410-dimensional
approximation for rotavirus VP6, and (c) the 3528-dimensional approximation for
hemoglobin. Lines depict the median within each of the three tiers, and bands
depict the 10th to 90th percentiles.}\label{fig:alltiers}
\end{figure}

We used rotavirus VP6 and hemoglobin maps
publicly available on EMDB (EMDB-1461 and EMDB-3650). We recentered the
rotavirus map EMDB-1461 to have center-of-mass at the origin, and zero-padded it
into a cubical volume of dimensions $141 \times 141 \times 141$. The hemoglobin
volume EMDB-3650 is already cubical. We centered the values of both maps to
have mean value 0.\\

\noindent {\bf Fourier quadrature.} 
We computed the Fourier transform $\hat{f}$ of both maps over a spherical grid
in the Fourier domain, using the FINUFFT library developed
in \cite{barnett2019parallel}. Parametrizing Fourier space by spherical
coordinates $(\rho,\phi_1,\phi_2)$, we computed $\hat{f}$ on a linearly spaced
grid of $150 \times 250 \times 250$ points $(\rho,\phi_1,\phi_2) \in
[0,1/R] \times [0,\pi] \times [0,2\pi)$,
where $R$ is the spatial-domain distance from the origin to the boundary of the 
cubical volume. All integrals in the Fourier domain were computed using the
weighted quadrature defined by this discrete grid with weight proportional to
$\rho^2 \sin \phi_1$.\\

\noindent {\bf Low-pass filter and basis approximation.}
For each frequency threshold $v$, we performed low-pass filtering by simple
truncation of the Fourier transform to radii $\rho \in [0,v]$. We then
iteratively defined radial functions $z_1,\ldots,z_S:[0,v] \to \R$
satisfying the orthogonality (\ref{eq:zorthogonality}), so that for each
$s=1,\ldots,S$, the partial basis $\{z_1,\ldots,z_s\}$ maximizes the
total power of the projection of the Fourier transform $\hat{f}$ onto the
function space
\[\Big\{z_1(\rho)h_1(\phi_1,\phi_2)+\ldots+z_s(\rho)h_s(\phi_1,\phi_2):
h_1,\ldots,h_s \in L_2(\sS^2,\C)\Big\}.\]

In detail, let us denote $u=(\phi_1,\phi_2)$, and $\der u=\sin \phi_1\,\der
\phi_1\,\der \phi_2$ as the surface area measure on $\sS^2$.
Then the projection of $\hat{f}$ onto the above space is defined
explicitly by
\[h_i(u)=\int_0^v \hat{f}(\rho,u)\overline{z_i(\rho)}
\cdot \rho^2 \der \rho \quad \text{ for each } i=1,\ldots,s.\]
The projected power is then
\begin{align*}
\text{Power}&=\int_{\sS^2}\int_0^v \left|\sum_{i=1}^s z_i(\rho)h_i(u)\right|^2
\cdot \rho^2\,\der \rho\,\der u
=\int_{\sS^2} \sum_{i=1}^s |h_i(u)|^2\,\der u\\
&=\int_0^v\int_0^v \sum_{i=1}^s \overline{z_i(\rho)} C(\rho,\rho')z_i(\rho')
\cdot \rho^2\,\der \rho\cdot {\rho'}^2\,\der \rho'
\end{align*}
where we have defined the cross-covariance of $\hat{f}(\rho,\cdot)$ and
$\hat{f}(\rho',\cdot)$ as
\[C(\rho,\rho')=\int_{\sS^2} \hat{f}(\rho,u)\overline{\hat{f}(\rho',u)}\der u.\]
From the orthogonality (\ref{eq:zorthogonality}), the maximizing functions
$z_1,\ldots,z_S$ are such that $\{\rho\,z_1(\rho),\ldots,\rho\,z_S(\rho)\}$
are the $S$ leading eigenfunctions (orthogonal with respect to the standard
unweighted $L_2$-inner-product on $[0,v]$) of the weighted cross-covariance kernel
\[K(g,h)=\int_0^v \int_0^v \overline{g(\rho)}C(\rho,\rho')h(\rho)
\cdot \rho\,\der \rho \cdot \rho'\der \rho'.\]
We approximated this kernel $K$ by its $M \times M$ matrix discretization
$K_{\text{mat}}=(C(\rho,\rho')\rho\,\rho')_{\rho,\rho'}$ where $M$ is the
number of radial quadrature points $\rho \in [0,v]$. We approximated its
eigenfunctions by the eigenvectors of $K_{\text{mat}}$. As the eigenfunctions
of $K$ correspond to $\rho\,z_i(\rho)$, we divided the
eigenvectors of $K_{\text{mat}}$
by $\rho$ to obtain the values of the radial basis functions
$z_1,\ldots,z_S$ along the above radial quadrature.

The final function basis over $\R^3$ was obtained as a product of 
$\{z_s:s=1,\ldots,S\}$ with the spherical harmonics as described
in Section \ref{subsec:cryoEM}. We computed the spherical harmonics on the
above $250 \times 250$ quadrature points $(\phi_1,\phi_2)$ using
the implementation of the spherical
harmonics provided by the \texttt{sph\_harm} function in scipy. Finally, basis
coefficients $\theta_*$ were computed by integration in the Fourier domain,
as approximated by the above quadrature.\\

\noindent {\bf $\SO(3)$ quadrature.}
We computed the empirical Hessian $\nabla^2 R_n(\theta_*)$ by approximating the
integral over $\SO(3)$ in the definition of the log-likelihood using a weighted
discrete quadrature on $\SO(3)$. Parametrizing $\SO(3)$ by the Euler angles
$(\alpha,\beta,\gamma)$, we used a discrete grid of $40 \times 40 \times 40$
values $(\alpha,\beta,\gamma) \in [0,2\pi) \times [0,\pi] \times [0,2\pi)$,
with linearly-spaced grid points and equal weights for $(\alpha,\gamma)$.
For $\beta$ we also used linearly-spaced points
$\beta_1,\ldots,\beta_{40}$, with weights $w_1,\ldots,w_{40}$ computed by numerically solving the equations
\begin{equation}
	\sum_{i=1}^{40} w_i D^{(l)}_{0,0}\left( (0,\beta_i,0) \right)  = \begin{cases}
	1, & \text{if } l=0 \\
	0, & \text{otherwise}
	\end{cases}
\end{equation}
for $0 \leq l < 40$. Here $D^{(l)}_{0,0}\left( (0,\beta_i,0) \right)$ is the
middle element of the Wigner-D matrix $D^{(l)}(\frakg)$, evaluated at $\frakg
\in \SO(3)$ having Euler angles $(0,\beta_i,0)$.
With proper normalization, this yields an approximate quadrature for
band-limited functions over $\SO(3)$. 

We note that 
computing $\nabla^2 R_n(\theta_*)$ for many samples is quite computationally
intensive, scaling as $O(nd^2 \cdot |\text{quad}|)$ where $|\text{quad}|$ is the
number of $\SO(3)$ quadrature points, and we were consequently
limited in the size of this quadrature. Since this quadrature may provide an
imperfect approximation to integration over true Haar measure on $\SO(3)$,
to mitigate some of the discretization effects, we generated samples
$y_1,\ldots,y_n$ using random rotations also drawn from the weighted discrete
distribution over $\SO(3)$ defined by this quadrature, rather than from the Haar
measure. This does not fully address the numerical inaccuracy, but at
least ensures that the true rotation for each sample $y_i$ belongs
to our discretization of $\SO(3)$. This becomes numerically important for
smaller values of the noise variance $\sigma^2$, when the posterior
distribution of the rotation $\frakg \in \SO(3)$ for each sample becomes more
localized over $\SO(3)$.

We translated each quadrature point of $\SO(3)$ to a rotational element in
$\O(d)$ via the representation described in Lemma
\ref{lem:so3-sphere-act}. We used the implementation of Wigner-D matrices
provided by the third-party \texttt{quaternion} and
\texttt{spherical\_functions} Python libraries. These elements of $\O(d)$ were
applied to $\theta_*$ (with additive Gaussian noise) to generate the samples
$y_1,\ldots,y_n$, and the corresponding quadrature on $\O(d)$ was
used also to compute the integrals over $\SO(3)$ in the analytic expression
for $\nabla^2 R_n(\theta_*)$.\\

\noindent {\bf Visualizations.}
The molecular graphics in Figures \ref{fig:rotavirus} and \ref{fig:hemoglobin} 
were rendered using the UCSF ChimeraX software \cite{pettersen2021ucsf}. We used
atomic structures publicly available on PDB (PDB:1QHD and PDB:5NI1). We aligned
the rotavirus VP6 structure PDB:1QHD to the 8.2\AA-resolution map
depicted in Figure \ref{fig:lowpass}(a) using the ChimeraX ``Fit in Map'' tool.
The hemoglobin structure PDB:5NI1 is already pre-aligned to the analyzed map.

\section{Cryogenic Electron Microscopy (cryo-EM)}\label{sec:cryoem}

Cryo-EM is a technology for  determining the spatial structure of macromolecules. 
In recent years, cryo-EM has become increasingly popular in structural biology.
Thanks to technological advancement in hardware and algorithms in the last
decade, cryo-EM now allows scientists to routinely recover structures at near-atomic resolutions. 
Unlike popular X-ray crystallography techniques for structure determination,
cryo-EM does not require the samples to be crystallized. This gives cryo-EM an
advantage, in particular, for molecules that are difficult to crystallize, and in heterogeneous samples. 

In a typical cryo-EM study, a solution with the molecule of interest is
flash-frozen in a thin layer of ice. The particles are sufficiently sparse and
the layer sufficiently thin so that when viewed from above, the molecules rarely overlap. Each particle is trapped in the ice at an unknown random orientation. The sample is then inserted into the microscope; an electron beam is transmitted through the sample and then recorded by a camera. A detailed description of the procedure can be found, inter alia, in \cite{BookGlaeser}.

The procedure produces a 2D tomographic projection of each particle. For
simplicity, in this work we have omitted some effects such as the filters
applied in the process (i.e.\ the contrast transfer function) and the problem of
centering the particles, which is less crucial at lower resolutions.  
In addition, we assume the experimental distribution of viewing directions is
uniform.
A more comprehensive description can be found in  \cite{BookGlaeser},  and a more detailed mathematical description can be found in \cite{bendory2020single}.
The simplified imaging model is summarized by the equation
	    \begin{equation} \label{eq:cryoemmodel:space}
I(r_1, r_2) = \int_{\mathbb{R}} f(\frakg^{-1} {\bm {r} } ) \der r_3 =
\int_{\mathbb{R}} \left(\frakg \cdot f\right)\left( (r_1,r_2,r_3) \right) \der r_3,
	    \end{equation}
where ${\bm r}=(r_1,r_2,r_3)$, the function $f:\R^3 \to \R$ is the electric
potential, and $\frakg \in \SO(3)$ is a rotation. We have expressed this
equivalently in the main text as (\ref{eq:cryoEMmodel}).
In other words, the image is obtained by integrating the $z$ axis of the volume rotated to viewing direction $\frakg$ (which is not known to us).
The interactions of the electrons with the sample lead to rapid deterioration in
the quality of the sample, and very few electrons can be used to record the
images before the sample becomes unusable. Therefore, the measurements are
characterized by low signal-to-noise ratios.

Taking the Fourier transform of Eq.\ (\ref{eq:cryoemmodel:space}) with appropriate normalization yields
	    \begin{equation} \label{eq:cryoemmodel:fourier}
		    \hat{I}(\omega_1, \omega_2) =  \left(\frakg \cdot  \hat{f}\right)\left( (\omega_1,\omega_2,0) \right) ,
	    \end{equation}
where $\hat{I}$ is the Fourier transform of the image and $\hat{f}$ is the
Fourier transform of the density map. In other words, in the Fourier domain, the
tomographic projection can be conveniently described as taking a slice of the
Fourier transform of the volume, at the plane that passes through the origin and
is perpendicular to the viewing direction. We have expressed this
relation in our analysis as (\ref{eq:fourierslice}), which is characterized in
our specific function bases by the projection operator $\Pi^\C$ in (\ref{eq:cryoEMPi}).

\subsection{Cryo-Electron Tomography (Cryo-ET, ``Unprojected Cryo-EM'')}\label{sec:cryoET}

Cryo-ET is based on the same technology as cryo-EM. However, in cryo-ET several images are taken of each particle, with the sample tilted in a different direction for every image. 
As in the classic cryo-EM problem above, the relative rotation angles of the
different particles are unknown. However, the relative tilt angles of images of
{\em the same particle} are known.
By the Fourier-slice relation (\ref{eq:cryoemmodel:fourier}), each image
is a slice of the Fourier domain, and thus a dense set of slices from different
viewing directions of {\em the same particle} can be used to reconstruct an
entire 3D volume.
Unfortunately, due to physical limitations, the tilt angles cannot cover all
viewing directions,
and a series of cryo-ET tilt images is typically noisier than a cryo-EM image. 
Thus cryo-ET provides a method of obtaining noisy 3D maps of individual
particles, whose relative rotations across different particles
are unknown as in the cryo-EM problem.
Cryo-ET is more commonly used to study larger samples (e.g.\ entire cells), but
is also used in the study of smaller particles. For additional information, see
\cite{eisenstein2019improved,turk2020promise}. In the main text,
we have referred to this problem of reconstructing a map from \emph{unprojected}
and rotated 3D volumes also as the ``unprojected cryo-EM'' model.

A simplified model of cryo-ET, after the tilt series has been reassembled to a 3D function, has the form:
	    \begin{equation} \label{eq:cryoETmodel:spatial}
		    f_\frakg (\omega) = \left( \frakg \cdot  {f} \right)(\omega) = f(  \frakg^{-1} \cdot \omega),
	    \end{equation}
	    or, in the Fourier domain:
	    \begin{equation} \label{eq:cryoETmodel:fourier}
		     \hat{f}_\frakg (\omega) = \left( \frakg \cdot  \hat{f} \right)(\omega) = \hat{f}(  \frakg^{-1} \cdot \omega).
	    \end{equation}
	    The (projected) cryo-EM model, is related to the cryo-ET model (``unprojected cryo-EM'') through the tomographic projection operator $\Pi$; 
	    in the Fourier domain, $\Pi$ has the form
	    \begin{equation} \label{eq:cryoEMmodel:Pi}
		     \left(\Pi \hat{f}\right) (\omega_1, \omega_2) = \hat{f} (\omega_1, \omega_2, 0).
	    \end{equation}
	    
\bibliographystyle{alpha}
\bibliography{continuous_orbit}

\end{document}